\documentclass[10pt]{amsart}
\usepackage{amsthm, amsfonts, amssymb, amsmath, graphicx, enumitem, bbold, tkz-berge, caption}
\usetikzlibrary{decorations.pathreplacing, graphs}
\usepackage{fouriernc}

\newcommand{\into}{\hookrightarrow}

\newcommand{\C}{\mathbb{C}}
\newcommand{\R}{\mathbb{R}}

\newcommand{\Z}{\mathbb{Z}}

\newcommand{\F}{\mathbb{F}}
\newcommand{\K}{\mathbb{K}}
\newcommand{\GL}{\mathrm{GL}}
\newcommand{\bd}{\partial}

\newcommand{\e}{\epsilon}

\newcommand{\la}{\langle}
\newcommand{\ra}{\rangle}
\newcommand{\edge}{\sim}
\mathchardef\mhyphen="2D
\newcommand{\adspec}{\alpha\mhyphen\mathrm{spec}}
\newcommand{\lapspec}{\lambda\mhyphen\mathrm{spec}}

\newcommand{\Nm}{\mathrm{N}}
\newcommand{\Tr}{\mathrm{Tr}}
\newcommand{\bl}{\bullet}
\newcommand{\wh}{\circ}
\newcommand{\ct}{\mathbb{1}}

\theoremstyle{definition}
\newtheorem{thm}{Theorem}[section]
\newtheorem{lem}[thm]{Lemma}
\newtheorem{cor}[thm]{Corollary}
\newtheorem{prop}[thm]{Proposition}

\newtheorem{ex}[thm]{Example}
\newtheorem{Rule}[thm]{Rule}
\newtheorem{exer}[thm]{Exercise}

\newtheorem*{notes}{Notes}

\begin{document}
\title{A brief introduction to Spectral Graph Theory}
\author[]{Bogdan Nica}
\date{\today}
\maketitle

\begin{figure}[!ht]
\centering
\GraphInit[vstyle=Simple]
\tikzset{VertexStyle/.style = {
shape = circle,
fill = black,
inner sep = 0pt,
draw}}
\begin{minipage}[b]{0.15\linewidth}
\centering
\begin{tikzpicture}[rotate=10, scale=.15]
\grPetersen[form=3,RA=5]
\end{tikzpicture}
\end{minipage}
\begin{minipage}[b]{.16\linewidth}
\centering
\begin{tikzpicture}[rotate=15,scale=.15]
\grFranklin[Math,RA=5]
\end{tikzpicture}
\end{minipage}
\begin{minipage}[b]{.15\linewidth}
\centering
\begin{tikzpicture}[scale=.15]
\grLCF[RA=5]{5,9}{7}
\end{tikzpicture}
\end{minipage}
\begin{minipage}[b]{.15\linewidth}
\centering
\begin{tikzpicture}[rotate=-11, scale=.15]
\grMobiusKantor[RA=5]
\end{tikzpicture}
\end{minipage}
\begin{minipage}[b]{.15\linewidth}
\centering
\begin{tikzpicture}[scale=.15, rotate=40]
\grPappus[RA=5]
\end{tikzpicture}
\end{minipage}
\begin{minipage}[b]{.15\linewidth}
\centering
\begin{tikzpicture}[scale=.15, rotate=188]
\grDesargues[Math,RA=5]
\end{tikzpicture}
\end{minipage}
\end{figure}

\bigskip
\section*{-- Introduction --}

Spectral graph theory starts by associating matrices to graphs, notably, the adjacency matrix and the laplacian matrix. The general theme is then, firstly, to compute or estimate the eigenvalues of such matrices, and secondly, to relate the eigenvalues to structural properties of graphs. As it turns out, the spectral perspective is a powerful tool. Some of its loveliest applications concern facts that are, in principle, purely graph-theoretic or combinatorial. To give just one example, spectral ideas are a key ingredient in the proof of the so-called Friendship Theorem: if, in a group of people, any two persons have exactly one common friend, then there is a person who is everybody's friend.

This text is an introduction to spectral graph theory, but it could also be seen as an invitation to algebraic graph theory. On the one hand, there is, of course, the linear algebra that underlies the spectral ideas in graph theory. On the other hand, most of our examples are graphs of algebraic origin. The two recurring sources are Cayley graphs of groups, and graphs built out of finite fields. In the study of such graphs, some further algebraic ingredients (e.g., characters) naturally come up.

The table of contents gives, as it should, a good glimpse of where is this text going. Very broadly, the first half is devoted to graphs, finite fields, and how they come together. This part is meant as an appealing and meaningful motivation. It provides a context that frames and fuels much of the second, spectral, half. 

Most sections have one or two exercises. Their position within the text is a hint. The exercises are optional, in the sense that virtually nothing in the main body depends on them. But the exercises are often of the non-trivial variety, and they should enhance the text in an interesting way. The hope is that the reader will enjoy them.

We assume a basic familiarity with linear algebra, finite fields, and groups, but not necessarily with graph theory. This, again, betrays our algebraic perspective. 

This text is based on a course I taught in G\"ottingen, in the Fall of 2015. I would like to thank Jerome Baum for his help with some of the drawings. The present version is preliminary, and comments are welcome (email: bogdan.nica@gmail.com).

\newpage
 \tableofcontents

 \newpage
\section{Graphs}

\bigskip
\subsection*{Notions}\label{sec: some definitions}\
\medskip

A \emph{graph} consists of vertices, or nodes, and edges connecting pairs of vertices. 

Throughout this text, graphs are \emph{finite} (there are finitely many vertices), \emph{undirected} (edges can be traversed in both directions), and \emph{simple} (there are no loops or multiple edges). Unless otherwise mentioned, graphs are also assumed to be \emph{connected} (there is a sequence of edges linking any vertex to any other vertex) and \emph{non-trivial} (there are at least three vertices). On few occasions the singleton graph, $\circ$, the `marriage graph', $\circ\!\!-\!\!\circ$, or disconnected graphs will come up. By and large, however, when we say `a graph' we mean that the above conditions are fulfilled.

A \emph{graph isomorphism} is a bijection between the vertex sets of two graphs such that two vertices in one graph are adjacent if and only if the corresponding two vertices in the other graph are adjacent. Isomorphic graphs are viewed as being the same graph.

Within a graph, two vertices that are joined by an edge are said to be \emph{adjacent}, or \emph{neighbours}. The \emph{degree} of a vertex counts the number of its neighbours. A graph is \emph{regular} if all its vertices have the same degree. 
 
Basic examples of regular graphs include the \emph{complete graph} $K_n$, the \emph{cycle graph} $C_n$, the \emph{cube graph} $Q_n$. See Figure~\ref{fig: KCQ}. The complete graph $K_n$ has $n$ vertices, with edges connecting any pair of distinct vertices. Hence $K_n$ is regular of degree $n-1$. The cycle graph $C_n$ has $n$ vertices, and it is regular of degree $2$. The cube graph $Q_n$ has the binary strings of length $n$ as vertices, with edges connecting two strings that differ in exactly one slot. Thus $Q_n$ has $2^n$ vertices, and it is regular of degree $n$. 

\begin{figure}[!ht]\bigskip
\GraphInit[vstyle=Simple]
\tikzset{VertexStyle/.style = {
shape = circle,
inner sep = 0pt,
minimum size = .8ex,
draw}}
\begin{minipage}[b]{0.32\linewidth}
\centering
\begin{tikzpicture}[rotate=90, scale=.6]
  \grComplete[RA=2]{5}
\end{tikzpicture}
\end{minipage}
\begin{minipage}[b]{0.32\linewidth}
\centering
\begin{tikzpicture}[rotate=90, scale=.6]
  \grCycle[RA=2]{5}
\end{tikzpicture}
\end{minipage}
\begin{minipage}[b]{0.32\linewidth}
\centering
\begin{tikzpicture}[rotate=45, scale=.5]\grPrism[RA=3,RB=1.5]{4}%
\end{tikzpicture}
\end{minipage}
\caption{Complete graph $K_5$. Cycle graph $C_5$. Cube graph $Q_3$.}\label{fig: KCQ}
\end{figure}
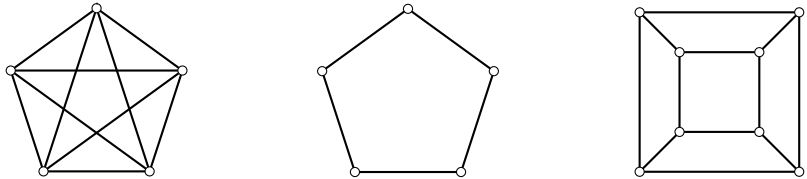

Another example of a regular graph is the Petersen graph, a $3$-regular graph on $10$ vertices pictured in Figure~\ref{fig: petersen}. 

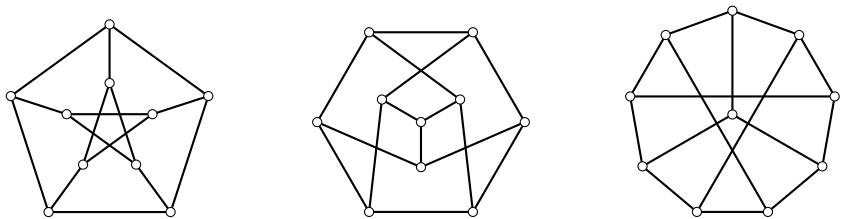
\begin{figure}[!ht]\bigskip
\GraphInit[vstyle=Simple]
\tikzset{VertexStyle/.style = {
shape = circle,
inner sep = 0pt,
minimum size = .8ex,
draw}}
\begin{minipage}[b]{0.32\linewidth}
\centering
\begin{tikzpicture}[rotate=90,scale=.6]
  \grPetersen[form=1, RA=2.3,RB=1]
 \end{tikzpicture}
\end{minipage}
\begin{minipage}[b]{0.32\linewidth}
\centering
\begin{tikzpicture}[scale=.6]
  \grPetersen[form=2,RA=2.3,RB=1]
 \end{tikzpicture}
\end{minipage}
\begin{minipage}[b]{0.32\linewidth}
\centering
\begin{tikzpicture}[rotate=10, scale=.6]
\grPetersen[form=3,RA=2.3]
\end{tikzpicture}
\end{minipage}
\caption{Three views of the Petersen graph.}\label{fig: petersen}
\end{figure}

A formal description of the Petersen graph runs as follows: the vertices are the $2$-element subsets of a $5$-element set, and edges represent the relation of being disjoint. The Petersen graph is, in many ways, the smallest interesting graph.

Notable families of non-regular graphs are the path graphs, the star graphs, the wheel graphs, the windmill graphs - all illustrated in Figure~\ref{fig: irregular}.

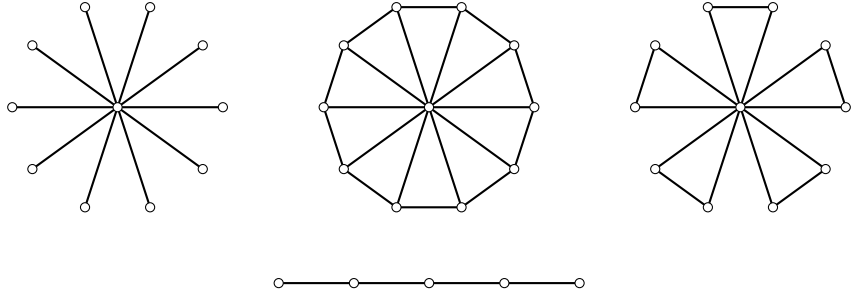
\begin{figure}[!ht]\bigskip
\centering
\GraphInit[vstyle=Simple]
\tikzset{VertexStyle/.style = {
shape = circle,
inner sep = 0pt,
minimum size = .8ex,
draw}}

\begin{minipage}[b]{0.32\linewidth}
\centering
\begin{tikzpicture}
  \grStar[RA=1.4]{11}
\end{tikzpicture}
\end{minipage}
\begin{minipage}[b]{0.32\linewidth}
\centering
\begin{tikzpicture}
\grWheel[RA=1.4]{11}
\end{tikzpicture}
\end{minipage}
\begin{minipage}[b]{0.32\linewidth}
\centering
\begin{tikzpicture}
  \grStar[RA=1.4]{11}
  \Edges(a0, a1)
  \Edges(a2, a3)
  \Edges(a4, a5)
  \Edges(a6, a7)
  \Edges(a8, a9)
\end{tikzpicture}
\end{minipage}

\bigskip
\bigskip
\begin{tikzpicture}
\grPath[RA=1,RS=1]{5}
\end{tikzpicture}
\caption{Star graph. Wheel graph. Windmill graph. Path graph.}\label{fig: irregular}
\end{figure}

We remark that the marriage graph $\circ\!\!-\!\!\circ$ is a degenerate case for several of the families mentioned above. From now on we denote it by $K_2$.

We usually denote a graph by $X$. We write $V$ and $E$ for the vertex set, respectively the edge set. The two fundamental parameters of a graph are
\begin{itemize}[leftmargin=50pt]
\item[$n$ :] number of vertices,
\item[$d$ :] maximal vertex degree.
\end{itemize}
Our non-triviality assumption means that $n\geq 3$ and $d\geq 2$. If $X$ is regular, then $d$ is simply the degree of every vertex. In general, the degree is a map on the vertices, $v\mapsto \mathrm{deg}(v)$. 

\begin{prop} Let $X$ be a graph. Then the following holds:
\begin{align*}
\sum_{v\in V} \mathrm{deg}(v)=2|E|
\end{align*}
In particular, $dn\geq 2|E|$ with equality if and only if $X$ is regular.
\end{prop}

A \emph{path} in a graph connects some vertex $v$ to some other vertex $w$ by a sequence of adjacent vertices $v=v_0 \edge v_1 \edge \ldots \edge v_k=w$. A path is allowed to contain backtracks, i.e., steps of the form $u\edge u'\edge u$. The concept of a path was, in fact, implicit when we used the term `connected', since the defining property of a connected graph is that any two vertices can be joined by a path. A path from a vertex $v$ to itself is said to be a closed path based at $v$.  A \emph{cycle} is a non-trivial closed path with distinct intermediate vertices. Here, non-triviality means that we rule out empty paths, and back-and-forths across edges.  

The \emph{length} of a path is given by the number of edges, counted with multiplicity. An integral-valued \emph{distance} on the vertices of a graph can be defined as follows: the distance $\mathrm{dist}(v,w)$ between two vertices $v$ and $w$ is the minimal length of a path from $v$ to $w$. So metric concepts such as diameter, and spheres or balls around a vertex, make sense. 

\begin{exer}\label{exer: mantel} A graph on $n$ vertices that contains no $3$-cycles has at most $\lfloor n^2/4\rfloor$ edges.
\end{exer}

A \emph{tree} is a graph that has no cycles. For instance, star graphs and path graphs are trees. Two important examples are the trees $T_{d,R}$ and $\tilde{T}_{d,R}$, described as follows. There is a root vertex of degree $d-1$ in $T_{d,R}$, respectively of degree $d$ in $\tilde{T}_{d,R}$; the pendant vertices lie on a sphere of radius $R$ about the root; the remaining intermediate vertices all have degree $d$. See Figure~\ref{fig: comparing trees} for an illustration.

\begin{figure}[!ht]\bigskip
\centering
\GraphInit[vstyle=Classic]
\tikzset{VertexStyle/.style = {
shape = circle,
inner sep = 0pt,
minimum size = .8ex,
draw}}
\begin{minipage}[b]{0.48\linewidth}
\centering
\begin{tikzpicture}[scale=.7]
\SetVertexNoLabel
\Vertex[x=1.5,y=1]{A}
\Vertex[x=2,y=1]{B}
\Vertex[x=2.5,y=1]{C}
\Vertex[x=3,y=1]{D}
\Vertex[x=7,y=1]{a}
\Vertex[x=7.5,y=1]{b}
\Vertex[x=8,y=1]{c}
\Vertex[x=8.5,y=1]{d}
\Vertex[x=2,y=1.5]{E}
\Vertex[x=3,y=1.5]{F}
\Vertex[x=3,y=2.5]{G}
\Vertex[x=7,y=1.5]{e}
\Vertex[x=8,y=1.5]{f}
\Vertex[x=7,y=2.5]{g}
\Vertex[x=5,y=4.5]{R}
\Edges(R, G, E, A)
\Edges(G, F, D)
\Edges(F, C)
\Edges(E, B)
\Edges(R, g, e, a)
\Edges(g, f, d)
\Edges(f, c)
\Edges(e, b)
\end{tikzpicture}
\end{minipage}
\begin{minipage}[b]{0.48\linewidth}
\centering
\begin{tikzpicture}[scale=.7]
\SetVertexNoLabel
\Vertex[x=1.5,y=1]{A}
\Vertex[x=2,y=1]{B}
\Vertex[x=2.5,y=1]{C}
\Vertex[x=3,y=1]{D}
\Vertex[x=7,y=1]{a}
\Vertex[x=7.5,y=1]{b}
\Vertex[x=8,y=1]{c}
\Vertex[x=8.5,y=1]{d}
\Vertex[x=2,y=1.5]{E}
\Vertex[x=3,y=1.5]{F}
\Vertex[x=3,y=2.5]{G}
\Vertex[x=7,y=1.5]{e}
\Vertex[x=8,y=1.5]{f}
\Vertex[x=7,y=2.5]{g}
\Vertex[x=5,y=4.5]{R}
\Vertex[x=5, y=2.5]{u}
\Vertex[x=4.5, y=1.5]{v}
\Vertex[x=5.5, y=1.5]{w}
\Vertex[x=4.5, y=1]{x}
\Vertex[x=4, y=1]{y}
\Vertex[x=5.5, y=1]{z}
\Vertex[x=6, y=1]{t}
\Edges(R, G, E, A)
\Edges(G, F, D)
\Edges(F, C)
\Edges(E, B)
\Edges(R, g, e, a)
\Edges(g, f, d)
\Edges(f, c)
\Edges(e, b)
\Edges(R, u)
\Edges(v, u)
\Edges(w, u)
\Edges(v, x)
\Edges(w, z)
\Edges(v, y)
\Edges(w, t)
\end{tikzpicture}
\end{minipage}
\caption{$T_{3,3}$ and $\tilde{T}_{3,3}$.}\label{fig: comparing trees}
\end{figure}
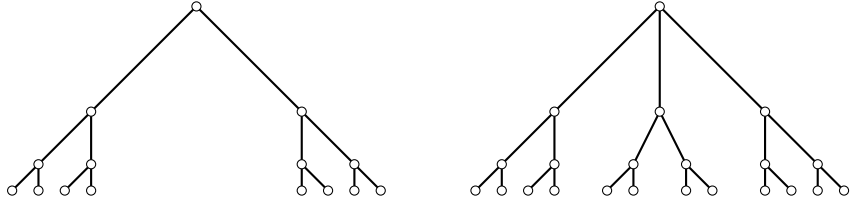

Let $X_1$ and $X_2$ be two graphs with vertex sets $V_1$, respectively $V_2$. The \emph{product graph} $X_1\times X_2$ has vertex set $V_1\times V_2$, and edges defined as follows: $(u_1,u_2)\edge (v_1,v_2)$ if either $u_1\sim v_1$ and $u_2=v_2$, or $u_1=v_1$ and $u_2\sim v_2$. Let us point out that there are several reasonable ways of defining a product of two graphs. The type of product that we have defined is, however, the only one we will use.

For example, the $n$-fold product $K_2\times\dots\times K_2$ is the cube graph $Q_n$. Another product we will encounter is $K_n\times K_n$. One can think of $K_n\times K_n$ as describing the possible moves of a rook on an $n\times n$ chessboard.

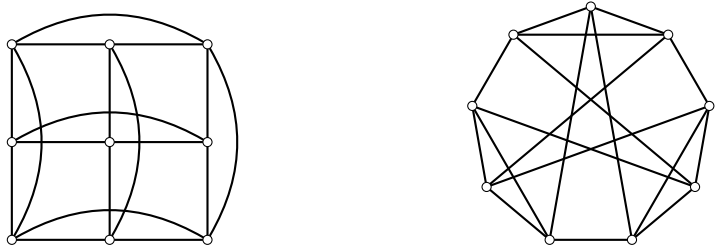
\begin{figure}[!ht]\bigskip
\centering
\GraphInit[vstyle=Simple]
\tikzset{VertexStyle/.style = {
shape = circle,
inner sep = 0pt,
minimum size = .8ex,
draw}}
\begin{minipage}[b]{0.48\linewidth}
\centering
\begin{tikzpicture}[scale=1.3]
\SetVertexNoLabel
\Vertex{A}
\EA(A){B} \WE(A){C} \NO(A){D} \SO(A){E}
\NOEA(A){F} \NOWE(A){G} \SOEA(A){H} \SOWE(A){I}
\Edges(G,D,F)
\Edges(C,A,B)
\Edges(I,E,H)
\Edges(G,C,I)
\Edges(D,A, E)
\Edges(F,B,H)
\Edge[style={bend left}](I)(H)
\Edge[style={bend left}](C)(B)
\Edge[style={bend left}](G)(F)
\Edge[style={bend left}](G)(I)
\Edge[style={bend left}](F)(H)
\Edge[style={bend left}](D)(E)
\end{tikzpicture}
\end{minipage}
\begin{minipage}[b]{0.48\linewidth}
\centering
\begin{tikzpicture}[rotate=50, scale=.8]
\SetVertexNoLabel
  \grEmptyCycle[RA=2]{9}
  \Edges(a0, a1, a2, a0)
  \Edges(a3, a4, a5, a3)
  \Edges(a6, a7, a8, a6)
  \Edges(a0, a4, a8, a0)
  \Edges(a1, a5, a6, a1)
  \Edges(a2, a3, a7, a2)
\end{tikzpicture}
\end{minipage}
\caption{Two views of $K_3\times K_3$.}
\end{figure}

How to tell two graphs apart? Obstructions to graph isomorphism are provided, globally, by size, and locally by vertex degrees. Such counting criteria are, however, quite naive. For example, they are useless in distinguishing the $3$-regular graphs on $8$ vertices that are pictured in Figure~\ref{fig: linking}. 

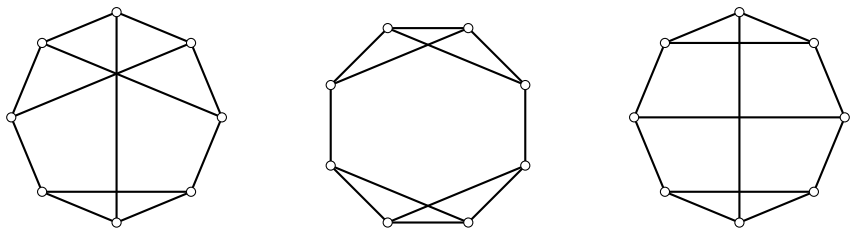
\begin{figure}[!ht]\bigskip
\centering
\GraphInit[vstyle=Simple]
\tikzset{VertexStyle/.style = {
shape = circle,
inner sep = 0pt,
minimum size = .8ex,
draw}}
\begin{minipage}[b]{0.32\linewidth}
\centering
\begin{tikzpicture}
  \grCycle[RA=1.4]{8}
  \Edges(a0, a3)
  \Edges(a1, a4)
  \Edges(a2, a6)
  \Edges(a5, a7)
\end{tikzpicture}
\end{minipage}
\begin{minipage}[b]{0.32\linewidth}
\centering
\begin{tikzpicture}[rotate=22.5]
  \grCycle[RA=1.4]{8}
  \Edges(a0, a2)
  \Edges(a4, a6)
  \Edges(a7, a5)
 \Edges(a3, a1)
\end{tikzpicture}
\end{minipage}
\begin{minipage}[b]{0.32\linewidth}
\centering
\begin{tikzpicture}
  \grCycle[RA=1.4]{8}
  \Edges(a2, a6)
  \Edges(a7, a5)
  \Edges(a3, a1)
  \Edges(a4, a0)
\end{tikzpicture}
\end{minipage}

\caption{Three $3$-regular graphs on $8$ vertices.}\label{fig: linking}
\end{figure}

So let us enhance the local information offered by vertex degrees. Instead of just counting the neighbours of a vertex, we consider their structure. Given a graph, the \emph{link graph} (or the \emph{neighbourhood graph}) of a vertex has all its neighbours as vertices, and edges inherited from the ambient graph. A graph isomorphism is link-preserving, in the sense that it induces a graph isomorphism between the corresponding link graphs.

As an illustration of this idea, let us consider again the graphs in Figure~\ref{fig: linking}, and let us record the pattern of link graphs. The first two graphs have, both, two types of link graphs, $\circ\!\!-\!\!\circ\; \circ$ and $\circ\;\circ\; \circ$. The number of vertices having each type is, however, different. The third graph has link graphs $\circ\!\!-\!\!\circ\; \circ$ and $\circ\!\!-\!\!\circ\!\!-\!\!\circ$. In conclusion, the graphs in Figure~\ref{fig: linking} are mutually non-isomorphic.

\begin{exer}\label{exer: frucht} Show that the regular graph in Figure~\ref{fig: frucht} has no non-trivial automorphisms.
\end{exer}

\begin{figure}[!ht]\medskip
\centering
\GraphInit[vstyle=Simple]
\tikzset{VertexStyle/.style = {
shape = circle,
inner sep = 0pt,
minimum size = .8ex,
draw}}
\begin{tikzpicture}
  \grCycle[RA=1.5]{12}
  \Edges(a0, a2)
  \Edges(a4, a6)
  \Edges(a7, a9)
  \Edges(a3, a11)
  \Edges(a5, a10)
 \Edges(a1, a8)
\end{tikzpicture}
\caption{The Frucht graph.}\label{fig: frucht}
\end{figure}
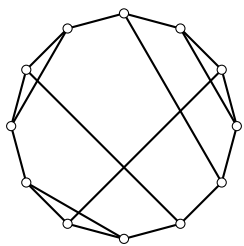

\begin{notes} The word `graph' was coined by Sylvester in 1878, as an abbreviation of `graphical notation'. This was a well-established term among chemists at the time, and it referred to depictions of molecules in which nodes represented atoms, and edges indicated bonds between them.
\end{notes}

\bigskip
\subsection*{Bipartite graphs}\label{sec: bipartite}\
\medskip

A graph is said to be \emph{bipartite} if its vertex set can be partitioned into two subsets, such that no two points belonging to the same subset are connected by an edge. We note that, if it exists, such a bipartition is unique. We often think of the two subsets as consisting of `white', respectively `black' vertices. 

For example, the cube graph $Q_n$ is bipartite. Indeed, we may partition the binary strings by the weight parity, where the \emph{weight} of a binary string is the number of entries equal to $1$ - in other words, the distance to the all $0$'s string. Also trees are bipartite: a choice of a base vertex defines a partition according to the parity of the distance to the chosen vertex.  

The \emph{complete bipartite graph} $K_{m,n}$ is the bipartite counterpart of a complete graph $K_n$. The vertices of $K_{m,n}$ are two disjoint sets of size $m$, respectively $n$, with edges connecting each element of one set to every element of the other set.

\begin{figure}[!ht]\medskip
\centering
\GraphInit[vstyle=Simple]
\tikzset{VertexStyle/.style = {
shape = circle,
inner sep = 0pt,
minimum size = .9ex,
draw}}
\begin{tikzpicture}[scale=.4]
\grCompleteBipartite[RA=4,RB=4,RS=5]{4}{4}
 \end{tikzpicture}
 \caption{Complete bipartite graph $K_{4,4}$.}
 \end{figure}
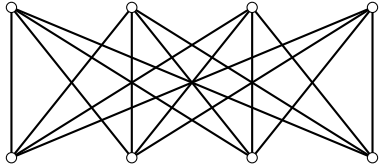
 
The complete graph $K_n$ is not bipartite, as it contains $3$-cycles. A cycle graph $C_n$ is bipartite if and only if $n$ is even. In fact, bipartiteness can be characterized as follows.
 
\begin{thm}
 A graph is bipartite if and only if it contains no cycles of odd length.
\end{thm}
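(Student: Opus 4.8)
The plan is to establish the two implications separately; the forward one is short, and the reverse one carries the real content.

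For the forward implication, suppose $X$ is bipartite, with its vertices partitioned into a white class and a black class. The defining property is that every edge joins a vertex of one class to a vertex of the other, so traversing an edge always switches colour. Consequently, along any closed path $v_0\edge v_1 \edge \ldots \edge v_k = v_0$ the colour alternates at each step, and since $v_k = v_0$ has the same colour as $v_0$, the number $k$ of steps must be even. In particular every cycle has even length, so $X$ has no odd cycle.

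For the reverse implication, assume $X$ has no cycle of odd length. Using our standing assumption that $X$ is connected, I would fix a base vertex $v_0$ and colour each vertex $v$ according to the parity of $\mathrm{dist}(v_0,v)$: white if this distance is even, black if it is odd. To show that this colouring is a bipartition, it suffices to rule out an edge between two equally-coloured vertices. Suppose, for contradiction, that $u\edge w$ with $\mathrm{dist}(v_0,u)$ and $\mathrm{dist}(v_0,w)$ of the same parity. Splicing together a shortest path from $v_0$ to $u$, the edge $u\edge w$, and a shortest path from $w$ back to $v_0$ yields a closed path based at $v_0$ of length $\mathrm{dist}(v_0,u) + 1 + \mathrm{dist}(v_0,w)$, which is odd. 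Thus everything reduces to the following claim: \emph{a closed path of odd length always contains a cycle of odd length.} Granting it, the closed path just constructed would contain an odd cycle, contradicting the hypothesis; hence no monochromatic edge exists, and the colouring is a valid bipartition.

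It remains to prove the claim, which I expect to be the main obstacle, and I would do so by induction on the length of the closed path $W = v_0 \edge \ldots \edge v_k = v_0$. If the intermediate vertices $v_0, \ldots, v_{k-1}$ are pairwise distinct, then $W$ is itself a cycle of odd length and we are done. Otherwise $v_i = v_j$ for some $0 \le i < j \le k-1$, and I would cut $W$ at this repeated vertex into two shorter closed paths: the loop $v_i \edge \ldots \edge v_j$ and the complementary closed path $v_0 \edge \ldots \edge v_i \edge v_{j+1} \edge \ldots \edge v_k$, the splicing being legitimate because $v_i = v_j \edge v_{j+1}$. Their lengths are $j-i$ and $k-(j-i)$, which sum to the odd number $k$; hence one of them is odd and strictly shorter than $W$, and the inductive hypothesis supplies an odd cycle inside it, and therefore inside $W$. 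Since the length strictly decreases at each step while remaining a positive odd integer, the process terminates at a closed path with distinct intermediate vertices, i.e. at a genuine odd cycle, completing the proof.
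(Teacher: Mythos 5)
Your proof is correct and follows essentially the same route as the paper: both directions rest on colouring by parity relative to a base vertex, with a monochromatic edge forced to produce an odd closed path. The only difference is one of rigor: the paper simply asserts that ``if all cycles have even length then all closed paths have even length,'' whereas you isolate the contrapositive of this as the key claim and prove it properly by descent, cutting the closed path at a repeated vertex into two shorter closed paths of complementary parity --- a worthwhile addition, since that step is the real content of the reverse implication.
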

 
 \begin{proof}
If a graph is bipartite, then a path of odd length has its endpoints in different sides of the bi-partition - in particular it cannot be a cycle. For the other direction, pick a vertex $u$. If all cycles have even length then all closed paths have even length, which in turn implies that lengths of paths from a fixed vertex $u$ to any other vertex $v$ have a well-defined parity. This induces a partition of the vertex set into two types, the `odd' and the `even' vertices with respect to $u$, and there are no edges between vertices of the same type.
 \end{proof}
 
 \begin{exer}\label{exer: mantel2} A graph on $n$ vertices that contains no $3$-cycles has $\lfloor n^2/4\rfloor$ edges. What is the graph?
\end{exer}

There is a canonical way of passing from non-bipartite graphs to bipartite ones. The \emph{bipartite double} of a graph $X$ is the graph whose vertex set consists of two disjoint copies, $V_\bl$ and $V_\wh$, of the vertex set of $X$, and two vertices $u_\bl$ and $v_\wh$ are adjacent if and only if $u$ and $v$ are adjacent in $X$. Thus, we get two edges, $u_\bl\sim v_\wh$ and $u_\wh\sim v_\bl$, for each edge $u\sim v$ in $X$. 

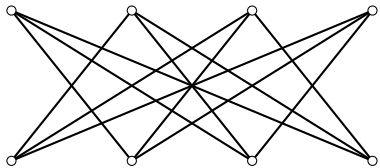
\begin{figure}[!ht]\medskip
\centering\GraphInit[vstyle=Simple]
\tikzset{VertexStyle/.style = {
shape = circle,
inner sep = 0pt,
minimum size = .8ex,
draw}}
\begin{tikzpicture}[scale=.4]
\grCrown[RA=4,RS=5]{4}
\end{tikzpicture}
\caption{Bipartite double of $K_4$.}\label{fig: bidoubleK4}
\end{figure}

For example, the bipartite double of $K_4$ is depicted in Figure~\ref{fig: bidoubleK4}. It resembles $K_{4,4}$, except that $4$ edges given by a black-white pairing have been removed. After unwrapping the tangle, it turns out to be the cube graph $Q_3$. 

The bipartite double of a cycle $C_n$ is the doubled cycle $C_{2n}$ if $n$ is odd, respectively twice the cycle $C_{n}$ if $n$ is even. This is an instance of a general phenomenon.
 
\begin{prop}
The bipartite double of a graph $X$ is connected if and only if $X$ is non-bipartite.
\end{prop}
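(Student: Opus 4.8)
The plan is to convert connectivity of the double into a statement about the parities of walk-lengths in $X$. The crucial feature is that the double is bipartite for the obvious two-colouring into $V_\bl$ and $V_\wh$, so every edge joins a black vertex to a white one, and hence any walk in the double alternates colour at each step. I would first record the resulting lifting dictionary: a walk $u=w_0\edge w_1\edge\cdots\edge w_k$ in $X$ lifts uniquely to a walk out of $u_\bl$, namely $(w_0)_\bl\edge(w_1)_\wh\edge(w_2)_\bl\edge\cdots$, where $w_i$ is coloured $\bl$ when $i$ is even and $\wh$ when $i$ is odd; conversely, forgetting colours sends any walk out of $u_\bl$ to a walk of the same length out of $u$ in $X$. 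The upshot is a clean criterion: starting from $u_\bl$, a vertex $v_\bl$ is reachable exactly when there is an \emph{even}-length walk from $u$ to $v$ in $X$, and $v_\wh$ is reachable exactly when there is an \emph{odd}-length walk from $u$ to $v$ in $X$.

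With this dictionary in hand, both directions reduce to controlling which parities are achievable. Suppose first that $X$ is non-bipartite; then it contains a cycle of odd length, based at some vertex $c$. Given any two vertices $u,v$, connectedness of $X$ provides a walk $P$ from $u$ to $v$, and I can manufacture a walk of the \emph{opposite} parity by inserting at $v$ a detour consisting of a round trip $v\to c\to v$ together with one traversal of the odd cycle at $c$. This detour has length $(\text{even})+(\text{odd})$, hence odd, so it flips the parity of $P$. Thus walks of both parities from $u$ to $v$ exist, and by the dictionary every $v_\bl$ and every $v_\wh$ is reachable from $u_\bl$. Therefore the double is connected.

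Conversely, suppose $X$ is bipartite, with bipartition $V=A\sqcup B$. Then the length of any walk from $u$ to $v$ has a well-defined parity: it is even precisely when $u$ and $v$ lie on the same side. Fixing $u\in A$, the dictionary shows that the vertices reachable from $u_\bl$ are exactly $\{v_\bl: v\in A\}\cup\{v_\wh: v\in B\}$, a set of only $n$ of the $2n$ vertices of the double. In particular no $v_\wh$ with $v\in A$ is reachable, so the double is disconnected; indeed it splits into two components, each isomorphic to $X$.

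The step I expect to be the main obstacle is the non-bipartite direction: one must verify carefully that an odd cycle \emph{together with} connectedness genuinely realizes both parities between an \emph{arbitrary} pair $u,v$, rather than merely producing a single odd closed walk somewhere in the graph. Once the parity-reversing detour through $c$ is set up correctly, everything else is routine bookkeeping with the lifting correspondence.
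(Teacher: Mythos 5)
Your proof is correct and follows essentially the same route as the paper's: both rest on the dictionary that $v_\bl$ (resp.\ $v_\wh$) is reachable from $u_\bl$ exactly when $X$ has an even (resp.\ odd) walk from $u$ to $v$, and both flip parity using an odd cycle. The only cosmetic differences are that you argue the bipartite direction directly rather than contrapositively, and you insert the odd cycle via a round-trip detour through an auxiliary vertex $c$ instead of basing the argument at a vertex on the cycle itself.
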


\begin{proof}
A vertex $u_\bl$ is connected to $v_\bl$, respectively to $v_\wh$ in the bipartite double of $X$, if and only if $u$ is connected to $v$ in $X$ by a path of even, respectively odd length. Thus, if the bipartite double is connected, then a path from $u_\bl$ to $v_\bl$ and one from $u_\bl$ to $v_\wh$ yield a close path of odd length in $X$, so $X$ is non-bipartite. Conversely, assume that $X$ is non-bipartite, so $X$ contains some cycle of odd length. Pick a vertex $u$ on the cycle. We claim that, in $X$, we can join $u$ to any other vertex $v$ by a path of even length, as well as by one of odd length. Indeed, take a path $p$ from $u$ to $v$. Then $p$ and the path $p'$, obtained by first going once around the cycle and then following $p$, are two paths from $u$ to $v$ of different parities. In the bipartite double, this means that $u_\bl$ is connected to $v_\bl$ and to $v_\wh$, no matter what $v$ is. Thus, the bipartite double is connected.
\end{proof}

The upshot is that we will only take bipartite doubles of non-bipartite graphs.

But we can also ask the reverse question: given a bipartite graph, is it the bipartite double of some other graph? A first exercise in this direction, left to the reader, is that a tree is not a bipartite double.

\bigskip
\section{Invariants}
A graph invariant is some property of a graph, most commonly a number, that is preserved by isomorphisms. There is a great variety of numerical invariants that one can associate to a graph, besides the fundamental parameters $n$ (the size) and $d$ (the maximal degree). Among the most important, and the ones we are interested herein, are the chromatic number, the independence number, the diameter, the girth, and the isoperimetric constant. Roughly speaking, the first two measure the freeness, the next two the largeness, and the last one the connectivity of a graph.

\bigskip
\subsection*{Chromatic number and independence number}\label{sec: chr-ind}\
\medskip

The \emph{chromatic number}, denoted $\chi$, is the minimal number of colours needed to paint the vertices in such a way that adjacent vertices have different colours. The \emph{independence number}, denoted $\iota$, is the maximal number of vertices that can be chosen so that no two of them are adjacent. A set of vertices is said to be \emph{independent} if no two vertices are adjacent.

The chromatic number satisfies $2\leq \chi \leq n$. At the extremes, $\chi=2$ characterizes bipartite graphs whereas $\chi=n$ characterizes the complete graph $K_n$. The independence number satisfies $1\leq \iota \leq n-1$. The lowest value $\iota=1$ characterizes the complete graph $K_n$, while the highest value $\iota=n-1$ characterizes the star graph on $n$ vertices.

\begin{align*}
\begin{array}{rccccc}
 & \quad K_n \quad &  \quad C_n \quad  & \quad Q_n \quad & \quad K_{n,n} \quad & \textrm{Petersen}   \\[.4em]\cline{2-6}\\
  \textrm{chromatic}\quad\chi \quad & n & 2 \textrm{ or } 3  & 2 & 2 & 3\\[.4em]
 \textrm{independence}\quad \iota \quad & 1 & \lfloor n/2\rfloor  & 2^{n-1} & n & 4
\end{array}
\end{align*}
\smallskip

The chromatic number and the independence number are related by the following fact.

\begin{prop}
We have $\chi\cdot \iota\geq n$.
\end{prop}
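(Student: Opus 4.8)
The plan is to use a proper colouring of $X$ with $\chi$ colours to produce a collection of independent sets that cover all vertices, and then count. The key observation is that in any proper colouring, each colour class---the set of all vertices receiving a fixed colour---is an independent set, since by definition adjacent vertices get different colours.

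First I would fix an optimal proper colouring, using exactly $\chi$ colours. This partitions the vertex set $V$ into $\chi$ colour classes $C_1, \ldots, C_\chi$, where $C_i$ is the set of vertices painted with colour $i$. Since every vertex receives some colour, these classes cover $V$, and since each vertex receives exactly one colour, they are disjoint; hence $\sum_{i=1}^{\chi} |C_i| = n$. Next I would invoke the fact that each $C_i$ is independent, so by the definition of the independence number we have $|C_i| \leq \iota$ for every $i$. Summing these bounds over all $\chi$ colour classes gives
\begin{align*}
n = \sum_{i=1}^{\chi} |C_i| \leq \sum_{i=1}^{\chi} \iota = \chi\cdot\iota,
\end{align*}
which is exactly the desired inequality.

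There is really no hard part here: the argument is a clean pigeonhole-style count once one recognizes that colour classes are independent sets. The only point requiring a small amount of care is ensuring that the colouring is chosen to be optimal (using the minimal number $\chi$ of colours) so that the number of classes in the sum is exactly $\chi$; the bound $|C_i|\le\iota$ then holds for each class regardless, and the inequality follows immediately. I expect the entire proof to fit in two or three sentences.
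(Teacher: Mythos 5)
Your proof is correct and is essentially the paper's own argument: partition the vertices into the $\chi$ colour classes of an optimal colouring, note each class is independent and hence of size at most $\iota$, and sum. Nothing to add.
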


\begin{proof}
Consider a colouring with $\chi$ colours. Then the vertex set is partitioned into $\chi$ monochromatic sets. Furthermore, each monochromatic set of vertices is independent so its size is at most $\iota$. 
\end{proof}

The obvious bound $\chi \leq n$ can be improved as follows.

\begin{prop}\label{prop: chromatic bound}
We have $\chi\leq d+1$.
\end{prop}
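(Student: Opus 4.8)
The plan is to prove this by a greedy (sequential) colouring argument. First I would fix an arbitrary ordering of the vertices, say $v_1, v_2, \ldots, v_n$, together with a palette of $d+1$ colours. Then I would colour the vertices one at a time, in this order, at each stage assigning to the current vertex any colour that is not already used by those of its neighbours that precede it in the ordering.

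The key step is to check that this procedure never gets stuck. When the process reaches a vertex $v_i$, the neighbours of $v_i$ that have already received a colour number at most $\mathrm{deg}(v_i)\leq d$. Consequently they forbid at most $d$ of the colours, and since the palette contains $d+1$ colours, there is always at least one colour left free to assign to $v_i$. Choosing such a free colour keeps the partial colouring proper, that is, no two adjacent already-coloured vertices share a colour. Running this through all $n$ vertices yields a proper colouring of $X$ with at most $d+1$ colours, and hence $\chi\leq d+1$.

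I expect essentially no serious obstacle here: the degree bound $d$ directly caps the number of colours forbidden at each step, which is exactly what makes $d+1$ colours suffice. The one point worth stating carefully is that at each stage one only needs to avoid the colours of the \emph{previously processed} neighbours, not of all neighbours of $v_i$; this localisation is what keeps the count at $d$ rather than potentially higher. It is also worth remarking that the bound is generally not tight, but for this proposition the simple greedy argument is both elementary and sufficient.
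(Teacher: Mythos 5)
Your greedy-colouring argument is correct and is essentially the same proof as the paper's: the paper phrases it as induction on $n$ (delete a vertex, colour the rest with $d+1$ colours, then note the deleted vertex has at most $d$ neighbours), which is just the recursive form of your iterative greedy procedure. Both rest on the identical counting step, namely that at most $d$ colours are ever forbidden at the vertex being coloured, so the $(d+1)$-st colour is available.
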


\begin{proof} We argue by induction on $n$. Consider the (possibly disconnected) graph obtained after deleting some vertex. It can be coloured by using $d+1$ colours. The deleted vertex has at most $d$ neighbours, so at least one more colour is available to complete the colouring.
\end{proof}

The previous proposition is sharp: complete graphs and odd cycles satisfy $\chi=d+1$. Nevertheless, it can be strengthened as follows.

\begin{thm}[Brooks]\label{thm: brooks}
We have $\chi\leq d$ except for complete graphs and odd cycles.
\end{thm}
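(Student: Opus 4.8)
The engine of the proof is \emph{greedy colouring}: if we list the vertices as $v_1, \dots, v_n$ and colour them in this order, each time assigning the smallest colour not yet used on an already-coloured neighbour, then the number of colours used is at most $1 + \max_i |\{j < i : v_j \edge v_i\}|$. So the whole game is to find a vertex ordering in which few neighbours precede each vertex. I would first dispose of the non-regular case. If $X$ is not regular, pick a vertex $r$ with $\deg(r) < d$ and order the vertices by \emph{decreasing} distance from $r$, so that $v_n = r$. Every vertex other than $r$ then has a neighbour one step closer to $r$, which comes later in the list; hence it has at most $d-1$ earlier neighbours, while $r$ has fewer than $d$ neighbours in total. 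Greedy colouring therefore uses at most $d$ colours.

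This reduces everything to the regular case, where $d$ is the common degree. The case $d = 2$ is just the cycles: $C_n$ needs $2$ colours when $n$ is even and $3$ when $n$ is odd, matching the odd-cycle exception. For $d \ge 3$ with $X$ not complete, I would hang the argument on the following lemma: \emph{there exist vertices $a, b, c$ with $a \edge c$, $b \edge c$, $a \not\edge b$, such that $X - \{a,b\}$ is connected.} Granting the lemma, build the order $v_1 = a$, $v_2 = b$, followed by the vertices of the connected graph $X - \{a, b\}$ listed by decreasing distance from $c$, so that $v_n = c$. Greedy colouring now succeeds with $d$ colours: $a$ and $b$ are non-adjacent and both receive colour $1$; each interior vertex has a later neighbour (one step closer to $c$) and hence at most $d-1$ earlier neighbours; and although all $d$ neighbours of $c$ precede it, two of them are $a$ and $b$, which share colour $1$, so at most $d-1$ colours are blocked and one remains free for $c$.

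The whole difficulty is thus concentrated in the lemma, and I expect the connectivity bookkeeping there to be the main obstacle. I would split on the connectivity of $X$. If $X$ is $3$-connected, the lemma is easy: since $X$ is not complete it has two vertices $a, b$ at distance $2$, their common neighbour is the desired $c$, and $X - \{a, b\}$ is connected because no two vertices can separate a $3$-connected graph. The delicate case is when $X$ is not $3$-connected, so it has a separating set $S$ of size at most $2$. Here I would let $c$ be a vertex of $S$ (a cut vertex when $|S| = 1$), and choose $a$ and $b$ to be neighbours of $c$ lying in two different components of $X - S$; they are then automatically non-adjacent. The point that needs care is to pick $a$ and $b$ as \emph{non-separating} vertices of their respective sides, so that deleting them does not break those sides off from $c$ --- using $d \ge 3$ to guarantee each side is rich enough to contain such a neighbour of $c$. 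Verifying that $X - \{a, b\}$ stays connected in every low-connectivity configuration is exactly where the argument must be handled most carefully. When $|S| = 1$ one may alternatively reduce through the blocks of $X$, noting that no block of a connected non-complete regular graph can itself be a $K_{d+1}$.
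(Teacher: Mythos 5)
The paper never proves this theorem: it is stated bare, the accompanying note explicitly calls both the original and the modern proofs ``somewhat involved'', and only the irregular case is delegated to Exercise~\ref{exer: irregular chr}. So there is nothing to compare your route against; your proposal must stand on its own. Its skeleton is the standard Lov\'asz greedy argument, and most of it is sound: the greedy bound, the irregular case (your first paragraph is exactly the intended solution of Exercise~\ref{exer: irregular chr}), the $d=2$ case, the reduction to the three-vertex lemma, and the $3$-connected case of the lemma are all complete and correct.

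The genuine gap is where you yourself point: the lemma when $X$ is not $3$-connected, and your sketch there would fail as literally stated. Two concrete problems. First, for $|S|=2$ an arbitrary vertex $c$ of an arbitrary $2$-cut $S$ need not have neighbours in two different components of $X-S$ at all; this requires $S$ to be a \emph{minimum} separator (each vertex of a minimum separator has a neighbour in every component). Second, and more seriously, ``pick $a$ and $b$ non-separating on their respective sides'' is not always possible for a fixed choice of $c$: if $c$ has a \emph{single} neighbour $a_0$ in a component $A$ of $X-S$, and $a_0$ is a cutvertex of the side, then deleting $a_0$ strands part of $A$ no matter what, since $A$ communicates with the rest of $X$ only through $S$; richness from $d\geq 3$ does not rule this out, because $c$'s third neighbour may lie elsewhere. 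The repair is Lov\'asz's selection, which chooses the triple globally rather than component-by-component: when $\kappa(X)=2$, take $c$ in a $2$-cut $\{c,w\}$, so that $X-c$ is connected but has $w$ as a cutvertex and hence has at least two end-blocks; $2$-connectivity of $X$ forces $c$ to have a neighbour in the interior of each end-block, and choosing $a,b$ as such non-cutvertex interior neighbours in two distinct end-blocks gives $a\not\sim b$ (adjacent vertices share a block), keeps $(X-c)-\{a,b\}$ connected, and $d\geq 3$ leaves $c$ a third neighbour to reattach it. When $\kappa(X)=1$ the per-component selection can fail for the same reason, and the correct move is the block decomposition you mention in your last sentence: every block of a connected $d$-regular graph containing a cutvertex has that vertex of block-degree less than $d$, so no block is $d$-regular (in particular none is $K_{d+1}$), each block is $d$-colourable by the irregular case or by $\chi\leq d_B+1\leq d$, and the colourings are merged by permuting colours at cutvertices. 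With those two substitutions your outline becomes a complete proof.
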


\begin{exer}\label{exer: irregular chr} Show that $\chi\leq d$ for irregular graphs.
\end{exer}

\begin{notes}
The chromatic number is the oldest graph invariant. Much of the early work in graph theory was motivated by a cartographic observation that we now call the Four-Colour Theorem.

Theorem~\ref{thm: brooks} is due to Brooks (\emph{On colouring the nodes of a network}, Proc. Cambridge Philos. Soc. 1941). The original proof, but also the modern approaches, are somewhat involved. Exercise~\ref{exer: irregular chr} covers the easy case when the graph is not regular.
\end{notes}

\bigskip
\subsection*{Diameter and girth}\label{sec: diam-gir}\
\medskip

The \emph{diameter}, denoted $\delta$, is the maximal distance between vertices. The \emph{girth}, denoted $\gamma$, is the minimal length of a cycle.

The diameter satisfies $1\leq \delta\leq n-1$. At the extremes, $\delta=1$ characterizes the complete graph $K_n$, and $\delta=n-1$ characterizes the path on $n$ vertices. The girth satisfies $3\leq \gamma\leq n$, unless we are dealing with a tree in which case we put $\gamma=\infty$. The value $\gamma=n$ characterizes the cycle $C_n$. 

\begin{align*}
\begin{array}{rccccc}
 & \quad K_n \quad &  \quad C_n \quad  & \quad Q_n \quad & \quad K_{n,n} \quad & \textrm{Petersen}  \\[.4em]\cline{2-6}\\
 \textrm{diameter}\quad\delta \quad & 1 & \lfloor n/2\rfloor & n & 2 & 2\\[.4em]
 \textrm{girth}\quad\gamma \quad  & 3 & n & 4 & 4 & 5
\end{array}
\end{align*}
\smallskip

For graphs which are not trees, the diameter and the girth are related as follows.

\begin{prop}
We have $\gamma\leq 2\delta+1$. If $\gamma$ is even, then actually $\gamma\leq 2\delta$.
\end{prop}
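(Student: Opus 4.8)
The plan is to produce a pair of vertices whose graph distance is about $\gamma/2$, which then forces $\delta$ to be at least that large. The obvious place to look for such a pair is on a shortest cycle. So I would first fix a cycle $C$ of minimal length $\gamma$ (it exists because $X$ is not a tree), and the crux of the whole argument will be the following lemma: for any two vertices $u,v$ lying on $C$, the graph distance $\mathrm{dist}(u,v)$ equals their distance measured \emph{along} $C$. Concretely, if the two arcs of $C$ joining $u$ and $v$ have lengths $a$ and $b$ (so $a+b=\gamma$), then $\mathrm{dist}(u,v)=\min(a,b)$.

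Granting the lemma, the proposition follows in a few lines. On a cycle of length $\gamma$ there is a pair of vertices whose arc-distance is exactly $\lfloor\gamma/2\rfloor$ (take $u$ and the vertex reached by walking $\lfloor\gamma/2\rfloor$ steps along $C$). By the lemma their graph distance is $\lfloor\gamma/2\rfloor$, and this is at most the diameter $\delta$. Hence $\lfloor\gamma/2\rfloor\le\delta$. When $\gamma$ is odd this reads $(\gamma-1)/2\le\delta$, i.e. $\gamma\le 2\delta+1$; when $\gamma$ is even it reads $\gamma/2\le\delta$, i.e. $\gamma\le 2\delta$. This is exactly the claimed dichotomy.

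The real work, and the only step I expect to be delicate, is the lemma. The inequality $\mathrm{dist}(u,v)\le\min(a,b)$ is immediate, since each arc is a path. For the reverse, I would argue by contradiction: suppose $\mathrm{dist}(u,v)=c<\min(a,b)$ and let $P$ be a geodesic of length $c$ from $u$ to $v$. Combining $P$ with the longer arc, whose length $b=\max(a,b)$ satisfies $c<\min(a,b)\le b$, yields a closed walk of length $c+b<\min(a,b)+\max(a,b)=\gamma$. The point that must be nailed down carefully is that this closed walk, being nontrivial (its two halves are paths of different lengths, $c\neq b$), genuinely contains a cycle, whose length is then at most $c+b<\gamma$, contradicting the minimality of $\gamma$. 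The cleanest way to make ``contains a cycle'' rigorous is to pass to the symmetric difference of the edge sets of $P$ and of the arc: this is a nonempty subgraph in which every vertex has even degree, and such a subgraph must contain a cycle, necessarily of length at most $c+b<\gamma$. Once this is established the lemma holds and the proposition is proved.
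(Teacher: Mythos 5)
Your proposal is correct and follows essentially the same route as the paper: fix a cycle of minimal length $\gamma$, pick two vertices at arc-distance $\lfloor \gamma/2\rfloor$, compare that with the graph distance, and extract a cycle shorter than $\gamma$ from two distinct $u$--$v$ paths to reach a contradiction. The only difference is one of rigor, in your favor: where the paper simply asserts ``it follows that there is a cycle whose length is less than $2\lfloor \gamma/2\rfloor$,'' you nail this down via the symmetric difference of the edge sets, a nonempty subgraph with all degrees even that must contain a cycle of length at most $c+b<\gamma$.
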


\begin{proof}
We have to show that $\delta\geq \lfloor \gamma/2\rfloor$. Assume that, on the contrary, $\delta< \lfloor \gamma/2\rfloor$. Consider a cycle of length $\gamma$ in the graph, and pick two vertices $u$ and $v$ at distance $\lfloor \gamma/2\rfloor$ as measured along the cycle. On the other hand, $u$ and $v$ can be connected by a path of length at most $\delta$. So we have two paths from $u$ to $v$ whose lengths are equal to $\lfloor \gamma/2\rfloor$, respectively less than $\lfloor \gamma/2\rfloor$. In particular, the two paths are distinct. It follows that there is a cycle whose length is less than $2\lfloor \gamma/2\rfloor$. As $2\lfloor \gamma/2\rfloor\leq \gamma$, this is a contradiction. 
\end{proof}

The next result gives bounds for diameter and the girth in terms of the fundamental parameters. 

\begin{thm}\label{thm: logarithmic} 
Assume $d\geq 3$. Then we have
\begin{align*}
\delta> \frac{\log n}{\log(d-1)}-2.
\end{align*} 
For a regular graph, we also have
\begin{align*}
\gamma<2\:\frac{\log n}{\log(d-1)}+2.
\end{align*}
\end{thm}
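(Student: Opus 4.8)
The plan is to prove both inequalities by \emph{counting vertices in balls}, using the degree bound to control how fast the spheres around a vertex can grow.

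First, the diameter bound. After exponentiating (legitimate since $d\geq 3$ makes $\log(d-1)>0$), it is equivalent to the statement $n<(d-1)^{\delta+2}$. To get this I fix a vertex $v$; since every vertex lies within distance $\delta$ of $v$, the whole graph is the ball of radius $\delta$. Now I bound the spheres: the sphere of radius $1$ has at most $d$ vertices, and for $k\geq 2$ each vertex at distance $k$ is reached from one at distance $k-1$, which already spends one of its $\le d$ edges going back toward $v$; hence $|S_k|\le (d-1)|S_{k-1}|$ and so $|S_k|\le d(d-1)^{k-1}$. Summing the geometric series gives
\[
 n \le 1 + d\,\frac{(d-1)^{\delta}-1}{d-2},
\]
and a routine estimate (using $d\ge 3$) shows that the right-hand side is strictly below $(d-1)^{\delta+2}$. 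Taking logarithms finishes this half. Note that this argument needs only the maximal-degree bound, not regularity.

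For the girth bound I run the same idea in reverse: now I want a \emph{lower} bound on $n$, so I count vertices in a ball that I can guarantee is large. The key point is that small balls cannot contain cycles. If $X$ is $d$-regular with girth $\gamma$, a breadth-first search from any vertex $v$ produces a tree up to a certain radius, because a ``horizontal'' edge joining two vertices in the same level, or a vertex with two neighbours in the previous level, would close up a cycle shorter than $\gamma$. Tracking when such a short-cut first becomes possible shows that the ball of radius $s-1$ is a genuine tree and, moreover, that the sphere $S_s$ still carries the full tree count $d(d-1)^{s-1}$, where $s=\lfloor(\gamma-1)/2\rfloor$. Since regularity makes every sphere in this tree region grow by the factor $d-1$, this yields
\[
 n\ge d(d-1)^{s-1}>(d-1)^{s}.
\]
Finally $s=\lfloor(\gamma-1)/2\rfloor\ge(\gamma-2)/2$, so $n>(d-1)^{(\gamma-2)/2}$, which is the desired inequality after taking logarithms.

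The main obstacle is \emph{bookkeeping the constants}, and in particular getting the parity of $\gamma$ right in the second half. For even $\gamma$ the largest ``safe'' ball (the one certified to be a tree) already has radius $(\gamma-2)/2$, and its tree count suffices; for odd $\gamma$ the tree only reaches radius $(\gamma-3)/2$, so I must squeeze out one extra sphere, using the observation that one level beyond the tree a vertex can acquire horizontal edges but \emph{not} a second parent, so the sphere count $d(d-1)^{s-1}$ is still exact. Verifying these level-by-level claims, and checking that the estimates are strict so that the final inequalities are strict, is where the real care is needed; the logarithmic shape of the bounds, by contrast, falls out immediately from the geometric growth of the spheres.
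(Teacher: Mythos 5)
Your proof is correct and follows essentially the same route as the paper: the diameter half bounds spheres by $|S_k|\leq d(d-1)^{k-1}$ and sums the geometric series, while the girth half uses the exact tree count inside balls of radius $r$ with $2r<\gamma$ --- your $s=\lfloor(\gamma-1)/2\rfloor$ is precisely the paper's ``largest $r$ subject to $2r<\gamma$'', so your case analysis by parity of $\gamma$ is the same fact unpacked. The only cosmetic differences are that you certify the lower bound using just the outermost sphere $|S_s|=d(d-1)^{s-1}>(d-1)^s$, where the paper uses the whole ball $|B_r(u)|>(d-1)^r$, and that you make the final numerical comparison $1+d\frac{(d-1)^\delta-1}{d-2}<(d-1)^{\delta+2}$ explicit where the paper routes it through $n<d(d-1)^\delta$ and $\log_{d-1}d<2$.
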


\begin{proof}
Let $u$ be a fixed vertex. Then balls around $u$ grow at most exponentially with respect to the radius:
\begin{align*}
|B_r(u)|\leq 1+d+d(d-1)+\ldots+d(d-1)^{r-1}<d (d-1)^r
\end{align*}
If $r=\delta$ then the ball $B_r(u)$ is the entire vertex set, so $n<d(d-1)^\delta$. Therefore $\delta > \log_{d-1} n-\log_{d-1} d$. As $\log_{d-1} d<2$, we get the claimed lower bound on $\delta$.

Let us turn to the girth bound for regular graphs. Note, incidentally, that the girth is well-defined: a regular graph cannot be a tree. We exploit the idea that, for a certain short range, balls around $u$ achieve exponential growth in the radius. Namely, as long as $2r<\gamma$ we have
\begin{align*}
|B_r(u)|= 1+d+d(d-1)+\ldots+d(d-1)^{r-1}> (d-1)^r.
\end{align*}
Take $r$ largest possible subject to $2r<\gamma$. Then $\gamma\leq 2r+2$ while $n\geq |B_r(u)|>(d-1)^r$, in other words $r<\log_{d-1} n$. The claimed upper bound for the girth follows. 
\end{proof}

The bounds of the previous theorem become particularly appealing when viewed in an asymptotic light. A \emph{$d$-regular family} is an infinite sequence of $d$-regular graphs $\{X_k\}$ with $|X_k|\to \infty$. For a $d$-regular family with $d\geq 3$, Theorem~\ref{thm: logarithmic} says that the diameter growth is at least logarithmic in the size, $\delta_k\gg \log |X_k|$, while the girth growth is at most logarithmic in the size, $\gamma_k\ll \log |X_k|$. Here, we are using the following growth relation on sequences: $a_k\ll b_k$ if there is a constant $C$ such that $a_k\leq C b_k$ for all $k$.

At this point, it is natural to ask about extremal families. A $d$-regular family $\{X_k\}$ is said to have \emph{small diameter} if  $\delta_k\asymp \log |X_k|$, respectively \emph{large girth} if $\gamma_k\asymp \log |X_k|$. The growth equivalence of sequences is defined as follows: $a_k\asymp b_k$ if and only if $a_k\ll b_k$ and $a_k\gg b_k$.

\begin{ex}\label{ex: sd}
Consider the rooted tree $\tilde{T}_{3,k}$, for $k\geq 3$. Most vertices have degree $3$, and we can actually turn it into a $3$-regular graph by adding edges between pendant vertices. Indeed, the pendant vertices have a natural partition into $4$-tuples, induced by the vertices that lie two levels up. Let $X_k$ be the graph obtained by turning every $4$-tuple into a $4$-cycle. It is easy to check that the $3$-regular graph $X_k$ has diameter $2k$, and $3\cdot 2^k -2$ vertices. Thus $\{X_k\}$ is a regular family of small diameter.
\end{ex}

\begin{figure}[!ht]\medskip
\GraphInit[vstyle=Classic]
\tikzset{VertexStyle/.style = {
shape = circle,
inner sep = 0pt,
minimum size = .8ex,
draw}}
\begin{tikzpicture}[scale=.9]
\SetVertexNoLabel
\Vertex[x=1,y=.5]{A}
\Vertex[x=2,y=1]{B}
\Vertex[x=2.5,y=1]{C}
\Vertex[x=3,y=.5]{D}
\Vertex[x=7,y=.5]{a}
\Vertex[x=7.5,y=1]{b}
\Vertex[x=8,y=1]{c}
\Vertex[x=9,y=.5]{d}
\Vertex[x=2,y=1.5]{E}
\Vertex[x=3,y=1.5]{F}
\Vertex[x=3,y=2.5]{G}
\Vertex[x=7,y=1.5]{e}
\Vertex[x=8,y=1.5]{f}
\Vertex[x=7,y=2.5]{g}
\Vertex[x=5,y=4.5]{R}
\Vertex[x=5, y=2.5]{u}
\Vertex[x=4.5, y=1.5]{v}
\Vertex[x=5.5, y=1.5]{w}
\Vertex[x=4.65, y=1]{x}
\Vertex[x=4, y=.5]{y}
\Vertex[x=5.35, y=1]{z}
\Vertex[x=6, y=.5]{t}
\Edges(R, G, E, A)
\Edges(G, F, D)
\Edges(F, C)
\Edges(E, B)
\Edges(R, g, e, a)
\Edges(g, f, d)
\Edges(f, c)
\Edges(e, b)
\Edges(R, u)
\Edges(v, u)
\Edges(w, u)
\Edges(v, x)
\Edges(w, z)
\Edges(v, y)
\Edges(w, t)
\Edges(A, B, C, D, A)
\Edges(x, y, t, z, x)
\Edges(a, b, c, d, a)
\end{tikzpicture}
\caption{The graph $X_3$.}
\end{figure}
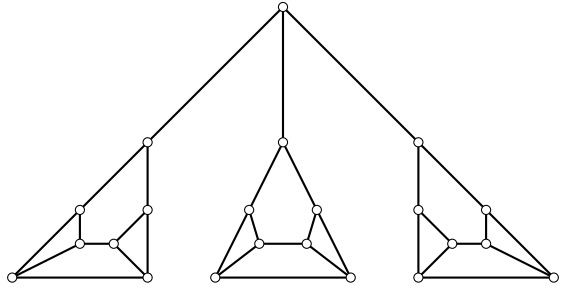

\begin{notes}
Large-girth families exist, but constructions are more involved. A beautiful group-theoretic construction was given by Margulis (\emph{Explicit constructions of graphs without short cycles and low density codes}, Combinatorica 1982).  
\end{notes}

\bigskip
\subsection*{Isoperimetric number}\label{sec: isop}\
\medskip

For a non-empty vertex subset $S$ in a graph, the \emph{boundary} $\bd S$ is the set of edges connecting $S$ to its complement $S^c$. Note that $S$ and $S^c$ have the same boundary. 

The \emph{isoperimetric constant}, denoted $\beta$, is given by
\begin{align*}
\beta=\min_{S}\frac{|\bd S|}{|S|}
\end{align*}
where the minimum is taken over all vertex subsets $S$ that contain no more than half the vertices. A set $S$ on which $\beta$ is attained is said to be an \emph{isoperimetric set}.

Finding $\beta$ can be quite difficult, even on small or familiar graphs. Unlike all the previous graph invariants, which are integral, the isoperimetric constant $\beta$ is rational. It is therefore harder to limit possibilities in a trial-and-error approach. In concrete examples, it can be quite easy to give an upper bound for $\beta$, by means of a well-chosen vertex subset. However, proving a matching lower bound for $\beta$ is usually much harder.

\begin{ex} Consider the cube graph $Q_n$. The binary strings starting in $0$ define a subset $S$ containing half the number of vertices of $Q_n$, and $|\bd S|/|S|=1$ since every vertex in $S$ has one external edge. Thus $\beta\leq 1$. We claim that, in fact, $\beta=1$. 

We think of $Q_n$ as the product $K_2\times\dots\times K_2$. We wish to argue that $\beta(X\times K_2)=1$ whenever $\beta(X)=1$. As $\beta(K_2)=1$, the bound $\beta(X\times K_2)\leq 1$ is an instance of the general principle that
\begin{align*}
\beta(X\times Y)\leq \min\{\beta (X),\beta(Y)\}.
\end{align*} 
Indeed, let $S$ be an isoperimetric subset in $X$. In $X\times Y$, the boundary of the subset $S\times Y$ is in bijection with $(\bd S) \times Y$. It follows that $\beta(X\times Y)\leq \beta(X)$, and similarly for $\beta(Y)$.

 The trickier bit is to show that $\beta(X\times K_2)\geq 1$. We visualize $X\times K_2$ as two `horizontal' copies of $X$, $X_\bl$ and $X_\wh$, with additional `vertical' edges joining $v_\bl$ to $v_\wh$ for each vertex $v$ in $X$. A vertex subset $S$ in $X\times K_2$ is of the form $A_\bl\cup B_\wh$, where $A$ and $B$ are vertex subsets in $X$. The boundary of $S$ in $X\times K_2$ has horizontal edges, corresponding to the boundary of $A$, respectively $B$, in $X$, and vertical edges, corresponding to the symmetric difference $A\triangle B$. Thus, $|\bd S|=|\bd A|+|\bd B|+|A\triangle B|$. We have to show that
\begin{align*}
|\bd A|+|\bd B|+|A\triangle B|\geq |A|+|B|
\end{align*}
whenever $|A|+|B|\leq |X|$. If $|A|,|B|\leq |X|/2$, then we already have $|\bd A|\geq |A|$ and $|\bd B|\geq |B|$. If $|A|\geq |X|/2\geq |B|$, then $|\bd B|\geq |B|$, $|\bd A|\geq |A^c|=|X|-|A|\geq |B|$, and $|A\triangle B|\geq |A|-|B|$.  Adding up these inequalities completes the argument. 
\end{ex}

\begin{exer}\label{exer: iso petersen} 
Show that $\beta(K_n)=\lceil n/2\rceil$; $\beta(C_n)=2/\lfloor n/2\rfloor$; $\beta(K_{n,n})=\lceil n^2/2\rceil/n$; $\beta=1$ for the Petersen graph.
\end{exer}

\begin{align*}
\begin{array}{rccccc}
 & \quad K_n \quad &  \quad C_n \quad  & \quad Q_n \quad & \quad K_{n,n} \quad & \textrm{Petersen}  \\[.4em]\cline{2-6}\\
\textrm{isoperimetric}\quad\beta \quad  & \sim n/2 & \sim 4/n & 1 & \sim n/2 & 1
\end{array}
\end{align*}
\smallskip

This table records our findings in asymptotic form: we write $a_n\sim b_n$ if $a_n/b_n\to 1$ as $n\to \infty$.

The isoperimetric constant clearly satisfies $0<\beta\leq d$. Taking a subset $S$ consisting of two adjacent vertices, the upper bound can be improved to $d-1$. But a more careful argument, explained below, yields an upper bound of roughly $d/2$. Note, however, that $\beta$ may slightly exceed $d/2$. This is the case for $K_n$ and $K_{n,n}$, for odd $n$.

\begin{thm}\label{better bound beta} We have:
\begin{align*}
\beta\leq \frac{d}{2}\cdot \frac{n+1}{n-1}
\end{align*}
\end{thm}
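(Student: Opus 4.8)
The plan is to bound $\beta$ from above by producing, through an averaging argument, a single good vertex subset. Since $\beta$ is defined as a \emph{minimum} over subsets $S$ with $|S|\le n/2$, it suffices to exhibit one such subset with a small ratio $|\bd S|/|S|$; rather than construct it explicitly, I would average over all subsets of a well-chosen fixed size and invoke the fact that some outcome is at most the mean. Concretely, I would fix $s=\lfloor n/2\rfloor$ (the largest admissible size) and choose $S$ uniformly at random among all $s$-element subsets of $V$.

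First I would compute $\mathbb{E}[|\bd S|]$ by linearity of expectation, edge by edge: $\mathbb{E}[|\bd S|]=|E|\cdot p$, where $p$ is the probability that a fixed edge is cut, i.e.\ has exactly one endpoint in $S$. A direct count of the $s$-subsets containing exactly one of two given vertices gives
\[
p=\frac{2\binom{n-2}{s-1}}{\binom{n}{s}}=\frac{2s(n-s)}{n(n-1)}.
\]
Next I would feed in the degree-sum identity of the opening proposition, $2|E|=\sum_{v\in V}\mathrm{deg}(v)\le dn$, to replace $|E|$ by at most $dn/2$. This yields
\[
\mathbb{E}[|\bd S|]\le \frac{dn}{2}\cdot\frac{2s(n-s)}{n(n-1)}=\frac{d\,s(n-s)}{n-1}.
\]
Since the average of the finitely many values $|\bd S|$ is at least their minimum, there exists an $s$-subset $S$ with $|\bd S|\le \mathbb{E}[|\bd S|]$, hence with $|\bd S|/|S|\le d(n-s)/(n-1)$.

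Finally I would substitute $s=\lfloor n/2\rfloor$, so that $n-s=\lceil n/2\rceil\le (n+1)/2$, which gives $\beta\le d(n+1)/\bigl(2(n-1)\bigr)$, exactly the claimed bound. I expect the only real subtlety to be the choice of $s$ together with the role of the floor: the constraint $|S|\le n/2$ forbids taking $s$ any larger, and it is precisely the odd-$n$ case, where $\lceil n/2\rceil=(n+1)/2$, that makes the factor $(n+1)/(n-1)$ sharp (for even $n$ one even gets the slightly smaller $dn/\bigl(2(n-1)\bigr)$). The averaging step itself is the conceptual core; the probability computation and the optimization over $s$ are routine once that idea is in place.
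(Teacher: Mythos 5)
Your proposal is correct and is essentially the paper's own argument: both average the boundary size over all subsets of a fixed size $s$ using the same count that each edge lies in the boundary of $2\binom{n-2}{s-1}$ such subsets, then use $2|E|\leq dn$ and take $s=\lfloor n/2\rfloor$ (the paper keeps $s$ general and minimizes at the end, which amounts to the same choice, and likewise notes the resulting bound $d(n-\lfloor n/2\rfloor)/(n-1)$ is slightly better than the stated one). Your probabilistic phrasing via expectation is just a reformulation of the paper's direct averaging.
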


\begin{proof}
For $1\leq s\leq \lfloor n/2\rfloor$, we let $\beta_s$ denote the minimum of $|\bd S|/|S|$ as $S$ runs over vertex subsets of  size $s$. An upper bound for $\beta_s$ is the average 
\begin{align*}
\frac{1}{\# \{S: |S|=s\}}\sum _{|S|=s} \frac{|\bd S|}{|S|}
\end{align*}
which we can actually compute. Any given edge is in the boundary of $2\big(\begin{smallmatrix} n-2 \\ s-1 \end{smallmatrix}\big)$ subsets of size $s$, so the numerator is $\sum _{|S|=s} |\bd S|=2|E|\big(\begin{smallmatrix} n-2 \\ s-1 \end{smallmatrix}\big)$. The denominator is $s\big(\begin{smallmatrix} n \\ s \end{smallmatrix}\big)$. Simplifying, we deduce that
\begin{align*}
\beta_s\leq \frac{2|E|(n-s)}{n(n-1)}.
\end{align*}
As $2|E|\leq dn$, we get
\begin{align*}
\beta=\min_s \beta_s\leq \min_s \frac{d(n-s)}{n-1}= \frac{d(n-\lfloor n/2\rfloor)}{n-1}
\end{align*}
which is actually slightly better than the claim.
\end{proof}

\begin{exer}\label{exer: upper logarithmic bound} Let $B_r(u)$ denote the ball of radius $r$ around a vertex $u$. Prove the following short-range exponential growth behaviour: $|B_{r}(u)|\geq (1+\beta/d)^r$ as long as $|B_{r-1}(u)|\leq n/2$. Conclude that
\begin{align*}
\delta \leq \frac{2\log (n/2)}{\log(1+\beta/d)}+2.
\end{align*}
\end{exer}

Let us explain an important qualitative consequence of this exercise. A $d$-regular family $\{X_k\}$ is said to be an \emph{expander} if the sequence of isoperimetric constants $\{\beta_k\}$ is bounded away from $0$. This is a very interesting condition: roughly speaking, an expander family is an infinite sequence of sparse graphs which nevertheless maintain a sizable connectivity. Exercise~\ref{exer: upper logarithmic bound} implies the following fact: an expander family has small diameter. Note that the converse does not hold. The small-diameter family $\{X_k\}$ of Example~\ref{ex: sd} is not an expander. Indeed, if $S$ is the set of vertices lying on one of the three main branches stemming from the root, then $|\bd S|=1$ and $|S|=2^k-1$. Thus $\beta_k\to 0$ exponentially fast.

\begin{notes}
Among the five invariants under consideration, the isoperimetric constant is the youngest. To the best of our knowledge, it was introduced by Buser (\emph{Cubic graphs and the first eigenvalue of a Riemann surface}, Math. Z. 1978). This partly explains our choice of notation, $\beta$. Elsewhere, the isoperimetric constant is denoted by $h$ or $i$, and it is also called the Cheeger constant because it is the graph-theoretic analogue of an isoperimetric constant for manifolds, denoted $h$, due to Cheeger (\emph{A lower bound for the smallest eigenvalue of the Laplacian}, in `Problems in analysis', Princeton University Press 1970). Other sources call it conductance, and denote it by $\phi$ or $\Phi$.

Theorem~\ref{better bound beta} is due to Mohar (\emph{Isoperimetric numbers of graphs}, J. Combin. Theory Ser. B 1989).
\end{notes}

\bigskip
\section{Regular graphs I}
In this section, we focus on constructions of regular graphs from groups. The most important notion here is that of a Cayley graph. We also discuss a bipartite variation, that of a bi-Cayley graph.

\bigskip
\subsection*{Cayley graphs}\label{sec: sameness}\
\medskip

Let $G$ be a finite group. Let $S\subseteq G$ be a subset that does not contain the identity of $G$, and which is \emph{symmetric} (i.e., closed under taking inverses) and \emph{generating} (i.e., every element of $G$ is a product of elements of $S$). The \emph{Cayley graph} of $G$ with respect to $S$ has the elements of $G$ as vertices, and an edge between every two vertices $g,h\in G$ satisfying $g^{-1}h\in S$. In other words, the neighbours of a vertex $g$ are the vertices of the form $gs$, where $s\in S$. 

The Cayley graph of $G$ with respect to $S$ an $|S|$-regular graph of size $|G|$. The assumptions on $G$ and $S$ reflect our standing convention that graphs should be finite, connected, and simple.

\begin{ex} The Cayley graph of a group $G$ with respect to the set of non-identity elements of $G$ is the complete graph on $n=|G|$ vertices. 
\end{ex}

\begin{ex} The Cayley graph of $\Z_n$ with respect to $\{\pm 1\}$ is the cycle graph $C_n$. 
\end{ex}

\begin{ex} The Cayley graph of $(\Z_2)^n$ with respect to $\{e_i=(0,\dots, 0,1,0,\dots, 0): i=1,\dots,n\}$ is the cube graph $Q_n$.
\end{ex}

\begin{ex} The \emph{halved cube graph} $\tfrac{1}{2}Q_n$ is defined as follows: its vertices are those binary strings of length $n$ that have an even weight, and edges connect two such strings when they differ in precisely two slots. This is the Cayley graph of the subgroup $\big\{v\in (\Z_2)^n: \sum v_i=0\big\}$ with respect to $\{e_i+e_j: 1\leq i< j\leq n\}$.
\end{ex}

\begin{ex} (The twins) Here we consider two Cayley graphs coming from the same group, $\Z_4\times \Z_4$. The two symmetric generating sets are as follows: 
\begin{align*}
S_1&=\{\pm (1,0), \pm (0,1),  (2,0), (0,2)\}\\
S_2&=\{\pm (1,0), \pm (0,1), \pm (1,1)\}
\end{align*}
The corresponding Cayley graphs are $6$-regular graphs on $16$ vertices. It is not hard to recognize the Cayley graph with respect to $S_1$ as the product $K_4\times K_4$. The second Cayley graph, with respect to $S_2$, is known as the \emph{Shrikhande graph}. Both are depicted in Figure~\ref{fig: twins}. An amusing puzzle, left to the reader, is to check the pictures by finding an appropriate $\Z_4\times \Z_4$ labeling. The more interesting one is, of course, the Shrikhande graph.

\begin{figure}[!ht]
\medskip
\GraphInit[vstyle=Simple]
\tikzset{VertexStyle/.style = {
shape = circle,
inner sep = 0pt,
minimum size = .8ex,
draw}}
\begin{minipage}[b]{0.48\linewidth}
\centering
\begin{tikzpicture}[scale=.6]
  \grEmptyCycle[prefix=a, RA=4pt, rotation=11.25]{16}
  \EdgeDoubleMod{a}{16}{0}{1}{a}{16}{4}{1}{16}
  \EdgeDoubleMod{a}{16}{0}{1}{a}{16}{8}{1}{8}
  \EdgeDoubleMod{a}{16}{0}{4}{a}{16}{1}{4}{4}
  \EdgeDoubleMod{a}{16}{0}{4}{a}{16}{2}{4}{4}
  \EdgeDoubleMod{a}{16}{0}{4}{a}{16}{3}{4}{4}
  \EdgeDoubleMod{a}{16}{1}{4}{a}{16}{2}{4}{4}
  \EdgeDoubleMod{a}{16}{1}{4}{a}{16}{3}{4}{4}
  \EdgeDoubleMod{a}{16}{2}{4}{a}{16}{3}{4}{4}
\end{tikzpicture}
\end{minipage}
\begin{minipage}[b]{0.48\linewidth}
\centering
\begin{tikzpicture}[scale=.6]
  \grEmptyCycle[prefix=a, RA=4pt, rotation=22.5]{8}
  \grEmptyCycle[prefix=b, RA=2pt, rotation=22.5]{8}
  \EdgeDoubleMod{a}{8}{0}{1}{a}{8}{1}{1}{8}
  \EdgeDoubleMod{a}{8}{0}{1}{a}{8}{2}{1}{8}
  \EdgeDoubleMod{a}{8}{0}{1}{b}{8}{1}{1}{8}
  \EdgeDoubleMod{a}{8}{1}{1}{b}{8}{0}{1}{8}
  \EdgeDoubleMod{b}{8}{0}{1}{b}{8}{2}{1}{8}
  \EdgeDoubleMod{b}{8}{0}{1}{b}{8}{3}{1}{8}
\end{tikzpicture}
\end{minipage}
\caption{$K_4\times K_4$ and the Shrikhande graph.}\label{fig: twins}
\end{figure}
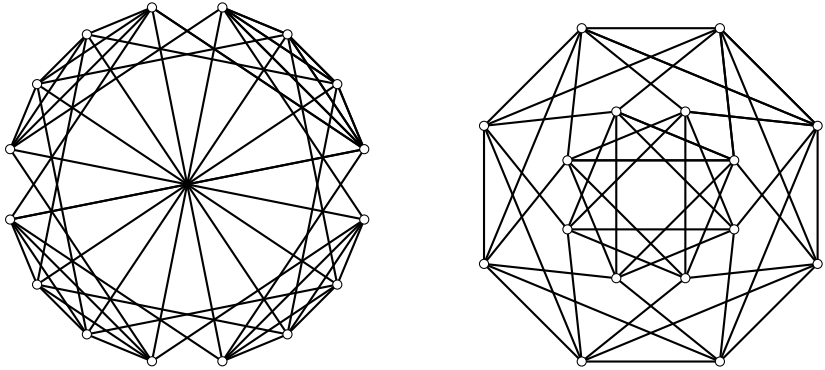
It seems quite appropriate to think of these two graphs as being twins. Not only are they Cayley graphs of the same group, but they also turn out to be indistinguishable by any of the five invariants under consideration! Both graphs have diameter $2$ and girth $3$. They also have the same chromatic number, $\chi=4$. Indeed, we get a nice $4$-colouring for both graphs by noting that the addition homomorphism $\Z_4\times \Z_4\to \Z_4$, given by $(a,b)\mapsto a+b$, is non-trivial on both sets of generators. On the other hand, three colours are not sufficient for $K_4\times K_4$, as it contains $K_4$ as a subgraph; in the Shrikhande graph, it is the outer octagon that cannot be $3$-coloured. Next, both graphs have independence number $\iota=4$. Firstly, we know that $\iota\geq 4$ by using the inequality $\chi\times \iota\geq 16$. Secondly, we easily find independent sets of size $4$ in each one of the two graphs. Finally, both graphs turn out to have isoperimetric constant $2$. A conceptual explanation for this fact will be given later on, using spectral ideas. 
\end{ex}

\begin{ex} Let $n\geq 3$. The Cayley graph of $\Z_{3n-1}$ with respect to the symmetric generating set $S=\{1,4,\dots,3n-2\}$ is known as the \emph{Andr\'asfai graph} $A_n$.
\end{ex}

\begin{exer}\label{exer: andrasfai} 
Show that the Andr\'asfai graph $A_n$ has diameter $2$, girth $4$, chromatic number $3$, and independence number $n$.
\end{exer}

\begin{ex} Let us view the five Platonic solids as graphs. The tetrahedron and the cube yield two familiar graphs, $K_4$ and $Q_3$. The next three are the octahedral graph (Figure~\ref{fig: octa}), the icosahedral graph (Figure~\ref{fig: icos}), and the dodecahedral graph (Figure~\ref{fig: dode}).

\begin{figure}[!ht]\bigskip
\centering
\GraphInit[vstyle=Simple]
\tikzset{VertexStyle/.style = {
shape = circle,
inner sep = 0pt,
minimum size = .8ex,
draw}}
\begin{minipage}[b]{0.48\linewidth}
\centering
\begin{tikzpicture}[scale=.35]
\grSQCycle[RA=5]{6}
\end{tikzpicture}
\end{minipage}
\begin{minipage}[b]{0.48\linewidth}
\centering
\begin{tikzpicture}[scale=.35, rotate=-30]
\grOctahedral[RA=5.5,RB=1]
\end{tikzpicture}
\end{minipage}
\caption{Two views of the octahedral graph.}\label{fig: octa}
\bigskip
\bigskip
\begin{minipage}[b]{0.48\linewidth}
\centering
\begin{tikzpicture}[scale=.35]
\grIcosahedral[RA=5,RB=3]
\end{tikzpicture}
\end{minipage}
\begin{minipage}[b]{0.48\linewidth}
\centering
\begin{tikzpicture}[scale=.35, rotate=-30]
\grIcosahedral[form=2,RA=5.5,RB=1.8,RC=.8]
\end{tikzpicture}
\end{minipage}
\caption{Two views of the icosahedral graph.}\label{fig: icos}
\bigskip
\bigskip
\begin{minipage}[b]{0.48\linewidth}
\centering
\begin{tikzpicture}[scale=0.35]
\grDodecahedral[RA=5,RB=3,RC=2,RD=1]
\end{tikzpicture}
\end{minipage}
\begin{minipage}[b]{0.48\linewidth}
\centering
\begin{tikzpicture}[scale=.35]
  \grCycle[prefix=a, RA=1pt, rotation=18+36]{5}
  \grCycle[prefix=b, RA=3pt, rotation=18]{10}
  \grCycle[prefix=c, RA=5.4pt, rotation=18]{5}
  \EdgeDoubleMod{a}{5}{0}{1}{b}{10}{1}{2}{5}
  \EdgeDoubleMod{b}{10}{0}{2}{c}{5}{0}{1}{5}
\end{tikzpicture}
\end{minipage}
\caption{Two views of the dodecahedral graph.}\label{fig: dode}
\end{figure}

The octahedral graph can be realized as the Cayley graph of $\Z_6$ with respect to $\{\pm 1,\pm 2\}$. Another realization is on the dihedral group with $6$ elements. We think of this group as the semidirect product $\Z_3\rtimes \{\pm 1\}$: the underlying set is $\Z_3\times \{\pm 1\}$, with operation $(a,\e)(b,\tau)=(a+\e b, \e\tau)$. The elements in $\Z_3\times \{-1\}$ are involutions, and the remaining two non-trivial elements, $(\pm 1,1)$, are inverse to each other. A symmetric subset $S$ of size $4$ must contain these two elements, and two more involutions picked from $\Z_3\times \{-1\}$. The Cayley graph with respect to any one of the three possible choices for $S$ is the octahedral graph. There is a good reason why this is the case: there is just one $4$-regular graph on $6$ vertices.

The icosahedral graph is a Cayley graph, as well. Namely, it is the Cayley graph of the alternating group $\mathrm{Alt}(4)$ with respect to the set $\{(123), (132),(234), (243),(12)(34)\}$.
\end{ex}

\begin{ex} As we know, a cube graph is bipartite. Is a cube graph a bipartite double? 
  
We start from $Q_{n+1}$, and we look for an $(n+1)$-regular graph on $2^n$ vertices. Adding one edge per vertex to $Q_n$ would be such a graph. So let us consider the Cayley graph of $(\Z_2)^n$ with respect to $\{e_1,\dots,e_n,a\}$, where the new generator $a\in (\Z_2)^n$ has weight greater than $1$. This Cayley graph is referred to as the \emph{decked cube graph} $DQ_n(a)$ in what follows.

A decked cube graph only depends on the weight of the new generator. Indeed, let $a$ and $a'$ have equal weight. Then we can actually set up a \emph{Cayley graph isomorphism} between $DQ_n(a)$ and $DQ_n(a')$, that is, a group isomorphism between the underlying groups which maps one generating set onto the other. For there is a permutation of the basis $\{e_1,\dots,e_n\}$ which, once extended to a group automorphism of $(\Z_2)^n$, maps $a$ to $a'$.

A decked cube graph is bipartite if and only if the new generator has odd weight. To see this, consider the bipartition of $Q_n$ according to weight parity. Then the new edges join vertices of different colour precisely when the weight of the new generator is odd.

So we focus on a decked cube graph $DQ_n(a)$ where $a\in (\Z_2)^n$ has even weight, and we claim that its bipartite double is $Q_{n+1}$. Note the following general fact: the bipartite double of the Cayley graph of a group $G$ with respect to $S$ is the Cayley graph of $G\times \Z_2$ with respect to $S\times \{1\}$. So the bipartite double of $DQ_n(a)$ is the Cayley graph of $(\Z_2)^{n+1}$ with respect to $\{(e_1,1),\dots,(e_n,1),(a,1)\}$. To show that the latter graph is $Q_{n+1}$, we use again a Cayley graph isomorphism. We adopt a linear algebra perspective, viewing $\Z_2$ as field. Let $A$ be the $(n+1)$-by-$(n+1)$ matrix over $\Z_2$ with rows $(e_1,1),\dots,(e_n,1),(a,1)$. Then $v\mapsto Av$ defines a group homomorphism from $(\Z_2)^{n+1}$ to itself, mapping the standard basis $e'_1,\dots,e'_{n+1}$ to $(e_1,1),\dots,(e_n,1),(a,1)$. We are left with checking that the matrix $A$ is invertible, equivalently, that the binary strings $(e_1,1),\dots,(e_n,1),(a,1)$ are linearly independent in $(\Z_2)^{n+1}$. Let
\begin{align*}
c_0(a,1)+\sum_{i=1}^n c_i(e_i,1)=0
\end{align*}
for some coefficients $c_i\in \Z_2$. Then $c_0=c_1+\ldots+c_n$ and $c_0a=(c_1,\dots,c_n)$. If $c_0=1$ then $a$ has odd weight, contradicting our assumption on $a$. Hence $c_0=0$, and $c_1=\ldots=c_n=0$ follows as well.

Returning to our starting question, we infer that $Q_{n+1}$ is a bipartite double as soon as there is an element of $(\Z_2)^n$ having even, but non-zero weight. This holds for $n\geq 2$, so all cube graphs starting from $Q_3$ are bipartite doubles. As for $Q_2$, it is not a bipartite double.
\end{ex}

\begin{notes}
Cayley graphs were introduced by Cayley in 1878.
\end{notes}


\bigskip
\subsection*{Vertex-transitive graphs}\label{sec: vertex-tran}\
\medskip

A graph is said to be \emph{vertex-transitive} if any vertex can be taken to any other vertex by a graph automorphism. 

\begin{prop}
Cayley graphs are vertex-transitive.
\end{prop}

\begin{proof} Consider a Cayley graph of a group $G$. We look for a graph automorphism taking the identity vertex to an arbitrary vertex $g\in G$. Left multiplication by $g$, namely the mapping $h\mapsto gh$, does the job.
\end{proof}

\begin{exer}\label{exer: only left} Consider the Cayley graph of a group $G$ with respect to $S$. Show that the following are equivalent: (i) for each $g\in G$, right multiplication by $g$ is a graph automorphism; (ii) inversion on $G$, $g\mapsto g^{-1}$, is a graph automorphism; (iii) $S$ is closed under conjugation.
\end{exer}

\begin{ex} The Petersen graph is vertex-transitive. Let us give two arguments, according to how we choose to think of the Petersen graph.

Symbolically, the Petersen graph records the relation of disjointness between $2$-element subsets of a $5$-element set $S$. A permutation of $S$ maps $2$-element subsets to $2$-element subsets, and it does so in a disjointness-preserving fashion. So any permutation of $S$ defines an automorphism of the Petersen graph. To conclude, observe that any $2$-element subset of $S$ can be taken to any other one by a permutation.

Graphically, we think of the Petersen graph in its pentagram-within-pentagon drawing. Consider a labeling of the vertices as in the right-most panel of the triptych in Figure~\ref{fig: knotpet}. Note that there is an obvious $5$-fold rotational symmetry. So the vertices of the outer pentagon, $1$ through $5$, are in the same orbit of the automorphism group, and the same is true for the vertices of the outer pentagon, $a$ through $e$. We need an automorphism that mixes up the two orbits. This is explained in the next two panels of Figure~\ref{fig: knotpet}: switch $b$ and $3$, respectively $e$ and $4$, and then switch $c$ and $d$. A mixing automorphism can be read off by comparing the first and the last panels.

 \begin{figure}[!ht]\bigskip
\centering
\GraphInit[vstyle=Classic]
\tikzset{VertexStyle/.style = {
shape = circle,
inner sep = 0pt,
minimum size = .8ex,
draw}}
\begin{minipage}[b]{0.32\linewidth}
\centering
\begin{tikzpicture}[scale=.6]
\Vertex[x=5,y=4.5]{1}
\Vertex[x=2.5,y=2.5]{2}
\Vertex[x=7.5,y=2.5]{5}
\Vertex[x=3.5,y=0]{3}
\Vertex[x=6.5,y=0]{4}
\Vertex[x=5,y=3.2]{a}
\Vertex[x=3.6,y=2.3]{b}
\Vertex[x=6.4,y=2.3]{e}
\Vertex[x=4.2,y=0.9]{c}
\Vertex[x=5.8,y=0.9]{d}
\Edges(1, 2, 3, 4, 5, 1)
\Edges(a,d,b,e,c,a)
\Edges(1,a)
\Edges(2,b)
\Edges(3,c)
\Edges(4,d)
\Edges(5,e)
\end{tikzpicture}
\end{minipage}
\begin{minipage}[b]{0.32\linewidth}
\centering
\begin{tikzpicture}[scale=.6]
\Vertex[x=5,y=4.5]{1}
\Vertex[x=2.5,y=2.5]{2}
\Vertex[x=7.5,y=2.5]{5}
\Vertex[x=3.5,y=0]{b}
\Vertex[x=6.5,y=0]{e}
\Vertex[x=5,y=3.2]{a}
\Vertex[x=3.6,y=2.3]{3}
\Vertex[x=6.4,y=2.3]{4}
\Vertex[x=4.2,y=0.9]{c}
\Vertex[x=5.8,y=0.9]{d}
\Edges(1, 2, 3, 4, 5, 1)
\Edges(a,d,b,e,c,a)
\Edges(1,a)
\Edges(2,b)
\Edges(3,c)
\Edges(4,d)
\Edges(5,e)
\end{tikzpicture}
\end{minipage}
\begin{minipage}[b]{0.32\linewidth}
\centering
\begin{tikzpicture}[scale=.6]
\Vertex[x=5,y=4.5]{1}
\Vertex[x=2.5,y=2.5]{2}
\Vertex[x=7.5,y=2.5]{5}
\Vertex[x=3.5,y=0]{b}
\Vertex[x=6.5,y=0]{e}
\Vertex[x=5,y=3.2]{a}
\Vertex[x=3.6,y=2.3]{3}
\Vertex[x=6.4,y=2.3]{4}
\Vertex[x=4.2,y=0.9]{d}
\Vertex[x=5.8,y=0.9]{c}
\Edges(1, 2, 3, 4, 5, 1)
\Edges(a,d,b,e,c,a)
\Edges(1,a)
\Edges(2,b)
\Edges(3,c)
\Edges(4,d)
\Edges(5,e)
\end{tikzpicture}
\end{minipage}
\caption{}\label{fig: knotpet}
\end{figure}
\end{ex}

\begin{ex} The five Platonic graphs are vertex-transitive. Reflections take any face to any other face. Within each face, we can get from any vertex to any other vertex by using rotations. Recall that all but one of them, the dodecahedral graph, are in fact Cayley graphs.
\end{ex}

\begin{ex} Are there vertex-transitive graphs which are not Cayley graphs? The answer is positive, as one might guess, but providing examples is not such an easy matter. Let us show that the Petersen graph and the dodecahedral graph, both of which we already know to be vertex-transitive, are not Cayley graphs.

The Petersen graph and the dodecahedral graph have two noticeable features in common: both are $3$-regular, and have girth $5$. Consider a Cayley graph having these two properties. Then there are two cases, according to whether the symmetric generating set $S$ contains one or three involutions.  

Let $S=\{r,s,t\}$, where $r^2=s^2=t^2=1$. The edges of a $5$-cycle are labeled by $r$, $s$, or $t$. Two consecutive edges have different labels, so one of labels, say $t$, appears only once. Up to swapping $r$ and $s$, we may assume that we have the labeling of Figure~\ref{fig: 3 involutions}. Then $t=(rs)^2=(sr)^2$. It follows that $r$ commutes with $t$: $rt=r(sr)^2=(rs)^2r=tr$. This means that there is a $4$-cycle in the graph, a contradiction.

\begin{figure}[ht!]\medskip
\begin{minipage}[b]{0.32\linewidth}
\centering
\GraphInit[vstyle=Simple]
\tikzset{VertexStyle/.style = {
shape = circle,
inner sep = 0pt,
minimum size = .2ex,
draw}}
\begin{tikzpicture}[rotate=90]
\SetGraphUnit{1.5}
\SetVertexNoLabel
\Vertices{circle}{A,B,C,D,E}
\Edge[label=$s$](B)(A)
\Edge[label=$r$](A)(E)
\Edge[label=$s$](E)(D)
\Edge[label=$r$](C)(B)
\Edge[label=$t$](D)(C)
\end{tikzpicture}
\caption{}
\label{fig: 3 involutions}
\end{minipage}
\begin{minipage}[b]{0.32\linewidth}
\centering
\GraphInit[vstyle=Simple]
\tikzset{VertexStyle/.style = {
shape = circle,
inner sep = 0pt,
minimum size = .2ex,
draw}}
\begin{tikzpicture}[rotate=90]
\SetGraphUnit{1.5}
\SetVertexNoLabel
\Vertices{circle}{A,B,C,D,E}
\Edge[style={->},label=$a$](B)(A)
\Edge[style={->}, label=$a$](A)(E)
\Edge[label=$r$](E)(D)
\Edge[label=$r$](C)(B)
\Edge[label=$a^{\pm}$](D)(C)
\end{tikzpicture}
\caption{}
\label{fig: two r}
\end{minipage}
\begin{minipage}[b]{0.32\linewidth}
\centering
\GraphInit[vstyle=Simple]
\tikzset{VertexStyle/.style = {
shape = circle,
inner sep = 0pt,
minimum size = .2ex,
draw}}
\begin{tikzpicture}[rotate=90]
\SetGraphUnit{1.5}
\SetVertexNoLabel
\Vertices{circle}{A,B,C,D,E}
\Edge[style={->},label=$a$](B)(A)
\Edge[style={->}, label=$a$](A)(E)
\Edge[style={->}, label=$a$](E)(D)
\Edge[style={->}, label=$a$](C)(B)
\Edge[label=$r$](D)(C)
\end{tikzpicture}
\caption{}
\label{fig: one r}
\end{minipage}
\end{figure}

Let $S=\{r,a,a^{-1}\}$, where $r^2=1$. Label again the edges of a $5$-cycle by the generators. There can be at most two occurrences of $r$. If there are two occurrences, then we get the relation $rar=a^{\pm 2}$ (Figure~\ref{fig: two r}). But then $a=ra^{\pm 2}r=(rar)^{\pm 2}=a^4$, meaning that $a^3=1$. This yields a $3$-cycle in the graph, hence a contradiction. If there is only one occurrence of $r$, then we get $r=a^4$ (Figure~\ref{fig: one r}). The commutation relation $ar=ra$ yields a $4$-cycle in the graph, a contradiction. The remaining case is no occurrence of $r$ in the chosen $5$-cycle. However, both the Petersen graph and the dodecahedral graph contain two $5$-cycles sharing an edge, and it cannot be that both $5$-cycles have no occurrences of $r$. 
\end{ex}

A vertex-transitive graph is, in particular, regular. In fact, a vertex-transitive graph has the same link graph at each vertex. This idea serves as an obstruction to vertex-transitivity. An illustration is provided by the regular graphs in Figure~\ref{fig: linking}, whose link patterns reveal that they are not vertex-transitive. Also, we may speak of the link graph of a vertex-transitive graph, and we may read it off at any vertex. For instance, the link graph of $K_4\times K_4$ is a disjoint union of two $3$-cycles, and the link graph of the Shrikhande graph is a $6$-cycle. So the twin graphs are, just as we suspected, not isomorphic.

\bigskip
\subsection*{Bi-Cayley graphs}\label{sec: bi-cayley}\
\medskip

Let $G$ be a finite group, and let $S\subseteq G$ be a subset which need not be symmetric, or which might contain the identity. A Cayley graph of $G$ with respect to $S$ can still be defined, but it will have directed edges or loops. Subsequently passing to an simple undirected graph, by forgetting directions and erasing loops, does not reflect the choice of $S$. An appropriate undirected substitute for the Cayley graph of $G$ with respect to $S$ can be constructed as follows. This illustrates a general procedure of turning a graph with directed edges and loops into a simple graph, by a bipartite construction. 

The \emph{bi-Cayley graph} of $G$ with respect to $S$ is the bipartite graph on two copies of $G$, $G_\bl$ and $G_\wh$, in which $g_\bl$ is adjacent to $h_\wh$ whenever $g^{-1}h\in S$. This is a regular graph, of degree $|S|$. It is connected if and only if the symmetric subset $S\cdot S^{-1}=\{st^{-1}:s,t\in S\}$ generates $G$. This is stronger than requiring $S$ to generate $G$, though equivalent if $S$ happens to contain the identity. Connectivity of a bi-Cayley graph may be cumbersome as a direct algebraic verification but in many cases this task is facilitated by alternate descriptions of the graph.

\begin{ex} The bi-Cayley graph of $\Z_7$ with respect to $S=\{1,2,4\}$ is the \emph{Heawood graph}, Figure~\ref{fig: heawood} below. An interesting observation is that, when computing the difference set $S-S$, every non-zero element of $\Z_7$ appears exactly once. 

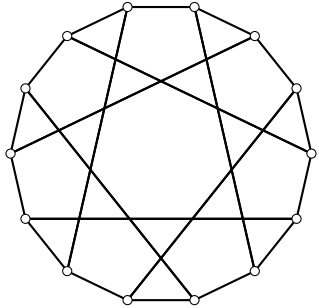
\begin{figure}[!ht]\medskip
\centering
\GraphInit[vstyle=Simple]
\tikzset{VertexStyle/.style = {
shape = circle,
inner sep = 0pt,
minimum size = .8ex,
draw}}
\begin{tikzpicture}[scale=.4]
\grLCF[RA=5]{5,9}{7}
\end{tikzpicture}
\caption{The Heawood graph.}\label{fig: heawood}
\end{figure}
\end{ex}

There are several observations to be made. The bi-Cayley graph of $G$ with respect to $S$ can also be defined as the bipartite graph on two copies of $G$, in which $g_\bl$ is adjacent to $h_\wh$ whenever $gh\in S$. This adjacency law is more symmetric, and tends to appear more often in practice. The correspondence $g_\bl\leftrightarrow (g^{-1})_\bl$, $h_\wh\leftrightarrow h_\wh$ shows the equivalence of the two descriptions.

If $S$ is such that the Cayley graph of $G$ with respect to $S$ already makes sense, then its bipartite double is the bi-Cayley graph of $G$ with respect to $S$. We interpret this as saying that the bi-Cayley graph construction is essentially a generalization of the Cayley graph construction.

The bi-Cayley graph of $G$ with respect to $S$ is a Cayley graph in many cases of interest. Namely, assume $G$ is an abelian group, and consider the semidirect product $G\rtimes \{\pm 1\}$ given by inversion. This means that we endow the set $G\times \{\pm 1\}$ with the non-abelian operation $(g,\e)(h,\tau)=(gh^{\e}, \e\tau)$ for $g,h\in G$ and $\e,\tau\in \{\pm 1\}$. If $G$ happens to be a cyclic group, then $G\rtimes \{\pm 1\}$ is a dihedral group. This familiar instance prompts us to deem $G\rtimes \{\pm 1\}$ a \emph{generalized dihedral group}. The advantage of the semidirect product over the direct product $G\times\{\pm 1\}$ is that now $S\times \{-1\}$ is symmetric in $G\rtimes \{\pm 1\}$, for it consists of involutions. The Cayley graph of $G\rtimes \{\pm 1\}$ with respect to $S\times \{-1\}$ is a bipartite graph on $G\times \{+1\}$ and $G\times \{-1\}$, with $(g,+1)$ connected to $(h,-1)$ whenever $g^{-1}h\in S$. This is precisely the bi-Cayley graph of $G$ with respect to $S$.

\bigskip
\section{Regular graphs II}

This section introduces some types of regular graphs. Their common feature is that the number of neighbours shared by pairs of vertices is severely restricted.

\bigskip
\subsection*{Strongly regular graphs}\label{sec: strongly regular}\
\medskip

A regular graph is said to be \emph{strongly regular} if there are two non-negative integers $a$ and $c$ such that any two adjacent vertices have $a$ common neighbours, and any two distinct non-adjacent vertices have $c$ common neighbours. 

\begin{ex}
The Petersen graph is strongly regular, with $a=0$ and $c=1$. 
\end{ex}

\begin{ex} 
The cycle $C_5$ is strongly regular, with $a=0$ and $c=1$. The only other cycle which is strongly regular is $C_4$, which has $a=0$ and $c=2$.
\end{ex}

\begin{ex}  The complete bipartite graph $K_{n,n}$ is strongly regular, with $a=0$ and $c=n$. No other bipartite graph is strongly regular. 
\end{ex}

\begin{ex}  The rook's graph $K_n\times K_n$ is strongly regular, with $a=n-2$ and $c=2$. No other non-trivial product is strongly regular.
\end{ex}

\begin{ex}  The twin graphs, $K_4\times K_4$ and the Shrikhande graph, are strongly regular with $a=c=2$.
\end{ex}

The complete graph $K_n$ has $a=n-2$ but undefined $c$. Treating it as a strongly regular graph is a matter of convention, and ours is to exclude it. 

The parameters $(n, d,a,c)$ of a strongly regular graph are constrained by a number of relations. The simplest one is the following:

\begin{prop}\label{prop: double counting}
The parameters $(n, d,a,c)$ of a strongly regular graph satisfy
\begin{align*}
d(d-a-1)=(n-d-1)c.
\end{align*} 
\end{prop}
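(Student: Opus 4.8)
The plan is to count, in two different ways, the number of edges running between the neighbourhood of a fixed vertex and the set of vertices at distance exactly two. Fix a vertex $v$ in the strongly regular graph $X$. Since $X$ is $d$-regular, $v$ has exactly $d$ neighbours; call this set $N$. Because $X$ is non-complete (we have excluded $K_n$ from the definition) and connected, there are vertices outside $N\cup\{v\}$, and every such vertex is non-adjacent to $v$; call this set $M$. A quick count gives $|M|=n-d-1$, since the $n$ vertices split into $v$ itself, its $d$ neighbours, and the remaining non-neighbours.

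Now I would count the edges between $N$ and $M$ in two ways. First, fix a neighbour $w\in N$. The vertices $v$ and $w$ are adjacent, so they share exactly $a$ common neighbours; these common neighbours all lie in $N$. Thus, of the $d$ neighbours of $w$, one is $v$ itself and $a$ of them lie in $N$, leaving $d-a-1$ neighbours of $w$ in $M$. Summing over all $w\in N$ yields
\begin{align*}
\#\{\text{edges between } N \text{ and } M\}=d(d-a-1).
\end{align*}
Second, fix a vertex $x\in M$. Since $x$ and $v$ are distinct and non-adjacent, they share exactly $c$ common neighbours, and these common neighbours are precisely the neighbours of $x$ lying in $N$. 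Summing over all $x\in M$ gives
\begin{align*}
\#\{\text{edges between } N \text{ and } M\}=(n-d-1)c.
\end{align*}
Equating the two counts produces the desired identity $d(d-a-1)=(n-d-1)c$.

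I do not anticipate a serious obstacle here; the argument is a clean double count. The one point that warrants care is confirming that the $a$ common neighbours of $v$ and an adjacent $w$ genuinely all lie in $N$ (they do, as any common neighbour of $v$ is by definition a neighbour of $v$), and likewise that the $c$ common neighbours of $v$ and a non-adjacent $x$ lie in $N$. The convention excluding $K_n$ is what guarantees $M$ is non-empty and that $c$ is well-defined, so the statement is only meaningful in that setting; otherwise the right-hand side would be vacuous.
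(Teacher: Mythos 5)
Your double count is correct and is exactly the argument the paper gives: fix a base vertex, note each of its $d$ neighbours sends $d-a-1$ edges to the $n-d-1$ farther vertices, while each farther vertex receives exactly $c$ such edges. Nothing to add.
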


\begin{proof}
Fix a base vertex; its link graph has $d$ vertices, and there are $n-d-1$ farther vertices. The desired relation comes from counting in two ways the edges between link vertices and farther vertices.

Each link vertex has $a$ common neighbours with the base vertex. In other words, the link graph is $a$-regular. So each link vertex is adjacent to $d-a-1$ farther vertices.

Each farther vertex has $c$ common neighbours with the base vertex. That is, each farther vertex is adjacent to $c$ link vertices. 
\end{proof} 

As we have seen, the link graph of any vertex is an $a$-regular graph on $d$ vertices.  It cannot be complete on $d$ vertices - if it were, the whole graph would be complete on $d+1$ vertices. So $a<d-1$, and $c>0$. The latter fact means the following.  

\begin{prop} A strongly regular graph has diameter $2$.
\end{prop}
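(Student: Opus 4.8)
The plan is to show that any two distinct vertices are at distance at most $2$, which together with the fact that the graph is non-complete (hence has diameter at least $2$) gives diameter exactly $2$. The key observation, already established in the discussion immediately preceding the statement, is that $c>0$ for a strongly regular graph under our convention. Recall that $a<d-1$ was deduced from the fact that the link graph cannot be complete, and that Proposition~\ref{prop: double counting} then forces $c>0$, since the left-hand side $d(d-a-1)$ is strictly positive while the right-hand factor $(n-d-1)$ is positive (the graph is non-complete, so $n>d+1$).

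First I would take two distinct vertices $u$ and $v$. If they are adjacent, their distance is $1\leq 2$, and we are done. If they are non-adjacent, then by the defining property of strong regularity they have exactly $c$ common neighbours. Since $c>0$, there exists at least one common neighbour $w$, giving a path $u\edge w\edge v$ of length $2$. Hence $\mathrm{dist}(u,v)\leq 2$ in all cases, so $\delta\leq 2$.

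For the matching lower bound, I would note that a strongly regular graph is, by our convention, not complete, so there exist two distinct vertices that are non-adjacent; for such a pair the distance is at least $2$. Therefore $\delta\geq 2$, and combining the two bounds yields $\delta=2$.

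The only subtlety — and it is really a bookkeeping point rather than a genuine obstacle — is making sure the positivity $c>0$ is invoked correctly, as this is precisely what rules out diameter larger than $2$. The hard part is conceptually trivial here: everything reduces to the single inequality $c>0$, which has been secured just before the statement. I would simply cite that fact, so the proof is essentially one line of reasoning once $c>0$ is in hand.
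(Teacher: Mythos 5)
Your proof is correct and matches the paper's argument exactly: the paper also deduces the proposition immediately from the fact $c>0$ (obtained from $a<d-1$ and Proposition~\ref{prop: double counting}), which guarantees a common neighbour for any non-adjacent pair, while non-completeness rules out diameter $1$. Your write-up merely makes explicit the two-sided bound that the paper leaves implicit in the phrase ``the latter fact means the following.''
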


There are non-isomorphic strongly regular graphs with the same parameters $(n, d,a,c)$. For example, the twin graphs have parameters $(16,6,2,2)$. The following construction provides many more examples of this kind.

\begin{thm}\label{thm: machine} Fix a positive integer $n$. Let $G$ be a finite group with $n$ elements, and let $X_G$ be the Cayley graph of $G\times G$ with respect to the symmetric generating subset $\{(s,1), (1,s), (s,s): s\in G, s\neq 1\}$. Then:
\begin{itemize}
\item[(i)] $X_G$ is strongly regular with parameters $(n^2,3n-3,n,6)$, 
\item[(ii)] the number of elements of order $2$ in $G$ is a graph invariant of $X_G$. 
\end{itemize}
\end{thm}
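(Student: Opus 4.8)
The plan is to handle (i) by a direct common-neighbour count, exploiting that $X_G$, being a Cayley graph, is vertex-transitive. The easy parameters come first: the set $S=\{(s,1),(1,s),(s,s):s\neq 1\}$ is a disjoint union of three families of size $n-1$, so $X_G$ is $(3n-3)$-regular on $|G\times G|=n^2$ vertices, and $S$ is symmetric and does not contain the identity. For the parameters $a$ and $c$ I would fix the identity vertex $e=(1,1)$ and an arbitrary second vertex $g$, and count common neighbours $k$; these are exactly the $k$ with $k\in S$ and $g^{-1}k\in S$. Putting $s=k$ and $u=(g^{-1}k)^{-1}$ and using that $S$ is symmetric, this count equals the number of factorisations $g=su$ with $s,u\in S$.

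The substance of (i) is to evaluate this factorisation number $N(g)$ by running over the nine combinations of types of $s$ and $u$. Writing $g=(x,y)$, the vertices adjacent to $e$ are exactly those with $x=1$, or $y=1$, or $x=y$; for each such $g$ the tally gives $N(g)=n$, so $a=n$. The vertices $g\notin S\cup\{e\}$ are exactly those with $x\neq 1$, $y\neq 1$, and $x\neq y$; here six of the nine type-combinations contribute a single factorisation and the other three contribute none, so $N(g)=6$ and $c=6$. As a consistency check these values satisfy the relation of Proposition~\ref{prop: double counting}, which here reads $d(d-a-1)=(n^2-d-1)c$, both sides being $6(n-1)(n-2)$.

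For (ii) the key object is the link graph $L$ of a vertex: for the vertex-transitive $X_G$ it is well-defined up to isomorphism, hence any isomorphism invariant of $L$ is an invariant of $X_G$. I would describe $L$ at $e$ explicitly. Its vertex set is $S$, split into the three families $A=\{(s,1)\}$, $B=\{(1,s)\}$, $C=\{(s,s)\}$; a short computation of which differences land in $S$ shows that each family is a clique on $n-1$ vertices, while the edges between any two families form a perfect matching, namely $(s,1)\sim(1,s^{-1})$ for $A,B$, and $(s,1)\sim(s,s)$, $(1,s)\sim(s,s)$ for the pairs involving $C$. Thus $L$ is three copies of $K_{n-1}$ joined pairwise by perfect matchings.

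The final step is to count triangles in $L$. Triangles inside a single clique number $3\binom{n-1}{3}$; triangles with two vertices in one family and one in another cannot occur, because each cross-family matching joins a vertex to only one vertex on the other side; and a transversal triangle $(s,1),(1,t),(u,u)$ forces $t=s^{-1}$, $u=s$, and $u=t$, hence $s=s^{-1}$, so $s$ is an involution, giving exactly one triangle per element of order $2$. The triangle count of $L$ is therefore $3\binom{n-1}{3}+m$, where $m$ is the number of elements of order $2$ in $G$. Since $n$ is recovered from the graph as the square root of the number of vertices, and the triangle count of $L$ is a graph invariant, $m$ is determined by $X_G$, which proves (ii). I expect the only real labour to be the bookkeeping of the nine factorisation types in (i); the heart of (ii)—that transversal triangles in the link graph enumerate the involutions of $G$—is brief once the three-cliques-plus-matchings picture of $L$ is established.
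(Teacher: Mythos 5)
Your proposal is correct and takes essentially the same route as the paper: your nine-way bookkeeping of factorisations $g=su$ with $s,u\in S$ is just a reorganisation of the paper's direct listing of common neighbours in part (i), and your picture of the link graph as three $(n-1)$-cliques joined pairwise by perfect matchings, with transversal triangles forcing $s=s^{-1}$ and thus enumerating the involutions, is precisely the paper's argument for part (ii). The small extras you include---the consistency check against Proposition~\ref{prop: double counting} and recovering $n$ as the square root of the number of vertices---are sound and make the invariance step slightly more explicit than the paper's phrasing ``when the order of $G$ is fixed''.
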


\begin{proof}
Part (i) is straightforward counting. Pick two vertices in $X_G$, one of which may be assumed to be the identity $(1,1)$. Its neighbors are of the form $(s,s)$, or $(s,1)$, or $(1,s)$ with $s\neq 1$. There are $n$ common neighbours in each case: $(s,1)$, $(1,s)$, and $(t,t)$ with $t\neq 1,s$; $(s,s)$, $(1,s)$, and $(t,1)$ with $t\neq 1,s$; $(s,s)$, $(s,1)$, and $(1,t)$ with $t\neq 1,s$. Now take a vertex $(x,y)$ not adjacent to $(1,1)$, meaning that $x\neq 1$, $y\neq 1$, and $x\neq y$. Then $(x,y)$ has $6$ common neighbours with $(1,1)$, namely $(x,x)$ and $(y,y)$, $(x,1)$ and $(xy^{-1},1)$, $(1,y)$ and $(1,yx^{-1})$.

(ii) Consider the link graph of $X_G$. It has $3n-3$ vertices, and it is $n$-regular. The vertices of the link at $(1,1)$ are naturally partitioned into three `islands': $S_1=\{(s,1): s\neq 1\}$, $S_2=\{(1,s): s\neq 1\}$, and $S_3=\{(s,s): s\neq 1\}$. Each $S_i$ is a complete subgraph on $n-1$ vertices. A vertex has $n-2$ incident edges within its own island, and two more `bridge' edges connecting it to the remaining two islands. It follows that $3$-cycles in the link graph are either contained in an island, or they consist of three vertices from each of the islands, connected by bridge edges. The number of $3$-cycles contained in an island is a function of $n$. The number of $3$-cycles of bridge type equals the number of elements of order $2$ in $G$. Indeed, the paths of length $3$ starting and ending in $S_2$, say, along bridge edges have the form $(s,1)\edge (s,s)\edge (1,s) \edge (s^{-1},1)$. This is a $3$-cycle precisely when $s$ has order $2$. In conclusion, the number of elements of order $2$ in $G$ is a graph invariant of the link graph of $X_G$, hence of $X_G$ itself, when the order of $G$ is fixed.
\end{proof}

\begin{cor}\label{cor: machine} For every positive integer $N$, there exist $N$ strongly regular graphs that are pairwise non-isomorphic, but have the same parameters.
\end{cor}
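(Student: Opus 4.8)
The plan is to derive this as an essentially immediate consequence of Theorem~\ref{thm: machine}. That theorem attaches to every finite group $G$ of order $n$ a strongly regular graph $X_G$ whose parameters $(n^2, 3n-3, n, 6)$ depend on $G$ only through $n$, while part (ii) lets us read off the number of elements of order $2$ in $G$ from $X_G$ itself. So, with the order $n$ held fixed, two such graphs coming from groups with different numbers of involutions are forced to be non-isomorphic despite sharing all four parameters. The entire task therefore reduces to a purely group-theoretic one: for each $N$, produce a single order $n$ that is realized by $N$ groups whose numbers of involutions are pairwise distinct.

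First I would specialize to abelian $2$-groups, taking $n=2^N$. For an abelian group $A=\Z_{2^{a_1}}\times\dots\times\Z_{2^{a_k}}$, the elements annihilated by doubling form the $2$-torsion subgroup $A[2]\cong(\Z_2)^k$, which has $2^k$ elements; hence $A$ has exactly $2^k-1$ elements of order $2$, where $k$ is the number of cyclic factors. The key point is that $2^k-1$ is strictly increasing in $k$, so the involution count is completely controlled by, and separates, the number of cyclic summands.

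Next I would realize each value $k=1,\dots,N$ by a suitable partition of $N$ into exactly $k$ parts (for instance $N-k+1,1,\dots,1$), obtaining abelian groups $G_1,\dots,G_N$ of order $2^N$ with $1,\dots,N$ cyclic factors respectively, and therefore with involution counts $2^1-1,\dots,2^N-1$, all distinct. Applying Theorem~\ref{thm: machine} to these groups, the graphs $X_{G_1},\dots,X_{G_N}$ all carry the common parameters $(4^N,\,3\cdot 2^N-3,\,2^N,\,6)$, yet part (ii) recovers the distinct numbers $2^k-1$ from them; hence they are pairwise non-isomorphic, which is exactly the claim.

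There is essentially no serious obstacle once Theorem~\ref{thm: machine} is available, so the only genuinely creative step is the choice of witnessing family. I expect the mild ``hard part'' to be simply recognizing that abelian $2$-groups furnish the cleanest such family, with an explicit and transparently distinct involution count at a single fixed order; one could instead try to survey involutions across all groups of some order, but that is needlessly delicate, whereas the $2$-group computation is elementary and self-contained.
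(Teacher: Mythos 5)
Your proof is correct and takes essentially the same route as the paper: both deduce the corollary from Theorem~\ref{thm: machine} by fixing a single order and exhibiting abelian $2$-groups of that order whose involution counts $2^k-1$, $k=1,\dots,N$, are pairwise distinct (the paper uses $G(k)=(\Z_2)^{k-1}\times\Z_{2^{N-k}}$ of order $2^{N-1}$, you use $\Z_{2^{N-k+1}}\times(\Z_2)^{k-1}$ of order $2^N$). Your variant is, if anything, marginally cleaner, since all cyclic factors in your groups are nontrivial for every $k$ in the stated range.
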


\begin{proof}
For $k=1,\dots, N$, consider the abelian group $G(k)=(\Z_2)^{k-1}\times \Z_{2^{N-k}}$. Then $G(k)$ has size $2^{N-1}$, and $2^{k}-1$ elements of order $2$. The previous theorem implies that the graphs $X_{G(k)}$, $k=1,\dots, N$, are strongly regular with the same parameters, but they  are mutually non-isomorphic.
\end{proof}

\begin{notes}
Strongly regular graphs were introduced by Bose (\emph{Strongly regular graphs, partial geometries and partially balanced designs}, Pacific J. Math. 1963). The Shrikhande graph was originally defined by Shrikhande (\emph{The Uniqueness of the $L_2$ Association Scheme}, Ann. Math. Stat. 1959) in a combinatorial way that made strong regularity immediate. 
\end{notes}

\bigskip
\subsection*{Design graphs}\label{sec: design}\
\medskip

Next, we consider bipartite analogues of strongly regular graphs.

A \emph{design graph} is a regular bipartite graph with the property that any two distinct vertices of the same colour have the same number of common neighbours. The complete bipartite graph $K_{n,n}$ fits the definition, but we exclude it by convention.

For a design graph, we let $m$ denote the half-size, $d$ the degree, and $c$ the number of neighbours shared by any monochromatic pair of vertices. Note that the parameter $c$ is, in fact, determined by the following relation: 
\begin{align*}
c(m-1)=d(d-1)
\end{align*} 
This is obtained by counting in two ways the paths of length $2$ joining a fixed vertex with the remaining $m-1$ vertices of the same colour.

As an immediate consequence of the definition, we have following fact.

\begin{prop} 
A design graph has diameter $3$, and girth $4$ or $6$ according to whether $c>1$ or $c=1$. Conversely, a regular bipartite graph of diameter $3$ and girth $6$ is a design graph with parameter $c=1$.
\end{prop}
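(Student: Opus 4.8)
The plan is to re-express the entire statement in terms of distances and short cycles, exploiting the bipartite structure together with the relation $c(m-1)=d(d-1)$ and, crucially, the earlier bound $\gamma\leq 2\delta$ for graphs of even girth. First I would record the basic constraints. A design graph is regular bipartite but not $K_{m,m}$, so the degree satisfies $d<m$; and since $d\geq 2$ by non-triviality, the relation $c(m-1)=d(d-1)$ forces $c\geq 1$ and $m\geq 2$. Bipartiteness guarantees that two vertices of the same colour lie at even distance, two of opposite colour at odd distance, and that every cycle (hence $\gamma$) is even.

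For the diameter, two distinct vertices of the same colour share a neighbour (as $c\geq 1$), so they are at distance exactly $2$. For opposite-colour vertices $u_\wh$ and $w_\bl$ that are non-adjacent, I would pick any neighbour $u'_\wh$ of $w_\bl$; then $u_\wh$ and $u'_\wh$ are distinct same-coloured vertices, hence share a neighbour $x_\bl$, yielding a path $u_\wh \edge x_\bl \edge u'_\wh \edge w_\bl$ of length $3$. Thus every opposite-colour distance is $1$ or $3$, so all distances are at most $3$; and since $d<m$ every vertex has a non-neighbour across the partition, so distance $3$ is attained. Hence $\delta=3$.

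For the girth I would split on $c$. If $c>1$, two same-coloured vertices have at least two common neighbours, giving a $4$-cycle, so $\gamma=4$. If $c=1$, no same-coloured pair has two common neighbours, so there is no $4$-cycle and $\gamma\geq 6$; on the other hand the even-girth bound $\gamma\leq 2\delta=6$ (the graph is not a tree, so this proposition applies) gives $\gamma\leq 6$, whence $\gamma=6$. This last step is the only point requiring the earlier machinery, and I expect it to be the main obstacle: the absence of a $4$-cycle alone yields merely $\gamma\geq 6$, and equality comes precisely from combining this with $\delta=3$ through $\gamma\leq 2\delta$.

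For the converse, let $X$ be regular bipartite of diameter $3$ and girth $6$. Girth $6$ forbids $4$-cycles, so any two distinct same-coloured vertices have at most one common neighbour; diameter $3$ forces such a pair to be at distance exactly $2$ (even and $\leq 3$), hence to have at least one common neighbour. Therefore every monochromatic pair has exactly one common neighbour, so $X$ is a design graph with $c=1$; and it is automatically not $K_{m,m}$, since the latter has girth $4$.
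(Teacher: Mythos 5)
Your proof is correct. The paper states this proposition without proof (``as an immediate consequence of the definition''), and your argument --- deriving $c\geq 1$ from $c(m-1)=d(d-1)$, splitting the distance analysis by colour to get $\delta=3$ exactly (with $d<m$ ensuring distance $3$ is attained), and settling the $c=1$ girth case by combining the absence of $4$-cycles with the earlier even-girth bound $\gamma\leq 2\delta$ --- is precisely the natural filling-in of the omitted details, with every step, including the converse, valid.
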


Design graphs with parameter $c=1$ are particularly interesting. The previous proposition highlights them as being extremal for the girth among regular bipartite graph of diameter $3$. The next exercise is concerned with another extremal property. 

\begin{exer}\label{exer: extremal incidence}
Design graphs with parameter $c=1$ and degree $d$ are the ones that minimize the number of vertices among all $d$-regular graphs of girth $6$. 
\end{exer}

It seems appropriate to refer to design graphs with parameter $c=1$ as \emph{extremal design graphs}. There are some examples of extremal design graphs among the graphs that we already know. One is the cycle $C_6$. The other, more interesting, is the Heawood graph. In the next section, we will construct more design graphs, some of them extremal, by using finite fields.

A \emph{partial design graph} is a regular bipartite graph with the property that there are only two possible values for the number of neighbours shared by any two distinct vertices of the same colour. 

The parameters of a partial design graph are denoted $m$, $d$, respectively $c_1$ and $c_2$. Note that $c_1\neq c_2$, and that the roles of $c_1$ and $c_2$ are interchangeable.

\begin{ex} The bipartite double of a (non-bipartite) strongly regular graph is a design graph or a partial design graph. 
\end{ex}

\begin{ex} The cube graph $Q_n$ is a partial design graph. Indeed, consider the bipartition given by weight parity. Fix two distinct strings with the same weight parity. If they differ in two slots, then they have two common neighbours; otherwise, they have no common neighbour. Thus, $Q_n$ has parameters of  are $c_1=0$, $c_2=2$.
\end{ex}

\begin{ex}\label{ex: T-C} Consider the complete graph $K_6$. It has $15$ edges. A \emph{matching} is a choice of three edges with distinct endpoints, i.e., a partition of the six vertices into two-element subsets. There are $15$ matchings, as well. Define a bipartite graph by using the edges and the matchings as vertices, and connecting matchings to the edges they contain. This is the \emph{Tutte - Coxeter graph}, drawn in Figure~\ref{fig: tutte} below.

\begin{figure}[!ht]\medskip
\GraphInit[vstyle=Simple]
\tikzset{VertexStyle/.style = {
shape = circle,
inner sep = 0pt,
minimum size = .8ex,
draw}}
\centering
\begin{tikzpicture}[scale=.4, rotate=25]
\grLCF[RA=7]{-13,-9,7,-7,9,13}{5}
\end{tikzpicture}
\caption{The Tutte - Coxeter graph.}\label{fig: tutte}
\end{figure}
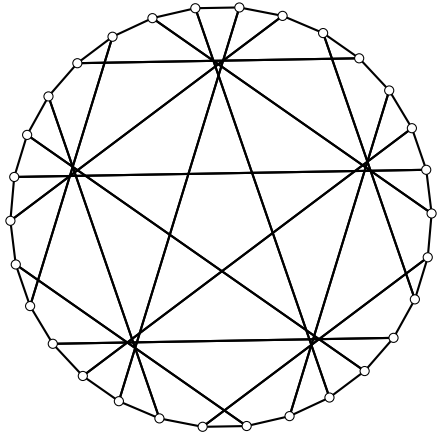
The Tutte - Coxeter graph is a partial design graph. Its half-size is $m=15$. The degree is $d=3$. Indeed, a matching is adjacent to three edges, and an edge is adjacent to three matchings. The joint parameters are $c_1=0$, $c_2=1$. Two distinct edges are adjacent to a unique matching if they have disjoint endpoints, and to no matching otherwise. Two distinct matchings have at most one edge in common.

A further property of the Tutte - Coxeter graph is that it has diameter $4$ and girth $8$. The verification is left to the reader.
\end{ex}

\bigskip
\section{Finite fields}
In this section, and in the next one, the focus is on finite fields. Among other things, we construct some interesting families of graphs, and we acquire some of the tools that later on will prove useful for studying these graphs. 

\bigskip
\subsection*{Notions}\label{sec: finite fields kit}\
\medskip

Let us start by recalling some fundamental facts about finite fields. 

A finite field has $q=p^d$ elements for some prime $p$, the characteristic of the field, and some positive integer $d$, the dimension of the field over its prime subfield. 

For each prime power $q$ there exists a field with $q$ elements, which is furthermore unique up to isomorphism. We think of $\Z_p=\Z/p\Z$ as `the' field with $p$ elements. In general, `the` field with $q=p^d$ elements can be realized as a quotient $\Z_p[X]/(f(X))$, where $f(X)\in \Z_p[X]$ is an irreducible polynomial of degree $d$. Such a polynomial $f$ exists for each given $d$; however, no general recipe is known for producing one. 

The multiplicative group of a finite field is cyclic. This is, again, a non-constructive existence: given a field, there is no known recipe for producing a multiplicative generator.

We now turn to extensions of finite fields. The following result plays a key role in this respect.

\begin{thm}
Let $\K$ be a field with $q^n$ elements. Then the map $\phi: \K\to \K$, given by $\phi(a)=a^q$, has the following properties:
\begin{itemize}
\item[(i)] $\phi$ is an automorphism of $\K$ of order $n$;
\item[(ii)] $\F=\{a\in \K: \phi(a)=a\}$ is a field with $q$ elements, and $\phi$ is an $\F$-linear isomorphism when $\K$ is viewed as a linear space over $\F$.
\end{itemize}
\end{thm}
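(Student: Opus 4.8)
The map $\phi$ is the Frobenius endomorphism, and the plan is to handle (i) and (ii) in turn, leaning on two facts recalled above: the multiplicative group $\K^\times$ is cyclic of order $q^n-1$, and a field homomorphism out of a finite field is automatically bijective.

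For (i), I would first verify that $\phi$ is a field homomorphism. Writing $q=p^m$ with $p$ the characteristic, multiplicativity $(ab)^q=a^qb^q$ is clear, while additivity $(a+b)^q=a^q+b^q$ is the ``freshman's dream'': in characteristic $p$ the binomial coefficients $\binom{p}{k}$ vanish for $0<k<p$, giving $(a+b)^p=a^p+b^p$, and iterating $m$ times handles the exponent $q=p^m$. A homomorphism of fields has trivial kernel, so $\phi$ is injective; since $\K$ is finite, $\phi$ is therefore bijective, i.e.\ an automorphism. To find the order, note $\phi^k(a)=a^{q^k}$. Every $a\in\K$ satisfies $a^{q^n}=a$ (for $a\neq 0$ because $a^{q^n-1}=1$ in $\K^\times$), so $\phi^n=\mathrm{id}$; and if $\phi^k=\mathrm{id}$ for some $1\leq k<n$, then all $q^n$ elements of $\K$ would be roots of $X^{q^k}-X$, a nonzero polynomial of degree $q^k<q^n$, which is absurd. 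Hence $\phi$ has order exactly $n$.

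For (ii), the set $\F=\{a:\phi(a)=a\}$ is the fixed set of the automorphism $\phi$, hence closed under subtraction, multiplication, and inversion of nonzero elements, so it is a subfield. To count it I would pass to $\F^\times=\{a\in\K^\times:a^{q-1}=1\}$: in a cyclic group of order $q^n-1$ the equation $x^{q-1}=1$ has exactly $\gcd(q-1,q^n-1)$ solutions, and this equals $q-1$ because $q-1\mid q^n-1$. Thus $|\F^\times|=q-1$ and $|\F|=q$. Finally, since $\phi$ fixes $\F$ pointwise, for $c\in\F$ and $a\in\K$ we get $\phi(ca)=\phi(c)\phi(a)=c\,\phi(a)$, so $\phi$ is $\F$-linear; being bijective, it is an $\F$-linear automorphism of $\K$.

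I do not anticipate a genuine obstacle, as each step is standard finite-field theory; the one point deserving care is the count in (ii). It is tempting to say $\F$ is the root set of $X^q-X$, a degree-$q$ polynomial, and conclude $|\F|=q$ --- but that argument only gives $|\F|\leq q$ and still needs a separability-cum-splitting argument for equality. Routing the count through the cyclic structure of $\K^\times$ and the divisibility $q-1\mid q^n-1$ avoids this and yields equality directly.
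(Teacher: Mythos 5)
Your proof is correct and takes essentially the same route as the paper's: iterated freshman's dream for additivity, injectivity plus finiteness of $\K$ for bijectivity, the too-many-roots contradiction for $X^{q^k}-X$ to pin down the order, and the cyclic structure of $\K^*$ with $q-1\mid q^n-1$ to get $|\F|=q$. Your closing caveat about not counting $\F$ as the root set of the degree-$q$ polynomial $X^q-X$ is precisely the subtlety the paper's count also avoids by routing through $\K^*$.
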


\begin{proof}
(i) Clearly, $\phi$ is injective, multiplicative, and $\phi(1)=1$. To see that $\phi$ is additive, we iterate the basic identity $(a+b)^p=a^p+b^p$, where $p$ is the characteristic of $\K$, up to $(a+b)^q=a^q+b^q$. Thus $\phi$ is automorphism of $\K$. Each $a\in \K^*$ satisfies the relation $a^{q^n-1}=1$, so $a^{q^n}=a$ for all $a\in \K$. Thus $\phi^n$ is the identity map on $\K$. Assuming that $\phi^t$ is the identity map on $\K$ for some $0<t<n$, we would get that $X^{q^t}=X$ has $q^n$ solutions in $\K$, a contradiction.

(ii) $\F$ is a subfield since $\phi$ is an automorphism. As $\K^*$ is cyclic of order $q^n-1$, and $q-1$ divides $q^n-1$, there are precisely $q-1$ elements $a\in \K^*$ satisfying $a^{q-1}=1$. Therefore $\F$ has $q$ elements. Finally, note that $\phi(ab)=\phi(a)\phi(b)=a\phi(b)$ whenever $a\in \F$ and $b\in \K$.
\end{proof}

Turning the above theorem on its head, we can start with a field $\F$ with $q$ elements, and then consider a field $\K$ with $q^n$ elements as an overfield of $\F$. Then $\K$ is an $n$-dimensional linear space over $\F$, and we say that $\K$ is an \emph{extension} of $\F$ of \emph{degree} $n$. The map $\phi$ is called the \emph{Frobenius automorphism} of $\K$ over $\F$.

\begin{exer} \label{exer: max order} Let $\F$ be a field with $q$ elements. Show that the maximal order of an element in the general linear group $\GL_n(\F)$ is $q^n-1$.
\end{exer}

Let $\F$ be a field with $q$ elements, and let $\K$ be an extension of $\F$ of degree $n$. The \emph{trace} and the \emph{norm} of an element $a\in \K$ are defined as follows:
\begin{align*}
\mathrm{Tr}(a)=\sum_{k=0}^{n-1} \phi^k(a)=a+a^q+\ldots+a^{q^{n-1}}, \qquad \mathrm{N}(a)=\prod_{k=0}^{n-1} \phi^k(a)=a\cdot a^q\cdot \ldots\cdot a^{q^{n-1}}
\end{align*}

One might think of the trace and the norm as the additive, respectively the multiplicative content of a Frobenius orbit. The importance of the trace and the norm stems from the following properties.

\begin{thm}
The trace is additive, in fact $\F$-linear, while the norm is multiplicative. The trace and the norm map $\K$ onto $\F$.
\end{thm}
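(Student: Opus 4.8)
The plan is to separate the purely algebraic assertions (additivity, linearity, multiplicativity) from the two surjectivity claims, and to treat the trace and the norm in parallel. I would first check that $\Tr$ and $\Nm$ genuinely land in $\F$, which is needed before one can even speak of surjectivity \emph{onto} $\F$. The mechanism is that $\phi$ cyclically permutes the Frobenius orbit $a, a^q, \dots, a^{q^{n-1}}$: since $\phi(\phi^k(a)) = \phi^{k+1}(a)$ and $\phi^n = \mathrm{id}$ by part (i) of the previous theorem, applying $\phi$ leaves the sum $\Tr(a)$ and the product $\Nm(a)$ unchanged. Hence $\Tr(a), \Nm(a) \in \{x : \phi(x) = x\} = \F$.

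The homomorphism properties are then formal consequences of each $\phi^k$ being a field automorphism. For additivity, $\Tr(a+b) = \sum_k \phi^k(a+b) = \sum_k \phi^k(a) + \sum_k \phi^k(b) = \Tr(a) + \Tr(b)$; for $\F$-homogeneity, if $c \in \F$ then $\phi^k(c) = c$ for every $k$, so $\Tr(ca) = \sum_k \phi^k(c)\phi^k(a) = c\,\Tr(a)$. The multiplicativity of the norm is the identical computation with products replacing sums. These steps are routine and I would state them briefly.

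For surjectivity of the trace, the idea is that $\Tr$ is a \emph{nonzero} $\F$-linear map $\K \to \F$, so its image is a nonzero $\F$-subspace of the one-dimensional space $\F$, forcing the image to be all of $\F$. The only non-formal input is that $\Tr$ is not identically zero: the equation $\Tr(a) = a + a^q + \dots + a^{q^{n-1}} = 0$ is polynomial of degree $q^{n-1}$ in $a$, hence has at most $q^{n-1}$ roots in $\K$, whereas $|\K| = q^n > q^{n-1}$. For surjectivity of the norm I would rewrite $\Nm(a) = a^{1 + q + \dots + q^{n-1}} = a^{(q^n-1)/(q-1)}$, exhibiting $\Nm$ as the $e$-th power map on the cyclic group $\K^*$, with $e = (q^n-1)/(q-1)$. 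Since $e$ divides $|\K^*| = q^n - 1$, the image of this power map is the unique subgroup of $\K^*$ of order $(q^n-1)/e = q-1$, which is exactly $\F^*$; adding $\Nm(0) = 0$ gives surjectivity onto $\F$.

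The main obstacle is the pair of surjectivity statements rather than the homomorphism properties, which are essentially bookkeeping. For the trace the crux is the degree-counting argument that rules out the trace being identically zero; for the norm the crux is recognizing it as a power map on $\K^*$ and using $|\K^*| = q^n - 1$ together with the cyclicity of $\K^*$ to pin down the image. Both leverage structural facts supplied by the earlier results, namely that $\phi$ has order $n$ and that the multiplicative group of a finite field is cyclic.
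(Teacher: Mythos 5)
Your proof is correct, and its skeleton --- Frobenius-invariance to show the trace and norm are $\F$-valued, routine verification of the homomorphism properties, then counting for surjectivity --- matches the paper's; the interest lies in how the two counting steps are packaged. For the trace, the paper bounds the kernel of $\Tr:\K\to\F$ by $q^{n-1}$ (its elements are roots of the polynomial $X+X^q+\cdots+X^{q^{n-1}}$) and concludes the image has size at least $q^n/q^{n-1}=q$; you use the very same degree count only to show $\Tr\not\equiv 0$, and then let $\F$-linearity finish, since a nonzero linear map into a one-dimensional space is onto --- identical input, repackaged. For the norm the mechanisms genuinely differ: the paper again counts roots, bounding the kernel of $\Nm:\K^*\to\F^*$ by $(q^n-1)/(q-1)$ and deducing the image has size at least $q-1$, whereas you invoke cyclicity of $\K^*$, writing $\Nm(a)=a^e$ with $e=(q^n-1)/(q-1)$ and noting that the image of the $e$-th power map on a cyclic group of order $q^n-1$ is the unique subgroup of order $q-1$, which is forced to be $\F^*$. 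Your route leans on more structure (cyclicity, stated earlier in the section, so legitimately available) but buys more: it identifies the image exactly, and as a byproduct does not even need the prior observation that $\Nm$ is $\F$-valued, which the paper's size bound does require. Conversely, the paper's root-counting argument is uniform across the two maps and would survive even without the cyclicity of $\K^*$.
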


\begin{proof}
The first statement is obvious, so let us turn to the second statement. The trace and the norm are Frobenius-invariant, $\phi(\mathrm{Tr}(a))=\mathrm{Tr}(a)$ and $\phi(\mathrm{N}(a))=\mathrm{N}(a)$, so they are $\F$-valued. Consider $\mathrm{Tr}: \K\to \F$ as an additive homomorphism. The size of its kernel is at most $q^{n-1}$, so the size of its image is at least $q^n/q^{n-1}=q$. Therefore $\mathrm{Tr}$ is onto. Similarly, consider $\mathrm{N}: \K^*\to \F^*$ as a multiplicative homomorphism. Its kernel has size at most $(q^n-1)/(q-1)$, so the size of its image is at least $q-1$. A fortiori, $\mathrm{N}$ is onto.
\end{proof}

\begin{exer} \label{exer: hilbert} Show that the kernels of the trace and the norm maps can be described as follows: 
\begin{align*}
\{a\in \K: \mathrm{Tr}(a)=0\}=\{b^q-b: b\in \K\}, \qquad \{a\in \K: \mathrm{N}(a)=1\}=\{b^q/b: b\in \K^*\}
\end{align*}
\end{exer}

\begin{ex}\label{ex: quadext} Let us discuss in some detail the simple case of degree $2$, or \emph{quadratic}, extensions. We give a fairly concrete picture for such an extension, its Frobenius automorphism, and the associated trace and norm maps. We do this in odd characteristic, and we invite the reader to provide a similar discussion in even characteristic.

Let $\F$ be a finite field of odd characteristic. The squaring map $\F\to \F$, given by $x\mapsto x^2$, is not injective hence not surjective as well. Let $d\in \F$ be a non-square, that is, an element not in the range of the squaring map. Then the quadratic polynomial $X^2-d\in \F[X]$ is irreducible, and
\begin{align*}
\K=\F[X]/(X^2-d)=\{x+y\sqrt{d}: x,y\in \F\}
\end{align*}
is a quadratic extension of $\F$. We use $\sqrt{d}$ to denote the class of $X$ in $\F[X]/(X^2-d)$. 

The Frobenius automorphism of $\K$ over $\F$ is determined by what it does on $\sqrt{d}$. As $\phi(\sqrt{d})^2=\phi(d)=d=(\sqrt{d})^2$, we infer that $\phi(\sqrt{d})=\pm \sqrt{d}$. Now $\sqrt{d}$ cannot be fixed by $\phi$, since $\sqrt{d}\notin\F$, so $\phi(\sqrt{d})=- \sqrt{d}$. Therefore $\phi$ is the `conjugation':
\begin{align*}
\phi(x+y\sqrt{d})= x-y\sqrt{d}
\end{align*}
The trace and the norm are then given as follows:
\begin{align*}
\mathrm{Tr}(x+y\sqrt{d})&=(x+y\sqrt{d})+(x-y\sqrt{d})=2x\\
\mathrm{N}(x+y\sqrt{d})&=(x+y\sqrt{d})(x-y\sqrt{d})=x^2-dy^2
\end{align*}
\end{ex}


\bigskip
\subsection*{Projective combinatorics} \label{sec: projective}\
\medskip

Let $\F$ be a field with $q$ elements, and consider a linear  space $V$ of dimension $n$ over $\F$. We think of $V$ as an ambient space, and we investigate the geometry and combinatorics of its subspaces. A $k$-dimensional subspace of $V$ is called a $k$-space in what follows.

\begin{prop}\label{prop: subs}
The number of $k$-spaces is given by the $q$-binomial coefficients:
\begin{align*}
\binom{n}{k}_{\! q}:=\frac{(q^n-1)\dots(q^{n}-q^{k-1})}{(q^k-1)\dots(q^k-q^{k-1})}=\frac{(q^n-1)\dots(q^{n-k+1}-1)}{(q^k-1)\dots(q-1)}
\end{align*}
\end{prop}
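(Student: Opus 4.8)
The plan is to count ordered bases. The key observation is that a $k$-space is overcounted by its ordered bases, so I will count ordered linearly independent $k$-tuples in $V$ and then divide by the number of ordered bases of a single $k$-space.

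First I would count the ordered linearly independent $k$-tuples $(v_1,\dots,v_k)$ in $V$. Building them one vector at a time, $v_1$ may be any nonzero vector, giving $q^n-1$ choices; and having chosen independent $v_1,\dots,v_{i-1}$, which span an $(i-1)$-space with exactly $q^{i-1}$ elements, the next vector $v_i$ may be anything outside this span, giving $q^n-q^{i-1}$ choices. Multiplying, the number of such tuples is $(q^n-1)(q^n-q)\cdots(q^n-q^{k-1})$.

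Next, performing the same count inside a fixed $k$-space $W$ (that is, replacing $n$ by $k$) shows that $W$ admits exactly $(q^k-1)(q^k-q)\cdots(q^k-q^{k-1})$ ordered bases, a number independent of which $W$ we chose. Since every ordered linearly independent $k$-tuple spans a unique $k$-space, and each $k$-space arises from precisely that many tuples, dividing the two counts yields the first displayed expression for $\binom{n}{k}_{\!q}$. Incidentally the denominator is just $|\GL_k(\F)|$.

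Finally, to pass to the second expression I would extract powers of $q$: the $i$-th numerator factor is $q^n-q^i=q^i(q^{n-i}-1)$, and the $i$-th denominator factor is $q^k-q^i=q^i(q^{k-i}-1)$. The common factor $q^{0+1+\dots+(k-1)}$ appears in both numerator and denominator and cancels, leaving $(q^n-1)\cdots(q^{n-k+1}-1)$ over $(q^k-1)\cdots(q-1)$, as claimed. There is no serious obstacle; the only point demanding care is the inductive fact that the span of $i-1$ independent vectors has exactly $q^{i-1}$ elements, and that this span count, and hence the basis count, is the same for every subspace of a given dimension.
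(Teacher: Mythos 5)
Your proposal is correct and is essentially the paper's own argument: count ordered linearly independent $k$-tuples as $(q^n-1)(q^n-q)\cdots(q^n-q^{k-1})$, note that each $k$-space is spanned by exactly $(q^k-1)\cdots(q^k-q^{k-1})$ such tuples (the $n=k$ case of the same count), and divide. Your extra verification of the algebraic simplification to the second displayed form is fine and only makes explicit what the paper leaves to the reader.
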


\begin{proof}
There are $(q^n-1)\dots(q^{n}-q^{k-1})$ ordered ways of choosing $k$ linearly independent vectors. Some of these choices span one and the same $k$-space. Namely, a $k$-space has $(q^k-1)\dots(q^k-q^{k-1})$ ordered bases. This corresponds to the case $n=k$ of the previous count. 
\end{proof}

Applying the previous proposition to a quotient space of $V$, we get the following useful consequence.

\begin{cor}\label{cor: chainy}
The number of $m$-spaces containing a given $k$-space is $\binom{n-k}{m-k}_{\! q}$.
\end{cor}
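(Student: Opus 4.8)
The plan is to reduce the count to Proposition~\ref{prop: subs} by passing to a quotient space, exactly as the sentence preceding the statement suggests. Fix a $k$-space $W\subseteq V$. The key observation is that the subspaces of $V$ that contain $W$ are in inclusion-preserving bijection with the subspaces of the quotient space $V/W$, via the correspondence $U\mapsto U/W$. This is the lattice isomorphism theorem for vector spaces (equivalently, the correspondence theorem applied to the quotient map $V\to V/W$), and it will do all the work.

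First I would record that $\dim(V/W)=n-k$. Then, under the correspondence above, a subspace $U$ with $W\subseteq U\subseteq V$ and $\dim U=m$ is sent to the subspace $U/W\subseteq V/W$, whose dimension is $\dim U-\dim W=m-k$. Conversely, every $(m-k)$-space of $V/W$ is of the form $U/W$ for a unique such $U$, obtained as the preimage under the quotient map. Hence the $m$-spaces of $V$ containing $W$ are in bijection with the $(m-k)$-spaces of $V/W$.

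Finally I would apply Proposition~\ref{prop: subs} to the $(n-k)$-dimensional space $V/W$: the number of its $(m-k)$-spaces is $\binom{n-k}{m-k}_{\! q}$, which is precisely the claimed count, and is independent of the chosen $W$ as it should be.

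The only point that requires any attention is the dimension bookkeeping in the correspondence, namely that passing to the quotient shifts every dimension down by $k$ and preserves inclusions; but this is routine linear algebra. There is no genuine obstacle here. The entire combinatorial content is carried by the Proposition, and the quotient construction is just the device that lets us invoke it in the right dimension.
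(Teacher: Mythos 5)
Your proof is correct and follows exactly the paper's intended route: the paper derives the corollary in one line by ``applying the previous proposition to a quotient space of $V$'', which is precisely your correspondence $U\mapsto U/W$ between $m$-spaces containing $W$ and $(m-k)$-spaces of the $(n-k)$-dimensional space $V/W$. You have simply spelled out the dimension bookkeeping that the paper leaves implicit.
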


For $q\to \infty$, the $q$-binomial coefficients satisfy $\binom{n}{k}_{\! q}\sim q^{k(n-k)}$. If, on the other hand, we formally let $q\to 1$ then the $q$-binomial coefficients turn into the usual binomial coefficients. The familiar symmetry of the latter, $\binom{n}{k}=\binom{n}{n-k}$, has a $q$-analogue: 
\begin{align*}
\binom{n}{k}_{\! q}=\binom{n}{n-k}_{\! q}
\end{align*} 
This can be checked directly, using the formula displayed above. The more conceptual explanation, however, would be a duality between subspaces of complementary dimensions. This can be set up with the help of a scalar product.

A \emph{scalar product} on $V$ is a bilinear form $\la\cdot,\cdot\ra: V\times V\to \F$ which is symmetric, $\la v,w\ra=\la w,v\ra$ for all $v,w\in V$, and non-degenerate, meaning that $\la v,w\ra=0$ for all $w\in V$ implies $v=0$. The obvious example is the standard scalar product on $\F^n$, given by $\la x,y\ra=x_1y_1+\ldots+x_ny_n$. For a more interesting example, let $\K$ be a degree $n$ extension of $\F$. Then $\K$ can be endowed with the tracial scalar product $\la a,b\ra_\tau=\mathrm{Tr}(ab)$. In general, any ambient space $V$ can be equipped with a scalar product.

A scalar product brings in the notion of orthogonality. In particular, for a subspace $W$ of $V$ we can define 
\begin{align*}
W^\perp=\{v\in V: \la v,w\ra=0 \textrm{ for all } w\in W\}
\end{align*}
which is again a subspace of $V$. Over the real or complex numbers, we would call $W^\perp$ the orthogonal complement of $W$. Over a finite field, however, there is a catch: $W^\perp$ may fail to be a direct summand of $W$, because a non-zero vector can be orthogonal to itself. This is particularly visible in the case of the standard scalar product on $\F^n$. Nevertheless, the following holds.

\begin{prop}\label{prop: ortho}
Let $V$ be endowed with a scalar product. Then the assignment $W\mapsto W^\perp$ is an involutive correspondence between the $k$-spaces and the $(n-k)$-spaces of $V$.
\end{prop}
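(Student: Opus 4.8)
The plan is to establish the two defining features of an involutive correspondence separately: first that $\dim W^\perp = n-k$ whenever $\dim W = k$, and then that $(W^\perp)^\perp = W$. The dimension count is the substantive part; involutivity will then follow formally by a dimension comparison.

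First I would exploit non-degeneracy to identify $V$ with its dual. The scalar product defines a linear map $\Phi\colon V\to V^*$ by $\Phi(v)=\la v,\cdot\ra$. Non-degeneracy says precisely that $\Phi$ is injective, and since $\dim V=\dim V^*=n$, the map $\Phi$ is in fact an isomorphism. This is where symmetry and non-degeneracy of the form do all their work; the remaining steps are pure linear algebra in the dual space.

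Next I would reinterpret $W^\perp$ through this isomorphism. For a $k$-space $W$, let $\mathrm{Ann}(W)=\{f\in V^*: f|_W=0\}$ be its annihilator. A vector $v$ lies in $W^\perp$ exactly when $\la v,w\ra=0$ for all $w\in W$, that is, when $\Phi(v)$ restricts to zero on $W$; hence $W^\perp=\Phi^{-1}(\mathrm{Ann}(W))$. The restriction map $V^*\to W^*$ is surjective, since any functional on $W$ extends to $V$, and its kernel is $\mathrm{Ann}(W)$, so rank--nullity gives $\dim\mathrm{Ann}(W)=n-k$. As $\Phi$ is an isomorphism, $\dim W^\perp=n-k$, so $W\mapsto W^\perp$ does send $k$-spaces to $(n-k)$-spaces.

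Finally, involutivity. The inclusion $W\subseteq (W^\perp)^\perp$ is immediate, since every $w\in W$ is by definition orthogonal to every vector of $W^\perp$. Applying the dimension count twice yields $\dim (W^\perp)^\perp = n-(n-k)=k=\dim W$, and an inclusion of subspaces of equal dimension is an equality, so $(W^\perp)^\perp=W$. The one point to handle with care, and the only place where the finite-field subtlety noted just before the proposition might cause worry, is that I never assert $W\oplus W^\perp=V$ --- this can genuinely fail when a non-zero vector is orthogonal to itself --- but only the dimension identity $\dim W+\dim W^\perp=n$, which survives precisely because it is extracted from the dual-space annihilator rather than from an orthogonal direct-sum decomposition.
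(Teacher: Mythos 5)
Your proof is correct and follows essentially the same route as the paper's: both use non-degeneracy to make $v\mapsto \la v,\cdot\ra$ an isomorphism $V\to V^*$, obtain $\dim W+\dim W^\perp=n$ by rank--nullity applied to restriction onto $W^*$ (the paper works with the composite $\psi_W\colon V\to W^*$ whose kernel is $W^\perp$, you equivalently compute $\dim\mathrm{Ann}(W)$ and pull back), and deduce $W=(W^\perp)^\perp$ from the inclusion $W\subseteq (W^\perp)^\perp$ together with equality of dimensions. Your closing remark about avoiding the false decomposition $W\oplus W^\perp=V$ correctly identifies the finite-field subtlety the paper flags before the proposition.
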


\begin{proof}
The main claim is that $\dim W+\dim W^\perp=n$. Then $W\mapsto W^\perp$ sends $k$-spaces to $(n-k)$-spaces. Also, $\dim W^{\perp\perp}=\dim W$. As $W\subseteq W^{\perp\perp}$, we conclude that, in fact, $W=W^{\perp\perp}$. That is, $W\mapsto W^\perp$ is an involution.

Consider the linear map $\psi:V\to V^*$, where $V^*$ is the dual of $V$, given by $v\mapsto \la v,\cdot\ra$. Non-degeneracy of the scalar product means that $\psi$ is injective, hence an isomorphism for dimension reasons. Let $\psi_W:V\to W^*$ be the linear map sending $v\in V$ to the restriction $\psi(v)|_W$. On the one hand, $W^\perp=\mathrm{ker}(\psi_W)$. On the other hand $\mathrm{im}(\psi_W)=W^*$, as $\psi_W$ is the composition of the isomorphism $\psi$ with the onto map $V^*\to W^*$ given by restriction. Now the identity $\dim V=\dim\mathrm{ker}(\psi_W)+\dim\mathrm{im}(\psi_W)$ yields our claim.\end{proof}

The map $W\mapsto W^\perp$ is also order-reversing with respect to inclusion: $W_1\subseteq W_2$ implies $W_2^\perp\subseteq W_1^\perp$. This fact can be used for an alternate proof of Corollary~\ref{cor: chainy}. The number of $m$-spaces containing a given $k$-space equals the number of $(n-m)$-spaces contained in a given $(n-k)$-space, and this is $\binom{n-k}{n-m}_{\! q}=\binom{n-k}{m-k}_{\! q}$.

\bigskip
\subsection*{Incidence graphs}\label{sec: incidence graphs}\
\medskip

Let $V$ be an ambient linear space of dimension $n\geq 3$ over a finite field $\F$ with $q$ elements. The \emph{incidence graph} $I_n(q)$ is the bipartite graph whose vertices are the $1$-spaces respectively the $(n-1)$-spaces, and whose edges connect $1$-spaces to $(n-1)$-spaces containing them. Note that the construction only depends on the dimension of $V$, and not on $V$ itself; hence the notation.

\begin{thm}
The incidence graph $I_n(q)$ is a design graph, with parameters
\begin{align*}
m=\binom{n}{1}_{\! q}=\frac{q^n-1}{q-1}, \qquad d=\binom{n-1}{1}_{\! q}=\frac{q^{n-1}-1}{q-1}, \qquad c=\binom{n-2}{1}_{\! q}=\frac{q^{n-2}-1}{q-1}.
\end{align*}
\end{thm}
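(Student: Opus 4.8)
The plan is to verify the defining properties of a design graph directly, using the subspace counts from Proposition~\ref{prop: subs} and Corollary~\ref{cor: chainy} together with the symmetry $\binom{n}{k}_q=\binom{n}{n-k}_q$. Bipartiteness is built into the construction, so all the work lies in computing the half-size, the degree, and the common-neighbour parameter, and in checking that the graph is not complete bipartite.

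First I would count each colour class. The number of $1$-spaces is $\binom{n}{1}_q=(q^n-1)/(q-1)$ by Proposition~\ref{prop: subs}, and by the symmetry of the $q$-binomial coefficients the number of $(n-1)$-spaces is $\binom{n}{n-1}_q=\binom{n}{1}_q$ as well; thus both sides have size $m=(q^n-1)/(q-1)$. For the degree, a fixed $1$-space is contained in $\binom{n-1}{n-2}_q=\binom{n-1}{1}_q$ many $(n-1)$-spaces by Corollary~\ref{cor: chainy}, while a fixed $(n-1)$-space contains $\binom{n-1}{1}_q$ many $1$-spaces by Proposition~\ref{prop: subs} applied inside it. These agree, so $I_n(q)$ is regular of degree $d=(q^{n-1}-1)/(q-1)$.

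Next comes the common-neighbour count, which must be checked for monochromatic pairs of each colour. Two distinct $1$-spaces $L_1,L_2$ span a $2$-space $L_1+L_2$, and an $(n-1)$-space contains both exactly when it contains this $2$-space; by Corollary~\ref{cor: chainy} the number of such $(n-1)$-spaces is $\binom{n-2}{n-3}_q=\binom{n-2}{1}_q$. Dually, two distinct $(n-1)$-spaces $H_1,H_2$ meet in a subspace of dimension $(n-1)+(n-1)-n=n-2$, and a $1$-space lies in both exactly when it lies in $H_1\cap H_2$; the number of $1$-spaces in an $(n-2)$-space is again $\binom{n-2}{1}_q$ by Proposition~\ref{prop: subs}. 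Both cases yield $c=(q^{n-2}-1)/(q-1)$, confirming the design graph property. I would then note $d<m$ (since $q^{n-1}<q^n$), so the graph is not complete bipartite and is a genuine design graph.

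I do not expect a serious obstacle here; the content is entirely the subspace bookkeeping, and the only points requiring care are reading off the right dimensions for the span $L_1+L_2$ and the intersection $H_1\cap H_2$ and then applying the appropriate $q$-binomial identity. One clean way to halve the work would be to invoke the perp correspondence $W\mapsto W^\perp$ of Proposition~\ref{prop: ortho}: it interchanges $1$-spaces and $(n-1)$-spaces, and since it reverses inclusion one has $L\subseteq H \iff H^\perp\subseteq L^\perp$, so it realizes a colour-swapping automorphism of $I_n(q)$. This shows a priori that the two monochromatic cases must give the same $c$, so it suffices to compute one of them.
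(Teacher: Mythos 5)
Your proposal is correct and follows essentially the same route as the paper: both count $m$ and $d$ via Proposition~\ref{prop: subs}, Corollary~\ref{cor: chainy} and the symmetry of $q$-binomial coefficients, and both verify the design property by the dimension formula, identifying $H_1\cap H_2$ as an $(n-2)$-space and $L_1+L_2$ as a $2$-space to get $c=\binom{n-2}{1}_{\! q}$ in each monochromatic case. Your two small additions --- explicitly checking $d<m$ to rule out the excluded $K_{m,m}$ case, and the observation that the perp correspondence of Proposition~\ref{prop: ortho} gives a colour-swapping automorphism that makes the two cases dual --- are sound refinements (the latter is exactly the ``orthogonal picture'' the paper develops immediately after this theorem).
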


\begin{proof} 
By definition, the incidence graph $I_n(q)$ is bipartite. The half-size $m$, as well as the degree $d$ for each vertex, are given by Proposition~\ref{prop: subs}, Corollary~\ref{cor: chainy} and the symmetry property of the $q$-binomial coefficients. 

To check the design property, we use the dimensional formula 
\begin{align*}
\dim(W+W')+\dim(W\cap W')=\dim W+\dim W'
\end{align*} 
where $W$ and $W'$ are subspaces of $V$. 

Let $W$ and $W'$ be distinct $(n-1)$-spaces. The common neighbours of $W$ and $W'$ are the $1$-spaces contained in $W\cap W'$. Now $W+W'$ has dimension $n$, so $W\cap W'$ is an $(n-2)$-space. The number of $1$-spaces contained in an $(n-2)$-space is $\binom{n-2}{1}_{\! q}$.
 
Similiarly, let $W$ and $W'$ be distinct $1$-spaces. The common neighbours of $W$ and $W'$ are the $(n-1)$-spaces containing $W+W'$.  This is a $2$-space, since $W\cap W'$ is $0$-dimensional. The number of $(n-1)$-spaces containing a given $2$-space is $\binom{n-2}{1}_{\! q}$. 
\end{proof}

\begin{ex}\label{ex: I3}
The incidence graph $I_3(q)$ is an extremal design graph. Its half-size is $m=q^2+q+1$, and it is regular of degree $d=q+1$. For $q=2$, this is the Heawood graph.
\end{ex}

Assume now that $V$ is endowed with a scalar product. Proposition~\ref{prop: ortho} provides a bijection between the $(n-1)$-spaces and the $1$-spaces. Using this bijection, we can give an alternate, and somewhat simpler, description of $I_n(q)$. Take two copies of the set of $1$-spaces, that is, lines through the origin, and join a `black' line to a `white' line whenever they are orthogonal. Note that the orthogonal picture is independent of the choice of scalar product, for it agrees with the original incidence picture.

The orthogonal picture immediately reveals that $I_n(q)$ is an induced subgraph of $I_{n+1}(q)$. Indeed,  view $\F^n$ as the subspace of $\F^{n+1}$ having vanishing last coordinate. Then the lines of $\F^n$ are also lines in $\F^{n+1}$, and they are orthogonal in $\F^n$ if and only if they are orthogonal in $\F^{n+1}$.

The incidence graph $I_n(q)$ enjoys a regularity property that is not obvious at first sight. Here, the orthogonal picture turns out to be very useful.

\begin{thm}
The incidence graph $I_n(q)$ is a Cayley graph.
\end{thm}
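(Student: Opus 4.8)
The plan is to realize $I_n(q)$ as a bi-Cayley graph of a suitable abelian group, and then to quote the earlier fact that the bi-Cayley graph of an abelian group $G$ with respect to $S$ is the Cayley graph of the generalized dihedral group $G\rtimes\{\pm 1\}$ with respect to $S\times\{-1\}$.

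First I would set the stage using the orthogonal picture. Identify the ambient space with a degree-$n$ extension $\K$ of $\F$, and equip $\K$ with the tracial scalar product $\la a,b\ra=\Tr(ab)$; this is symmetric because $\K$ is commutative, and non-degenerate because $\Tr$ is onto and hence does not vanish identically on $a\K$ for any $a\neq 0$. In this picture the vertices of $I_n(q)$ are two copies, black and white, of the set of lines (that is, $1$-spaces), and a black line is joined to a white line exactly when they are orthogonal. The crucial structural input is the \emph{Singer cycle}: the cyclic group $C=\K^*/\F^*$, of order $m=(q^n-1)/(q-1)$, acts simply transitively on the lines by multiplication. Fixing this action lets me label the lines by the elements of $C$.

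Next I would translate orthogonality into a bi-Cayley adjacency. Since $\Tr$ is $\F$-linear, the condition $\Tr(c)=0$ is invariant under scaling by $\F^*$, so it descends to a well-defined subset $S\subseteq C$, the image of the nonzero trace-zero elements, and one checks $|S|=(q^{n-1}-1)/(q-1)=d$, matching the degree. For lines $\la a\ra,\la b\ra$ one has $\la a\ra\perp\la b\ra$ iff $\Tr(ab)=0$ iff $ab\in S$. Relabelling each black line $\la a\ra$ by $a^{-1}$, which is exactly the correspondence $g_\bl\leftrightarrow(g^{-1})_\bl$ from the bi-Cayley discussion, recasts this as the condition $g^{-1}h\in S$. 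Thus $I_n(q)$ is precisely the bi-Cayley graph of the abelian group $C$ with respect to $S$. As $C$ is abelian, the earlier result now identifies it with the Cayley graph of $C\rtimes\{\pm 1\}$ with respect to the symmetric set of involutions $S\times\{-1\}$; the graph is connected (it has diameter $3$, being a design graph), and $2|C|=2m$ equals its number of vertices, so this is a bona fide Cayley graph.

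The main obstacle is the middle step: one must recognize the Singer-cycle action on lines and verify that, in the tracial picture, orthogonality turns into a genuinely translation-invariant condition $g^{-1}h\in S$ on the cyclic group $C$. The two facts that make this work, namely that multiplication by $C$ is simply transitive on lines and that the trace-zero condition is $\F^*$-invariant and so descends to $C$, are where the real content lies; the remaining verifications (symmetry and non-degeneracy of the tracial form, the cardinality count $|S|=d$, and connectivity) are routine bookkeeping.
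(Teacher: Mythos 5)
Your proof is correct and takes essentially the same route as the paper: both identify the lines with the cyclic group $\K^*/\F^*$ via the tracial scalar product, recognize $I_n(q)$ as the bi-Cayley graph of this group with respect to the trace-zero classes $S$, and then invoke the generalized dihedral construction (here, an honest dihedral group, since $\K^*/\F^*$ is cyclic) to conclude it is a Cayley graph. Your extra verifications --- well-definedness of $S$ under $\F^*$-scaling, the relabelling $g_\bl\leftrightarrow (g^{-1})_\bl$ converting the symmetric adjacency law $gh\in S$ into $g^{-1}h\in S$, and connectivity via the design-graph property --- simply make explicit the bookkeeping the paper leaves implicit.
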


\begin{proof}
Let $\mathbb{K}$ be an extension of $\F$ of degree $n$, endowed with the tracial scalar product. The lines through the origin can be identified with the quotient group $\K^*/\F^*$. So we may think of $I_n(q)$ as the bipartite graph on two copies of $\K^*/\F^*$, with an edge joining $[a]_\bl$ to $[b]_\wh$ whenever $\mathrm{Tr}(ab)=0$. But this is the bi-Cayley graph of $\K^*/\F^*$ with respect to the subset $S=\{[s]\in G: \Tr(s)=0\}$. Since $\K^*/\F^*$ is cyclic, this bi-Cayley graph can be viewed as a Cayley graph of a dihedral group. \end{proof}

\begin{exer}\label{exer: singer} Let $q$ be, as usual, a power of a prime. Show that there are $q+1$ numbers among $\{1,2,\dots,q^2+q+1\}$ with the following property: if someone picks two different numbers from the list, and tells you their difference, then you know the numbers. 
\end{exer}

\begin{notes} 
Example~\ref{ex: I3} addresses the problem of constructing extremal design graphs. Namely, it says that there are `standard' examples in degree $q+1$, for each prime power $q$. Are there any other examples? It turns out that there are non-standard examples in degree $q+1$, for each $q\geq 9$ which is a prime power but not a prime. The constructions are, again, of algebraic nature. However, there are no known examples, and there are probably none, in degree $n+1$, when $n$ is not a prime power! This is, up to a slight reformulation, one of the most famous problems in combinatorics. The common formulation concerns combinatorial objects known as \emph{finite projective planes}. Extremal design graphs are the incidence graphs of finite projective planes. A beautiful theorem of Bruck and Ryser says the following: if there are extremal design graphs of degree $n+1$, and $n\equiv 1,2$ mod $4$, then $n$ is a sum of two square. For example, the Bruck - Ryser criterion implies that there are no examples in degree $7$, but it says nothing about the case of degree $11$. In 1989, after massive computer calculations, it was eventually shown that there are no examples in degree $11$. Or was it? See Lam (\emph{The Search for a Finite Projective Plane of Order 10}, Amer. Math. Monthly 1991).

Exercise~\ref{exer: singer} is based on a theorem of Singer (\emph{A theorem in finite projective geometry and some applications to number theory}, Trans. Amer. Math. Soc. 1938).
\end{notes}

\bigskip
\section{Squares in finite fields}
The previous section was concerned, for the most part, with linear algebra \emph{over} a finite field. In this section, we mostly work \emph{within} a finite field $\F$ with $q$ elements. Throughout, we assume that $q$ is odd.

\bigskip
\subsection*{Around squares} \label{sec: around squares}\
\medskip

The squaring homomorphism $\F^*\to \F^*$, given by $x\mapsto x^2$, is two-to-one: $x^2=y^2$ if and only if $x=\pm y$. So half the elements of $\F^*$ are squares, and half are not. The \emph{quadratic signature} $\sigma: \F^*\to \{\pm 1\}$ is given by
\begin{align*}
\sigma(a)=\begin{cases} 1 & \textrm{ if } a \textrm{ is a square in }\F^*,\\-1 & \textrm{ if } a \textrm{ is not a square in }\F^*.\end{cases}
\end{align*}

\begin{thm}
The quadratic signature $\sigma$ is multiplicative on $\F^*$, and it is explicitly given by the `Euler formula' 
\begin{align*}
\sigma(a)= a^{(q-1)/2}.
\end{align*}
\end{thm}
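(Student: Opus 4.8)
The plan is to extract both assertions from the single structural fact that $\F^*$ is cyclic of order $q-1$. I would treat the Euler formula as the heart of the matter and then read off multiplicativity as an immediate corollary, since once $\sigma(a)=a^{(q-1)/2}$ is known, $\sigma(ab)=(ab)^{(q-1)/2}=a^{(q-1)/2}b^{(q-1)/2}=\sigma(a)\sigma(b)$ needs no further argument.

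First I would record the preliminary observation that $a^{(q-1)/2}\in\{\pm 1\}$ for every $a\in\F^*$. This is because $a^{q-1}=1$ (Lagrange's theorem in the group $\F^*$), so $\big(a^{(q-1)/2}\big)^2=1$, and in a field the polynomial $X^2-1=(X-1)(X+1)$ has only the roots $\pm 1$. Here the oddness of $q$ enters in an essential way: it guarantees $1\neq -1$, so that $\{\pm 1\}$ is a genuine two-element set rather than a single point, and the signature has something to distinguish. Next I would fix a generator $g$ of the cyclic group $\F^*$, so that every element is $a=g^k$, and note that $a$ is a square precisely when $k$ is even (the squares are exactly the even powers of $g$).

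The key step is then to identify $g^{(q-1)/2}$. Since $g$ has order $q-1$, the element $g^{(q-1)/2}$ has order exactly $2$; and $-1$ is the unique element of order $2$ in $\F^*$, again by the factorization $X^2-1=(X-1)(X+1)$ together with $1\neq -1$. Hence $g^{(q-1)/2}=-1$. Writing $a=g^k$ now gives
\begin{align*}
a^{(q-1)/2}=\big(g^{(q-1)/2}\big)^{k}=(-1)^{k},
\end{align*}
which equals $1$ if and only if $k$ is even, that is, if and only if $a$ is a square. This is exactly $\sigma(a)=a^{(q-1)/2}$.

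I do not expect a serious obstacle; the only point requiring care is the repeated appeal to $q$ odd, which underlies both the two-valuedness of $\sigma$ and the identification of $-1$ as the distinguished order-$2$ element. If one preferred to avoid choosing a generator, an alternative would be a counting argument: the squares form a subgroup of $\F^*$ of index $2$, hence of size $(q-1)/2$, and all of them satisfy $X^{(q-1)/2}=1$; since this polynomial has at most $(q-1)/2$ roots in the field $\F$, the squares are precisely its roots, giving the forward and reverse implications simultaneously. I would mention this as a remark but carry out the cleaner generator-based computation as the main line.
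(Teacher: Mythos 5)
Your proof is correct, and your main line is genuinely different from the paper's. The paper never chooses a generator: it defines $\tau(a)=a^{(q-1)/2}$, observes $\tau(a)=\pm 1$, shows $\tau$ is onto $\{\pm 1\}$ by root-counting (if $\tau\equiv 1$ then $X^{(q-1)/2}=1$ would have $q-1$ roots, too many for its degree), and then compares cardinalities: the kernel of $\tau$ has exactly $\tfrac{1}{2}(q-1)$ elements and contains the $\tfrac{1}{2}(q-1)$ non-zero squares, so it equals them. Your argument instead leans on the cyclicity of $\F^*$, identifying $g^{(q-1)/2}=-1$ as the unique element of order $2$ and reading $\sigma$ off as the parity of the discrete logarithm; this makes the structural picture most transparent (the squares are the even powers, and $\sigma$ is the index-$2$ quotient map), at the cost of invoking a generator whose existence the paper itself flags as non-constructive. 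The counting alternative you sketch at the end is in substance the paper's proof, just phrased with the squares rather than the kernel of $\tau$ as the primary object. Both routes correctly isolate where $q$ odd enters ($1\neq -1$, and $(q-1)/2$ being an integer), and your derivation of multiplicativity as an immediate corollary of the formula matches the logic of the paper, where $\sigma=\tau$ and $\tau$ is visibly a homomorphism.
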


\begin{proof} 
Let $\tau(a)=a^{(q-1)/2}$ for $a\in \F^*$. Note that $\tau(a)=\pm 1$, as $\tau(a)^2=a^{q-1}=1$. Note also that $\tau$ does take the value $-1$, for otherwise the equation $X^{(q-1)/2}=1$ would have too many solutions in $\F$. Thus $\tau:\F^*\to \{\pm 1\}$ is an onto homomorphism. Its kernel has size $\frac{1}{2}(q-1)$, and it contains the non-zero squares, whose number is also $\frac{1}{2}(q-1)$. Therefore the kernel of $\tau$ consists precisely of the non-zero squares. In other words, $\tau(a)=1$ if $a$ is a square in $\F^*$, and $\tau(a)=-1$ if $a$ is not a square in $\F^*$, that is, $\tau=\sigma$.
\end{proof}

The question whether $-1$ is a square or not is an important one. An application of Euler's formula yields the following.

\begin{cor}\label{prop: -1 as a square}
$-1$ is a square in $\F$ if and only if $q\equiv 1$ mod $4$.
\end{cor}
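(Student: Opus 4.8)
The plan is to reduce everything to the Euler formula $\sigma(a)=a^{(q-1)/2}$ from the preceding theorem, applied at the single value $a=-1$. Since $q$ is odd by our standing assumption, $q-1$ is even, so $(q-1)/2$ is a genuine integer and the expression $(-1)^{(q-1)/2}$ makes sense.

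First I would observe that, by definition of the quadratic signature, $-1$ is a square in $\F$ if and only if $\sigma(-1)=1$. Plugging into Euler's formula gives $\sigma(-1)=(-1)^{(q-1)/2}$, so the condition becomes $(-1)^{(q-1)/2}=1$.

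Next I would argue that $(-1)^{(q-1)/2}=1$ precisely when the exponent $(q-1)/2$ is even. The one point that deserves a word is why the value of $(-1)^{(q-1)/2}\in\F$ actually detects the parity of the exponent: this is where we use that $q$ is odd, so $\F$ has odd characteristic and hence $1\neq -1$ in $\F$. Thus the two possible values $+1$ and $-1$ are genuinely distinct, and $(-1)^{(q-1)/2}$ equals $1$ if and only if $(q-1)/2$ is even, and equals $-1$ otherwise.

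Finally I would translate the parity condition into a congruence: $(q-1)/2$ is even if and only if $4 \mid (q-1)$, that is, $q\equiv 1 \pmod 4$. Chaining the equivalences yields the claim. I do not expect any genuine obstacle here; the only subtlety to flag is the need to invoke odd characteristic to ensure $1\neq-1$, without which the final step would not follow.
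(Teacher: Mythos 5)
Your proof is correct and matches the paper's intended argument exactly: the paper derives this corollary as a direct application of Euler's formula $\sigma(a)=a^{(q-1)/2}$ at $a=-1$, just as you do. Your extra remark that odd characteristic guarantees $1\neq -1$ in $\F$, so that the value of $(-1)^{(q-1)/2}$ genuinely detects the parity of $(q-1)/2$, is a worthwhile point that the paper leaves implicit.
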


If $\K$ is an extension of $\F$, how is being a square in $\F$ related to being a square in $\K$? This question, too, can be settled by applying Euler's formula. Let $n$ denote the degree of the extension $\K/\F$. Then for each $a\in \F^*$ we have that
\begin{align*}
\sigma_\K(a)=a^{(q^n-1)/2}=\sigma_\F(a)^{1+q+\ldots+q^{n-1}}=\sigma_\F(a)^n.
\end{align*}
Thus, the following holds:

\begin{cor}
If $\K$ is an extension of even degree, then all elements of $\F$ are squares in $\K$. If $\K$ is an extension of odd degree, then an element of $\F$ is a square in $\K$ if and only if it is already a square in $\F$.
\end{cor}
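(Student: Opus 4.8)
The plan is to let the displayed identity preceding the statement do essentially all of the work. For any $a\in \F^*$, Euler's formula applied inside $\K$ gives
\begin{align*}
\sigma_\K(a)=a^{(q^n-1)/2}=\sigma_\F(a)^{1+q+\ldots+q^{n-1}}=\sigma_\F(a)^n,
\end{align*}
so the whole question of whether $a$ is a square in $\K$ is encoded in the single quantity $\sigma_\F(a)^n$. The corollary then follows by simply inspecting this quantity according to the parity of the degree $n$, together with the trivial remark that $0$ is a square in any field.

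The only point that requires a word of justification is the last equality in the display, namely the reduction of the exponent $1+q+\ldots+q^{n-1}$ to $n$. Here I would invoke the standing hypothesis that $q$ is odd: each power $q^k$ is then odd, so the sum of the $n$ terms $1+q+\ldots+q^{n-1}$ has the same parity as $n$. Since $\sigma_\F(a)\in\{\pm 1\}$, raising it to two exponents of equal parity gives the same result, which is exactly $\sigma_\F(a)^n$. (One should also note that $q^n$ is odd, so that $\sigma_\K$ is defined and Euler's formula is legitimately available over $\K$.)

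With $\sigma_\K(a)=\sigma_\F(a)^n$ in hand, the two cases are immediate. If $n$ is even, then $\sigma_\F(a)^n=(\pm 1)^n=1$ for every $a\in\F^*$, so every non-zero element of $\F$ is a square in $\K$, and hence so is $0$; this is the first assertion. If $n$ is odd, then $\sigma_\F(a)^n=\sigma_\F(a)$, so $\sigma_\K(a)=\sigma_\F(a)$, which says precisely that $a\in\F^*$ is a square in $\K$ exactly when it is already a square in $\F$; as $0$ is a square in both fields, this gives the second assertion. I do not expect a genuine obstacle: the content is already contained in the pre-computed formula $\sigma_\K(a)=\sigma_\F(a)^n$, and the proof is a parity bookkeeping argument resting on the oddness of $q$.
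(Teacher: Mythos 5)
Your proof is correct and takes essentially the same route as the paper: the paper deduces the corollary directly from the displayed identity $\sigma_\K(a)=a^{(q^n-1)/2}=\sigma_\F(a)^{1+q+\ldots+q^{n-1}}=\sigma_\F(a)^n$, and your parity bookkeeping (each $q^k$ is odd since $q$ is odd, so the exponent $1+q+\ldots+q^{n-1}$ has the parity of $n$, which suffices because $\sigma_\F(a)\in\{\pm 1\}$) is exactly the step the paper leaves implicit. Nothing further is needed.
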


There are as many squares as non-squares in $\F^*$, so 
\begin{align*}
\sum_{a\in \F^*} \sigma(a)=0.
\end{align*}
The quadratic signature $\sigma$ is extended to $\F$ by setting $\sigma(0)=0$. With this convention, $\sigma$ continues to be multiplicative and to satisfy the Euler formula. The above summation formula holds as well over $\F$. 

The next lemma is a computation involving the quadratic signature. We will use it as an ingredient in proving several interesting results.

\begin{lem}\label{lem: convolution} Put
\begin{align*}
J_c=\sum_{a+b=c} \sigma(a)\:\sigma(b).
\end{align*}
Then $J_c=-\sigma(-1)$ if $c\neq 0$, and $J_0=\sigma(-1) \:(q-1)$.
\end{lem}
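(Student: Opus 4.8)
The plan is to split the analysis according to whether $c=0$ or $c\neq 0$, and in the main case $c\neq 0$ to reduce the sum to a single-variable sum over $\F$ by a change of variables that exploits the multiplicativity of $\sigma$. First I would dispose of the case $c=0$. When $c=0$ we are summing $\sigma(a)\sigma(b)$ over all pairs with $a+b=0$, i.e.\ $b=-a$, so the sum becomes $\sum_{a\in \F}\sigma(a)\sigma(-a)=\sum_{a}\sigma(-a^2)=\sigma(-1)\sum_{a\neq 0}\sigma(a^2)$, using multiplicativity and $\sigma(0)=0$. Since $a^2$ is a non-zero square for every $a\neq 0$, each term $\sigma(a^2)=1$, and there are $q-1$ such terms; hence $J_0=\sigma(-1)(q-1)$, as claimed.

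For the main case $c\neq 0$, the key idea is to factor out $c$. Writing $a=cx$ and $b=c(1-x)$ as $x$ ranges over $\F$ (a legitimate reparametrization since $c\neq 0$), multiplicativity gives $\sigma(a)\sigma(b)=\sigma(c^2)\,\sigma(x)\,\sigma(1-x)=\sigma\big(x(1-x)\big)$, because $\sigma(c^2)=1$. Thus $J_c=\sum_{x\in\F}\sigma\big(x(1-x)\big)$, which no longer depends on $c$. The problem is now reduced to evaluating the single sum $\sum_{x}\sigma(x-x^2)$.

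The main obstacle — and the genuinely substantive step — is the evaluation of $\sum_{x\in\F}\sigma\big(x(1-x)\big)$. The cleanest route I would take is the standard trick for quadratic character sums of the form $\sum_x\sigma(x)\sigma(x+t)$: on the support where $x\neq 0$ one substitutes $x(1-x)=x^2(x^{-1}-1)$ and uses $\sigma(x^2)=1$ to rewrite the summand as $\sigma(x^{-1}-1)$, so that as $x$ runs over $\F^*$ the quantity $y=x^{-1}-1$ runs over $\F\setminus\{-1\}$. Therefore $\sum_{x\neq 0}\sigma\big(x(1-x)\big)=\sum_{y\neq -1}\sigma(y)=\Big(\sum_{y\in\F}\sigma(y)\Big)-\sigma(-1)=-\sigma(-1)$, where I use the global vanishing $\sum_{y\in\F}\sigma(y)=0$ recorded just before the lemma. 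The term $x=0$ contributes $\sigma(0)=0$, so it may be dropped. This yields $J_c=-\sigma(-1)$ for $c\neq 0$, completing the proof.

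Alternatively, if one prefers to avoid the inversion substitution, the same sum can be evaluated by completing the square: in odd characteristic $x(1-x)=\tfrac14\big(1-(2x-1)^2\big)$, and as $x$ ranges over $\F$ the variable $u=2x-1$ ranges over all of $\F$, so one is left with $\sigma(\tfrac14)\sum_u \sigma(1-u^2)$; counting how often $1-u^2$ is a nonzero square via the two-to-one nature of squaring again reduces everything to the vanishing of $\sum_{a\in\F}\sigma(a)$. Either way the crux is the same: convert the two-variable convolution into a one-variable character sum and invoke the orthogonality relation $\sum_{a\in\F}\sigma(a)=0$.
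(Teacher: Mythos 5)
Your main argument is correct and is essentially the paper's own proof: the identical computation of $J_0$ via $\sigma(a)\sigma(-a)=\sigma(-1)$, the same rescaling $a=cx$, $b=c(1-x)$ reducing $J_c$ to the $c$-independent sum $J_1=\sum_x \sigma\big(x(1-x)\big)$, and the same evaluation of $J_1$ by writing $\sigma\big(x(1-x)\big)=\sigma(x^{-1}-1)$ for $x\neq 0$ and invoking $\sum_{y\in\F}\sigma(y)=0$. One caveat on your completing-the-square aside: if you evaluate $\sum_u \sigma(1-u^2)$ by counting preimages under squaring, each value $t$ of $1-u^2$ is weighted by $1+\sigma(1-t)$, and the sum collapses back to $\sum_t \sigma(t)\sigma(1-t)=J_1$ --- circular unless you instead count points on the conic $u^2+s^2=1$ by an independent method (such as the paper's norm-map argument for quadratic extensions); so that alternative does not stand on its own as sketched, though your main route is complete.
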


\begin{proof}
As $ \sigma(a)\:\sigma(-a)=\sigma(-1)\:\sigma(a^2)=\sigma(-1)$ for each $a\in \F^*$, we get
\begin{align*}
J_0=\sum_{a} \sigma(a)\:\sigma(-a)= \sigma(-1)\: (q-1).
\end{align*} 
For $c\neq 0$, a change of variables yields
\begin{align*}
J_c=\sum_{a+b=1} \sigma(ac)\:\sigma(bc)= J_1.
\end{align*}
The value of $J_1$ can be computed in two ways. Directly, we may write $\sigma(a)\:\sigma(1-a)=\sigma(a-a^2)=\sigma(a^2)\:\sigma(a^{-1}-1)=\sigma(a^{-1}-1)$ for each $a\in \F^*$ to get
\begin{align*}
J_1=\sum_{a} \sigma(a)\:\sigma(1-a)=\sum_{a\neq 0}  \sigma(a^{-1}-1)=-\sigma(-1).
\end{align*} 
Indirectly, we observe that 
\begin{align*}
\sum_c J_c=\Big(\sum_a \sigma(a)\Big)\Big(\sum_b \sigma(b)\Big)=0
\end{align*}
so $J_0+(q-1)\: J_1=0$. It follows that $J_1=-\sigma(-1)$, by using the value of $J_0$.
\end{proof}

The first application of Lemma~\ref{lem: convolution} concerns the equation $aX^2+bY^2=1$, for $a,b\in \F^*$. A simple counting argument shows that the equation is solvable in $\F$: the two subsets $\{ax^2: x\in \F\}$ and $\{1-by^2: y\in \F\}$ have size $\tfrac{1}{2}(q+1)$, so they must intersect. 

\begin{prop}\label{prop: conic}
The equation $aX^2+bY^2=1$ has $q-\sigma(-ab)=q\pm 1$ solutions. In particular, the equation $X^2+Y^2=c$ has $q-\sigma(-1)$ solutions, for every $c\in \F^*$.
\end{prop}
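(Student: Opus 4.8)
The plan is to count solutions by reducing the cross term to the convolution sum $J_1$ of Lemma~\ref{lem: convolution}. The starting observation is that for any $c\in\F$, the number of $x\in\F$ with $x^2=c$ equals $1+\sigma(c)$: this is clear for $c\neq 0$ (two solutions if $c$ is a square, none otherwise), and it holds at $c=0$ as well, thanks to the convention $\sigma(0)=0$. Since $a\neq 0$, multiplicativity of $\sigma$ upgrades this to: the number of $x$ with $ax^2=u$ equals $1+\sigma(a)\sigma(u)$, and likewise the number of $y$ with $by^2=v$ equals $1+\sigma(b)\sigma(v)$.

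Next I would stratify the solutions of $aX^2+bY^2=1$ according to the values $u=ax^2$ and $v=by^2$, which necessarily satisfy $u+v=1$. Multiplying the fibre counts and summing gives
\begin{align*}
N=\sum_{u+v=1}\bigl(1+\sigma(a)\sigma(u)\bigr)\bigl(1+\sigma(b)\sigma(v)\bigr).
\end{align*}
Expanding the product produces four sums over the $q$ pairs with $u+v=1$. The constant term contributes $q$. The two linear terms each reduce to $\sum_{u\in\F}\sigma(u)=0$, so they vanish. The remaining term is $\sigma(a)\sigma(b)\sum_{u+v=1}\sigma(u)\sigma(v)=\sigma(a)\sigma(b)\,J_1$, and Lemma~\ref{lem: convolution} supplies $J_1=-\sigma(-1)$. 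Therefore
\begin{align*}
N=q-\sigma(-1)\,\sigma(a)\,\sigma(b)=q-\sigma(-ab),
\end{align*}
using multiplicativity once more. As $-ab\in\F^*$ we have $\sigma(-ab)=\pm1$, which is exactly the claimed count $q\pm1$.

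For the special case $X^2+Y^2=c$ with $c\neq 0$, I would divide the equation by $c$, which leaves the solution set unchanged, rewriting it as $c^{-1}X^2+c^{-1}Y^2=1$. Applying the formula with $a=b=c^{-1}$ gives $N=q-\sigma(-c^{-2})=q-\sigma(-1)\,\sigma(c^{-2})=q-\sigma(-1)$, since $c^{-2}$ is a nonzero square and hence $\sigma(c^{-2})=1$.

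I expect no serious obstacle: the entire argument is bookkeeping once the count $1+\sigma(c)$ is available. The one point that demands care is the correct handling of the degenerate fibres $u=0$ and $v=0$ in the stratification, and this is precisely where the convention $\sigma(0)=0$ earns its keep—it lets the single formula $1+\sigma(\cdot)$ cover both the generic and the degenerate cases, so that no separate case analysis is needed before invoking the lemma.
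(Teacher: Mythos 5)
Your argument is correct and is essentially the paper's first proof: the same fibre count $N(aX^2-r)=1+\sigma(ar)$, the same stratification over $r+s=1$, and the same reduction to $J_1=-\sigma(-1)$ from Lemma~\ref{lem: convolution}, with the special case handled identically by taking $a=b=1/c$. (The paper also offers a second, independent proof via the norm map of the quadratic extension $\F[X]/(X^2-d)$, but your route matches the first.)
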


\begin{proof}[First proof] Let $N(f)$ denote the number of solutions in $\F$ of a polynomial equation $f=0$. Note that $N(X^2-c)=1+\sigma(c)$; more generally, $N(aX^2-c)=1+\sigma(ac)$ for $a\neq 0$. Then:
\begin{align*}
N(aX^2+bY^2-1)&=\sum_{r+s=1} N(aX^2-r) \: N(bY^2-s)\\
&=\sum_{r+s=1} \big(1+\sigma(ar)\big)\: \big(1+\sigma(bs)\big)
\end{align*}
We have 
\begin{align*}
\sum_{r+s=1} 1=q, \qquad \sum_{r+s=1} \sigma(r)=\sum_{r+s=1} \sigma(s)=0,\end{align*} 
so we get
\begin{align*}
N(aX^2+bY^2-1)&=q+\sigma(ab)\:\sum_{r+s=1}\sigma(r)\:\sigma(s)\\
&=q-\sigma(-ab).
\end{align*}
The second part follows by taking $a=b=1/c$.
\end{proof} 

\begin{proof}[Second proof] 
Dividing the equation $aX^2+bY^2=1$ by $a$, and setting $d=-ba^{-1}$, $c=a^{-1}$, we reach the equation $X^2-dY^2=c$. We wish to show that, for each $d\in \F^*$, the latter equation has $q-\sigma(d)$ solutions independently of $c\in \F^*$.

Consider the case when $d$ is a square. After writing $d=d_0^2$ for some $d_0\in \F^*$, the equation $X^2-dY^2=c$ becomes $(X+d_0Y)(X-d_0Y)=c$. This, in turn, is equivalent to solving the system
\begin{align*}
X+d_0Y=s, \quad X-d_0Y=cs^{-1}
\end{align*}
for each $s\in \F^*$. This system has a unique solution, namely $x=(s+cs^{-1})/2$ and $y=d_0^{-1}(s-cs^{-1})/2$. Note here that division by $2$ is allowed, since the characteristic is odd. We conclude that, in this case, $X^2-dY^2=c$ has $q-1$ solutions.

Consider now the case when $d$ is not a square. Let $\K=\F[X]/(X^2-d)$ be the quadratic extension of $\F$ discussed in Example~\ref{ex: quadext}. Then the solutions of $X^2-dY^2=c$ correspond, via $(x,y)\mapsto x+y\sqrt{d}$, to the elements of $\K^*$ having norm $c$. Since the norm map is an onto homomorphism from $\K^*$ to $\F^*$, there are $|\K^*|/|\F^*|=(q^2-1)/(q-1)=q+1$ such elements for each $c$. To conclude, $X^2-dY^2=c$ has $q+1$ solutions in this case.
\end{proof}

\begin{exer}\label{exer: spheres}
Find the number of solutions to the equation $X_1^2+\ldots+X_n^2=c$, where $c\in \F^*$. 
\end{exer}

\bigskip
\subsection*{Two arithmetic applications}\
\medskip

\begin{lem}[Gauss]\label{Gauss lemma}
Let $p$ be an odd prime, and let $\zeta\in \C$ be a $p$-th root of unity. Then the cyclotomic integer
\begin{align*}
G=\sum_{a\in \Z_p} \sigma(a)\: \zeta^a \in \Z[\zeta]
\end{align*}
satisfies $G^2=\sigma(-1)\: p$.
\end{lem}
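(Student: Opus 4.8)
The plan is to compute $G^2$ directly by expanding the product and collecting terms according to the exponent of $\zeta$. Writing
\begin{align*}
G^2=\sum_{a,b\in \Z_p}\sigma(a)\:\sigma(b)\:\zeta^{a+b},
\end{align*}
I would group the double sum by the value $c=a+b$. For each fixed $c$, the coefficient of $\zeta^c$ is precisely $J_c=\sum_{a+b=c}\sigma(a)\:\sigma(b)$, the quantity computed in Lemma~\ref{lem: convolution}. Here the sums run over $\Z_p$, and since $\sigma(0)=0$ by convention, only nonzero $a,b$ contribute, so Lemma~\ref{lem: convolution} applies verbatim with $q=p$. This reduces the problem to the identity $G^2=\sum_{c\in\Z_p} J_c\:\zeta^c$.

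Next I would substitute the two values from the lemma: $J_0=\sigma(-1)(p-1)$ and $J_c=-\sigma(-1)$ for $c\neq 0$. This gives
\begin{align*}
G^2=\sigma(-1)(p-1)\:\zeta^0-\sigma(-1)\sum_{c\neq 0}\zeta^c.
\end{align*}
The final step uses the standard cyclotomic relation $\sum_{c\in\Z_p}\zeta^c=0$ (valid since $\zeta$ is a nontrivial $p$-th root of unity), which means $\sum_{c\neq 0}\zeta^c=-1$. Substituting, the bracket becomes $\sigma(-1)\big((p-1)+1\big)=\sigma(-1)\:p$, which is the claim.

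The computation is essentially routine once Lemma~\ref{lem: convolution} is in hand, so I do not expect a genuine obstacle. The one point requiring a little care is the bookkeeping of the $\sigma(0)=0$ convention: I must confirm that the index sets in the definition of $G$ (which range over all of $\Z_p$) mesh correctly with the convolution sums $J_c$ as stated in the lemma, and that the term $c=0$ is handled by $J_0$ rather than being inadvertently dropped. A secondary, purely expository, point is to note at the outset that $G\in\Z[\zeta]$ makes sense because $\sigma$ takes integer values $\{0,\pm 1\}$, so that squaring lands back in $\Z[\zeta]$ and the resulting identity is an equality of cyclotomic integers.
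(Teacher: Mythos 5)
Your proposal is correct and follows essentially the same route as the paper's own proof: expand $G^2$, group the terms by $c=a+b$ to recognize the convolution sums $J_c$ of Lemma~\ref{lem: convolution}, and finish with $1+\zeta+\cdots+\zeta^{p-1}=0$. Your extra care about the $\sigma(0)=0$ convention is sound and handled implicitly in the paper as well.
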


\begin{proof}
We have $\zeta^p=1$, so $1+\zeta+\ldots+\zeta^{p-1}=0$. Using Lemma~\ref{lem: convolution} along the way, we compute
\begin{align*}
G^2&=\sum_{a,b} \sigma(a)\:\sigma(b)\:  \zeta^{a+b}=\sum_c \Big(\sum_{a+b=c} \sigma(a)\:\sigma(b)\Big)\: \zeta^c\\
&= \sigma(-1)\: (p-1)-\sigma(-1)\: \Big(\sum_{c\neq 0} \zeta^c\Big)= \sigma(-1)\: p,
\end{align*}
as desired.
\end{proof}

The above result was a lemma, so we might guess that something bigger lurks around the corner. Indeed, it is the law of quadratic reciprocity.

\begin{thm}[Euler, Legendre, Gauss]\label{thm: QR}
Let $p$ and $\ell$ be distinct odd primes. If $p\equiv 1$ mod $4$ or $\ell\equiv 1$ mod $4$, then $p$ is a square mod $\ell$ if and only if $\ell$ is a square mod $p$. If $p,\ell\equiv 3$ mod $4$, then $p$ is a square mod $\ell$ if and only if $\ell$ is not a square mod $p$.
\end{thm}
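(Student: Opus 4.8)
The plan is to evaluate the $\ell$-th power of the Gauss sum from Lemma~\ref{Gauss lemma} in two ways, working in the ring $\Z[\zeta]$ where $\zeta$ is a primitive $p$-th root of unity, and comparing the results modulo $\ell$. Write $\sigma_p, \sigma_\ell$ for the quadratic signatures on $\Z_p, \Z_\ell$, and set $G=\sum_{a}\sigma_p(a)\,\zeta^a$. Lemma~\ref{Gauss lemma} gives $G^2=\sigma_p(-1)\,p=:p^*$, and by Corollary~\ref{prop: -1 as a square} we have $p^*=(-1)^{(p-1)/2}p=\pm p$, which is a unit modulo $\ell$ since $\ell\neq p$.

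First I would compute $G^\ell \bmod \ell$ via the identity $(x+y)^\ell\equiv x^\ell+y^\ell\pmod\ell$ --- the same freshman's-dream congruence underlying the Frobenius map earlier in the text. Expanding term by term and using $\sigma_p(a)^\ell=\sigma_p(a)$ (valid because $\ell$ is odd and each $\sigma_p(a)=\pm1$) collapses the power to $\sum_a\sigma_p(a)\,\zeta^{\ell a}\pmod\ell$. Reindexing by $b=\ell a$, which permutes $\Z_p$ since $\ell$ is invertible mod $p$, and using multiplicativity together with $\sigma_p(\ell)^{-1}=\sigma_p(\ell)$, this rewrites as $\sigma_p(\ell)\,G\pmod\ell$, where $\sigma_p(\ell)$ records whether $\ell$ is a square mod $p$.

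Second I would compute $G^\ell=(G^2)^{(\ell-1)/2}G=(p^*)^{(\ell-1)/2}G$, and apply the Euler formula in $\Z_\ell$ to the ordinary integer $p^*$, giving $(p^*)^{(\ell-1)/2}\equiv\sigma_\ell(p^*)\pmod\ell$. Comparing the two evaluations yields $\sigma_\ell(p^*)\,G\equiv\sigma_p(\ell)\,G\pmod\ell$; multiplying through by $G$ and cancelling the unit $G^2=p^*$ leaves $\sigma_\ell(p^*)\equiv\sigma_p(\ell)\pmod\ell$, and since both sides are $\pm1$ while $\ell>2$, this is an equality $\sigma_\ell(p^*)=\sigma_p(\ell)$.

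It then remains to unwind $p^*$: by multiplicativity $\sigma_\ell(p^*)=\sigma_\ell(-1)^{(p-1)/2}\sigma_\ell(p)$, and $\sigma_\ell(-1)=(-1)^{(\ell-1)/2}$ (Corollary~\ref{prop: -1 as a square} again), so $\sigma_\ell(p^*)=(-1)^{(p-1)(\ell-1)/4}\sigma_\ell(p)$. This produces the reciprocity relation $\sigma_p(\ell)=(-1)^{(p-1)(\ell-1)/4}\,\sigma_\ell(p)$. The sign is $-1$ exactly when $(p-1)/2$ and $(\ell-1)/2$ are both odd, i.e.\ when $p\equiv\ell\equiv 3\bmod 4$, and is $+1$ otherwise --- precisely the case split in the statement, reading $\sigma_p(\ell)=1$ as ``$\ell$ is a square mod $p$''. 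I expect the main obstacle to be one of bookkeeping discipline in the ring $\Z[\zeta]/(\ell)$, which is not a field: one must confirm that the reductions modulo $\ell$ are consistent and, crucially, that $G^2=p^*$ is genuinely invertible mod $\ell$, so that the cancellation producing the final congruence is legitimate.
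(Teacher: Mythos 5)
Your proposal is correct and takes essentially the same route as the paper's first proof: both evaluate $G^\ell$ modulo $\ell$ twice, once by the freshman's-dream congruence and reindexing (giving $\sigma_p(\ell)\,G$), and once via $G^{\ell-1}=(G^2)^{(\ell-1)/2}=(p^*)^{(\ell-1)/2}$ together with Euler's formula (giving $\sigma_\ell(p^*)\,G$), then compare. The one point you flag but leave implicit---that a congruence between the integers $\pm 1$ modulo $\ell$ \emph{in} $\Z[\zeta]$ forces actual equality---is exactly where the paper invokes the fact that a rational number which is an algebraic integer must be an integer (so $2/\ell\notin\Z[\zeta]$); your multiplication by $G$ and cancellation of the unit $p^*$ is a harmless repackaging of the paper's step of deducing $2G\equiv 0$, squaring, and reaching the same contradiction.
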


The \emph{Legendre symbol} for an odd prime $p$ is defined as
\begin{align*}
(a/p)=
\begin{cases}
1 & \textrm{if } a \textrm{ is a square mod }p\\
-1 & \textrm{if } a \textrm{ is not a square mod }p\end{cases}
\end{align*} 
when $a$ is an integer relatively prime to $p$, respectively $(a/p)=0$ when $a$ is an integer divisible by $p$. Clearly, $(a/p)$ only depends on the residue class $a$ mod $p$. On $\Z_p$, 
the Legendre symbol $(\cdot/p)$ is just another notation for the quadratic signature. In terms of the Legendre symbol, the law of quadratic reciprocity can be compactly stated as follows: for distinct odd primes $p$ and $\ell$ we have
\begin{align}
(p/\ell)\: (\ell/p)=(-1)^{(p-1)(\ell-1)/4}.\tag{QR}
\end{align}
A convenient reformulation, especially for the proofs below, is
\begin{align*}
(\ell/p)=(p^*/\ell), \qquad p^*=(-1/p)\:p\tag{QR*}.
\end{align*}
 Indeed, $(-1/p)= (-1)^{(p-1)/2}$ so $((-1/p)/\ell)= (-1)^{(p-1)(\ell-1)/4}$.

\begin{proof}[Proof of Theorem~\ref{thm: QR}] Let $G$ be defined as in the previous lemma, with respect to $p$. We compute $G^\ell$ mod $\ell$ in two ways. On the one hand, we have
\begin{align*}
G^\ell=\Big(\sum_{a\in \Z_p}  (a/p)\: \zeta^a\Big)^\ell \equiv \sum_{a\in \Z_p} \big((a/p)\: \zeta^a\big)^\ell \mod \ell,
\end{align*}
and 
\begin{align*}
\sum_{a\in \Z_p} \big((a/p)\: \zeta^a\big)^\ell=\sum_{a\in \Z_p} (a/p)\: \zeta^{a\ell}=\sum_{a\in \Z_p} (a\ell^{-1}/p)\: \zeta^{a}=(\ell/p)\: G.
\end{align*}
In short, $G^\ell\equiv (\ell/p)\: G$ mod $\ell$. 

On the other hand, by Gauss's Lemma we may write
\begin{align*}
G^{\ell-1}=(G^2)^{(\ell-1)/2}=(p^*)^{(\ell-1)/2},
\end{align*}
and 
\begin{align*}
(p^*)^{(\ell-1)/2}\equiv (p^*/\ell)\mod \ell
\end{align*}
by Euler's formula. It follows that $G^\ell\equiv (p^*/\ell)\: G$ mod $\ell$.

The two congruences for $G^\ell$ imply that $(\ell/p)=(p^*/\ell)$. For otherwise, we would have $G\equiv -G$, that is $2G\equiv 0$ mod $\ell$. Squaring, and using Gauss's Lemma once again, leads to $\pm 4p \equiv 0$ mod $\ell$. This is a congruence in the cyclotomic ring $\Z[\zeta]$, and it means that the rational number $4p/\ell$ is in $\Z[\zeta]$. In particular, $4p/\ell$ is an algebraic integer. Now, if a rational number is an algebraic integer, then it is an integer. This is a contradiction, as $4p/\ell$ is certainly not an integer.
\end{proof}

The last step of the previous proof relied on the notion of algebraic integer. Let us give a variation which avoids this notion. The idea is the following: instead of working with $G$ modulo $\ell$ in characteristic $0$, we make sense of $G$ in characteristic $\ell$.

\begin{proof}[Proof of Theorem~\ref{thm: QR}, variation.] Let $\K$ be an extension of $\Z_\ell$ containing a $p$-th root of unity $z$. For this to happen, $p$ must divide $|\K^*|=\ell^n-1$, where $n$ is the degree of $\K$. So we may take $n=p-1$, thanks to Fermat's little theorem. 

As $z^p=1$, we may consider
\begin{align*}
g=\sum_{a\in \Z_p} (a/p)\: z^a \in \K.
\end{align*}
The same argument as in Gauss's lemma shows that $g^2=p^*$. So $p^*$ is a square in $\Z_\ell$ precisely when $g$ is in $\Z_\ell$. The latter holds if and only if $g^\ell=g$, that is, $g$ is fixed by the Frobenius automorphism of the extension $\K/\Z_\ell$. Now
\begin{align*}
g^\ell=\sum_{a\in \Z_p} (a/p)\: z^{a\ell}=\sum_{a\in \Z_p} (a\ell^{-1}/p)\: z^{a}=(\ell/p)\: g.
\end{align*}
In summary, $(p^*/\ell)=1$ if and only if $(\ell/p)=1$. This means that $(\ell/p)=(p^*/\ell)$.
\end{proof}

The law of quadratic reciprocity is complemented by the following: if $p$ is an odd prime, then $2$ is a square mod $p$ if and only if $p\equiv \pm 1$ mod $8$. This can be proved by looking at $(1+i)^p$ mod $p$ in two different ways. The details are left to the reader.

\begin{exer}\label{exer: lebesgue} Let $p$ and $\ell$ be distinct odd primes. Give a different proof for the law of quadratic reciprocity, $(\ell/p)=(p^*/\ell)$, by arguing as follows. Let $N$ denote the number of solutions to the equation $X_1^2+\ldots+X_\ell^2=1$ in $\Z_p$. Show that $N\equiv 1+(p^*/\ell)$ mod $\ell$, but also $N\equiv 1+(\ell/p)$ mod $\ell$.
\end{exer}

The earliest arithmetical jewel is Fermat's theorem that a prime power congruent to $1$ mod $4$ is a sum of two integral squares. Facts as old as this one have many elegant proofs. The following result, our third application of Lemma~\ref{lem: convolution}, gives an \emph{explicit} proof of Fermat's theorem.

\begin{thm}[Jacobsthal]\label{thm: jaco}
Assume that $q\equiv 1$ mod $4$. For $a\in \F^*$, consider the sum
\begin{align*}
S(a)=\sum_{x\in \F} \sigma(x^3+ax).
\end{align*}
Then the following hold:
\begin{itemize}
\item[(i)] $S(a)$ is an even integer, whose absolute value only depends on whether $a$ is a square or not;
\item[(ii)] if $2A$ and $2B$ denote the values of $|S(\cdot)|$ on squares, respectively on non-squares, then the positive integers $A$ and $B$ satisfy $A^2+B^2=q$. 
\end{itemize}
\end{thm}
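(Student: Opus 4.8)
The plan is to prove (i) from two symmetries of the defining sum, and (ii) by evaluating $\Sigma=\sum_{a\in\F^*}S(a)^2$ in two different ways. Throughout, the hypothesis $q\equiv 1 \bmod 4$ enters only through $\sigma(-1)=1$ (Corollary~\ref{prop: -1 as a square}), and I would rewrite $S(a)=\sum_{x\neq 0}\sigma(x)\,\sigma(x^2+a)$, the $x=0$ term vanishing since $\sigma(0)=0$.

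For (i): the quantity $S(a)$ is manifestly an integer, as $\sigma$ is $\{0,\pm1\}$-valued. To see it is even, I would use the involution $x\mapsto -x$: since $(-x)^3+a(-x)=-(x^3+ax)$ and $\sigma(-1)=1$, the summands at $x$ and $-x$ agree, and because the characteristic is odd these are honest pairs while $x=0$ contributes $0$. For the class-dependence, the substitution $x\mapsto tx$ with $t\in\F^*$ yields $S(t^2a)=\sigma(t^3)\,S(a)=\sigma(t)\,S(a)$, hence $|S(t^2a)|=|S(a)|$; since any two squares, and any two non-squares, differ by a factor $t^2$, the value $|S(a)|$ is constant on squares (write it $2A$) and constant on non-squares (write it $2B$).

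For (ii), the first evaluation uses (i) together with the presence of $(q-1)/2$ squares and $(q-1)/2$ non-squares in $\F^*$, giving
\begin{align*}
\Sigma=\frac{q-1}{2}(2A)^2+\frac{q-1}{2}(2B)^2=2(q-1)(A^2+B^2).
\end{align*}
The second evaluation comes from expanding the square and interchanging the order of summation:
\begin{align*}
\Sigma=\sum_{x,y\neq 0}\sigma(x)\,\sigma(y)\sum_{a\neq 0}\sigma(x^2+a)\,\sigma(y^2+a).
\end{align*}
I would evaluate the inner sum by first summing over all $a\in\F$; the shift $a\mapsto a-x^2$ turns it into $\sum_c\sigma(c)\,\sigma\!\left(c+(y^2-x^2)\right)$, which by Lemma~\ref{lem: convolution} equals $q-1$ when $y^2=x^2$ and $-1$ otherwise. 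Subtracting the $a=0$ term $\sigma(x^2)\sigma(y^2)=1$, the inner sum over $a\neq 0$ becomes $q-2$ if $y=\pm x$ and $-2$ if $y\neq\pm x$.

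Splitting the outer sum along this dichotomy then finishes the count. On the diagonal $y=\pm x$ one has $\sigma(x)\sigma(x)=1$ and $\sigma(x)\sigma(-x)=\sigma(-1)\sigma(x)^2=1$, so these $2(q-1)$ terms contribute $2(q-1)(q-2)$; off the diagonal the identity $\sum_{y\neq 0,\,y\neq\pm x}\sigma(y)=-2\sigma(x)$ collapses the remainder to $4(q-1)$. Thus $\Sigma=2(q-1)(q-2)+4(q-1)=2q(q-1)$, and comparing with the first evaluation yields $A^2+B^2=q$. I expect the main obstacle to be exactly this second evaluation: organising the double sum, invoking Lemma~\ref{lem: convolution} for the inner sum, and handling the case split correctly — and it is precisely $\sigma(-1)=1$ that makes the diagonal terms reinforce rather than cancel. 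For positivity, in the arithmetically relevant case $q=p$ prime, neither $A$ nor $B$ can vanish, since $A=0$ or $B=0$ would force the prime $p$ to be a perfect square.
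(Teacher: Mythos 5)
Your proof is correct and follows essentially the same route as the paper's: part (i) via the symmetries $x\mapsto -x$ and $x\mapsto tx$ (both hinging on $\sigma(-1)=1$), and part (ii) by evaluating $\sum_a S(a)^2$ in two ways using Lemma~\ref{lem: convolution}, the only difference being that you sum over $a\in\F^*$ and subtract the $a=0$ contribution (inner sums $q-2$ and $-2$), where the paper includes the harmless $S(0)=0$ so that the inner sums are $q-1$ and $-1$ --- both yield $2q(q-1)$. Your closing remark on positivity is a welcome addition rather than a deviation: the paper asserts but never verifies that $A,B>0$, and this genuinely requires $q$ prime, since for instance $q=9$ forces one of $A,B$ to vanish in $A^2+B^2=9$.
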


\begin{proof} (i) The map $x\mapsto \sigma(x^3+ax)$ is symmetric: $\sigma((-x)^3+a(-x))=\sigma(x^3+ax)$, as $\sigma(-1)=1$. Therefore $S(a)$ is even. To see that $|S(a)|$ only depends on whether $a$ is a square or not, we compute for $z\neq 0$ that
\begin{align*}
S(az^2)=\sum \sigma(x^3+az^2x)\stackrel{x\mapsto xz}{=} \sigma(z^3)\sum \sigma(x^3+ax) = \sigma(z) \: S(a).
\end{align*}
(ii) On the one hand, we have
\begin{align*}
\sum_a S(a)^2=\frac{q-1}{2}\big((2A)^2+(2B)^2\big)=2(q-1)(A^2+B^2).
\end{align*}
Here, we have included the harmless sum $S(0)=\sum \sigma(x^3)=\sum \sigma(x)=0$. 

On the other hand, we write
\begin{align*}
\sum_{a} S(a)^2= \sum_{a}\sum_{x,y} \sigma(x^3+ax)\: \sigma(y^3+ay)=\sum_{x,y} \sigma(xy)\:\Big( \sum_{a} \sigma(x^2+a)\: \sigma(y^2+a)\Big).
\end{align*}
At this point, we bring in Lemma~\ref{lem: convolution}:
\begin{align*}
\sum_a \sigma(x^2+a)\:\sigma(y^2+a)=\sigma(-1)\:\sum_a \sigma(x^2+a)\:\sigma(-y^2-a)=
\begin{cases}
q-1 & \textrm{ if } x^2-y^2= 0,\\ 
-1& \textrm{ if } x^2-y^2\neq 0. 
\end{cases}
\end{align*}
Thus
\begin{align*}
\sum_{a} S(a)^2=(q-1) \sum_{x^2=y^2} \sigma(xy) - \sum_{x^2\neq y^2} \sigma(xy)=q\sum_{x^2=y^2} \sigma(xy)-\sum_{x,y}\: \sigma(xy).
\end{align*}
Here $\sum_{x,y}\: \sigma(xy)=\big(\sum_x\sigma(x)\big)\big(\sum_y\sigma(y)\big)=0$, and
\begin{align*}
\sum_{x^2= y^2} \sigma(xy)=\sum_{x} \sigma(x^2)+\sum_{x} \sigma(-x^2)=2(q-1)
\end{align*}
where we use again the fact that $\sigma(-1)=1$. We conclude that $A^2+B^2=q$.
\end{proof}

\begin{notes} Gauss's Lemma says that $G=\pm \sqrt{p}$ for $p\equiv 1$ mod $4$, respectively $G=\pm i\sqrt{p}$ for $p\equiv 3$ mod $4$. Gauss also showed that the correct choice of sign in each case turns out to be the plus sign. 

Quadratic reciprocity played a fundamental role in the history of number theory. Euler discovered the square laws experimentally, and he was able to prove the complementary laws concerning $(-1/p)$ and $(2/p)$. Legendre had a partial proof of the quadratic reciprocity law, a name which he introduced, along with the notation that we now call the Legendre symbol. In the course of his investigations, Legendre also put forth the claim that there are infinitely many primes congruent to $a$ mod $b$, for every pair or relatively prime integers $a$ and $b$. This is a fact that we now know as Dirichlet's theorem. The first complete proof of the quadratic reciprocity law is due to Gauss, and so is the second, the third, the fourth, the fifth and the sixth--all essentially different. He added two more proofs later on. More than two hundred proofs of the quadratic reciprocity law have been published by now. The one suggested in Exercise~\ref{exer: lebesgue} is due to V.-A. Lebesgue. 

The reference for Theorem~\ref{thm: jaco} is Jacobsthal (\emph{\"Uber die Darstellung der Primzahlen der Form $4n+1$ als Summe zweier Quadrate}, J. Reine Angew. Math. 1907).
\end{notes}

\bigskip
\subsection*{Paley graphs}\label{sec: paley graphs}\
\medskip

We usually view the non-zero squares of $\F$ in a multiplicative light, but here we will take an \emph{additive} perspective on them. The fact that the equation $X^2+Y^2=c$ is solvable for each $c\in \F^*$ means that every element in $\F^*$ is a sum of at most two non-zero squares. As for $0$, it is a sum of two non-zero squares precisely when $q\equiv 1$ mod $4$. If $q\equiv 3$ mod $4$, then $0$ is a sum of three non-zero squares since $-1$ is a sum of two non-zero squares. In summary, the set of non-zero squares $\F^{*2}$ forms an additive generating set for $\F$. On the other hand, $\F^{*2}$ is additively symmetric, that is, $\F^{*2}=-\F^{*2}$, if and only if $q\equiv 1$ mod $4$. These considerations motivate the following definition.

For $q\equiv 1$ mod $4$, the \emph{Paley graph} $P(q)$ is the Cayley graph of the additive group of $\F$ with respect to the set of non-zero squares $\F^{*2}$. 

The Paley graph $P(q)$ has $q$ vertices, and it is regular of degree $\tfrac{1}{2}(q-1)$. The first in the sequence of Paley graphs, $P(5)$, is the $5$-cycle. The next one, $P(9)$, is the product $K_3\times K_3$. 

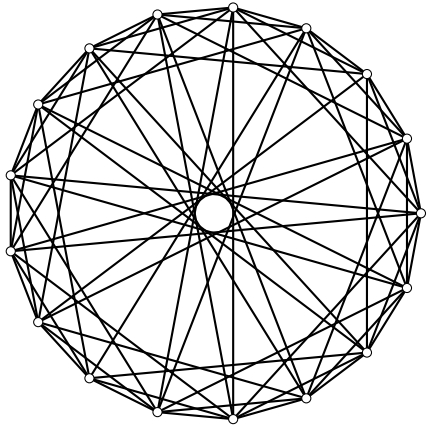
\begin{figure}[!ht]\medskip
\centering
\GraphInit[vstyle=Simple]
\tikzset{VertexStyle/.style = {
shape = circle,
inner sep = 0pt,
minimum size = .8ex,
draw}}
\begin{tikzpicture}[scale=.55]
  \grEmptyCycle[RA=5pt]{17}
  \Edges( a0,  a1,  a2,  a3,  a4,  a5,  a6,  a7,  a8)
  \Edges( a8,  a9, a10, a11, a12, a13, a14, a15, a16, a0)
  \Edges( a0,  a2,  a4,  a6,  a8, a10, a12, a14, a16)
  \Edges(a16,  a1,  a3,  a5,  a7,  a9, a11, a13, a15, a0)
  \Edges( a0,  a4,  a8, a12, a16,  a3,  a7, a11, a15)
  \Edges(a15,  a2,  a6, a10, a14,  a1,  a5,  a9, a13, a0)
  \Edges( a0,  a8, a16,  a7, a15)
  \Edges(a15,  a6, a14,  a5, a13)
  \Edges(a13,  a4, a12,  a3, a11)
  \Edges( a2, a10,  a1,  a9,  a0)
\end{tikzpicture}
\caption{The Paley graph $P(17)$.}
\end{figure}

\begin{thm}\label{Paley is strongly regular}
The Paley graph $P(q)$ is strongly regular: any two adjacent vertices have $a=\tfrac{1}{4}(q-5)$ common neighbours, and any two non-adjacent vertices have $c=\tfrac{1}{4}(q-1)$ common neighbours. 
\end{thm}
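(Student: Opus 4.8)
The plan is to use the translation invariance of the Paley graph. Since $P(q)$ is the Cayley graph of the additive group of $\F$, the map $x\mapsto x+s$ is a graph automorphism for every $s$, so the number of common neighbours of two vertices depends only on their difference. I would therefore fix one vertex at $0$ and the other at an arbitrary $t\in\F^*$, and count the vertices $w$ that are adjacent to both. By definition of the edge relation, $w$ is such a common neighbour precisely when $w$ and $w-t$ are both nonzero squares, i.e.\ $\sigma(w)=\sigma(w-t)=1$; note that any such $w$ is automatically distinct from $0$ and from $t$.

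For $w\notin\{0,t\}$ both $w$ and $w-t$ are nonzero, so I can encode the condition by the indicator $\tfrac14\bigl(1+\sigma(w)\bigr)\bigl(1+\sigma(w-t)\bigr)$, which is $1$ when both are squares and $0$ otherwise. Summing over the $q-2$ admissible values of $w$ and expanding the product, the count of common neighbours becomes
\[
N=\tfrac14\sum_{w\neq 0,t}\bigl(1+\sigma(w)+\sigma(w-t)+\sigma(w)\sigma(w-t)\bigr).
\]

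Now I would evaluate the four resulting sums. The constant sum is simply $q-2$. For the two linear sums, I use $\sum_{w\in\F}\sigma(w)=0$ and subtract the two omitted terms: this gives $\sum_{w\neq 0,t}\sigma(w)=-\sigma(t)$ and $\sum_{w\neq 0,t}\sigma(w-t)=-\sigma(-t)$, and since $q\equiv 1\bmod 4$ forces $\sigma(-1)=1$ by Corollary~\ref{prop: -1 as a square}, the latter also equals $-\sigma(t)$. The decisive term is the cross sum $\sum_{w\neq 0,t}\sigma(w)\sigma(w-t)$: here the two omitted values $w=0,t$ already contribute $0$, so I may extend the sum to all of $\F$, and rewriting $\sigma(w-t)=\sigma(-1)\sigma(t-w)=\sigma(t-w)$ turns it into $\sum_{w}\sigma(w)\sigma(t-w)=J_t$. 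This is exactly the quantity handled by Lemma~\ref{lem: convolution}, which for $t\neq 0$ yields $J_t=-\sigma(-1)=-1$.

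Assembling the four pieces gives
\[
N=\tfrac14\bigl(q-3-2\sigma(t)\bigr),
\]
and it remains only to split on the nature of $t$. Two vertices are adjacent exactly when their difference is a nonzero square, so adjacency corresponds to $\sigma(t)=1$, giving $a=\tfrac14(q-5)$, while non-adjacency corresponds to $\sigma(t)=-1$, giving $c=\tfrac14(q-1)$, as claimed. The only genuinely substantive step is the evaluation of the cross term, and that is precisely what the convolution Lemma~\ref{lem: convolution} supplies; everything else is bookkeeping resting on the two standard facts $\sum_{w}\sigma(w)=0$ and $\sigma(-1)=1$.
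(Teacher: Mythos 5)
Your proof is correct, and it takes a genuinely (if mildly) different route from the paper's. The paper first reinterprets a common neighbour of $v$ and $w$ as an ordered representation of $v-w$ as a sum of two non-zero squares (legitimate because $\sigma(-1)=1$ for $q\equiv 1$ mod $4$), then invokes Proposition~\ref{prop: conic} to get exactly $q-1$ solutions $(x,y)$ of $x^2+y^2=v-w$, and finally corrects for the boundary: no solutions with $x=0$ or $y=0$ in the non-adjacent case, four such solutions in the adjacent case, with the factor $\tfrac14$ arising from the four-to-one correspondence $(x,y)\mapsto(x^2,y^2)$ between solution pairs and ordered square representations. You instead bypass Proposition~\ref{prop: conic} entirely and expand the indicator $\tfrac14\bigl(1+\sigma(w)\bigr)\bigl(1+\sigma(w-t)\bigr)$ over $w\neq 0,t$, feeding the cross term $\sum_w \sigma(w)\,\sigma(t-w)=J_t=-\sigma(-1)=-1$ to Lemma~\ref{lem: convolution}; all four pieces check out, and $N=\tfrac14\bigl(q-3-2\sigma(t)\bigr)$ splits correctly into $a=\tfrac14(q-5)$ and $c=\tfrac14(q-1)$. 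Since the paper's \emph{first} proof of Proposition~\ref{prop: conic} is itself exactly this indicator-expansion trick, your argument essentially inlines that proof into the theorem: what you gain is a self-contained computation with the exclusion of $w=0,t$ handled automatically by $\sigma(0)=0$, rather than hidden in a $4$-to-$1$ counting correspondence and a case analysis of boundary solutions; what the paper gains is brevity by reusing an already-established counting result. Your opening reduction via translation automorphisms to the pair $(0,t)$ is sound but not strictly necessary — the paper simply works with the difference $v-w$ throughout, which amounts to the same thing.
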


\begin{proof}
Let $v$ and $w$ be distinct vertices. The number of common neighbours is the number of representations of $v-w$ as an ordered sum of two non-zero squares in $\F$. By Proposition~\ref{prop: conic}, there are $q-1$ pairs $(x,y)\in \F\times \F$ satisfying $x^2+y^2=v-w$. If $v-w$ is not a square (i.e., $v$ and $w$ are not adjacent) then both $x$ and $y$ are non-zero, and $v-w$ is a sum of two non-zero squares in $\tfrac{1}{4}(q-1)$ ordered ways. If $v-w$ is a square (i.e., $v$ and $w$ are adjacent) then there are two solutions when $x=0$ and two solutions when $y= 0$, hence $q-5$ solutions for which both $x$ and $y$ are non-zero. Thus $v-w$ is a sum of two non-zero squares in $\tfrac{1}{4}(q-5)$ ordered ways.
\end{proof}

The field $\F$ has permutations coming from its two operations, and it is instructive to know their effect on the graph $P(q)$. The translation $+t: \F\to\F$ by $t\in \F$ is, of course, a graph automorphism. More interesting is the scaling $\times r: \F\to\F$ by $r\in \F^*$.  When $r$ is a square, this is a graph automorphism fixing the vertex $0$. When $r$ is a non-square, we get a `switch' changing edges to non-edges, respectively non-edges to edges.

\begin{exer}\label{exer: ramsey 4}
The \emph{Ramsey number} $R(k)$ is the smallest number with the property that, in any group of $R(k)$ persons, either there are $k$ of them who know each other, or there are $k$ of them who do not know each other. Show that $R(4)\geq 18$.
\end{exer}

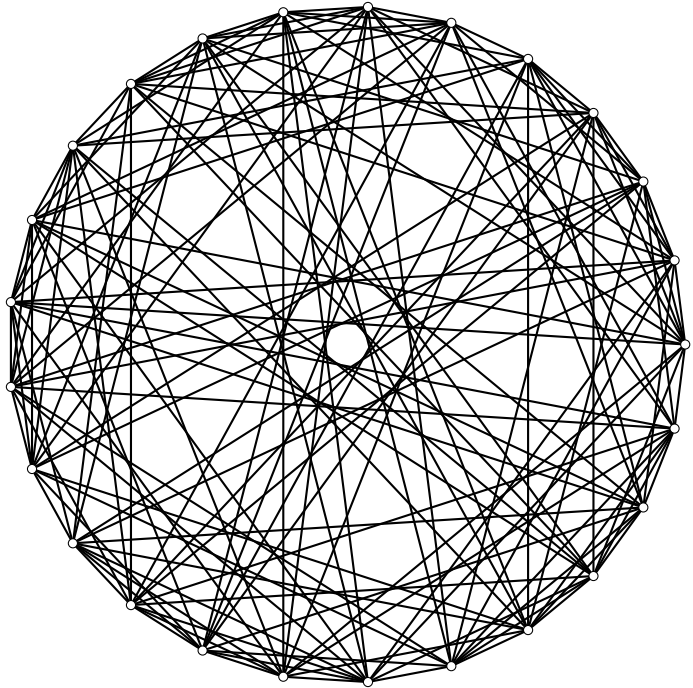
\begin{figure}[!ht]\medskip
\centering
\GraphInit[vstyle=Simple]
\tikzset{VertexStyle/.style = {
shape = circle,
inner sep = 0pt,
minimum size = .8ex,
draw}}
\begin{tikzpicture}[scale=.9]
  \grEmptyCycle[RA=5pt]{25}
\Edges(a1, a0)
\Edges(a2, a0)
\Edges(a2, a1)
\Edges(a3, a0)
\Edges(a3, a1)
\Edges(a3, a2)
\Edges(a4, a0)
\Edges(a4, a1)
\Edges(a4, a2)
\Edges(a4, a3)
\Edges(a5, a2)
\Edges(a5, a4)
\Edges(a6, a0)
\Edges(a6, a3)
\Edges(a6, a5)
\Edges(a7, a1)
\Edges(a7, a4)
\Edges(a7, a5)
\Edges(a7, a6)
\Edges(a8, a0)
\Edges(a8, a2)
\Edges(a8, a5)
\Edges(a8, a6)
\Edges(a8, a7)
\Edges(a9, a1)
\Edges(a9, a3)
\Edges(a9, a5)
\Edges(a9, a6)
\Edges(a9, a7)
\Edges(a9, a8)
\Edges(a10, a3)
\Edges(a10, a4)
\Edges(a10, a7)
\Edges(a10, a9)
\Edges(a11, a0)
\Edges(a11, a4)
\Edges(a11, a5)
\Edges(a11, a8)
\Edges(a11, a10)
\Edges(a12, a0)
\Edges(a12, a1)
\Edges(a12, a6)
\Edges(a12, a9)
\Edges(a12, a10)
\Edges(a12, a11)
\Edges(a13, a1)
\Edges(a13, a2)
\Edges(a13, a5)
\Edges(a13, a7)
\Edges(a13, a10)
\Edges(a13, a11)
\Edges(a13, a12)
\Edges(a14, a2)
\Edges(a14, a3)
\Edges(a14, a6)
\Edges(a14, a8)
\Edges(a14, a10)
\Edges(a14, a11)
\Edges(a14, a12)
\Edges(a14, a13)
\Edges(a15, a1)
\Edges(a15, a2)
\Edges(a15, a8)
\Edges(a15, a9)
\Edges(a15, a12)
\Edges(a15, a14)
\Edges(a16, a2)
\Edges(a16, a3)
\Edges(a16, a5)
\Edges(a16, a9)
\Edges(a16, a10)
\Edges(a16, a13)
\Edges(a16, a15)
\Edges(a17, a3)
\Edges(a17, a4)
\Edges(a17, a5)
\Edges(a17, a6)
\Edges(a17, a11)
\Edges(a17, a14)
\Edges(a17, a15)
\Edges(a17, a16)
\Edges(a18, a0)
\Edges(a18, a4)
\Edges(a18, a6)
\Edges(a18, a7)
\Edges(a18, a10)
\Edges(a18, a12)
\Edges(a18, a15)
\Edges(a18, a16)
\Edges(a18, a17)
\Edges(a19, a0)
\Edges(a19, a1)
\Edges(a19, a7)
\Edges(a19, a8)
\Edges(a19, a11)
\Edges(a19, a13)
\Edges(a19, a15)
\Edges(a19, a16)
\Edges(a19, a17)
\Edges(a19, a18)
\Edges(a20, a1)
\Edges(a20, a3)
\Edges(a20, a6)
\Edges(a20, a7)
\Edges(a20, a13)
\Edges(a20, a14)
\Edges(a20, a17)
\Edges(a20, a19)
\Edges(a21, a2)
\Edges(a21, a4)
\Edges(a21, a7)
\Edges(a21, a8)
\Edges(a21, a10)
\Edges(a21, a14)
\Edges(a21, a15)
\Edges(a21, a18)
\Edges(a21, a20)
\Edges(a22, a0)
\Edges(a22, a3)
\Edges(a22, a8)
\Edges(a22, a9)
\Edges(a22, a10)
\Edges(a22, a11)
\Edges(a22, a16)
\Edges(a22, a19)
\Edges(a22, a20)
\Edges(a22, a21)
\Edges(a23, a1)
\Edges(a23, a4)
\Edges(a23, a5)
\Edges(a23, a9)
\Edges(a23, a11)
\Edges(a23, a12)
\Edges(a23, a15)
\Edges(a23, a17)
\Edges(a23, a20)
\Edges(a23, a21)
\Edges(a23, a22)
\Edges(a24, a0)
\Edges(a24, a2)
\Edges(a24, a5)
\Edges(a24, a6)
\Edges(a24, a12)
\Edges(a24, a13)
\Edges(a24, a16)
\Edges(a24, a18)
\Edges(a24, a20)
\Edges(a24, a21)
\Edges(a24, a22)
\Edges(a24, a23)
\end{tikzpicture}
\caption{The Paley graph $P(25)$.}
\end{figure}

Here is a further fact concerning Paley graphs. 

\begin{prop}\label{prop: chromatic paley}
The Paley graph $P(q^2)$ has chromatic number $q$, and independence number $q$.
\end{prop}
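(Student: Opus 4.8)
The plan is to realize both parameters through the subfield $\F_q\subseteq \F_{q^2}$ and its additive cosets. The crucial input is the earlier corollary on squares in extensions of even degree: since $\F_{q^2}$ is a degree-$2$ extension of $\F_q$, every element of $\F_q$ is a square in $\F_{q^2}$. In particular every nonzero element of $\F_q$ is a \emph{non-zero} square, so in $P(q^2)$ any two distinct elements of $\F_q$ are adjacent; that is, $\F_q$ is a clique of size $q$. As a byproduct, every non-square of $\F_{q^2}$ lies outside $\F_q$.

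First I would extract the two bounds that flow from $\F_q$ viewed as a clique. Because $P(q^2)$ contains a clique of size $q$, we immediately get $\chi\geq q$. For the independence number, observe that the additive cosets $c+\F_q$ partition the $q^2$ vertices into $q$ cliques, since any two elements of a fixed coset differ by an element of $\F_q^*$, hence by a square. An independent set can contain at most one vertex from each of these $q$ cliques, so $\iota\leq q$.

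Next I would produce matching bounds in the opposite direction by \emph{scaling} the subfield by a non-square. Fix any non-square $\alpha\in \F_{q^2}^*$; then $\alpha\F_q=\{\alpha x: x\in \F_q\}$ is an additive subgroup of order $q$, and for distinct $\alpha x,\alpha x'$ the difference is $\alpha(x-x')$ with $x-x'\in \F_q^*$ a square, so $\sigma(\alpha(x-x'))=\sigma(\alpha)\,\sigma(x-x')=-1$ by multiplicativity of $\sigma$. Thus $\alpha\F_q$ is an independent set of size $q$, giving $\iota\geq q$. Moreover its $q$ additive cosets partition the vertex set into $q$ independent sets, which is precisely a proper $q$-colouring; hence $\chi\leq q$.

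Combining the four inequalities yields $\chi=q$ and $\iota=q$. Once the right objects are in view the argument is essentially bookkeeping, so the only genuine obstacle is the initial idea: recognizing that $\F_q$ serves simultaneously as a clique (via the even-degree square corollary) and, after multiplication by a non-square, as an independent set, and that the two complementary coset partitions deliver the clique cover and the colouring at once.
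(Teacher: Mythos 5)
Your proposal is correct and follows essentially the same route as the paper's proof: the subfield $\F_q$ as a clique (via the even-degree extension corollary), its additive cosets as a clique cover giving $\iota\leq q$, and the non-square-scaled copy $r\F_q$ together with its cosets giving the independent set and the $q$-colouring. The only detail worth adding is the opening observation that $q^2\equiv 1$ mod $4$, which is what licenses considering $P(q^2)$ in the first place.
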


\begin{proof}
Firstly, note that $q^2\equiv 1$ mod $4$, so we may indeed consider $P(q^2)$. Let $\K$ be a field with $q^2$ elements, and let $\F$ be a subfield with $q$ elements. Every element of $\F$ is a square in $\K$, so $\F$ defines a complete subgraph of $P(q^2)$. It follows that $\chi\geq q$. Moreover, we may partition the elements of $\K$ into $q$ cosets of $\F$, and each such coset $C$ forms a complete subgraph of $P(q^2)$. Hence $\iota\leq q$.

Now let $r$ be a non-square in $\K$. Then $r\F$ is an independent subset of $P(q^2)$, so $\iota\geq q$. Actually $rC$ is an independent subset for every coset $C$ as before. Painting each $rC$ monochromatically shows that $\chi\leq q$.
\end{proof}

For $q\equiv 3$ mod $4$, we can no longer use a Cayley graph construction. We define the \emph{bi-Paley graph} $BP(q)$ as the bi-Cayley graph of the additive group of $\F$ with respect to the non-zero squares $\F^{*2}$.

We discard $BP(3)$, as it is rather degenerate, namely, it is the disjoint union of three copies of $K_2$. So the first graph of the bi-Paley family is $BP(7)$, which turns out to be the Heawood graph.

\begin{thm}
The bi-Paley graph $BP(q)$ is a design graph, with parameters
\begin{align*}
m=q, \qquad d=\tfrac{1}{2}(q-1), \qquad c=\tfrac{1}{4}(q-3).
\end{align*}
\end{thm}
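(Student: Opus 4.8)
The plan is to read the bipartite structure and regularity straight off the bi-Cayley description, and then reduce the design property to a single character sum governed by Lemma~\ref{lem: convolution}. Concretely, I would write the vertex set as two copies $\F_\bl$ and $\F_\wh$ of $\F$, with $x_\bl\edge y_\wh$ precisely when $y-x\in\F^{*2}$. Bipartiteness is built in, the half-size is $m=|\F|=q$, and for each fixed vertex there are exactly $|\F^{*2}|=\tfrac12(q-1)$ neighbours, so $d=\tfrac12(q-1)$; since $d<q$ the graph is not $K_{q,q}$, so it is eligible to be a design graph.

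The heart is the common-neighbour count. For two distinct black vertices $x_\bl,x'_\bl$, put $\delta=x-x'\neq 0$; a white vertex $y_\wh$ is a common neighbour exactly when $u:=y-x$ and $u+\delta$ are both non-zero squares. Writing the indicator of ``non-zero square'' as $\tfrac12(1+\sigma(\cdot))$, the count is
\begin{align*}
N=\sum_{u\neq 0,-\delta}\frac{(1+\sigma(u))(1+\sigma(u+\delta))}{4}.
\end{align*}
Upon expanding, the constant term contributes $q$, the two linear terms $\sum_u\sigma(u)$ and $\sum_u\sigma(u+\delta)$ vanish, and the cross term $\sum_u\sigma(u)\sigma(u+\delta)$ is where I invoke Lemma~\ref{lem: convolution}: using $\sigma(-1)=-1$ (because $q\equiv 3$ mod $4$) to rewrite $\sigma(u+\delta)=-\sigma(-u-\delta)$, this sum becomes $-J_{-\delta}=-1$.

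The step that needs care is the bookkeeping at the excluded points $u=0$ and $u=-\delta$, where $\sigma$ vanishes. I would first compute the full sum over all $u\in\F$, obtaining $\tfrac14(q-1)$, and then subtract the two boundary contributions $\tfrac14(1+\sigma(\delta))$ and $\tfrac14(1+\sigma(-\delta))$. The pleasant point—and the reason the answer is a genuine constant $c$ rather than depending on whether $\delta$ is a square—is that $\sigma(\delta)+\sigma(-\delta)=0$, again because $\sigma(-1)=-1$. This yields $N=\tfrac14(q-3)$, independent of the chosen pair.

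Finally, two distinct white vertices $y_\wh,y'_\wh$ lead to the identical computation with $\delta$ replaced by $y-y'$, giving the same value $\tfrac14(q-3)$. Hence every monochromatic pair shares exactly $c=\tfrac14(q-3)$ neighbours, and $BP(q)$ is a design graph with the stated $(m,d,c)$. As a sanity check I would verify the design identity $c(m-1)=d(d-1)$, i.e. $\tfrac14(q-3)(q-1)=\tfrac12(q-1)\cdot\tfrac12(q-3)$, which holds. The only real obstacle is the honest handling of the degenerate terms; the congruence $q\equiv 3$ mod $4$ is exactly what makes both the main cross sum and the boundary correction fall into place.
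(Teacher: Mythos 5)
Your proof is correct, and it takes a somewhat different route from the paper's. The paper counts the representations of $v-w$ as a difference of two non-zero squares by starting from Proposition~\ref{prop: conic}, which gives $q-1$ solutions $(x,y)$ to $x^2-y^2=v-w$, and then runs a case analysis --- using that $\F^{*2}$ and $-\F^{*2}$ partition $\F^*$ when $q\equiv 3$ mod $4$ --- to discard the degenerate solutions with $x=0$ or $y=0$ and divide by $4$ for the sign ambiguities, arriving at $\tfrac{1}{4}(q-3)$ in both cases. You instead inline the whole computation as a single character sum, writing the indicator of a non-zero square as $\tfrac{1}{2}(1+\sigma(\cdot))$ and invoking Lemma~\ref{lem: convolution} directly; the case-independence that the paper obtains by a two-case check falls out for you from the identity $\sigma(\delta)+\sigma(-\delta)=0$. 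Since the paper's first proof of Proposition~\ref{prop: conic} is itself this same expansion plus Lemma~\ref{lem: convolution}, the two arguments rest on identical machinery: yours is more self-contained and uniform (no cases), while the paper's reuses an already-established point count and runs parallel to its proof of Theorem~\ref{Paley is strongly regular} for the Paley graphs. Your handling of the excluded points $u=0$ and $u=-\delta$ --- computing the full sum $\tfrac{1}{4}(q-1)$ and subtracting the two boundary contributions, whose $\sigma$-terms cancel --- is exactly the bookkeeping required, and your closing verification of $c(m-1)=d(d-1)$ matches the general design-graph relation stated in the paper.
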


\begin{proof}
The half-size $m$ and the degree $d$ are obvious. We check the design property: two vertices of the same colour have $c=\tfrac{1}{4}(q-3)$ common neighbours. It suffices to argue the case of two distinct black vertices, $v_\bl$ and $w_\bl$. The number of white vertices adjacent to both $v_\bl$ and $w_\bl$ is the number of representations of $v-w$ as a difference of two non-zero squares in $\F$. There are $q-1$ pairs $(x,y)\in \F\times \F$ satisfying $x^2-y^2=v-w$.

Now observe that $\F^{*2}$ and $-\F^{*2}$ partition $\F^*$, as $-1$ is not a square. If $v-w$ is a non-zero square then necessarily $x\neq 0$, and there are two solutions with $y=0$. Thus $v-w$ is a difference of two-squares in $\tfrac{1}{4}(q-3)$ ways. If $w-v$ is a non-zero square then $y\neq 0$, and there are two solutions with $x=0$. Again, $v-w$ is a difference of two-squares in $\tfrac{1}{4}(q-3)$ ways.
\end{proof}

\begin{exer}\label{exer: nobip BP}  Show that the bi-Paley graph $BP(q)$ is not a bipartite double.
\end{exer}

\begin{notes} Paley graphs are named after Raymond Paley, who used the quadratic signature to construct Hadarmard matrices. These are square matrices whose entries are $\pm 1$, and whose rows are orthogonal. The graph-theoretic implementation of the same idea came much later.

Ramsey numbers are named after Frank Ramsey. It is actually the case, and not too difficult to prove, that $R(4)=18$. Quite easily, $R(3)=6$. Quite strikingly, $R(5)$ is not known! The best available knowledge at the time of writing is that $43\leq R(5)\leq 49$. 
\end{notes}

\bigskip
\subsection*{A comparison}\label{sec: compare}\
\medskip

We have defined two families of design graphs from finite fields. For a prime power $q\equiv 3$ mod $4$, the bi-Paley graph $BP(q)$ has parameters
\begin{align*}
(m,d,c)=\bigg(q, \frac{q-1}{2}, \frac{q-3}{4}\bigg).
\end{align*}
The incidence graph $I_n(q)$, where $n\geq 3$ and $q$ is an arbitrary prime power, has parameters
\begin{align*}
(m,d,c)=\bigg(\frac{q^n-1}{q-1},\frac{q^{n-1}-1}{q-1}, \frac{q^{n-2}-1}{q-1}\bigg).
\end{align*}
We wish to compare these two families, and we start by comparing their parameters.

\begin{prop} 
The parameters of a bi-Paley graph and those of an incidence graph agree only for $BP(q)$ and $I_n(2)$, where $q=2^n-1$ is a prime.
\end{prop}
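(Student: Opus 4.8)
The plan is to reduce the matching of all three parameters to a single, easily solved identity. Both $BP(q)$ and $I_n(q)$ are design graphs, so for each of them the count $c$ is determined by $m$ and $d$ through the relation $c(m-1)=d(d-1)$ recorded above. Consequently the parameters of the two graphs agree if and only if their half-sizes and their degrees agree, so I only need to match $m$ and $d$; the $c$-values will then coincide automatically.

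To distinguish the two prime powers, let $q$ be the one defining the incidence graph and let $Q$ (with $Q\equiv 3 \bmod 4$) be the one defining the bi-Paley graph, and write $S_k=1+q+\dots+q^{k-1}=(q^k-1)/(q-1)$. The half-size condition then reads $Q=S_n$, and the degree condition reads $(Q-1)/2=S_{n-1}$, that is $Q=2S_{n-1}+1$. Using the obvious recursion $S_n=1+q\,S_{n-1}$, I would equate the two expressions for $Q$ to obtain $1+q\,S_{n-1}=1+2S_{n-1}$, hence $(q-2)\,S_{n-1}=0$. Since $S_{n-1}\geq 1$, this forces $q=2$, and then $Q=S_n=2^n-1$. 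A direct check confirms that with $q=2$ and $Q=2^n-1$ the degrees, and therefore also the values of $c$, coincide: both graphs carry $(m,d,c)=(2^n-1,\,2^{n-1}-1,\,2^{n-2}-1)$. Thus the incidence graph is $I_n(2)$ and the bi-Paley graph is $BP(2^n-1)$.

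It remains to address the existence constraint on the bi-Paley side: $BP(2^n-1)$ is defined only when $2^n-1$ is a prime power congruent to $3 \bmod 4$. The congruence is automatic for $n\geq 2$, so the real content is that a Mersenne number which is a prime power must in fact be prime. I would establish this by an elementary factorization: writing $2^n-1=p^k$ with $p$ an odd prime, the case of even $k$ is excluded because an odd square is $\equiv 1 \bmod 8$, which would force $n\leq 1$; the case of odd $k\geq 3$ is excluded because $p^k+1=(p+1)(p^{k-1}-p^{k-2}+\dots+1)$ exhibits an odd factor exceeding $1$ of the power of two $2^n$. Hence $k=1$, so $2^n-1$ is prime. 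The parameter algebra collapses immediately to $q=2$, so the only genuinely delicate point is this last Mersenne observation; one could alternatively invoke Mihailescu's theorem to rule out proper prime powers, but the factorization argument keeps the proof self-contained.
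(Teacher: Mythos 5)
Your proof is correct and follows essentially the same route as the paper: matching half-size and degree forces $q=2$ (the paper obtains this by imposing the bi-Paley relation $m=2d+1$ on $I_n(q)$, which is your identity $(q-2)S_{n-1}=0$ in disguise), and the Mersenne step---that a prime power $2^n-1$ must actually be prime---uses the same factorization $2^n=(p+1)(p^{k-1}-p^{k-2}+\dots+1)$ with an odd cofactor, the paper deducing oddness of the exponent from $2^n-1\equiv 3 \bmod 4$ exactly as you exclude even $k$ via odd squares mod $8$. Your preliminary observation that $c$ is automatically determined by $m$ and $d$ through $c(m-1)=d(d-1)$ is a tidy way of making explicit what the paper leaves as a routine verification.
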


\begin{proof}
A bi-Paley graph satisfies $m=2d+1$. Imposing this relation on an incidence graph $I_n(q')$, we find that $q'=2$. The parameters of $I_n(2)$ are $(2^n-1, 2^{n-1}-1, 2^{n-2}-1)$, and these are the parameters of a bi-Paley graph precisely when $2^n-1$ is a prime power. Note here that $2^n-1\equiv 3$ mod $4$, as $n\geq 3$.

We are left with arguing that $2^n-1$ is a prime power if and only if $2^n-1$ is actually a prime. So let $2^n-1=p^d$, where $p$ is a prime and $d$ is a positive integer that we claim to be $1$. As $2^n-1\equiv 3$ mod $4$, we first deduce that $d$ is odd. We can then write $2^n=p^d+1=(p+1)(p^{d-1}-\ldots+1)$. But the latter factor is odd, since both $p$ and $d$ are odd, so it has to equal $1$. Therefore $2^n-1=p$. \end{proof}

The prime sequence that appears in the previous proposition is in fact a notorious one. A prime of the form $2^n-1$ is known as a \emph{Mersenne prime}. Note that $n$ is, a fortiori, prime as well. Very few Mersenne primes are known: at the time of writing, the 49th has just been found. The sequence of primes $n$ that yield a Mersenne prime starts with $2$, $3$, $5$, $7$, $13$, $17$, $19$, $31$, $61$, $89$,$\dots$. It could be that there are infinitely many Mersenne primes.

Next, we consider possible isomorphisms between the two families. The graphs $BP(q)$ and $I_n(2)$ are isomorphic for the first allowed values, $q=7$ respectively $n=3$. This can be seen in several ways. One is simply by drawing: both turn out to be the Heawood graph. The following exercise suggests a more sophisticated approach. 

\begin{exer}\label{exer: BP(7) is I(2)}
Show that the bi-Paley graph $BP(7)$ is isomorphic to the incidence graph $I_3(2)$ by employing a notion of bi-Cayley graph isomorphism.
\end{exer}

It turns out that there is no other isomorphism between bi-Paley graphs and incidence graphs. To see the somewhat subtle difference, let us take a closer look at the design property in incidence graphs. Thinking of $I_n(q)$ in the incidence picture, consider two lines and the hyperplanes containing them. Then these hyperplanes contain the entire plane spanned by the two lines, so they contain $q-1$ additional lines. Dually, if we take two hyperplanes and the lines that they contain, then these lines are contained in $q-1$ further hyperplanes. One way to phrase this is that an incidence graph is a design graph in which we cannot recover a pair of monochromatic vertices from its joint neighbours. Now, the contrary holds in bi-Paley graphs--with the exception of $BP(7)$.

\begin{thm}\label{thm: no K3 in BP}
Let $q\equiv 3$ mod $4$, and $q>7$. Then the bi-Paley graph $BP(q)$ has the property that any two distinct vertices of the same colour are determined by their $\tfrac{1}{4}(q-3)$ common neighbours. 
\end{thm}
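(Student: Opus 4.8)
The plan is to recover an unordered pair of equal-coloured vertices from its common-neighbour set by a ``closure'' argument: I will show that the only black vertices adjacent to \emph{every} common neighbour of two distinct black vertices $v_\bl,w_\bl$ are $v_\bl$ and $w_\bl$ themselves (and symmetrically for white pairs). Since $v_\bl,w_\bl$ are visibly adjacent to all of their common neighbours, it is enough to rule out a third such vertex. Recall that a white vertex $z_\wh$ is adjacent to $t_\bl$ exactly when $z-t\in\F^{*2}$, so the common neighbours of $v_\bl$ and $w_\bl$ are the $z$ with $z-v,z-w\in\F^{*2}$, a set of size $c=\tfrac14(q-3)$. Thus, fixing a third element $u\neq v,w$, I must exhibit a $z$ with $z-v,z-w\in\F^{*2}$ but $z-u\notin\F^{*2}$; equivalently, writing $M=\#\{z:z-v,z-w,z-u\in\F^{*2}\}$, it suffices to prove $M<c$.

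To estimate $M$ I would pass to the quadratic signature via the indicator $f(x)=\tfrac12(1+\sigma(x))$ of the nonzero squares, which is exact for $x\neq 0$. Writing $M=\sum_z f(z-v)f(z-w)f(z-u)$ up to the three boundary corrections at $z\in\{u,v,w\}$ (each of which lies in $[0,\tfrac12]$), I expand the product $\prod_{t}(1+\sigma(z-t))$ and sum over $z$. The constant term contributes $q$; the three single sums $\sum_z\sigma(z-t)$ vanish; each of the three pairwise sums equals $-1$, as a special case of Lemma~\ref{lem: convolution} (via $\sum_z\sigma(z-s)\sigma(z-t)=\sigma(-1)\,J_{t-s}=-1$ for $s\neq t$); and the triple term is the cubic character sum $T=\sum_z\sigma\big((z-v)(z-w)(z-u)\big)$. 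Collecting everything gives
\begin{align*}
M=\tfrac18\big(q-3+T\big)-\varepsilon,\qquad 0\le \varepsilon\le \tfrac32,
\end{align*}
so that $c-M=\tfrac18\big(q-3-T\big)+\varepsilon\ge \tfrac18\big(q-3-T\big)$.

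The one genuinely nontrivial input --- and the main obstacle --- is a square-root cancellation bound for $T$. Since $v,w,u$ are distinct, the cubic $(z-v)(z-w)(z-u)$ is squarefree, so $y^2=(z-v)(z-w)(z-u)$ is an elliptic curve and Hasse's bound (the Weil bound for the quadratic character) yields $|T|\le 2\sqrt q$. Hence $c-M\ge\tfrac18\big(q-3-2\sqrt q\big)$, and this is positive precisely when $q-3>2\sqrt q$, i.e. $(\sqrt q-3)(\sqrt q+1)>0$, i.e. $q>9$. As every admissible modulus $q>7$ satisfies $q\ge 11$, the integer $c-M$ is at least $1$, giving the required common neighbour of $v_\bl,w_\bl$ that avoids $u$. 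It is worth noting that the inequality fails exactly at $q=7$, which is precisely the excluded case. Finally, the case of two white vertices is identical: the substitution $z\mapsto -z$ turns the relevant count into one of exactly the same shape, governed by the same squarefree cubic sum.
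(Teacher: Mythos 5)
Your proposal is correct, and the derivation checks out: the expansion $8\sum_z f(z-v)f(z-w)f(z-u)=q-3+T$ with $f=\tfrac12(1+\sigma)$ is right (the three pairwise sums each equal $\sigma(-1)J_{t-s}=-1$ by Lemma~\ref{lem: convolution}), the boundary corrections are correctly confined to $[0,\tfrac32]$, the squarefree monic cubic satisfies the hypothesis of Theorem~\ref{thm: weil}(i) with $m=2$, $d=3$, and your closure framing (the pair $\{v,w\}$ is exactly the set of black vertices adjacent to all of its common neighbours) does yield injectivity of the pair-to-neighbourhood map. But the route is genuinely different from the paper's. The paper argues by contradiction: if two pairs shared a common-neighbour set, three distinct elements $a,b,c$ would partition $\F\setminus\{a,b,c\}$ into sign classes; two linear identities (the vanishing of $\sum\sigma(x-a)$, plus Lemma~\ref{lem: convolution} applied to $\sum\sigma(x-b)\sigma(x-c)$) force the all-nonsquare class to be empty, so $(x-a)(x-b)(x-c)$ would be a nonzero square for every $x\neq a,b,c$ --- contradicting Proposition~\ref{prop: elliptic}, whose proof is an entirely elementary polynomial-degree argument adapted from Zannier. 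The essential trade-off: the paper needs only the \emph{qualitative} fact that a squarefree cubic takes some nonzero nonsquare value, which it establishes elementarily for $q>9$, keeping the whole proof self-contained; you need full square-root cancellation $|T|\leq 2\sqrt{q}$, a deep result that the paper states (Weil/Hasse) but explicitly does not prove. In exchange, your argument is shorter, skips the partition bookkeeping, and delivers a stronger quantitative conclusion --- roughly $\tfrac18(q-3-2\sqrt{q})$ common neighbours of any pair avoid any prescribed third vertex, so pairs are determined robustly rather than merely uniquely. It is a pleasant consistency check that both methods reduce to the same squarefree cubic and hit the same numerical threshold $q>9$, matching the genuine failure at $q=7$, where $BP(7)$ is the Heawood graph $I_3(2)$.
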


\begin{proof}
Arguing by contradiction, let us assume that there are three distinct vertices of the same colour sharing $\tfrac{1}{4}(q-3)$ common neighbours. In terms of the underlying field $\F$, this means that there are three distinct elements $a$, $b$, and $c$, and a subset $S\subseteq \F$ of size $\tfrac{1}{4}(q-3)$, such that $x-a$, $x-b$, are $x-c$ are non-zero squares whenever $x\in S$. Furthermore, if any two of $x-a$, $x-b$, are $x-c$ are non-zero squares, then $x\in S$. So we may partition $\F\setminus\{a,b,c\}$ into
\begin{align*}
S&=\{x: \sigma(x-a)=\sigma(x-b)=\sigma(x-c)=1\}\\
N&=\{x: \sigma(x-a)=\sigma(x-b)=\sigma(x-c)=-1\}\\
S_a&=\{x: \sigma(x-a)=1, \sigma(x-b)=\sigma(x-c)=-1\}
\end{align*}
and two more sets, $S_b$ and $S_c$, defined analogously. 

We claim that $N$ is, in fact, empty. On the one hand, the relation $\sum \sigma(x-a)=0$ says that
\begin{align*}
|S|+|S_a|-|S_b|-|S_c|-|N|+\sigma(b-a)+\sigma(c-a)=0.
\end{align*}
On the other hand, we have $\sum \sigma(x-b)\:\sigma(x-c)=\sigma(-1)\sum \sigma(b-x)\:\sigma(x-c)=-1$, thanks to Lemma~\ref{lem: convolution}. Hence
\begin{align*}
|S|+|S_a|-|S_b|-|S_c|+|N|+\sigma(a-b)\:\sigma(a-c)=-1.
\end{align*}
The two displayed relations imply that
\begin{align*}
2|N|+\big(1-\sigma(b-a)\big)\big(1-\sigma(c-a)\big)=0.
\end{align*}
Since both terms are non-negative, we must have $|N|=0$, as claimed.

Therefore $\F\setminus\{a,b,c\}$ is actually partitioned into $S$, $S_a$, $S_b$ and $S_c$. This means that $f(x)=(x-a)(x-b)(x-c)$ is a non-zero square whenever $x\neq a,b,c$. We reach a contradiction in light of the proposition below, which we treat as an independent fact interesting on its own.
\end{proof} 

\begin{prop}\label{prop: elliptic}
Let $f\in \F[X]$ be a cubic polynomial. If $q>9$, then some non-zero value of $f$ is a non-square. 
\end{prop}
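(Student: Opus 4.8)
The plan is to argue by contradiction. Suppose every nonzero value of $f$ is a square, so that $\sigma(f(x))=1$ whenever $f(x)\neq 0$ and $\sigma(f(x))=0$ at the (at most three) roots of $f$ in $\F$. Summing, the character sum $T=\sum_{x\in\F}\sigma(f(x))$ then satisfies $T\geq q-3$. The whole argument reduces to producing a matching \emph{upper} bound on $|T|$ of subdominant size $O(\sqrt q)$, which will be incompatible with $T\geq q-3$ once $q$ is large enough.

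First I would dispose of the degenerate cubics. Since $\sigma$ is multiplicative and $\sigma(u^2)=1$ for $u\neq 0$, a repeated root collapses the sum. If $f(x)=c(x-\alpha)^2(x-\beta)$ with $\alpha\neq\beta$, then $\sigma(f(x))=\sigma(c)\,\sigma(x-\beta)$ off $\alpha$, so $T=\sigma(c)\big(\sum_x\sigma(x-\beta)-\sigma(\alpha-\beta)\big)=-\sigma(c)\,\sigma(\alpha-\beta)$, whence $|T|\leq 1$; the triple-root case $f=c(x-\alpha)^3$ gives $\sigma(f(x))=\sigma(c)\,\sigma(x-\alpha)$ and hence $T=0$ outright. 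In either situation $T\geq q-3$ forces $q\leq 4$. This leaves the separable cubics, for which the three roots in an algebraic closure are distinct.

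For a separable cubic, $T$ is, up to the constant $q$, the point count of the curve $y^2=f(x)$: the number of affine solutions is $\sum_x\big(1+\sigma(f(x))\big)=q+T$. The estimate I want, $|T|\leq 2\sqrt q$, is exactly the Weil bound for the nontrivial quadratic character sum of a cubic — equivalently Hasse's bound for the elliptic curve $y^2=f(x)$ — and it applies because a cubic, having odd degree, is never a constant times a perfect square, so the character does not trivialize along $f$. Granting this, $q-3\leq T\leq 2\sqrt q$ yields $(\sqrt q-3)(\sqrt q+1)\leq 0$, i.e.\ $\sqrt q\leq 3$ and $q\leq 9$, contradicting $q>9$.

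The main obstacle is precisely this square-root cancellation for a \emph{cubic} argument, and it is where the threshold $q>9$ comes from. The convolution identity of Lemma~\ref{lem: convolution} evaluates quadratic sums exactly — for instance it gives $\sum_x\sigma\big((x-a)(x-b)\big)=-1$ for $a\neq b$, which is what powered the conic count and the design property of $BP(q)$ — but a cubic sum $\sum_x\sigma(x-a)\,\sigma(x-b)\,\sigma(x-c)$ admits no such closed form. Indeed, feeding the convolution lemma through the natural first-order moment relations only produces consistent constraints on the values of $\sigma$ at the pairwise differences $a-b,\,b-c,\,c-a$, and never closes up to a contradiction on its own. The genuine input is therefore a Weil/Hasse-type bound used at full strength, which is why the elementary methods of the preceding sections stop exactly at the quadratic case.
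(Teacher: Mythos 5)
Your proof is correct, but it takes a genuinely different route from the paper. You reduce to separable cubics (your handling of the degenerate cases is fine --- note that even in characteristic $3$ the triple root automatically lies in $\F$, since the cube map is the Frobenius and hence bijective on $\F$, so your formula $\sigma(f(x))=\sigma(c)\,\sigma(x-\alpha)$ is legitimate) and then invoke the Weil bound $\big|\sum_x \sigma(f(x))\big|\leq 2\sqrt{q}$, which the paper states only in the \emph{next} section, explicitly without proof and flagged as a deep result from algebraic geometry. The paper instead gives a self-contained elementary argument, adapted from Zannier: assuming every non-zero value of $f$ is a square, the polynomial $f^{(q+1)/2}-f$ vanishes identically on $\F$, hence is divisible by $P=X^q-X$; writing $f^{(q+1)/2}-f=Ph$, taking the formal derivative, and reducing modulo $P$ produces the congruence $\tfrac{q-1}{2}f'f+hf\equiv 0 \bmod P$, where the left side is a non-zero polynomial of degree $\tfrac{1}{2}(q+3)+3$, forcing $q\leq 9$. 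Amusingly, both methods land on exactly the same threshold: your $q-3\leq 2\sqrt{q}$ gives $q\leq 9$, just as the paper's degree count does. What your approach buys is brevity and a transparent geometric meaning ($q+T$ is the affine point count of $y^2=f(x)$, so the statement is Hasse's bound minus the trivial solutions); what it costs is self-containedness --- within this text your proof rests on an unproved black box, and in the broader literature it inverts the historical logic, since the whole point of the Zannier-style argument (as the paper's notes explain) is to get this consequence of Weil's bounds by elementary means. Your closing claim that ``the genuine input is therefore a Weil/Hasse-type bound used at full strength'' is thus too strong: you are right that the convolution identity of Lemma~\ref{lem: convolution} cannot close up a cubic sum, but the paper's derivative trick shows that square-root cancellation is not actually needed --- a purely algebraic degree count suffices for this qualitative statement.
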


\begin{proof}
Assume that every non-zero value of $f$ is a square. Then the polynomial $f(f^{(q-1)/2}-1)=f^{(q+1)/2}-f$ takes only the zero value. But the same holds for the polynomial $P=X^q-X$. A non-zero polynomial vanishing on $\F$ cannot have degree smaller than $q$ so, using division with remainder, it must be divisible by $P$. Write
\begin{align*}
f^{(q+1)/2}-f=Ph
\end{align*}
where $h\in \F[X]$ has degree $\tfrac{1}{2}(q+3)$. Taking the formal derivative, we get
\begin{align*}
\frac{q+1}{2}f'f^{(q-1)/2}-f'=Ph'+P'h=Ph'-h.
\end{align*}
Now the mod $P$ congruences
\begin{align*}
f^{(q+1)/2}\equiv f, \qquad \frac{q+1}{2}f'f^{(q-1)/2}\equiv f'-h
\end{align*}
imply that
\begin{align*}
\frac{q-1}{2}f'f+hf \equiv 0.
\end{align*}
So the degree of the above polynomial, $\tfrac{1}{2}(q+3)+3$, has to be at least $\mathrm{deg}(P)=q$. We find that $q\leq 9$, which is a contradiction.
\end{proof}

\begin{notes} Zannier (\emph{Polynomials modulo $p$ whose values are squares (elementary improvements on some consequences of Weil's bounds)}, Enseign. Math. 1998) gives an elementary argument for the following basic fact from the theory of elliptic curves over finite fields: the equation $y^2=f(x)$, where $f$ is a cubic polynomial, has solutions for $q>3$. After rescaling $f$ by a non-square, Proposition~\ref{prop: elliptic} says that $y^2=f(x)$ has a solution with $y\neq 0$ for $q>9$. The proof of Proposition~\ref{prop: elliptic} adapts Zannier's nice argument.
\end{notes}

\bigskip
\section{Characters}

We start with a general discussion of characters on finite abelian groups. Then we discuss character sums over finite fields.

\bigskip
\subsection*{Characters of finite abelian groups} \label{sec: characters}\
\medskip

Let $G$ be a finite abelian group, whose operation is written multiplicatively. A \emph{character} $\chi$ of $G$ is a group homomorphism $\chi: G\to \C^*$. 

\begin{ex} Let $\F$ be a finite field of odd characteristic. Then the quadratic signature $\sigma$ is a character of the multiplicative group $\F^*$. 
\end{ex}

\begin{ex}On the additive group $\Z_n$, a list of $n$ different characters is given by 
\begin{align*}
\chi_k(a)=e^{2\pi ika/n}, \qquad k=0,\dots,n-1. 
\end{align*}
\end{ex}

\begin{ex} On the additive group $(\Z_2)^n$, a list of $2^n$ different characters is given by 
\begin{align*}
\chi_I(v)= (-1)^{\sum_{i\in I} v_i}, \qquad I\subseteq \{1,\dots,n\}.
\end{align*}
\end{ex}

The characters of $G$ take values in the unit circle $\{z: |z|=1 \}$. In fact, the values are roots of unity of order $|G|$. Let $G^{\wedge}$ denote the set of characters of $G$. Then $G^{\wedge}$ is a group under pointwise multiplication, the neutral element being the trivial character $\ct$ defined by $g\mapsto 1$ for all $g\in G$. The inverse of a character $\chi$ is the conjugate character $\overline{\chi}$. The finite abelian group $G^{\wedge}$ is called the \emph{dual} of $G$. 

\begin{thm} 
The dual $G^{\wedge}$ is isomorphic to $G$. In particular, $G$ has exactly $|G|$ characters.
\end{thm}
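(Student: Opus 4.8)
The plan is to reduce to cyclic groups by means of the structure theorem for finite abelian groups, dispose of the cyclic case directly, and then show that the dual of a direct product splits as a product of duals.

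First I would treat the cyclic case $G=\Z_n$, written additively. A character $\chi$ is completely determined by the single value $\chi(1)$, and since $n\cdot 1=0$ in $\Z_n$ we must have $\chi(1)^n=\chi(0)=1$; thus $\chi(1)$ is a root of unity of order dividing $n$, of which there are exactly $n$. Conversely each such value is realized, namely by the characters $\chi_k(a)=e^{2\pi i ka/n}$ already exhibited in the earlier example. Hence $\Z_n$ has exactly $n$ characters. Moreover $\chi_k\,\chi_\ell=\chi_{k+\ell}$, so $k\mapsto\chi_k$ is a surjective homomorphism $\Z_n\to\Z_n^{\wedge}$ between groups of the same order, hence an isomorphism $\Z_n^{\wedge}\cong\Z_n$.

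Next I would establish that $(G_1\times G_2)^{\wedge}\cong G_1^{\wedge}\times G_2^{\wedge}$. A pair of characters $\chi_1,\chi_2$ on the two factors yields a character $(g_1,g_2)\mapsto\chi_1(g_1)\,\chi_2(g_2)$ of the product; conversely any character of $G_1\times G_2$ restricts, along the two inclusions, to characters of $G_1$ and $G_2$, and is recovered from these restrictions by multiplicativity. These assignments are mutually inverse group homomorphisms, giving the isomorphism. Writing $G\cong\Z_{n_1}\times\cdots\times\Z_{n_r}$ by the structure theorem and iterating, I would conclude
\[
G^{\wedge}\cong\Z_{n_1}^{\wedge}\times\cdots\times\Z_{n_r}^{\wedge}\cong\Z_{n_1}\times\cdots\times\Z_{n_r}\cong G .
\]

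The only genuine subtlety is conceptual rather than computational: the isomorphism manufactured this way depends on the chosen cyclic decomposition, so it is not canonical. For the present statement this is harmless, since we only assert the \emph{existence} of an isomorphism, and with it the count $|G^{\wedge}|=|G|$. If one wanted a natural statement instead, one would verify that the evaluation map $g\mapsto(\chi\mapsto\chi(g))$ is a canonical isomorphism $G\isomto G^{\wedge\wedge}$; but that refinement is not required here.
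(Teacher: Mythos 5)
Your proposal is correct and follows exactly the route the paper sketches: reduce via the structure theorem to cyclic groups, handle $\Z_n$ directly through the value $\chi(1)$, and show $(G_1\times G_2)^{\wedge}\cong G_1^{\wedge}\times G_2^{\wedge}$ --- you have simply carried out the ``detailed verification left to the reader,'' and your closing remark on the non-canonicity of the isomorphism (versus the canonical $G\isomto G^{\wedge\wedge}$) is an accurate and welcome aside.
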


\begin{proof}
This follows by combining three facts: $G$ is isomorphic to a direct product of finite cyclic groups; taking duals is compatible with direct products, in the sense that $(G_1\times G_2)^\wedge$ is isomorphic to $G_1^{\wedge}\times G_2^{\wedge}$; if $G$ is a finite cyclic group, then $G^{\wedge}$ is isomorphic to $G$. The detailed verification is left to the reader. 
\end{proof}

Character theory for finite abelian groups is grounded on two facts. One is the previous theorem, and the other is the following fundamental relation for non-trivial characters.

\begin{thm}\label{prop: character vanishing sum}
If $\chi$ is a non-trivial character of $G$, then
\begin{align*}
\sum_{g\in G} \chi(g)=0.
\end{align*}
\end{thm}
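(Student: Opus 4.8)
The plan is to use the classic ``multiply-by-a-value-and-shift'' trick. Since $\chi$ is non-trivial, there exists some element $h \in G$ with $\chi(h) \neq 1$. I would fix such an $h$ and denote the sum in question by $T = \sum_{g \in G} \chi(g)$.

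The key observation is that as $g$ ranges over all of $G$, so does the product $hg$, because left multiplication by $h$ is a bijection of $G$ onto itself. Therefore I can compute
\begin{align*}
\chi(h) \cdot T = \chi(h) \sum_{g \in G} \chi(g) = \sum_{g \in G} \chi(h)\chi(g) = \sum_{g \in G} \chi(hg) = \sum_{g' \in G} \chi(g') = T,
\end{align*}
where the third equality uses that $\chi$ is a homomorphism and the penultimate equality uses the reindexing $g' = hg$. This yields the relation $\chi(h)\cdot T = T$, that is, $(\chi(h) - 1)\, T = 0$.

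Since $\chi(h) \neq 1$ by our choice of $h$, the factor $\chi(h) - 1$ is a nonzero complex number, and dividing through gives $T = 0$, as desired.

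This argument is entirely routine and presents no real obstacle; the only point requiring the slightest care is the existence of an element $h$ with $\chi(h) \neq 1$, which is exactly the content of $\chi$ being non-trivial. The homomorphism property and the bijectivity of left multiplication are both immediate, so the proof is short.
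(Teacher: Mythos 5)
Your proof is correct and is essentially identical to the paper's own argument: both fix $h$ with $\chi(h)\neq 1$, use the bijectivity of multiplication by $h$ to show $\chi(h)\,T=T$, and conclude $T=0$. Nothing to add.
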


\begin{proof}
For each $h\in G$ we have
 \begin{align*}
 \chi(h)\sum_{g\in G} \chi(g)=\sum_{g\in G} \chi(hg)=\sum_{g\in G} \chi(g).
 \end{align*}
 As $\chi$ is non-trivial, there exists $h\in G$ with $\chi(h)\neq 1$. The desired relation follows.
\end{proof}

We deduce, in sequence, several consequences. The first concerns the way in which characters sit in $\ell^2G$. This is the finite-dimensional space of complex-valued functions on $G$, endowed with the usual inner product 
\begin{align*}
\la \phi,\psi\ra=\sum_{g\in G} \phi(g)\: \overline{\psi(g)}.
\end{align*}

\begin{cor}
The characters of $G$ form an orthogonal basis of $\ell^2G$.
\end{cor}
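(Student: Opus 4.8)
The plan is to reduce everything to the two theorems just proved: the orthogonality will come straight from the vanishing-sum relation for non-trivial characters, while the fact that the characters actually \emph{span} $\ell^2 G$ will come from the dimension count $|G^\wedge| = |G|$.

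First I would compute $\la \chi, \psi \ra$ for two characters $\chi, \psi \in G^\wedge$. Because character values are roots of unity, hence lie on the unit circle, we have $\overline{\psi(g)} = \psi(g)^{-1} = \psi(g^{-1})$, so the summand $\chi(g)\,\overline{\psi(g)}$ is exactly the value $(\chi\overline{\psi})(g)$ of the product character $\chi\overline{\psi} = \chi\psi^{-1}$. Thus
\begin{align*}
\la \chi, \psi \ra = \sum_{g \in G} (\chi\overline{\psi})(g).
\end{align*}
Now I split into two cases. If $\chi \neq \psi$ then $\chi\overline{\psi}$ is a non-trivial character --- it equals the trivial character $\ct$ precisely when $\chi = \psi$, since $G^\wedge$ is a group --- and Theorem~\ref{prop: character vanishing sum} forces the sum to vanish, so $\la\chi,\psi\ra = 0$. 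If $\chi = \psi$ then $\chi\overline{\psi} = \ct$ and $\la\chi,\chi\ra = \sum_{g} 1 = |G|$. This simultaneously shows that distinct characters are orthogonal and that each character is a non-zero vector, of norm $\sqrt{|G|}$.

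Having orthogonality, linear independence is automatic: a pairwise-orthogonal family of non-zero vectors is linearly independent. So the characters span a subspace of $\ell^2 G$ whose dimension equals their number. By the theorem identifying $G^\wedge$ with $G$, there are exactly $|G|$ characters, and $\dim \ell^2 G = |G|$ as well; hence the characters span all of $\ell^2 G$ and form an orthogonal basis.

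The computation is entirely routine once the earlier results are in hand, so I do not expect a real obstacle; the only point that deserves care is the identity $\overline{\psi(g)} = \psi(g^{-1})$, which is what converts the Hermitian inner product into a plain character sum and lets Theorem~\ref{prop: character vanishing sum} apply. All the genuine work has already been done in proving the vanishing-sum relation and the duality $G^\wedge \cong G$.
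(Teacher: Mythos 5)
Your proof is correct and follows the same route as the paper: orthogonality of distinct characters via the vanishing-sum relation applied to the non-trivial character $\chi\overline{\psi}$, then the dimension count $|G^\wedge|=|G|=\dim \ell^2 G$ to conclude the characters form a basis. The extra details you supply (the identity $\overline{\psi(g)}=\psi(g^{-1})$ and the norm computation $\la\chi,\chi\ra=|G|$) are fine but implicit in the paper's argument.
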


\begin{proof}
If $\chi_1$ and $\chi_2$ are distinct characters of $G$, then
\begin{align*}
\la \chi_1,\chi_2\ra=\sum_{g\in G} \chi_1(g)\:\overline{\chi_2}(g)=\sum_{g\in G} (\chi_1\overline{\chi_2})(g)=0
\end{align*}
as $\chi_1\overline{\chi_2}$ is a non-trivial character. Thus, the characters are orthogonal. As there are $|G|=\dim \ell^2G$ characters, we conclude that they form a basis.
\end{proof}

\begin{cor}
The characters of $G$ distinguish the elements of $G$. Namely, if $g\neq 1$ is a non-trivial element in $G$, then there exists a character $\chi$ such that $\chi(g)\neq 1$.
\end{cor}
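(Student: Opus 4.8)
The plan is to argue by contradiction using the orthogonal basis property established in the previous corollary. Suppose, for some non-trivial $g\in G$, we had $\chi(g)=1$ for every character $\chi$; I will show this forces $g=1$, which is the contrapositive of the claim.

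First I would introduce the two indicator functions $\delta_1,\delta_g\in \ell^2G$, where $\delta_h$ takes the value $1$ at $h$ and $0$ elsewhere, and form their difference $f=\delta_1-\delta_g$. Since $g\neq 1$ these are distinct functions, so $f$ is a non-zero vector in $\ell^2G$. Next I would test $f$ against an arbitrary character $\chi$ using the inner product. Because $\delta_h$ is supported at the single point $h$, the computation collapses to
\begin{align*}
\la f,\chi\ra=\overline{\chi(1)}-\overline{\chi(g)}=1-\overline{\chi(g)}.
\end{align*}
Under the standing assumption $\chi(g)=1$, this vanishes for every $\chi$. Thus $f$ is orthogonal to all characters. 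Since the characters form an orthogonal basis of $\ell^2G$, the only vector orthogonal to all of them is $0$; hence $f=0$, i.e. $\delta_1=\delta_g$, contradicting $g\neq 1$.

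The argument is essentially immediate once the right test function is chosen, so there is no serious obstacle; the only point requiring a moment's care is the conjugation in the inner product, which is harmless here since $\chi(1)=1$ and the hypothesis $\chi(g)=1$ are both real. As an alternative not relying on $\ell^2G$, I could instead set $H=\bigcap_{\chi}\ker\chi$, observe that every character of $G$ is trivial on $H$ and hence factors through the quotient $G/H$, and note that this factorization is injective on characters. The counting theorem then gives $|G|\leq |G/H|=|G|/|H|$, forcing $|H|=1$ and so $H=\{1\}$. This route uses the fact that $G$ has exactly $|G|$ characters in place of orthogonality, but arrives at the same conclusion.
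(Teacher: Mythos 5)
Your main argument is correct, and it leans on the same key fact as the paper---the corollary that the characters form an orthogonal basis of $\ell^2G$---but it uses the opposite face of that fact. The paper uses the spanning direction: it expands the characteristic function $\ct_g$ as $\sum a_\chi\,\chi$ and evaluates at $g$ and at $1$, getting $1=\sum a_\chi\,\chi(g)$ and $0=\sum a_\chi$, which are incompatible if every $\chi(g)$ equals $1$. You instead use nondegeneracy of orthogonality: the non-zero vector $\delta_1-\delta_g$ cannot be orthogonal to every element of a basis, and your computation $\la \delta_1-\delta_g,\chi\ra=1-\overline{\chi(g)}$ is right. For an orthogonal basis these two mechanisms are equivalent and both proofs are two lines, so this difference is cosmetic rather than structural. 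Your alternative via $H=\bigcap_\chi \ker\chi$ is the genuinely different route: it discards the inner product entirely and instead applies the counting theorem $|G^{\wedge}|=|G|$ to both $G$ and $G/H$, together with the injectivity of the factorization map $G^{\wedge}\to (G/H)^{\wedge}$, to force $|H|=1$. That purely algebraic argument is the standard one in treatments that develop duality before any harmonic analysis; its cost is invoking the duality theorem for the quotient group as well, whereas the paper's chain of corollaries only ever uses the basis property for $G$ itself. All steps in both of your arguments check out.
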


\begin{proof}
Let $\ct_g:G\to \C$ denote the characteristic function of $g$. By the previous result, $\ct_g$ is a linear combination of characters, say $\ct_g=\sum a_\chi\: \chi$. Evaluating both sides at $g$, respectively at $1$, we obtain
\begin{align*}
1=\sum a_\chi\: \chi(g), \qquad 0= \sum a_\chi.
\end{align*}
So there must be some $\chi$ satisfying $\chi(g)\neq 1$.
\end{proof}

\begin{cor}
If $g\neq 1$ is a non-trivial element in $G$, then 
\begin{align*}
\sum_{\chi\in G^{\wedge}} \chi(g)=0.
\end{align*}
\end{cor}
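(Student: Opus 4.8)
The plan is to exploit the symmetry between $G$ and its dual $G^\wedge$, turning the statement into an instance of Theorem~\ref{prop: character vanishing sum} applied to $G^\wedge$ in place of $G$. The crucial observation is that a fixed element $g\in G$ defines an evaluation map $\hat{g}\colon G^\wedge\to \C^*$ by $\hat{g}(\chi)=\chi(g)$, and that this map is itself a character of the abelian group $G^\wedge$: indeed $\hat{g}(\chi_1\chi_2)=(\chi_1\chi_2)(g)=\chi_1(g)\,\chi_2(g)=\hat{g}(\chi_1)\,\hat{g}(\chi_2)$, using that multiplication in $G^\wedge$ is pointwise.

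The next step is to verify that $\hat{g}$ is a \emph{non-trivial} character of $G^\wedge$ whenever $g\neq 1$. This is exactly the content of the preceding corollary, which asserts that the characters of $G$ distinguish its elements: there exists some $\psi\in G^\wedge$ with $\psi(g)\neq 1$, and hence $\hat{g}(\psi)\neq 1$, so $\hat{g}$ is not the trivial character of $G^\wedge$.

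With these two steps in place, I would simply invoke Theorem~\ref{prop: character vanishing sum}, now applied to the finite abelian group $G^\wedge$ and its non-trivial character $\hat{g}$. This yields
\begin{align*}
\sum_{\chi\in G^\wedge}\hat{g}(\chi)=0,
\end{align*}
which is precisely the claimed identity $\sum_{\chi\in G^\wedge}\chi(g)=0$.

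There is essentially no obstacle here; the argument is a formal dualization, and the only ingredient imported beyond Theorem~\ref{prop: character vanishing sum} is the distinguishing property of characters established just above. If one prefers to avoid even the mild abstraction of the double dual, the same idea can be carried out directly, mirroring the proof of Theorem~\ref{prop: character vanishing sum}: choosing $\psi$ with $\psi(g)\neq 1$ and noting that $\chi\mapsto \psi\chi$ permutes $G^\wedge$, one computes $\psi(g)\sum_{\chi}\chi(g)=\sum_{\chi}(\psi\chi)(g)=\sum_{\chi}\chi(g)$, which forces the sum to vanish since $\psi(g)\neq 1$.
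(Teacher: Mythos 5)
Your proof is correct and is essentially the paper's own argument: the paper likewise uses the distinguishing corollary to pick a character $\chi'$ with $\chi'(g)\neq 1$ and observes that multiplication by $\chi'$ permutes $G^{\wedge}$, forcing the sum to vanish. Your double-dual packaging (evaluation at $g$ as a non-trivial character of $G^{\wedge}$, then invoking Theorem~\ref{prop: character vanishing sum} applied to $G^{\wedge}$) merely formalizes the same ``dual trick,'' and your closing paragraph reproduces the paper's computation essentially verbatim.
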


\begin{proof}
The identity to be proved is the dual of the fundamental character sum, and we will use the dual trick. Thanks to the previous result, there is a character $\chi'$ of $G$ satisfying $\chi'(g)\neq 1$. Then
\begin{align*}
\chi'(g) \sum_{\chi\in G^{\wedge}} \chi(g)=\sum_{\chi\in G^{\wedge}} (\chi'\:\chi)(g)=\sum_{\chi\in G^{\wedge}} \chi(g)
\end{align*}
and the claimed relation follows.
\end{proof}

\begin{exer}\label{exer: d-powers} Let $d$ be a positive integer, and $g\in G$. Then:
 \begin{align*}
 \sum_{\chi^d=\ct}\: \chi(g)=\big|\{h\in G: h^d=g\}\big|
 \end{align*}
 \end{exer}

\bigskip
\subsection*{Character sums over finite fields} \label{sec: character sums}\
\medskip

The characters of a finite field $\F$ come in two flavours: characters of the additive group $(\F,+)$, and characters of the multiplicative group $(\F^*,\cdot)$. In what follows, an additive character is usually denoted by $\psi$, while a multiplicative one by $\chi$. It is convenient to extend the multiplicative characters to the whole of $\F$, and we do so by setting $\chi(0)=0$ for $\chi\neq \ct$, respectively $\chi(0)=1$ for $\chi=\ct$.

If $\K$ is an extension of $\F$, then characters of $\K$ restrict to characters of $\F$. The other way around, characters of $\F$ induce characters of $\K$ via the trace and the norm maps: if $\psi$ is an additive character of $\F$ then $\psi\circ \Tr$ is an additive character of $\K$, and if $\chi$ is a multiplicative character of $\F$ then $\chi\circ \Nm$ is a multiplicative character of $\K$. Note that non-trivial characters of $\F$ induce non-trivial characters of $\K$, thanks to the surjectivity of the trace and the norm.

Character sums arise by summing values of additive characters or multiplicative characters, or products thereof, over the whole field or just a subset of it. As this vague description already suggests, character sums come in a kaleidoscopic variety. 

A \emph{Gauss sum} over $\F$ is a character sum of the form\begin{align*}
G(\psi,\chi)=\sum_{s\in \F^*} \psi(s)\:\chi(s) 
\end{align*}
where $\psi$ is an additive character, and $\chi$ is a multiplicative character of $\F$. 

Recall that Gauss's Lemma concerns a sum of the above form. Specifically, the cyclotomic integer $G$ is the Gauss sum over the field $\Z_p$, corresponding to the additive character $a\mapsto \zeta^a$, and the quadratic signature as the multiplicative character. A general Gauss sum over a field $\F$ is, however, no longer a cyclotomic integer, for it combines roots of unity of order $q$ with roots of unity of order $q-1$. Here $q$ is, as usual, the number of elements of $\F$.

The Gauss sums involving trivial characters are easily computed:
\begin{align*}
G(\ct,\ct)=q-1,\qquad  G(\ct,\chi)=0 \textrm{ for } \chi\neq \ct,\qquad G(\psi,\ct)=-1 \textrm{ for } \psi\neq \ct
\end{align*}

Far more interesting is the following fact.

\begin{thm}\label{thm: abs gauss} If $\psi$ and $\chi$ are non-trivial characters, then
\begin{align*}
\big|G(\psi, \chi)\big|=\sqrt{q}. 
\end{align*}
\end{thm}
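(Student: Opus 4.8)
The standard approach is to compute $|G(\psi,\chi)|^2 = G(\psi,\chi)\overline{G(\psi,\chi)}$ directly and show it equals $q$. The plan is to write $\overline{G(\psi,\chi)} = \sum_{t\in\F^*} \overline{\psi(t)}\,\overline{\chi(t)}$, and use that $\overline{\psi(t)} = \psi(-t)$ (since $\psi$ takes values in roots of unity) together with $\overline{\chi(t)} = \chi(t^{-1})$ (since $\chi$ is a character into the unit circle). Multiplying the two sums gives a double sum over $s,t\in\F^*$:
\begin{align*}
|G(\psi,\chi)|^2 = \sum_{s,t\in\F^*} \psi(s)\,\psi(-t)\,\chi(s)\,\chi(t^{-1}) = \sum_{s,t\in\F^*} \psi(s-t)\,\chi(s\,t^{-1}).
\end{align*}

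The next key step is a change of variables to decouple the two summation indices. I would substitute $s = tu$ with $u\in\F^*$ (for each fixed $t\in\F^*$, as $s$ ranges over $\F^*$ so does $u$). Then $s - t = t(u-1)$ and $s\,t^{-1} = u$, so using multiplicativity of $\psi$ in its argument via additivity, $\psi(s-t) = \psi(t(u-1))$ and the sum becomes
\begin{align*}
|G(\psi,\chi)|^2 = \sum_{u\in\F^*}\chi(u)\sum_{t\in\F^*}\psi\big(t(u-1)\big).
\end{align*}
The inner sum over $t$ is now the crux: it is a sum of an additive character over $\F^*$. When $u \neq 1$, the map $t\mapsto t(u-1)$ is a bijection of $\F^*$, so the inner sum equals $\sum_{r\in\F^*}\psi(r) = -1$ by Theorem~\ref{prop: character vanishing sum} (the full sum over $\F$ vanishes, and subtracting the $r=0$ term $\psi(0)=1$ leaves $-1$). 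When $u=1$, every term is $\psi(0)=1$, so the inner sum is $q-1$.

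Finally I would assemble these two cases. Splitting off $u=1$ and using the value $-1$ for all $u\neq 1$:
\begin{align*}
|G(\psi,\chi)|^2 = \chi(1)(q-1) + \sum_{u\neq 1}\chi(u)\cdot(-1) = (q-1) - \sum_{u\neq 1}\chi(u).
\end{align*}
Since $\chi$ is nontrivial, $\sum_{u\in\F^*}\chi(u)=0$ again by Theorem~\ref{prop: character vanishing sum}, so $\sum_{u\neq 1}\chi(u) = -\chi(1) = -1$. Hence $|G(\psi,\chi)|^2 = (q-1)+1 = q$, giving $|G(\psi,\chi)|=\sqrt{q}$. The main obstacle is bookkeeping the two character-vanishing applications correctly—one for the additive character on $\F^*$ (yielding $-1$ rather than $0$ because of the missing zero term) and one for the multiplicative character (yielding a clean $0$). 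The nontriviality of \emph{both} $\psi$ and $\chi$ is used exactly once each, which is a good consistency check that the hypotheses are fully exploited.
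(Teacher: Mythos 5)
Your proof is correct and follows essentially the same route as the paper's: expand $|G(\psi,\chi)|^2$ as a double sum, decouple via the substitution $s=tu$, evaluate the inner additive-character sum by cases ($q-1$ at $u=1$, $-1$ otherwise), and finish with the vanishing of $\sum_{u\in\F^*}\chi(u)$. Your closing remark that nontriviality of $\psi$ and $\chi$ each enters exactly once is an accurate accounting of where the hypotheses are used.
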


\begin{proof}
We start by expanding $\big|G(\psi, \chi)\big|^2=G(\psi,\chi)\;\overline{G(\psi,\chi)}$:
\begin{align*} 
\big|G(\psi, \chi)\big|^2=\sum_{s,t\in \F^*} \psi(s)\:\overline{\psi(t)}\:\chi(s)\:\overline{\chi(t)}=\sum_{t\in \F^*} \sum_{s\in \F^*} \psi(s-t)\:\chi(s/t) 
\end{align*}
We make the change of variable $s\mapsto ts$ in the inner sum, and we continue:
\begin{align*} 
\big|G(\psi, \chi)\big|^2=\sum_{t\in \F^*} \sum_{s\in \F^*} \psi(t(s-1))\:\chi(s)=\sum_{s\in \F^*} \chi(s)\sum_{t\in \F^*} \psi(t(s-1))
\end{align*}
For $s=1$ the inner sum equals $q-1$, while for $s\neq 1$ it equals $-\psi(0)=-1$. Therefore
\begin{align*} 
\big|G(\psi, \chi)\big|^2&=q-1-\sum_{1\neq s\in \F^*} \chi(s)=q-\sum_{s\in \F^*} \chi(s)=q
\end{align*}
as claimed.
\end{proof}

The previous theorem is, in effect, a generalization of Gauss's Lemma. To see this, observe that
\begin{align*}
\overline{G(\psi,\chi)}&=\sum_{s\in \F^*} \overline{\psi(s)}\;\overline{\chi(s)}=\sum_{s\in \F^*} \psi(-s)\:\overline{\chi}(s)\\
&=\sum_{s\in \F^*} \psi(s)\:\overline{\chi}(-s)=\chi(-1) \sum_{s\in \F^*} \psi(s)\:\overline{\chi}(s)=\chi(-1) \: G(\psi,\overline{\chi}).
\end{align*}
So we may restate the theorem as saying that
\begin{align*}
G(\psi,\chi)\:G(\psi,\overline{\chi})=\chi(-1)\: q
\end{align*}
whenever $\psi$ and $\chi$ are non-trivial. In particular, if $q$ is odd then we may take $\chi$ to be the quadratic signature $\sigma$. Note that $\sigma$ is real-valued, in fact it is the only non-trivial multiplicative character which is real-valued. We obtain $G(\psi,\sigma)^2=\sigma(-1)\: q$ for any non-trivial $\psi$, and this relation is a mild extension of Gauss's Lemma.

A \emph{Jacobi sum} over $\F$ is a character sum of the form
\begin{align*}
J(\chi_1,\chi_2)=\sum_{s+t=1} \chi_1(s)\:\chi_2(t) 
\end{align*}
where $\chi_1$ and $\chi_2$ are multiplicative characters. 

We have already encountered a Jacobi sum in Lemma~\ref{lem: convolution}, where we found that $J(\sigma,\sigma)=-\sigma(-1)$. Note also that $J(\ct,\ct)=q$, and $J(\ct,\chi)=J(\chi,\ct)=0$ whenever $\chi$ is non-trivial. In what concerns the Jacobi sums involving non-trivial characters, their absolute values can be computed as well. This is the aim of the exercise below.

\begin{exer}\label{exer: jacobi sums} (i) Let  $\chi_1$ and $\chi_2$ be non-trivial multiplicative characters. Show that 
\begin{align*}
\big|J(\chi_1,\chi_2)\big|=
\begin{cases} 
\sqrt{q} & \textrm{ if } \chi_1\chi_2\neq \ct\\
1 & \textrm{ if } \chi_1\chi_2=\ct
\end{cases}
\end{align*} 
by proceeding as follows. In the first case, establish the following relation with Gauss sums: $G(\psi,\chi_1)\:G(\psi,\chi_2)=J(\chi_1,\chi_2)\: G(\psi,\chi_1\chi_2)$ for each non-trivial additive character $\psi$. For the second case, argue directly that $J(\chi,\chi^{-1})=-\chi(-1)$ for any non-trivial $\chi$.

\smallskip
(ii) Use the above computation, for a suitable choice of $\chi_1$ and $\chi_2$, to give another proof of Fermat's theorem: if $q\equiv 1$ mod $4$, then $q$ is a sum of two integral squares. 
\end{exer}

A Gauss sum is a mixed-type character sum, as it combines additive and multiplicative characters. A Jacobi sum is a convolution-type character sum. The following definition introduces yet another type: a restricted character sum.

An \emph{Eisenstein sum} for an extension $\K/\F$ is a character sum of the form
\begin{align*}
E(\chi)=\sum_{\Tr(s)=1} \chi(s), \qquad 
\end{align*}
where $\chi$ is a multiplicative character of $\K$. One could also consider the `singular' Eisenstein sum
\begin{align*}
E_0(\chi)=\sum_{\substack{\Tr(s)=0 \\ s\neq 0}} \chi(s)
\end{align*}
but this sum plays only a secondary role, as the next lemma will show. Note at this point that, for the trivial character, we have $E(\ct)=q^{n-1}$ and $E_0(\ct)=q^{n-1}-1$. As usual, $n$ denotes the degree of the extension $\K/\F$.

\begin{lem}\label{lem: sing eins} Let $\chi$ be a non-trivial character of $\K$. Then
\begin{align*}
E_0(\chi)=
\begin{cases}
0 & \textrm{ if } \chi \textrm{ is non-trivial on } \F,\\
-(q-1)\: E(\chi) & \textrm{ if } \chi \textrm{ is trivial on } \F.
\end{cases}
\end{align*}
\end{lem}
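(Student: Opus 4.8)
The plan is to funnel both sums into the single vanishing identity $\sum_{s \in \K^*} \chi(s) = 0$, which holds because $\chi$ is a non-trivial character of the multiplicative group $\K^*$ (Theorem~\ref{prop: character vanishing sum}). The bridge between $E(\chi)$, $E_0(\chi)$, and this global sum is the $\F$-linearity of the trace, which lets me pass between the trace fibers $\{s : \Tr(s) = c\}$ by scaling.

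First I would record a scaling relation. For $c \in \F^*$, regard $c$ as an element of $\K^*$ and consider the map $s \mapsto cs$. Since $\Tr$ is $\F$-linear, $\Tr(cs) = c\,\Tr(s)$, so this map carries the fiber $\{\Tr(s) = 1\}$ bijectively onto $\{\Tr(s) = c\}$. Using multiplicativity of $\chi$, this gives
\[
\sum_{\Tr(s) = c} \chi(s) = \sum_{\Tr(s) = 1} \chi(cs) = \chi(c)\, E(\chi),
\]
where $\chi(c)$ is the value of the restriction $\chi|_{\F}$; note that every $s$ with $\Tr(s) = c \neq 0$ is automatically non-zero, so the extension convention $\chi(0)=0$ never intrudes here.

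Next I would partition $\K^*$ according to the value of the trace and sum. Separating the fiber over $0$ (which contributes exactly $E_0(\chi)$, the zero element being excluded by definition) from the fibers over $c \in \F^*$ yields
\[
0 = \sum_{s \in \K^*} \chi(s) = E_0(\chi) + \sum_{c \in \F^*} \chi(c)\, E(\chi) = E_0(\chi) + E(\chi) \sum_{c \in \F^*} \chi(c).
\]
It then remains to evaluate $\sum_{c \in \F^*} \chi(c)$, and this is precisely the dichotomy in the statement. If $\chi$ is non-trivial on $\F$, then $\chi|_{\F^*}$ is a non-trivial character of $\F^*$, so $\sum_{c \in \F^*} \chi(c) = 0$ by Theorem~\ref{prop: character vanishing sum} applied to $\F^*$, forcing $E_0(\chi) = 0$. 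If $\chi$ is trivial on $\F$, then $\chi(c) = 1$ for every $c \in \F^*$, so the sum equals $q - 1$ and we obtain $E_0(\chi) = -(q-1)\,E(\chi)$.

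There is no serious obstacle; the whole argument is a double count organized by trace fibers. The only point demanding care is the bookkeeping around the zero element: because $E(\chi)$ and all the $c \neq 0$ fibers involve only elements of non-zero trace, $\chi$ is evaluated solely at genuine non-zero field elements, and the lone $s = 0$ term falls into the $c = 0$ fiber, where $E_0(\chi)$ already excludes it. The legitimacy of the scaling bijection (each fiber being non-empty) rests on the surjectivity of $\Tr$ onto $\F$, which was established earlier.
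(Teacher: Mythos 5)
Your proof is correct and follows essentially the same route as the paper: decompose $\sum_{s\in\K^*}\chi(s)=0$ over trace fibers, evaluate each fiber over $c\in\F^*$ via the change of variable $s\mapsto cs$ to get $\chi(c)\,E(\chi)$, and then split into cases according to whether $\sum_{c\in\F^*}\chi(c)$ vanishes or equals $q-1$. Your extra remarks on the $s=0$ bookkeeping and the surjectivity of the trace are careful but not needed beyond what the paper's argument already implicitly handles.
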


\begin{proof}
Decomposing over trace values, we write
\begin{align*}
\sum_{s\in \K^*} \chi(s)=E_0(\chi)+\sum_{c\in \F^*}\sum_{\Tr(s)=c} \chi(s).
\end{align*}
The left-hand sum is $0$. On the right-hand side, for each $c\neq 0$ we have that
\begin{align*}
\sum_{\Tr(s)=c} \chi(s)=\chi(c)\: E(\chi)
\end{align*}
by a change of variable $s\mapsto cs$. Thus
\begin{align*}
0=E_0(\chi)+E(\chi) \sum_{c\in \F^*} \chi(c).
\end{align*}
The latter sum is $0$ or $q-1$ according to whether $\chi$ is non-trivial or trivial on $\F$. The claimed formulas for $E_0(\chi)$ follow.
\end{proof}

The statement and the proof of the previous lemma are helpful in establishing the following fact.

\begin{thm}\label{thm: abs eins} 
Let $\chi$ be a non-trivial character of $\K$. Then
\begin{align*}
\big|E(\chi)\big|=
\begin{cases}
q^{(n-1)/2} & \textrm{ if } \chi \textrm{ is non-trivial on } \F,\\
q^{n/2-1} & \textrm{ if } \chi \textrm{ is trivial on } \F.
\end{cases}
\end{align*}
\end{thm}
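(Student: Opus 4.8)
The plan is to relate the Eisenstein sum $E(\chi)$ to a Gauss sum over the big field $\K$, whose absolute value is already pinned down by Theorem~\ref{thm: abs gauss}. Since $\K$ is a field with $q^n$ elements, that theorem tells us that any Gauss sum over $\K$ built from a non-trivial additive character and a non-trivial multiplicative character has absolute value $\sqrt{q^n}=q^{n/2}$. The device that converts a sum over $\K$ into something organised by traces is the additive character $\psi\circ\Tr$, where $\psi$ is a fixed non-trivial additive character of $\F$; this is non-trivial on $\K$ because the trace is onto. Thus I would work with
\begin{align*}
g=\sum_{s\in\K^*}\psi(\Tr(s))\,\chi(s),
\end{align*}
for which $|g|=q^{n/2}$.

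First I would decompose $g$ according to the value of the trace. Grouping the $s\in\K^*$ by $c=\Tr(s)\in\F$ and using, for $c\neq 0$, the change of variable $s\mapsto cs$ (legitimate since $\Tr$ is $\F$-linear, so $\Tr(s)=1$ becomes $\Tr(cs)=c$), one finds $\sum_{\Tr(s)=c}\chi(s)=\chi(c)\,E(\chi)$, while the $c=0$ stratum is exactly the singular Eisenstein sum $E_0(\chi)$. This yields
\begin{align*}
g=E_0(\chi)+E(\chi)\sum_{c\in\F^*}\psi(c)\,\chi(c)=E_0(\chi)+E(\chi)\,G(\psi,\chi|_{\F}),
\end{align*}
where $\chi|_\F$ denotes the restriction of $\chi$ to $\F^*$, itself a (possibly trivial) multiplicative character of $\F$, and $G(\psi,\chi|_\F)$ is an honest Gauss sum over $\F$.

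Now I would split into the two cases of the statement, feeding in Lemma~\ref{lem: sing eins} and the known Gauss-sum evaluations. If $\chi$ is non-trivial on $\F$, then $E_0(\chi)=0$ by the lemma, and $G(\psi,\chi|_\F)$ has two non-trivial arguments, so $|G(\psi,\chi|_\F)|=\sqrt q$ by Theorem~\ref{thm: abs gauss}. Hence $|g|=\sqrt q\,|E(\chi)|$, and comparing with $|g|=q^{n/2}$ gives $|E(\chi)|=q^{(n-1)/2}$. If instead $\chi$ is trivial on $\F$, then $\chi|_\F=\ct$, so $G(\psi,\ct)=-1$, while the lemma gives $E_0(\chi)=-(q-1)E(\chi)$; substituting, $g=-(q-1)E(\chi)-E(\chi)=-q\,E(\chi)$, so $q\,|E(\chi)|=q^{n/2}$ and $|E(\chi)|=q^{n/2-1}$.

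The two ingredients are entirely in hand, so I do not anticipate a serious obstacle; the only thing demanding care is the bookkeeping of the trace decomposition—making sure the $c=0$ stratum is isolated as $E_0(\chi)$ and that the change of variable $s\mapsto cs$ correctly produces the factor $\chi(c)$ and reindexes $\{\Tr(s)=c\}$ onto $\{\Tr(s)=1\}$. Once the identity $g=E_0(\chi)+E(\chi)\,G(\psi,\chi|_\F)$ is established, the result is just a matter of plugging in the values of $E_0$ and the relevant Gauss sums in each case.
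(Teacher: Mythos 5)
Your argument is correct and is essentially identical to the paper's proof: both decompose the Gauss sum $G(\psi\circ\Tr,\chi)$ over $\K$ by trace values to obtain $G(\psi\circ\Tr,\chi)=E_0(\chi)+E(\chi)\,G(\psi,\chi|_\F)$, then apply Lemma~\ref{lem: sing eins} and the known absolute values of Gauss sums in each case. The bookkeeping you flag (isolating the $c=0$ stratum and the change of variable $s\mapsto cs$) is handled in the paper exactly as you describe, so there is nothing to add.
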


\begin{proof}
Let $\chi_\mathrm{res}$ denote the multiplicative character of $\F$ obtained by restricting $\chi$. Let also $\psi$ be a non-trivial additive character of $\F$, and denote by $\psi^\mathrm{ind}$ the additive character of $\K$ induced via the trace. Note that $\psi^{\mathrm{ind}}$ is also non-trivial.

We relate the following two Gauss sums, one over $\K$ and the other over $\F$:
\begin{align*}
G(\psi^\mathrm{ind}, \chi)=\sum_{s\in \K^*} \psi(\Tr(s))\:\chi(s),\qquad
G(\psi, \chi_\mathrm{res})=\sum_{c\in \F^*} \psi(c)\:\chi(c)
\end{align*}
We decompose the Gauss sum over $\K$ according to the values of the trace map, and we write
\begin{align*}
G(\psi^{\mathrm{ind}}, \chi)&=\psi(0)\: E_0(\chi)+\sum_{c\in \F^*} \psi(c)\: \Big(\sum_{\Tr(s)=c} \chi(s)\Big)\\
&=E_0(\chi)+\sum_{c\in \F^*} \psi(c) \: \chi(c)\: E(\chi)\\
&=E_0(\chi)+G(\psi, \chi_{\mathrm{res}})\: E(\chi).
\end{align*}

Assume $\chi_\mathrm{res}$ is non-trivial. Then $E_0(\chi)=0$, and the above relation simplifies to 
\begin{align*}
G(\psi^{\mathrm{ind}}, \chi)=G(\psi, \chi_{\mathrm{res}})\: E(\chi).
\end{align*} 
As $|G(\psi^{\mathrm{ind}}, \chi)|=q^{n/2}$, and $|G(\psi, \chi_{\mathrm{res}})|=q^{1/2}$, we get $|E(\chi)|=q^{(n-1)/2}$.

\smallskip
Assume $\chi_\mathrm{res}$ is trivial. Then $E_0(\chi)=-(q-1)\: E(\chi)$ and $G(\psi, \chi_{\mathrm{res}})=-1$. In this case, we get
\begin{align*}
G(\psi^{\mathrm{ind}}, \chi)=-q\: E(\chi).
\end{align*} 
Using $|G(\psi^{\mathrm{ind}}, \chi)|=q^{n/2}$ once again, we conclude that $|E(\chi)|=q^{n/2-1}$.
\end{proof}

\bigskip
\subsection*{More character sums over finite fields} \label{sec: character sums+}\
\medskip

For the character sums considered so far--Gauss, Jacobi, Eisenstein sums--we have succeeded in computing their absolute values. To gain some appreciation for our computations and their relative simplicity, we need only enlarge the perspective very slightly. Let us consider some similar looking sums.

A \emph{Kloosterman sum} over $\F$ is a character sum given by the convolution formula
\begin{align*}
K(\psi_1,\psi_2)=\sum_{st=1} \psi_1(s)\:\psi_2(t) 
\end{align*}
where $\psi_1$ and $\psi_2$ are additive characters. This is the counterpart of a Jacobi sum. The Kloosterman sums involving non-trivial characters can be bounded as follows.

\begin{thm}[Weil]
If $\psi_1$ and $\psi_2$ are non-trivial, then $\big|K(\psi_1,\psi_2)\big|\leq 2\sqrt{q}$.
\end{thm}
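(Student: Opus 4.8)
The plan is to strip the statement down to a one-parameter family, then to convert the Kloosterman sum into a character sum that the paper's Gauss-sum technology can partially digest, and finally to isolate the genuine difficulty. First I would record that every additive character is of the form $\psi_a(x)=\psi(ax)$ for a fixed non-trivial $\psi$ and some $a\in\F$, with $\psi_a$ non-trivial exactly when $a\neq 0$. Writing $\psi_1=\psi_a$, $\psi_2=\psi_b$ and using $st=1$,
\[
K(\psi_1,\psi_2)=\sum_{s\in\F^*}\psi(as+bs^{-1}).
\]
The substitution $s\mapsto\lambda s$ shows $K(\psi_a,\psi_b)=K(\psi_{a\lambda},\psi_{b\lambda^{-1}})$, so the sum depends only on the product $ab$; taking $\lambda=a^{-1}$ reduces everything to $K_c=\sum_{s\in\F^*}\psi(s+cs^{-1})$ with $c=ab\in\F^*$. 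Since the character values are roots of unity, $\psi(-x)=\overline{\psi(x)}$, and the substitution $s\mapsto -s$ gives $\overline{K_c}=K_c$, so $K_c$ is real; the target becomes $-2\sqrt q\le K_c\le 2\sqrt q$.

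Next I would try to express $K_c$ through Gauss sums. Expanding each factor in the multiplicative-character basis of $\F^*$, via $\psi(s)=\tfrac1{q-1}\sum_\chi G(\psi,\overline\chi)\,\chi(s)$, and collapsing the resulting sum over $s$ by orthogonality, yields
\[
K_c=\frac1{q-1}\sum_{\chi}G(\psi,\overline\chi)^2\,\chi(c).
\]
By Theorem~\ref{thm: abs gauss} each $|G(\psi,\overline\chi)^2|=q$ for $\chi\neq\ct$, but the triangle inequality only gives $|K_c|\lesssim q$: the cancellation among the $q-1$ terms that produces the bound $2\sqrt q$ is invisible at this level. A second, equivalent elementary reduction (valid for odd $q$) counts, for each $t$, the solutions of $s+cs^{-1}=t$ through the discriminant of $s^2-ts+c$, giving $K_c=\sum_{t\in\F}\psi(t)\,\sigma(t^2-4c)$. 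This is appealing but again resists a direct estimate, since re-expanding $\sigma$ through its Gauss sum sends the computation back to a Kloosterman sum.

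The honest route, and the one I would commit to, is the zeta-function argument. I would introduce the Kloosterman sums $K_c^{(m)}=\sum_{s\in\K^*}\psi(\Tr(s+cs^{-1}))$ over the degree-$m$ extensions $\K$ of $\F$, interpret a point count on the Artin--Schreier curve attached to $x+cx^{-1}$ in terms of $q^m$ and $K_c^{(m)}$, and assemble the associated $L$-function
\[
L(T)=\exp\!\big(\textstyle\sum_{m\ge1}-\tfrac{K_c^{(m)}}{m}\,T^m\big).
\]
The combinatorial content to be checked is that $L(T)$ is the quadratic $1+K_c T+qT^2$; writing $1+K_c T+qT^2=(1-\alpha T)(1-\beta T)$ gives $\alpha\beta=q$ and $\alpha+\beta=-K_c$. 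The main obstacle, carrying essentially the whole weight of the theorem, is the Riemann Hypothesis for curves over finite fields, which forces $|\alpha|=|\beta|=\sqrt q$; granting it, $|K_c|=|\alpha+\beta|\le 2\sqrt q$. Everything before this point is elementary bookkeeping with the paper's Gauss sums, whereas the eigenvalue bound is the deep input that cannot be extracted from absolute-value estimates alone.
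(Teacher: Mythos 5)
The first thing to say is that the paper contains no proof of this statement: the Kloosterman bound is quoted as Weil's theorem, and the Notes at the end of that section say explicitly that it is a deep result whose proofs rely on algebraic geometry, even the ``elementary'' approaches (as in Lidl--Niederreiter, which the paper cites) being very involved. So your proposal cannot be measured against an in-paper argument, only on its own terms---and on its own terms it is a correct roadmap but not a proof. Your preliminary reductions are all sound: parametrizing the additive characters as $\psi_a(x)=\psi(ax)$, reducing to $K_c=\sum_{s\in\F^*}\psi(s+cs^{-1})$ with only the product $c=ab$ mattering, the reality of $K_c$ via $s\mapsto -s$, the identity $K_c=\frac{1}{q-1}\sum_\chi G(\psi,\overline{\chi})^2\,\chi(c)$ (together with your correct observation that Theorem~\ref{thm: abs gauss} then yields only a bound of order $q$, the cancellation responsible for $2\sqrt q$ being invisible at that level), and the discriminant identity $K_c=\sum_{t\in\F}\psi(t)\,\sigma(t^2-4c)$ for odd $q$. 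But the concluding step defers the entire content of the theorem to two unproven inputs: (a) the rationality-and-degree claim that the $L$-series attached to the extension sums $K_c^{(m)}$ is a quadratic, equivalently that $K_c^{(m)}=-(\alpha^m+\beta^m)$ for all $m$ with $\alpha\beta=q$; and (b) the Riemann Hypothesis for curves over finite fields, giving $|\alpha|=|\beta|=\sqrt q$. Input (b) is not a citable lemma in a proof of Weil's Kloosterman bound---it \emph{is} Weil's theorem, in stronger form---so as a proof the argument is circular, though as a reduction it is honest and accurately locates the difficulty.

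There is also a sign error in the one formula you do commit to: with your definition $L(T)=\exp\bigl(\sum_{m\ge 1}-\tfrac{K_c^{(m)}}{m}T^m\bigr)$ and the classical relation $K_c^{(m)}=-(\alpha^m+\beta^m)$, one gets $L(T)=\bigl((1-\alpha T)(1-\beta T)\bigr)^{-1}$, a rational function whose \emph{denominator} is your quadratic; the polynomial $1+K_cT+qT^2=(1-\alpha T)(1-\beta T)$ arises from $\exp\bigl(+\sum_{m\ge 1}\tfrac{K_c^{(m)}}{m}T^m\bigr)$. Once the sign is fixed, your extraction $\alpha+\beta=-K_c$, $\alpha\beta=q$, and the conditional conclusion $|K_c|\le|\alpha|+|\beta|=2\sqrt q$ are correct. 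If you want an argument at the level of the paper's toolkit, there is none to be had: the paper deliberately presents this theorem, like Deligne's bound stated just after it, as a black box, and the genuinely self-contained routes (Stepanov's method, or Weil's original argument) require machinery far beyond the Gauss, Jacobi, and Eisenstein sum computations developed in the text.
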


Now let $\K/\F$ be an extension of degree $n$. The restricted character sum
\begin{align*}
S(\psi)=\sum_{\Nm(s)=1}\psi(s)
\end{align*}
where $\psi$ is an additive character of $\K$, is the counterpart of an Eisenstein sum. Fairly simple considerations lead to the bound $|S(\psi)|\leq q^{n/2}$ for non-trivial $\psi$, see Exercise~\ref{exer: norm sums} below. However, when $n$ is small with respect to $q$, a better bound holds.

\begin{thm}[Deligne]
If $\psi$ is non-trivial, then $\big|S(\psi)\big|\leq n\: q^{(n-1)/2}$.
\end{thm}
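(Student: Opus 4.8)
The plan is to separate the \emph{elementary} content of the statement, which yields only the weaker bound $q^{n/2}$, from the genuinely deep input that produces both the correct exponent $q^{(n-1)/2}$ and the factor $n$. First I would parametrize the norm-one torus using Hilbert 90. By Exercise~\ref{exer: hilbert}, the map $\beta\mapsto \phi(\beta)/\beta=\beta^{q-1}$ carries $\K^*$ onto $\{s:\Nm(s)=1\}$, and its fibres all have size $|\F^*|=q-1$ (the kernel being exactly $\{\beta:\beta^{q-1}=1\}=\F^*$). Hence
\begin{align*}
S(\psi)=\sum_{\Nm(s)=1}\psi(s)=\frac{1}{q-1}\sum_{\beta\in\K^*}\psi(\beta^{q-1}).
\end{align*}

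Next I would expand this monomial sum into Gauss sums over $\K$. Counting $(q-1)$-th roots via Exercise~\ref{exer: d-powers}, the characters $\chi$ of $\K^*$ with $\chi^{q-1}=\ct$ are precisely those trivial on the $(q-1)$-th powers, i.e.\ the characters of the form $\eta\circ\Nm$ with $\eta\in(\F^*)^{\wedge}$. Interchanging the order of summation gives
\begin{align*}
S(\psi)=\frac{1}{q-1}\sum_{\eta\in(\F^*)^{\wedge}}G(\psi,\eta\circ\Nm).
\end{align*}
The term $\eta=\ct$ contributes $G(\psi,\ct)=-1$; for every other $\eta$ the character $\eta\circ\Nm$ is non-trivial on $\K^*$, so Theorem~\ref{thm: abs gauss} (over $\K$) gives $|G(\psi,\eta\circ\Nm)|=q^{n/2}$. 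Bounding the $q-2$ remaining terms by absolute values yields $|S(\psi)|\leq q^{n/2}$, which is exactly the easy estimate of Exercise~\ref{exer: norm sums}. This is the best one can extract from magnitudes alone, since it ignores all cancellation among the $q-1$ Gauss vectors.

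The real work, then, is to exhibit cancellation among the summands $G(\psi,\eta\circ\Nm)$, equivalently to treat $S(\psi)$ as an exponential sum over the $(n-1)$-dimensional norm-one torus $T=\{s:\Nm(s)=1\}$. The route I would take is cohomological: view $\psi$ as defining an Artin--Schreier sheaf $\mathcal L_\psi$ on $T$ and write, via the Grothendieck--Lefschetz trace formula,
\begin{align*}
S(\psi)=\sum_i(-1)^i\,\Tr\!\big(\mathrm{Frob}_q \mid H^i_c(T_{\overline{\F}},\mathcal L_\psi)\big).
\end{align*}
Deligne's theorem (Weil~II) bounds each Frobenius eigenvalue on $H^i_c$ by $q^{i/2}$, and a purity-and-vanishing analysis concentrates the sum on eigenvalues of absolute value $q^{(n-1)/2}$, of which there are at most $n$; this is precisely what delivers $|S(\psi)|\leq n\,q^{(n-1)/2}$.

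The hard part is entirely this last step, and it resists elementary treatment for two independent reasons. First, replacing the exponent $q^{n/2}$ by $q^{(n-1)/2}$ is exactly the Riemann Hypothesis over finite fields; already the case $n=2$ is Weil's bound for a curve, for which no soft argument is known. Second, even granting purity, the factor $n$ is a bound on the dimension of the relevant cohomology, i.e.\ a Betti-number estimate for $\mathcal L_\psi$ on $T$. Translated back to Step~2, these two inputs are what force the $q-1$ normalized vectors $G(\psi,\eta\circ\Nm)/q^{(n-1)/2}$, each of length $\sqrt q$, to sum to something of length at most $n(q-1)$ rather than reinforcing one another; no manipulation of the identities in Steps~1--2 alone can detect this.
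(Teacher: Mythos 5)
The paper offers no proof of this theorem to compare against: it is quoted from Deligne (SGA $4\tfrac{1}{2}$), and the accompanying Notes explicitly describe it, together with Weil's bounds, as a deep result relying on algebraic geometry. Your Steps 1--2 are correct and worth recording: the Hilbert 90 parametrization $s=\beta^{q-1}$ of the norm-one set (Exercise~\ref{exer: hilbert}), with fibres of size $q-1$, the identification via Exercise~\ref{exer: d-powers} of the characters $\chi$ of $\K^*$ satisfying $\chi^{q-1}=\ct$ with the induced characters $\eta\circ\Nm$, and the absolute-value computation of Theorem~\ref{thm: abs gauss} over $\K$ combine exactly as you say to give $|S(\psi)|\leq q^{n/2}$. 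This is precisely the elementary bound of Exercise~\ref{exer: norm sums}(ii), obtained by essentially the intended route, and your observation that no further information can be extracted from magnitudes alone is sound.

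However, your Step 3 is not a proof but a citation: to improve the exponent to $(n-1)/2$ and obtain the factor $n$, you invoke the Grothendieck--Lefschetz trace formula, Weil II purity, and a concentration-plus-dimension statement for $H^i_c$ of the norm-one torus --- that is, exactly the machinery whose output is the theorem being proved. Neither the vanishing (or control) of $H^i_c$ for $i\neq n-1$ nor the bound $\dim H^{n-1}_c\leq n$ is established in your sketch, and both require genuine geometric work; so as a self-contained argument the proposal has a gap precisely where you locate it. In fairness, this matches the paper's own stance --- the result is stated as a black box, with the remark that even the elementary approaches to the weaker Weil bounds are very involved --- so your proposal should be read as a correct reduction of the problem together with an accurate map of where the depth lies, not as a proof of the stated inequality.
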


\begin{exer}\label{exer: norm sums} (i) Let $\psi$ be a non-trivial additive character of $\F$, and $d$ a positive integer. Show that
\begin{align*}
\Big|\sum_{s\in \F} \psi(s^d)\Big|\leq (d-1)\sqrt{q}
\end{align*}
via the identity $\sum_{s\in \F^*} \psi(s^d)=\sum_{\chi^d=\ct} G(\psi,\chi)$.

\smallskip
(ii) Let $\K/\F$ be an extension of degree $n$, and let $\psi$ be a non-trivial additive character on $\K$. Using the above estimate, conclude that $|S(\psi)|\leq q^{n/2}$.
\end{exer}

The previous exercise is an instance of character sums restricted to polynomial values. The following result addresses this type of character sums, in much greater generality.

\begin{thm}[Weil]\label{thm: weil}
Let $f\in \F[X]$ be a monic polynomial of degree $d$.

\begin{itemize}
\item[(i)] Let $\chi$ be a non-trivial multiplicative character of order $m$. If $f$ is not of the form $g^m$ for some $g\in \F[X]$, then
\begin{align*}
\Big|\sum_{s\in \F} \chi(f(s))\Big|\leq (d-1)\sqrt{q}.
\end{align*}
\item[(ii)] Let $\psi$ be a non-trivial additive character. If $d$ is relatively prime to $q$, then
\begin{align*}
\Big|\sum_{s\in \F} \psi(f(s))\Big|\leq (d-1)\sqrt{q}.
\end{align*}
\end{itemize}
\end{thm}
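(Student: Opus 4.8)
The plan is to realize each character sum as a (weighted) count of points on an explicit algebraic curve over $\F$, and then to invoke the Riemann Hypothesis for curves --- equivalently the Hasse--Weil point-count estimate $|N-q|\le 2g\sqrt q$, whose shape $(\text{const})\cdot\sqrt q$ is exactly what the theorem asserts. The constant $d-1$ will emerge from a genus computation on the relevant curve.

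For the multiplicative statement (i), I would work with the superelliptic curve $C\colon y^m=f(x)$. Exercise~\ref{exer: d-powers}, applied to the cyclic group $\F^*$, gives the root-counting identity: the number of $y\in\F$ with $y^m=a$ equals $\sum_{\chi^m=\ct}\chi(a)$, and with the paper's convention $\chi(0)=0$ for $\chi\ne\ct$ this remains valid at $a=0$. Summing over $x$, the affine point count of $C$ is
\begin{align*}
N=\sum_{x\in\F}\#\{y:y^m=f(x)\}=\sum_{\chi^m=\ct}\ \sum_{x\in\F}\chi(f(x)).
\end{align*}
The trivial character contributes the main term $q$, while each nontrivial $\chi$ of order dividing $m$ contributes exactly one of the sums to be estimated. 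To isolate a single $\chi$ cleanly I would pass to its $L$-function $L(T,\chi)=\exp\big(\sum_{k\ge1}S_k\,T^k/k\big)$, where $S_k=\sum_{x\in\F_{q^k}}(\chi\circ \Nm_{\F_{q^k}/\F})(f(x))$ ranges over the extensions of $\F$. The hypothesis that $f$ is not an $m$-th power in $\F[X]$ is precisely what forces $C$ to be geometrically irreducible, so $L(T,\chi)$ is a genuine polynomial; Riemann--Roch on $C$ pins its degree at $d-1$. Writing $L(T,\chi)=\prod_{i=1}^{d-1}(1-\alpha_iT)$ gives $S_1=-\sum_i\alpha_i$, so the target bound is equivalent to $|\alpha_i|=\sqrt q$ for all $i$.

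The additive statement (ii) runs in parallel with the Artin--Schreier curve in place of $C$. Writing $\psi=\psi_0\circ\Tr_{\F/\F_p}$ for a nontrivial character $\psi_0$ of the prime field, $\sum_x\psi(f(x))$ becomes a character-weighted count on $y^p-y=f(x)$, and the coprimality hypothesis $\gcd(d,q)=1$, i.e.\ $p\nmid d$, is exactly what prevents $f$ from being absorbed into the Artin--Schreier relation (being of the form $g^p-g+\text{lower}$); this keeps the curve geometrically irreducible and of the expected genus, so the associated $L$-function is again a polynomial of degree $d-1$ and the bound reduces to the same eigenvalue estimate. As a sanity check on the constants, the monomial case $f(x)=x^d$ of (ii) is already elementary: the identity of Exercise~\ref{exer: norm sums}(i) expands the sum as $\sum_{\chi^d=\ct}G(\psi,\chi)$, and each of the at most $d-1$ Gauss sums attached to a nontrivial $\chi$ has absolute value $\sqrt q$ by Theorem~\ref{thm: abs gauss} --- which is precisely where both the factor $d-1$ and the $\sqrt q$ come from.

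The main obstacle is the final step, which is in fact the whole content of the theorem: proving that the reciprocal roots $\alpha_i$ all have absolute value exactly $\sqrt q$. This is the Riemann Hypothesis for curves over finite fields. One route is to cite Weil's theorem directly, deducing purity of the Frobenius eigenvalues on the Jacobian (equivalently on \'etale $H^1$). For a self-contained treatment I would instead run Stepanov's polynomial method: construct an auxiliary polynomial of tightly controlled degree vanishing to high order at every $\F$-rational point of the curve, so that a zero-counting inequality forces $N\le q+O(\sqrt q)$; a symmetric argument applied to the conjugate count yields the matching lower bound, and bootstrapping these one-sided estimates over the extensions $\F_{q^k}$ upgrades them to $|\alpha_i|=\sqrt q$. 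Everything before this step --- the character identities, the reduction to point counts, and the genus bookkeeping --- is routine; the analytic heart lies entirely in controlling the absolute values of the Frobenius eigenvalues.
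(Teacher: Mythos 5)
The paper does not prove this theorem at all: it is quoted as a deep result of Weil, and the accompanying Notes point to Weil's 1948 paper and to Lidl--Niederreiter for the (still very involved) elementary treatments. So there is no in-paper argument to compare yours against; your proposal has to stand on its own as a proof, and as such it has a genuine gap. The reduction you describe --- the root-counting identity from Exercise~\ref{exer: d-powers}, the passage to the superelliptic curve $y^m=f(x)$ and the Artin--Schreier curve $y^p-y=f(x)$, the correct identification of where each hypothesis enters ($f$ not an $m$-th power forcing geometric irreducibility; $p\nmid d$ blocking absorption of $f$ into $g^p-g$), and the rewriting of the sum as $-\sum_i\alpha_i$ for the inverse roots of the $L$-polynomial --- is the standard and sound scaffolding. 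But after that scaffolding the statement to be proved is exactly equivalent to $|\alpha_i|=\sqrt q$, and at that point you offer only two options: cite Weil's Riemann Hypothesis for curves, which is circular, since the theorem you were asked to prove \emph{is} the character-sum form of that result; or run Stepanov's polynomial method, which you describe in one sentence without constructing the auxiliary polynomial, stating the zero-counting inequality, or executing the bootstrap over the extensions $\F_{q^k}$. That argument is many pages of delicate degree bookkeeping --- precisely what the paper's Notes mean by ``elementary approaches\dots still very involved'' --- so gesturing at it does not close the gap. The analytic heart of the theorem is entirely absent from your write-up.

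Two smaller points to fix if you do carry this out. First, your claim that Riemann--Roch ``pins the degree at $d-1$'' holds only when $f$ is squarefree; in general the degree of $L(T,\chi)$ is governed by the number of distinct roots of $f$ (together with ramification at infinity) and is merely at most $d-1$, which still yields the stated bound but should be said correctly. Second, your sanity check on $f(x)=x^d$ via Exercise~\ref{exer: norm sums}(i) and Theorem~\ref{thm: abs gauss} is right, and it is in fact the only case provable with the paper's own tools --- a useful anchor for the constants, but one that does not extend to general $f$ without the full machinery you have left unexecuted.
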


These bounds are better than the trivial bound, $q$, when the degree $d$ is small compared to $\sqrt{q}$. The hypothesis in part (i) is clearly needed. To motivate the hypothesis in part (ii), consider the following example. Assume $q=p^e$, where $e\geq 2$. Let $\psi$ be an additive character of $\F$ induced by a non-trivial character of its prime field. Let also $f=X^p-X$. Then $\psi(f(s))=1$ for each $s\in \F$. Now $\sum_{s\in \F} \psi(f(s))=q$, for which $(p-1)\sqrt{q}$ is not an upper bound.

The following result is an important consequence of Theorem~\ref{thm: weil}. Historically, it predates, and it motivated, the Weil estimates. 

\begin{cor}[Hasse] Let $N$ be the number of solutions to the equation $y^2=f(x)$, where $f$ is a cubic polynomial. Then $|N-q|\leq 2\sqrt{q}$.
\end{cor}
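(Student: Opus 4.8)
The plan is to translate the counting problem into a character sum and then invoke the Weil bound of Theorem~\ref{thm: weil}. Working in odd characteristic, so that the quadratic signature $\sigma$ is available, I first count solutions fibrewise over the $x$-coordinate. For a fixed $x\in \F$, the equation $y^2=f(x)$ has exactly $1+\sigma(f(x))$ solutions $y$: this number is $1$ when $f(x)=0$, is $2$ when $f(x)$ is a non-zero square, and is $0$ when $f(x)$ is a non-square, which is precisely what $1+\sigma(f(x))$ records under the convention $\sigma(0)=0$. Summing over $x$ gives
\begin{align*}
N=\sum_{x\in \F}\big(1+\sigma(f(x))\big)=q+\sum_{x\in \F}\sigma(f(x)),
\end{align*}
so that $N-q=\sum_{x\in \F}\sigma(f(x))$.

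It remains to bound the character sum $\sum_{x}\sigma(f(x))$ via Weil's estimate. I would apply Theorem~\ref{thm: weil}(i) with the multiplicative character $\chi=\sigma$, which has order $m=2$. To meet the monic hypothesis there, write $f=a\,g$ with $a$ the non-zero leading coefficient and $g$ monic of degree $3$; then $\sigma(f(x))=\sigma(a)\,\sigma(g(x))$, and since $\sigma(a)=\pm 1$ is a constant it suffices to bound $|\sum_x \sigma(g(x))|$. The hypothesis of Weil's theorem asks that $g$ not be of the form $h^2$, and this is automatic because $g$ has odd degree $3$ whereas any square $h^2$ has even degree. Hence Theorem~\ref{thm: weil}(i) applies with $d=3$ and yields
\begin{align*}
\Big|\sum_{x\in \F}\sigma(f(x))\Big|=\Big|\sum_{x\in \F}\sigma(g(x))\Big|\leq (3-1)\sqrt{q}=2\sqrt{q}.
\end{align*}
Combined with the identity above, this gives $|N-q|\leq 2\sqrt{q}$, as claimed.

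As for where the real work lies: essentially all of it is packaged inside the Weil bound, which we are permitted to assume. Given that input, the only genuine checks are the fibre count $1+\sigma(f(x))$ and the verification that the cubic $g$ is not a perfect square of a polynomial, both elementary; the odd-degree parity argument for the latter is the crux of why the hypothesis of Theorem~\ref{thm: weil}(i) holds, and no further obstacle arises.
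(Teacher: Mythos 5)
Your proposal is correct and follows essentially the same route as the paper: count solutions fibrewise as $1+\sigma(f(x))$, so that $N-q=\sum_x \sigma(f(x))$, and then apply Theorem~\ref{thm: weil}(i) with $\chi=\sigma$ and $d=3$. The only difference is that you explicitly verify the hypotheses of Weil's bound (reducing to a monic cubic by factoring out the leading coefficient, and ruling out $g=h^2$ by degree parity), which the paper leaves implicit -- a worthwhile bit of extra care, but not a different argument.
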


\begin{proof}
For each $x$, there are $1+\sigma(f(x))$ solutions to $y^2=f(x)$. Thus 
\begin{align*}
N=\sum_x \big(1+\sigma(f(x))\big)=q+\sum_x \sigma(f(x)).
\end{align*}
Now
\begin{align*}
\Big|\sum_{x} \sigma(f(x))\Big|\leq 2\sqrt{q}.
\end{align*}
by applying part (i) of Theorem~\ref{thm: weil}.
\end{proof}

\begin{notes} Character sums over finite fields are important in algebraic and analytic number theory. We have used them in the proof of quadratic reciprocity, and for counting the number of solutions for certain polynomial equations. As we will see, they also come up in the spectral study of various algebraically-defined graphs.

The references for the theorems of Weil and Deligne that we have quoted are Weil (\emph{On some exponential sums}, Proc. Nat. Acad. Sci. U. S. A. 1948) respectively Deligne (\emph{Cohomologie \'etale, SGA $4\tfrac{1}{2}$}, Lecture Notes in Mathematics 569, Springer 1977). These are deep and difficult results, relying on ideas and techniques from algebraic geometry. Elementary approaches to Weil's theorem \ref{thm: weil} are nowadays available, but they are still very involved. See Lidl and Niederreiter (\emph{Finite fields}, Second edition, Cambridge University Press 1997).
\end{notes}

\bigskip
\subsection*{An application to Paley graphs}\
\medskip

Despite their deterministic construction, Paley graphs behave like random graphs. Informally, a random graph is obtained by starting with a set of vertices, and for each pair of vertices one draws an edge or not as decided by a coin flip. The following result, a rather striking universal property, illustrates the random-like nature of the Paley graphs. For a feature of randomness is that every possible substructure eventually appears.

\begin{thm}\label{thm: paley is universal}
Let $n$ be a positive integer. Then, for $q$ large enough with respect to $n$, the Paley graph $P(q)$ contains every graph on $n$ vertices as an induced subgraph. 
\end{thm}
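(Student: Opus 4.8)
The plan is to realize any prescribed adjacency pattern by choosing vertices $v_1,\dots,v_n\in\F$ one at a time, using character-sum estimates to guarantee that each extension step succeeds once $q$ is large. Fix a graph $H$ on the vertex set $\{1,\dots,n\}$ and record its structure by signs $\epsilon_{ij}\in\{\pm 1\}$, where $\epsilon_{ij}=+1$ if $i\sim j$ in $H$ and $\epsilon_{ij}=-1$ otherwise. Since adjacency in $P(q)$ is governed by the quadratic signature---two vertices $u,v$ are adjacent precisely when $\sigma(u-v)=1$---finding $H$ as an induced subgraph amounts to finding distinct $v_1,\dots,v_n$ with $\sigma(v_i-v_j)=\epsilon_{ij}$ for all $i<j$.

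The technical heart is the following counting lemma: given distinct $s_1,\dots,s_k\in\F$ and signs $\eta_1,\dots,\eta_k\in\{\pm 1\}$, the number of $x\in\F$ satisfying $\sigma(x-s_i)=\eta_i$ for every $i$ is $q/2^k+O_k(\sqrt{q})$. To prove this I would write the count as $\sum_x \prod_{i=1}^k \tfrac12\big(1+\eta_i\,\sigma(x-s_i)\big)$, noting that each factor equals $1$ when $\sigma(x-s_i)=\eta_i$ and $0$ when $\sigma(x-s_i)=-\eta_i$, while the at most $k$ boundary values $x=s_i$ contribute only a harmless $O(1)$. Expanding the product yields a main term $q/2^k$ together with cross terms, each a scalar multiple of a sum $\sum_x \sigma\big((x-s_{i_1})\cdots(x-s_{i_r})\big)$ with $1\le r\le k$. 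Because the $s_i$ are distinct, the polynomial $f(X)=(X-s_{i_1})\cdots(X-s_{i_r})$ is squarefree of positive degree, hence not a perfect square; Weil's theorem (Theorem~\ref{thm: weil}(i)), applied with the order-$2$ character $\sigma$, bounds each such sum by $(r-1)\sqrt{q}\le(k-1)\sqrt{q}$. Summing the finitely many error terms gives the claimed estimate.

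With the lemma in hand I would build $v_1,\dots,v_n$ by induction. Suppose distinct $v_1,\dots,v_k$ have been chosen with $\sigma(v_i-v_j)=\epsilon_{ij}$ for all $i<j\le k$. Applying the lemma with $s_i=v_i$ and $\eta_i=\epsilon_{i,k+1}$, the number of $x\in\F$ with $\sigma(x-v_i)=\epsilon_{i,k+1}$ for all $i\le k$ is $q/2^k+O_k(\sqrt{q})$. Any such $x$ automatically satisfies $x\neq v_i$, since $\sigma(x-v_i)=\pm 1$ forces $x-v_i\neq 0$; thus every valid $x$ extends the configuration. For $q$ large compared to $n$ (and hence to $k$) this count is positive, so a choice of $v_{k+1}$ exists, and after $n$ steps we obtain the desired induced copy of $H$.

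The main obstacle is the cross-term estimate, whose only substantial input is Weil's bound; everything else is bookkeeping. One subtlety worth flagging is the squarefreeness hypothesis needed to invoke Weil: it is exactly the distinctness of the $s_i$ that rules out $f$ being a square, so the inductive insistence on keeping the $v_i$ distinct is not merely cosmetic but is precisely what licenses the character-sum estimate at each step.
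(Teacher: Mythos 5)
Your proposal is correct and follows essentially the same route as the paper: a vertex-by-vertex extension argument in which the number of admissible next vertices is counted by expanding the product of indicators $\tfrac12\bigl(1+\eta_i\,\sigma(x-s_i)\bigr)$ and bounding each cross term $\sum_x \sigma\bigl((x-s_{i_1})\cdots(x-s_{i_r})\bigr)$ by Weil's theorem, exactly as the paper does with its function $\tau$. Your explicit remark that distinctness of the $s_i$ makes the polynomial squarefree, hence not a square, is a point the paper uses only implicitly when invoking Theorem~\ref{thm: weil}(i), and it is good that you flagged it.
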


In this theorem, the graphs on $n$ vertices need not be connected. An induced subgraph is a subgraph inheriting all edges of the ambient graph.

\begin{proof} A simple-minded strategy for realizing a graph on $n$ vertices inside $P(q)$ is to do it vertex by vertex. For this to succeed, we need to show that $P(q)$ enjoys the following property for $q$ large enough: given a set of vertices $A$ with $|A|=k\in \{1,\dots, n-1\}$, and a partition of $A$ into two subsets $C$ (`connect') and $D$ (`disconnect'), there is a `new' vertex $v\notin A$ such that $v$ is connected to each vertex in $C$ but to no vertex in $D$. 

The desired vertex $v\notin A$ has to satisfy $\sigma(v-c)=1$ for all $c\in C$, and $\sigma(v-d)\neq 1$ for all $d\in D$. In other words, we need $\tau(v)\neq 0$, where
\begin{align*}
\tau(v)=\prod_{c\in C}\big(1+\sigma(v-c)\big) \prod_{d\in D}\big(1-\sigma(v-d)\big).
\end{align*}
We will show that such a vertex exists by showing that $\sum_{v\notin A} \tau(v)>0$ for $q$ large enough. To that end, we need an upper bound for $\sum_{v\in A} \tau(v)$, and a lower bound for $\sum_{v\in \F} \tau(v)$. 

When $v\in A$, $\tau(v)$ is either $0$ or $2^{k-1}$. As $k=|A|$ is small in comparison to $q=|\F|$, we can afford the following bound: 
\begin{align}\label{eq: upper}
\sum_{v\in A} \tau(v)\leq k\: 2^{k-1}
\end{align}
On the other hand, expanding the product that defines $\tau(v)$, we get
\begin{align*}
\tau(v)= 1+\sum_{\emptyset\neq S\subseteq A} (-1)^{|S\cap D|} \sigma(f_S(v)), \qquad f_S(v):=\prod_{s\in S} (v-s)
\end{align*}
so
\begin{align*}
\sum _{v\in \F} \tau(v)= q-\sum_{\emptyset\neq S\subseteq A} (-1)^{|S\cap D|}\sum_{v\in \F} \sigma(f_S(v)).
\end{align*}
At this point, we employ the Weil character bound: 
\begin{align*}
\Big|\sum_{v\in \F} \sigma(f_S(v))\Big|\leq \big(|S|-1\big)\sqrt{q}
\end{align*} 
We obtain:
\begin{align*}
\sum _{v\in \F} \tau(v)\geq q- \sum_{\emptyset\neq S\subseteq A} \big(|S|-1\big)\sqrt{q}=q-\sqrt{q}\: \sum_{i=1}^k (i-1) \begin{pmatrix} k \\ i \end{pmatrix}
\end{align*}
and
\begin{align*}
\sum_{i=1}^k (i-1) \begin{pmatrix} k \\ i \end{pmatrix} = k \sum_{i=1}^k \begin{pmatrix} k-1 \\ i-1\end{pmatrix} -\sum_{i=1}^k \begin{pmatrix} k \\ i \end{pmatrix} =k2^{k-1}-(2^k-1)\leq k2^{k-1}-1.
\end{align*}
Thus
\begin{align}\label{eq: lower}
\sum _{v\in \F}\tau(v)\geq q- (k\: 2^{k-1}-1) \sqrt{q}.
\end{align}
Now \eqref{eq: upper} and \eqref{eq: lower} imply that $\sum_{v\notin A} \tau(v)>0$ whenever $q- (k\: 2^{k-1}-1) \sqrt{q}>k\: 2^{k-1}$, that is $\sqrt{q}>k\: 2^{k-1}$. This holds for each $k\in \{1,\dots, n-1\}$ when $\sqrt{q}> (n-1)\: 2^{n-2}$.
\end{proof}

\begin{notes}
Theorem~\ref{thm: paley is universal} is due to Bollob\'as and Thomason (\emph{Graphs which contain all small graphs}, European J. Combin. 1981), and to Blass, Exoo, and Harary (\emph{Paley graphs satisfy all first-order adjacency axioms}, J. Graph Theory 1981). It is a slight variation of the following earlier result of Graham and Spencer (\emph{A constructive solution to a tournament problem}, Canad. Math. Bull. 1971). Let $\F$ be a finite field with $q$ elements, where $q\equiv 3$ mod $4$. Thus, $-1$ is not a square in $\F$. Now consider the complete graph on the elements of $\F$, and orient each edge by the rule that $x\to y$ if and only if $x-y$ is a square. Such a directed complete graph can be thought of as a tournament in which every player meets every other player, and no game ends in a draw. The Graham - Spencer result is that the `Paley tournament' we have just described enjoys the following paradoxical property: for $q$ large enough with respect to a given $n$, every set of $n$ players is beaten by some other player. This is an organizers' nightmare, for who gets to win such a tournament?
\end{notes}

\bigskip
\section{Eigenvalues of graphs}
This section opens the second part of this text, devoted to the spectral perspective on graphs. The previous sections serve as a long motivation.

\bigskip
\subsection*{Adjacency and laplacian eigenvalues} \label{sec: some matrices}\ 
\medskip

The two most important matrices associated to a graph are the adjacency matrix and the laplacian matrix. Both are square matrices indexed by the vertex set $V$. The \emph{adjacency matrix} $A$ is given by
\begin{align*}
A(u,v)=
\begin{cases}
1 & \textrm{ if } u\sim v\\
0 & \textrm{ otherwise}
\end{cases}
\end{align*}
while the \emph{laplacian matrix} $L$ is defined as follows: 
\begin{align*}
L(u,v)=
\begin{cases}
\mathrm{deg}(v) & \textrm{ if } u=v\\
-1 & \textrm{ if } u\sim v\\
0 & \textrm{ otherwise}
\end{cases}
\end{align*}
The two matrices are related by the formula 
\begin{align*}
A+L=\mathrm{diag}(\mathrm{deg}),
\end{align*} where $\mathrm{diag}(\mathrm{deg})$ denotes the diagonal matrix recording the degrees.

We often view the adjacency matrix and the laplacian matrix as operators on $\ell^2V$. Recall that $\ell^2V$ is the finite-dimensional space of complex-valued functions on $V$, endowed with the inner product 
\begin{align*}
\la f,g\ra=\sum_{v} f(v)\: \overline{g(v)}.
\end{align*}
The adjacency operator $A:\ell^2V\to \ell^2V$ and the laplacian operator $L:\ell^2V\to\ell^2V$ are given by
\begin{align*}
Af(v)=\sum_{u: \: u\sim v} f(u),
\end{align*}
respectively
\begin{align*}
L f(v)=\mathrm{deg}(v)\: f(v)-\sum_{u: \: u\sim v} f(u).
\end{align*}
We have the following useful formulas:
\begin{align}
\la A f,f\ra &=\sum_{u\sim v} f(u)\:\overline{f(v)}= \sum_{\{u,v\}\in E} 2\: \mathrm{Re} \big(f(u)\:\overline{f(v)}\big) \label{eq: adjacency}\\
\la L f,f\ra &=\frac{1}{2}\sum_{u\sim v} \big|f(u)-f(v)\big|^2=\sum_{\{u,v\}\in E} \big|f(u)-f(v)\big|^2 \label{eq: laplacian}
\end{align}
Note the distinction between summing over adjacencies and summing over edges. Formula \eqref{eq: adjacency} is immediate from the definition of $A$, and \eqref{eq: laplacian} can be deduced from \eqref{eq: adjacency}. Formula \eqref{eq: laplacian} is particularly appealing: the right-most sum can be interpreted as an overall edge differential.

The adjacency matrix $A$ and the laplacian matrix $L$ are real symmetric matrices. Recall, at this point, the Spectral Theorem: if $M$ is a real, symmetric $n\times n$ matrix, then there is an orthogonal basis consisting of eigenvectors, and $M$ has $n$ real eigenvalues, counted with multiplicities. Thus, both $A$ and $L$ have $n=|V|$ real eigenvalues. In fact, the eigenvalues are algebraic integers (roots of monic polynomials with integral coefficients) since both $A$ and $L$ have integral entries.

Our convention is that the adjacency, respectively the laplacian eigenvalues are denoted and ordered as follows:
\begin{align*}
\alpha_{\min}=\alpha_n\leq \ldots \leq \alpha_2\leq \alpha_1=\alpha_{\max}\\
 \lambda_{\min}=\lambda_1\leq \lambda_2 \leq\ldots \leq \lambda_n=\lambda_{\max}
 \end{align*}
 For the purposes of relating eigenvalues to graph invariants, the most important eigenvalues will turn out to be the extremal ones, as well as $\lambda_2$ and $\alpha_2$.
 
The \emph{spectrum} is the multiset of eigenvalues, that is the set of eigenvalues repeated according to their multiplicity. Occasionally, we write $\adspec$ for the adjacency spectrum, respectively $\lapspec$ for the laplacian spectrum. 

For regular graphs, the laplacian matrix and the adjacency matrix carry the same spectral information. Indeed, $A+L=dI$, so $\alpha_k+\lambda_k=d$ and the corresponding eigenvectors are the same. Thus, using the laplacian or the adjacency spectrum is mostly a matter of convenience in the regular case. Even for non-regular graphs, there is a kind of silent duality between adjacency and laplacian eigenvalues.

A guiding principle of spectral graph theory is that much knowledge about a graph can be extracted from spectral information. But could it be that spectral information gives complete knowledge about a graph? The answer is a resounding no. As we will see, there are many examples of \emph{isospectral} but non-isomorphic regular graphs. As one might guess, two regular graphs are said to be isospectral when they have the same adjacency (equivalently, laplacian) spectrum. Note that we only consider isospectrality in the context of regular graphs. Of course, the same issue can be pursued for non-regular graphs, but then there are two distinct sides to the story.

\begin{notes} 
Perhaps the most conceptual way of motivating the combinatorial laplacian is the following. Choose an orientation on the edges, and define the operator $D: \ell^2V\to \ell^2E$ as $Df(e)=f(e^+)-f(e^-)$, where $e^+$ and $e^-$ denote the terminal, respectively the initial vertex of the edge $e$. We think of $D$ as a discrete differential operator. Then $L=D^*D$, where $D^*$ is the adjoint of  $D$. An analogous formula defines the geometric laplacian $\Delta$ on manifolds, with the gradient operator in place of $D$. This analogy explains the name attached to $L$, as well as the other common notation for the graph-theoretic laplacian, $\Delta$.

The laplacian of a graph can be traced back to work of Kirchhoff from 1847 on electrical networks. Not only the time, but also the result is surprising: in a modern formulation, Kirchhoff's Matrix-Tree theorem gives a formula for the number of spanning trees in a graph in terms of its laplacian eigenvalues. Interest in the laplacian eigenvalues was revived in the early 1970's by works of Fiedler (\emph{Algebraic connectivity of graphs}, Czechoslovak Math. J. 1973) and Anderson - Morley (\emph{Eigenvalues of the Laplacian of a graph}, Preprint 1971, Linear and Multilinear Algebra 1985).
\end{notes}

\bigskip
\subsection*{First properties}\
\medskip

There are similarities between the adjacency approach and the laplacian approach to spectra of graphs. But sometimes one viewpoint sees what the other does not. One crucial point in favour of the laplacian is the following observation.

\begin{thm}\label{thm: plus lap}
$0$ is a laplacian eigenvalue, having the constant function $\ct$ as an eigenfunction. Furthermore, $0$ is a simple eigenvalue.
\end{thm}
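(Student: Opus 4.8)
The plan is to prove two assertions: first that $0$ is an eigenvalue of $L$ with eigenvector $\ct$, and second that its multiplicity equals $1$. The first part is immediate: by the defining formula, $(L\ct)(v)=\mathrm{deg}(v)\cdot 1-\sum_{u\sim v} 1=\mathrm{deg}(v)-\mathrm{deg}(v)=0$, so $\ct$ lies in the kernel of $L$ and $0$ is a laplacian eigenvalue. Equivalently, one can cite that each row of $L$ sums to zero. This part requires no serious work.

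For the simplicity of the eigenvalue, I would exploit the quadratic-form formula \eqref{eq: laplacian}, namely $\la Lf,f\ra=\sum_{\{u,v\}\in E}|f(u)-f(v)|^2$. The key observation is that $0$ eigenfunctions are exactly the functions $f$ with $\la Lf,f\ra=0$, because $L$ is a real symmetric positive semidefinite operator: if $Lf=0$ then certainly $\la Lf,f\ra=0$, and conversely if $\la Lf,f\ra=0$ then, writing $f$ in the orthogonal eigenbasis guaranteed by the Spectral Theorem and using that all laplacian eigenvalues are non-negative (which \eqref{eq: laplacian} also establishes), the component of $f$ along every eigenvector with positive eigenvalue must vanish, so $Lf=0$. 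Thus $\ker L=\{f:\la Lf,f\ra=0\}$.

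Now I would argue that $\la Lf,f\ra=0$ forces $f$ to be constant. Since the sum $\sum_{\{u,v\}\in E}|f(u)-f(v)|^2$ is a sum of non-negative terms, it vanishes only when $f(u)=f(v)$ for every edge $\{u,v\}$. This means $f$ is constant along edges; propagating this equality along paths and invoking the standing assumption that the graph is \emph{connected}, I conclude that $f$ takes the same value at every vertex, i.e. $f$ is a scalar multiple of $\ct$. Hence $\ker L$ is one-dimensional, which is precisely the statement that $0$ is a simple eigenvalue.

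The only subtle point, and the step I expect to be the main obstacle to state cleanly rather than to prove, is the passage from ``$f$ is constant on each edge'' to ``$f$ is globally constant.'' This is exactly where connectivity enters, and it should be spelled out: given any two vertices $v,w$, connectivity provides a path $v=v_0\sim v_1\sim\cdots\sim v_k=w$, and constancy across each successive edge gives $f(v)=f(v_0)=f(v_1)=\cdots=f(v_k)=f(w)$. Everything else is routine once the positive-semidefiniteness coming from \eqref{eq: laplacian} is invoked.
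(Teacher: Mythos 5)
Your proof is correct and follows essentially the same route as the paper: evaluate $L\ct$ directly for the first claim, then use the quadratic-form identity \eqref{eq: laplacian} to force a $0$-eigenfunction to be constant across edges, with connectivity propagating this to global constancy. Your detour through positive semidefiniteness to show $\ker L=\{f:\la Lf,f\ra=0\}$ is correct but unnecessary for simplicity--you only need the trivial direction, that $Lf=0$ implies $\la Lf,f\ra=0$, which is where the paper starts.
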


\begin{proof}
The first part is easily checked: $L\ct(v)=\mathrm{deg}(v)-\sum_{u: u\sim v}1=0$ for each vertex $v$. Now let us prove that $0$ is a simple laplacian eigenvalue. If $f$ is an associated eigenfunction, then 
\begin{align*}
0=\la L f,f\ra =\sum_{\{u, v\}\in E} |f(u)-f(v)|^2.
\end{align*} 
Therefore $f(u)=f(v)$ whenever $u$ is adjacent to $v$, and so $f$ is constant.
 \end{proof}
 
This theorem prompts us to refer to $0$ as the trivial laplacian eigenvalue. In the proof, we have implicitly used the assumption that graphs are connected. In general, the proof shows that the multiplicity of $0$ as a laplacian eigenvalue is the number of connected components of a graph. So the connectivity of a graph can be read off from the laplacian spectrum. Let us point out that, in general, one cannot see the connectivity of a graph from its adjacency spectrum.

 \begin{cor}\label{cor: regtrivial}
 For a regular graph, the degree $d$ is a simple eigenvalue, with the constant  function $\ct$ as an eigenfunction.
 \end{cor}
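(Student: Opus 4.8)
The plan is to deduce this immediately from the preceding Theorem~\ref{thm: plus lap} together with the relation between the two matrices in the regular case. First I would specialize the identity $A+L=\mathrm{diag}(\mathrm{deg})$ to a $d$-regular graph, where every degree equals $d$, so that $\mathrm{diag}(\mathrm{deg})=dI$ and hence $A=dI-L$. This rewriting is the whole engine of the argument.

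Next I would observe that $A$ and $L$ share the same eigenfunctions, with the eigenvalues related by $\mu\mapsto d-\mu$. Concretely, if $Lf=\mu f$ then $Af=(dI-L)f=(d-\mu)f$, so $f$ is an eigenfunction of $A$ with eigenvalue $d-\mu$; and since $\mu\mapsto d-\mu$ is an involution, this correspondence is a bijection between the eigenspaces of $L$ and those of $A$. In particular it preserves multiplicities: an eigenvalue $\mu$ of $L$ is simple if and only if the corresponding eigenvalue $d-\mu$ of $A$ is simple.

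Finally I would invoke Theorem~\ref{thm: plus lap}, which asserts that $0$ is a \emph{simple} laplacian eigenvalue with the constant function $\ct$ as eigenfunction. Applying the correspondence with $\mu=0$, I conclude that $d-0=d$ is a simple adjacency eigenvalue, and that the associated eigenfunction is again $\ct$ (since $L\ct=0$ forces $A\ct=d\,\ct$, which one can also verify directly: $A\ct(v)=\sum_{u:\,u\sim v}1=\mathrm{deg}(v)=d$ for every vertex $v$). There is essentially no obstacle here; the only point requiring a line of care is noting that the affine map $\mu\mapsto d-\mu$ genuinely transports simplicity, i.e.\ that the eigenspaces (not merely the eigenvalues) match up, which is exactly why the constant function remains the eigenfunction and why simplicity is inherited verbatim from the laplacian side.
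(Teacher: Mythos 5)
Your proof is correct and matches the paper's intended argument exactly: the corollary is derived from Theorem~\ref{thm: plus lap} via the identity $A+L=dI$ for regular graphs, which the paper itself notes transports eigenfunctions unchanged and sends laplacian eigenvalues $\lambda_k$ to adjacency eigenvalues $d-\lambda_k$. Your extra care about eigenspaces (not just eigenvalues) matching up under $\mu\mapsto d-\mu$ is a sound, if brief, elaboration of what the paper leaves implicit.
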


Note that, conversely, if $\ct$ is an adjacency eigenfunction then the graph is regular. For a regular graph, the degree is the trivial adjacency eigenvalue.

The adjacency spectrum of a graph has a combinatorial content that is manifested by the formula
\begin{align}
\sum \alpha_k^r = \Tr\: A^r, \qquad r=1,2,\dots. \label{eq: adjcomb}
\end{align} 
The point of this formula is that the right-hand side counts the closed paths of length $r$ in the graph. Indeed, the $(u,v)$-entry of $A^r$ equals the number of paths of length $r$ joining the two vertices $u$ and $v$. For $r=1$ and $r=2$ we have: 
\begin{align*}
\sum \alpha_k &= \Tr\: A=0\\
\sum \alpha_k^2&= \Tr\: A^2=\sum \mathrm{deg}(v)=2|E|
\end{align*}

Among other things, formula \eqref{eq: adjcomb} is relevant for the following important fact.

\begin{thm}
A graph is bipartite if and only if its adjacency spectrum is symmetric about $0$.
\end{thm}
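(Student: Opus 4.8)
The plan is to treat the two implications separately: the forward direction follows from the block structure of the adjacency matrix in the bipartite case, while the converse is where the real work lies and will be driven by the trace formula \eqref{eq: adjcomb}.

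First I would show that bipartiteness forces a symmetric spectrum. Writing $V=V_\bl\sqcup V_\wh$ for the bipartition, the fact that edges run only between the two sides means the adjacency matrix has the block form
\begin{align*}
A=\begin{pmatrix} 0 & B\\ B^{\top} & 0\end{pmatrix}.
\end{align*}
Let $D$ be the diagonal $\pm1$ matrix equal to $+1$ on $V_\bl$ and $-1$ on $V_\wh$, so that $D^2=I$. A direct block computation gives $DAD=-A$, so $A$ and $-A$ are similar and hence have the same spectrum. But the spectrum of $-A$ is the spectrum of $A$ with every sign reversed, so the eigenvalue multiset is invariant under $\alpha\mapsto-\alpha$, i.e.\ symmetric about $0$. (Equivalently, one checks that $(f,g)\mapsto(f,-g)$ carries an eigenvector of eigenvalue $\alpha$ to one of eigenvalue $-\alpha$, preserving multiplicities.)

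For the converse I would argue by contrapositive and lean on the trace formula. Symmetry about $0$ means $\alpha$ and $-\alpha$ occur with equal multiplicity, so for every odd $r$ the power sum satisfies $\sum_k\alpha_k^r=(-1)^r\sum_k\alpha_k^r=-\sum_k\alpha_k^r$, forcing $\sum_k\alpha_k^r=0$. By \eqref{eq: adjcomb} this is exactly $\Tr A^r=0$, and since $\Tr A^r$ counts closed walks of length $r$ — a nonnegative integer — the graph admits no closed walk of odd length, hence no odd cycle. By the earlier characterization (a graph is bipartite if and only if it has no cycle of odd length) the graph is bipartite. In contrapositive form: if $X$ is not bipartite it contains an odd cycle of some length $r$, and traversing that cycle yields closed walks of odd length $r$, so $\Tr A^r=\sum_k\alpha_k^r>0$, which is incompatible with symmetry of the spectrum.

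The forward direction is essentially bookkeeping once the block structure is written down, so the only genuine content is the converse, and the main obstacle there is converting the spectral hypothesis into a combinatorial one. The trace formula \eqref{eq: adjcomb} is precisely the bridge, turning ``odd power sums vanish'' into ``no closed walks of odd length,'' after which the previously established odd-cycle characterization of bipartiteness closes the argument. The one point to keep straight is that $\Tr A^r$ counts closed \emph{walks} (allowing backtracks) rather than cycles; this causes no difficulty, since a single odd cycle already forces $\Tr A^r>0$.
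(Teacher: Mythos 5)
Your proof is correct and follows essentially the same route as the paper: the forward direction is the sign-flip argument (your similarity $DAD=-A$ is just the matrix form of the paper's eigenfunction trick $g=f$ on $V_\bl$, $g=-f$ on $V_\wh$, which you note parenthetically), and the converse is the identical trace computation $\Tr A^r=\sum\alpha_k^r=0$ for odd $r$ combined with the odd-cycle characterization of bipartiteness. Your explicit remark that $\Tr A^r$ counts closed walks rather than cycles, and that one odd cycle suffices to make the trace positive, is a welcome clarification of a point the paper passes over quickly.
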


\begin{proof}
Consider a bipartite graph. Let $\alpha$ be an adjacency eigenvalue, and let $f$ be a corresponding eigenfunction. Define a new function by switching the sign of $f$ on one side of the graph: $g=f$ on $V_\bl$ and $g=-f$ on $V_\wh$. Then $Ag=-\alpha g$, so $-\alpha$ is an adjacency eigenvalue as well.

Conversely, assume that the adjacency spectrum of a graph is symmetric about $0$. Then, for each odd $r\geq 1$, we have $\mathrm{Tr}\: A^r=\sum\alpha_k^r=0$. This means that there are no closed paths of odd length, so the given graph is bipartite.
\end{proof}

Combining the previous theorem with Corollary~\ref{cor: regtrivial}, we deduce the following: a $d$-regular bipartite graph has $d$ and $-d$ as simple adjacency eigenvalues. In this case, $\pm d$ are deemed to be the trivial adjacency eigenvalues.

\begin{exer}\label{exer: hearreg-lap} Show that the regularity of a graph can be read off from its laplacian spectrum.
\end{exer}

One of our main concerns is understanding the spectral distribution of a graph. The size of a graph counts the number of eigenvalues with multiplicity. The maximal degree, on the other hand, is the main quantifier for their location. We will see several manifestations of this idea, the most basic being the following:

\begin{thm}\label{thm: basic location}
The adjacency eigenvalues lie in the interval $[-d,d]$. The laplacian eigenvalues lie in the interval $[0,2d]$. 
\end{thm}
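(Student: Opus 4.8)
The plan is to bound eigenvalues via the variational (Rayleigh quotient) characterization guaranteed by the Spectral Theorem, feeding in the quadratic-form identities \eqref{eq: adjacency} and \eqref{eq: laplacian}. For a real symmetric operator $M$ on $\ell^2 V$, every eigenvalue lies between
\begin{align*}
\min_{f\neq 0}\frac{\la Mf,f\ra}{\la f,f\ra} \qquad\text{and}\qquad \max_{f\neq 0}\frac{\la Mf,f\ra}{\la f,f\ra},
\end{align*}
so it suffices to bound $\la Mf,f\ra$ by a multiple of $\la f,f\ra$ for $M=A$ and $M=L$.

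For the adjacency bound, I would start from \eqref{eq: adjacency} and estimate, using $|f(u)||f(v)|\leq \tfrac12\big(|f(u)|^2+|f(v)|^2\big)$,
\begin{align*}
\big|\la Af,f\ra\big|\leq \sum_{u\sim v}|f(u)|\,|f(v)|\leq \tfrac12\sum_{u\sim v}\big(|f(u)|^2+|f(v)|^2\big)=\sum_{v}\mathrm{deg}(v)\,|f(v)|^2\leq d\,\la f,f\ra,
\end{align*}
where the middle identity uses that the sum is over \emph{ordered} adjacencies, so each $|f(v)|^2$ occurs $\mathrm{deg}(v)$ times. This places every adjacency eigenvalue in $[-d,d]$.

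For the laplacian, the lower bound is immediate from \eqref{eq: laplacian}: $\la Lf,f\ra=\sum_{\{u,v\}\in E}|f(u)-f(v)|^2\geq 0$, so $L$ is positive semidefinite and $\lambda_{\min}\geq 0$ (in fact $\lambda_{\min}=0$ by Theorem~\ref{thm: plus lap}). For the upper bound I would use $|f(u)-f(v)|^2\leq 2\big(|f(u)|^2+|f(v)|^2\big)$ together with the \emph{edge}-sum identity $\sum_{\{u,v\}\in E}\big(|f(u)|^2+|f(v)|^2\big)=\sum_v \mathrm{deg}(v)\,|f(v)|^2$, obtaining $\la Lf,f\ra\leq 2d\,\la f,f\ra$ and hence $\lambda_{\max}\leq 2d$.

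The work here is routine, so the only real pitfall is bookkeeping: one must respect the distinction between summing over ordered adjacencies and summing over unordered edges (they differ by a factor of $2$), precisely the point the text flags right after \eqref{eq: laplacian}. An alternative, entirely calculation-free route for the adjacency bound is Gershgorin's theorem: the diagonal of $A$ is zero and each row has absolute-value sum $\mathrm{deg}(v)\leq d$, so every eigenvalue lies within distance $d$ of $0$; applying Gershgorin to $L$, the disc centered at $\mathrm{deg}(v)$ of radius $\mathrm{deg}(v)$ sits inside $[0,2d]$, recovering both laplacian bounds at once.
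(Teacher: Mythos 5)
Your proof is correct, but it takes a different route from the paper. The paper argues by a maximum principle: it evaluates the eigenrelation $\alpha f(v)=\sum_{u\sim v}f(u)$ (and its laplacian analogue) at a vertex $v$ where $|f|$ attains its maximum, and reads off $|\alpha|\leq \deg(v)\leq d$, respectively $|\deg(v)-\lambda|\leq\deg(v)$, directly. Your Gershgorin remark is in fact this same argument in packaged form --- the standard proof of Gershgorin's theorem is precisely ``look at the largest coordinate of an eigenvector'' --- so your ``calculation-free alternative'' recovers the paper's proof, while your main argument via the Rayleigh ratio and the quadratic-form identities \eqref{eq: adjacency} and \eqref{eq: laplacian} is genuinely different. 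Your bookkeeping is right: the ordered-adjacency identity $\tfrac12\sum_{u\sim v}\big(|f(u)|^2+|f(v)|^2\big)=\sum_v \deg(v)|f(v)|^2$ and its edge-sum counterpart both hold, and the AM--GM step is sound. Each approach buys something: the paper's pointwise argument is shorter and localizes the bound at a single vertex ($0\leq\lambda\leq 2\deg(v)$ at the maximizing vertex), whereas your $\ell^2$ estimates are exactly what the paper itself later deploys --- in the proposition showing $\lambda_{\max}=2d$ if and only if the graph is regular and bipartite --- because the quadratic-form route makes the equality analysis transparent, which the maximum-principle version does not.
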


In particular, $\lambda_{\min}=0$ in light of Theorem~\ref{thm: plus lap}.

\begin{proof}
Let $\alpha$ be an adjacency eigenvalue, with eigenfunction $f$. Consider the eigenrelation
\begin{align*}
\alpha f(v)=\sum_{u:\: u\sim v} f(u)
\end{align*}
at a vertex $v$ where $|f|$ achieves its maximum. Then $ |\alpha||f(v)|\leq \deg(v)\: |f(v)|\leq d\:|f(v)|$, so $|\alpha|\leq d$. 

Similarly, let $\lambda$ be a laplacian eigenvalue, with eigenfunction $g$. Arguing as before, we use the eigenrelation
\begin{align*}
(\deg(v)-\lambda) g(v)=\sum_{u:\: u\sim v} g(u)
\end{align*}
at a vertex $v$ where $|g|$ achieves its maximum. We obtain $|\deg(v)-\lambda|\leq \deg(v)$, hence $0\leq \lambda\leq 2\deg(v)\leq 2d$. \end{proof}

The number of \emph{distinct} eigenvalues can be nicely and usefully related to the diameter. This is a first illustration of another one of our main themes, that of relating adjacency and laplacian eigenvalues with graph-theoretical invariants.

\begin{thm}
The number of distinct adjacency, respectively laplacian, eigenvalues is at least $\delta+1$.
\end{thm}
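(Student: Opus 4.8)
The plan is to prove that the powers $I, M, M^2, \dots, M^\delta$ are linearly independent, where $M$ denotes either the adjacency matrix $A$ or the laplacian matrix $L$. Since both are real symmetric, hence diagonalizable, the minimal polynomial of $M$ is a product of distinct linear factors, so its degree equals the number of distinct eigenvalues. Linear independence of those $\delta+1$ powers shows that no polynomial of degree $\le \delta$ annihilates $M$, forcing the minimal polynomial to have degree at least $\delta+1$, which is exactly the claim.

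First I would record the combinatorial meaning of the entries of powers. Both $A$ and $L$ share the same support pattern: their $(u,v)$ entry is nonzero only when $u=v$ or $u\sim v$, that is, when $\mathrm{dist}(u,v)\le 1$. Consequently the entries of $M^r$ are supported on pairs with $\mathrm{dist}(u,v)\le r$, so $(M^r)_{uv}=0$ whenever $r<\mathrm{dist}(u,v)$. Moreover, when $\mathrm{dist}(u,v)=k$, the entry $(M^k)_{uv}$ is a sum over walks of length exactly $k$ from $u$ to $v$, and any such walk is forced to be a geodesic: it can neither stay put (a diagonal step of $L$) nor backtrack, since that would join $u$ to $v$ using fewer than $k$ moves along edges, contradicting $\mathrm{dist}(u,v)=k$. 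For $A$ each geodesic contributes $+1$, and for $L$ each contributes $(-1)^k$; in either case the contributions share a common sign, so there is no cancellation and $(M^k)_{uv}\ne 0$.

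Next I would pick vertices $u,v$ realizing the diameter, $\mathrm{dist}(u,v)=\delta$, together with a shortest path $u=x_0\edge x_1\edge\cdots\edge x_\delta=v$, so that $\mathrm{dist}(u,x_j)=j$ for each $j$. Suppose $\sum_{k=0}^\delta c_k M^k = 0$. Reading off the $(u,v)$ entry annihilates every term with $k<\delta$ and leaves $c_\delta\,(M^\delta)_{uv}=0$, forcing $c_\delta=0$; reading off the $(u,x_{\delta-1})$ entry then forces $c_{\delta-1}=0$, and descending through $x_{\delta-2},\dots,x_0$ yields $c_k=0$ for all $k$. This triangular elimination is what makes $I,M,\dots,M^\delta$ independent, and completes the argument for both $A$ and $L$ simultaneously.

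The steps are routine once the support-and-no-cancellation fact is in hand; the only point requiring genuine care is establishing $(M^k)_{uv}\ne 0$ at distance exactly $k$, and in particular checking that for the laplacian the signed geodesic contributions do not cancel. This is where I expect the (modest) main obstacle to lie, and the clean resolution is the observation above: a length-$k$ walk between vertices at distance $k$ has no freedom, so all its contributions carry the same sign $(-1)^k$ and the entry is a nonzero multiple of the number of geodesics from $u$ to $v$.
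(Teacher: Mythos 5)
Your proof is correct and follows essentially the same route as the paper: both rest on the observation that $(M^r)_{uv}$ vanishes for $r<\mathrm{dist}(u,v)$ and is nonzero at $r=\mathrm{dist}(u,v)$, combined with the fact that a symmetric matrix with $s$ distinct eigenvalues satisfies a monic polynomial relation of degree $s$. Your triangular elimination along a geodesic is a mild repackaging of the paper's minimal-polynomial argument, and your sign analysis for $L$ (all geodesic contributions carry the sign $(-1)^k$) nicely fills in the detail the paper dismisses with ``the same is true for $L$''.
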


\begin{proof}
Let $u$ and $v$ be two distinct vertices. Then $A^r(u,v)=0$ for $r=0,\dots,\mathrm{dist}(u,v)-1$, while $A^r(u,v)\neq 0$ for $r=\mathrm{dist}(u,v)$. Note that the same is true for $L$. On the other hand, a symmetric matrix $M$ with exactly $s$ distinct eigenvalues satisfies a monic polynomial relation of degree $s$. Thus, $M^s$ is a linear combination of $\{M^r: r=0,\dots, s-1\}$, and this continues to hold for higher powers $M^r$, $r\geq s$. We deduce that the number of distinct eigenvalues - adjacency or laplacian - is greater than $\mathrm{dist}(u,v)$ for any two vertices $u$ and $v$, hence greater than the diameter.
\end{proof}
 
It is certainly possible for a graph to have more than $\mathrm{diam}+1$ distinct eigenvalues. Two families of graphs of diameter $2$ but arbitrarily many distinct eigenvalues are the wheel graphs and the Andrasf\'ai graphs; these are discussed in ensuing exercises. Nevertheless, most graphs considered herein turn out to have exactly $\mathrm{diam}+1$ distinct eigenvalues. There are worse habits than mentally checking, whenever faced with a concrete list of eigenvalues for a graph, whether this diameter bound is attained or not.

\bigskip
\subsection*{First examples}\label{sec: basic examples}\
\medskip

In addition to the general theory, we wish to build a stock of concrete spectral computations. We start with some basic examples, including $K_n$, $C_n$, $Q_n$.

\begin{ex} The complete graph $K_n$ has adjacency matrix $A=J-I$, where $J$ is the matrix all of whose entries are $1$. The matrix $J$ has eigenvalues $0$, with multiplicity $n-1$, and $n$. So the adjacency eigenvalues are $-1$, with multiplicity $n-1$, and $n-1$. The laplacian eigenvalues are $0$, respectively $n$ with multiplicity $n-1$.
\begin{align*}
\adspec(K_n)=\{-1,n-1\} \qquad \lapspec(K_n)=\{0,n\}
\end{align*}
\end{ex}

\begin{ex} Consider the cycle graph $C_n$, and label its vertices by $1,\dots, n$ in a circular fashion. Let $\alpha$ be an adjacency eigenvalue, with eigenfunction $f$. Then the following relations hold: generically, $\alpha f_k=f_{k-1}+f_{k+1}$ for $k=1,\dots,n-1$; exceptionally, $\alpha f_1=f_2+f_n$, , and $\alpha f_n=f_{n-1}+f_1$.

The generic relations describe a quadratic recurrence whose solution is $f_k=ax_1^{k}+bx_2^{k}$, where $x_1$ and $x_2$ are the roots of $x^2-\alpha x+1=0$. The exceptional relations read as initial and terminal conditions: $f_{0}=f_n$, $f_{1}=f_{n+1}$. This means that the two relations $a+b=ax_1^{n}+bx_2^{n}$ and $ax_1+bx_2=ax_1^{n+1}+bx_2^{n+1}$ admit a non-zero solution $(a,b)$. Keeping in mind that $x_1x_2=1$, simple arguments lead to $x_1^n=x_2^n=1$. Put $x_1=\xi$, $x_2=\bar{\xi}$ where $\xi^{n}=1$. Then $\alpha=x_1+x_2=2\:\mathrm{Re}(\xi)$, and $f_k=a\xi^{k}+b\bar{\xi}^k$ for $k=0,\dots,n$.
\begin{align*}
\adspec(C_n)&=\Big\{2\:\cos \frac{2\pi k}{n}: k=1,\dots,n \Big\}\\
\lapspec(C_n)&=\Big\{2-2\:\cos \frac{2\pi k}{n}: k=1,\dots,n \Big\}
\end{align*}
\end{ex}

\begin{Rule} Consider the product $X\times Y$ of two graphs $X$ and $Y$. Given two maps  $f:V(X)\to \C$ and $g:V(Y)\to \C$, let $f\times g: V(X\times Y)\to \C$ denote the map given by $(x,y)\mapsto f(x)\: g(y)$. On such functions, the adjacency and the laplacian operators for $X\times Y$ can be expressed as follows: 
\begin{align*}
A(f\times g)&=A_X(f)\times g+ f \times A_Y(g)\\
L(f\times g)&=L_X(f)\times g+ f \times L_Y(g)
\end{align*}
The verification is left to the reader. Consequently, if $f$ is an eigenfunction on $X$ and $g$ is an eigenfunction on $Y$, then $f\times g$ is an eigenfunction on $X\times Y$, with eigenvalue equal to the sum of the eigenvalues for $f$ and $g$. There are no further eigenvalues for $X\times Y$ besides the ones obtained in this way. Indeed, note that $\la f\times g, f'\times g'\ra=\la f, f'\ra\: \la g, g'\ra$. So, if $\{f_i\}_i$ and $\{g_j\}_j$ are orthogonal bases of eigenfunctions on $X$ respectively $Y$, then $\{f_i\times g_j\}_{i,j}$ is an orthogonal basis of eigenfunctions on $X\times Y$. We conclude that the eigenvalues of the product $X\times Y$ are obtained by summing the eigenvalues of $X$ to those of $Y$.
\begin{align*}
\adspec (X\times Y)&=\adspec (X)+\adspec(Y)\\
 \lapspec (X\times Y)&=\lapspec (X)+\lapspec(Y)
\end{align*}
\end{Rule}

\begin{ex} Consider the cube graph $Q_n$, thought of as the $n$-fold product of $K_2\times\dots\times K_2$. The graph $K_2$ has two simple eigenvalues, $1$ and $-1$. Hence the adjacency eigenvalues of $Q_n$ are $(n-k)\cdot 1+k\cdot (-1)=n-2k$ with multiplicity $\bigl(\begin{smallmatrix} n\\ k \end{smallmatrix} \bigr)$, for $k=0,\dots, n$. 
\begin{align*}
\adspec(Q_n)&=\{n-2k: k=0,\dots, n\}\\
 \lapspec(Q_n)&=\{2k: k=0,\dots, n\}
\end{align*}
\end{ex}

\begin{Rule} For a disconnected graph, the adjacency and the laplacian operators admit a natural block-diagonal form, with a block for each connected component. So the eigenvalues of a disconnected graph are obtained by piling together the eigenvalues of its connected components. 
\begin{align*}
\adspec (X\sqcup Y)&=\adspec (X)\cup\adspec(Y)\\ \lapspec (X\sqcup Y)&=\lapspec (X)\cup\lapspec(Y)
\end{align*}
\end{Rule}

\begin{exer}\label{exer: complement} The \emph{complement} of a (possibly disconnected) graph $X$ has the same vertex set as $X$, and two distinct vertices are adjacent whenever they are not adjacent in $X$. 
\begin{itemize}
\item[(i)] Compute the laplacian spectrum of the complement of $X$ in terms of the laplacian spectrum of $X$. 
\item[(ii)] Find the laplacian spectrum of the complete $k$-partite graph $K_{n_1,\dots,n_k}$. This is the graph defined as follows: the vertex set is the disjoint union of $k\geq 2$ nonempty sets of size $n_1,\dots,n_k$, with edges connecting vertices lying in different sets. 
\end{itemize}
\end{exer}

\begin{exer}\label{exer: cone} The \emph{cone} over a (possibly disconnected) graph $X$ is the graph obtained by adding a new vertex, and joining it to all vertices of $X$. \begin{itemize}
\item[(i)] Describe the laplacian spectrum of the cone over $X$ in terms of the laplacian spectrum of $X$. 
\item[(ii)] Do the same for the adjacency spectrum, assuming that $X$ is regular. 
\item[(iii)] Find the adjacency and laplacian spectra of a star graph and a windmill graph. Compare the diameter and the number of distinct eigenvalues for a wheel graph with a large enough number of spokes.
\end{itemize}
\end{exer}

\begin{Rule} Let $X$ be a non-bipartite graph, and consider its bipartite double $X'$. Then the adjacency matrix of $X'$ has the form
\begin{align*}
A'=
\begin{pmatrix}
0 & A\\
A & 0 
\end{pmatrix}
\end{align*}
where $A$ is the adjacency matrix of $X$. Therefore
\begin{align*}
A'^{\: 2}=
\begin{pmatrix}
A^2 & 0\\
0 & A^2 
\end{pmatrix}.
\end{align*}
Keeping in mind that the spectrum of $A'$ is symmetric, it follows that the eigenvalues of $A'$ are $\pm \alpha$ with $\alpha$ running over the eigenvalues of $A$. In short, the adjacency spectrum of a bipartite double is the symmetrized adjacency spectrum of the original graph.
\begin{align*}
\adspec(X')=\pm \adspec(X)
\end{align*}
\end{Rule}

\begin{ex}\label{ex: ad Kmn} Consider the complete bipartite graph $K_{m,n}$. Squaring the adjacency matrix $A$ of $K_{m,n}$ yields
\begin{align*}
A^2=
\begin{pmatrix}
C_\bl & 0\\
0 & C_\wh 
\end{pmatrix}, \qquad C_\bl=nJ_m, \quad C_\wh=mJ_n.
\end{align*}
The eigenvalues of $C_\bl=nJ_m$ are $mn$, with multiplicity $1$, and $0$ with multiplicity $m-1$. Interchanging $m$ and $n$, $C_\wh=mJ_n$ has eigenvalues $mn$, with multiplicity $1$, and $0$ with multiplicity $n-1$. So the eigenvalues of $A$ are $\pm \sqrt{mn}$, each with multiplicity $1$, and $0$ with multiplicity $m+n-2$. 
\begin{align*}
\adspec(K_{m,n})=\{\pm \sqrt{mn}, 0\}
\end{align*}
\end{ex}

\bigskip
\section{Eigenvalue computations}
We have already computed the eigenvalues of some graphs. The focus of this section is on methods that are more or less explicit, and that apply to certain classes of regular graphs. We restrict our attention to their adjacency eigenvalues. 

\bigskip
\subsection*{Cayley graphs of abelian groups}\label{sec: cayley graphs}\
\medskip

Due to their algebraic origin, Cayley graphs can be quite tractable when it comes to understanding their spectral properties. The most basic result in this direction addresses the case of abelian groups.

\begin{thm}\label{thm: ab-spec}
Consider the Cayley graph of a finite abelian group $G$ with respect to $S$, a symmetric generating subset not containing the identity. Then the adjacency eigenvalues are
\begin{align*}
\bigg\{\sum_{s\in S} \:\chi(s): \chi \textrm{ character of } G\bigg\}.
\end{align*} 
\end{thm}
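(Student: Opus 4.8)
The plan is to diagonalize the adjacency operator $A$ directly on $\ell^2 G$ by exhibiting the characters as eigenfunctions. Recall that in the Cayley graph the neighbours of a vertex $g\in G$ are exactly the vertices $gs$ with $s\in S$, so the adjacency operator acts by
\begin{align*}
Af(g)=\sum_{u:\: u\sim g} f(u)=\sum_{s\in S} f(gs).
\end{align*}
First I would feed a character $\chi\in G^\wedge$ into this formula and use multiplicativity: since $\chi(gs)=\chi(g)\chi(s)$, we get
\begin{align*}
A\chi(g)=\sum_{s\in S}\chi(gs)=\chi(g)\sum_{s\in S}\chi(s).
\end{align*}
Thus each character $\chi$ is an eigenfunction of $A$ with eigenvalue $\sum_{s\in S}\chi(s)$, which is precisely the claimed value.

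The second, and decisive, step is to argue that these eigenvalues are \emph{all} of them, counted with the correct multiplicities. Here I would invoke the earlier corollary that the characters of $G$ form an orthogonal basis of $\ell^2 G$. Since $|G^\wedge|=|G|=\dim \ell^2 G$, the characters supply a full orthogonal eigenbasis for the symmetric operator $A$, so the multiset $\{\sum_{s\in S}\chi(s):\chi\in G^\wedge\}$ exhausts the $|G|$ adjacency eigenvalues (with multiplicity). No eigenvalue can be missed, because an orthogonal basis of eigenfunctions determines the entire spectrum.

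As a consistency check I would also verify that each purported eigenvalue is real, as it must be for a real symmetric matrix. Because $S$ is symmetric, the map $s\mapsto s^{-1}$ permutes $S$, and characters take values in the unit circle, so $\chi(s^{-1})=\overline{\chi(s)}$; hence
\begin{align*}
\sum_{s\in S}\chi(s)=\sum_{s\in S}\chi(s^{-1})=\sum_{s\in S}\overline{\chi(s)}=\overline{\sum_{s\in S}\chi(s)},
\end{align*}
confirming that the eigenvalue is real. I do not anticipate a genuine obstacle here: the computation in the first step is immediate from the definitions, and the only point requiring care is the completeness claim, which is handled cleanly by the orthogonal-basis property of characters rather than by any ad hoc dimension count. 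The abelian hypothesis enters exactly through the availability of $|G|$ one-dimensional characters; the non-abelian case would require replacing characters by higher-dimensional irreducible representations, which is why the theorem is stated for abelian $G$.
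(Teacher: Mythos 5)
Your proposal is correct and follows the paper's proof exactly: the characters are shown to be eigenfunctions of $A$ via multiplicativity, and completeness follows because the characters form an orthogonal basis of $\ell^2G$. Your additional verification that each eigenvalue $\sum_{s\in S}\chi(s)$ is real (using the symmetry of $S$) is a nice sanity check not in the paper, but the core argument is the same.
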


\begin{proof}
Let $\chi$ be a character of $G$. Then
\begin{align*}
A\chi(g)=\sum_{h:\: h\sim g} \:\chi(h)=\sum_{s\in S} \:\chi(gs)=\bigg(\sum_{s\in S} \:\chi(s)\bigg)\:\chi(g)
\end{align*}
and so $\chi$ is an adjacency eigenfunction with eigenvalue $\sum_{s\in S} \:\chi(s)$. As the characters form a basis, there cannot be any further eigenvalues. 
\end{proof}

Note that the trivial character yields the trivial adjacency eigenvalue: $\sum_{s\in S} \ct(s)=|S|$.

\begin{ex} Consider the cycle graph $C_n$, and let us view it as the Cayley graph of the cyclic group $\Z_n$ with respect to $\{\pm 1\}$. The characters of $\Z_n$ are $\chi_k$, $k=0,\dots, n-1$, where $\chi_k$ is defined by $\chi_k(1)=\cos (2\pi k/n)+ i\sin (2\pi k/n)$. The adjacency eigenvalues of $C_n$ are $\chi_k(1)+ \chi_k(-1)=2\cos (2\pi k/n)$ for $k=0,\dots, n-1$.
\end{ex}

\begin{ex} Consider the cube graph $Q_n$. We view it as the Cayley graph of $(\Z_2)^n$ with respect to $\{e_i: 1\leq i\leq n\}$. The characters of $(\Z_2)^n$ are $\chi_{I}(v)=(-1)^{\sum_{i\in I} v_i}$ for $I\subseteq \{1,\dots,n\}$. The adjacency eigenvalue corresponding to $\chi_I$ is
\begin{align*}
\sum_j \chi_{I}(e_j)=\sum_{j\in I} (-1)+\sum_{j\notin I} (1)=n-2|I|.
\end{align*}
So the adjacency eigenvalues of $Q_n$ are $\{n-2k: k=0,\dots,n\}$.
\end{ex}

\begin{ex} Consider the Paley graph $P(q)$, where $q\equiv 1$ mod $4$. The trivial adjacency eigenvalue is $\tfrac{1}{2}(q-1)$. The non-trivial eigenvalues are given by the character sums
\begin{align*}
\alpha_\psi=\sum_{s\in \F^{*2}} \psi(s)
\end{align*}
as $\psi$ runs over the non-trivial additive characters of $\F$. We relate $\alpha_\psi$ with the Gauss sum $G(\psi,\sigma)$, where $\sigma$ is the quadratic signature, as follows:
\begin{align*}
G(\psi,\sigma)&=\sum_{s\in\F^{*}} \psi(s)\:\sigma(s)=\sum_{s\in\F^{*2}} \psi(s)-\sum_{s\in\F^*\setminus \F^{*2}} \psi(s)=2 \sum_{s\in\F^{*2}} \psi(s)-\Big(\sum_{s\in\F^*} \psi(s)\Big)\\
&=2\alpha_\psi +1
\end{align*}
Recall, from the discussion following Theorem~\ref{thm: abs gauss}, that $G(\psi,\sigma)^2=\sigma(-1)\: q$. Here $\sigma(-1)=1$, as $q\equiv 1$ mod $4$, so $G(\psi,\sigma)=\pm \sqrt{q}$. But we also have
\begin{align*}
\sum_{\psi\neq \ct} G(\psi,\sigma)&=\sum_{s\in\F^{*}} \sigma(s)\sum_{\psi\neq \ct} \psi(s)=-\sum_{s\in\F^{*}} \sigma(s)=0
\end{align*}
meaning that the sign of $G(\psi,\sigma)$ is equally often positive as it is negative. 

We conclude that the non-trivial adjacency eigenvalues of $P(q)$ are $\tfrac{1}{2}(\pm \sqrt{q}-1)$, and they both have multiplicity $\tfrac{1}{2}(q-1)$.
\end{ex}

\begin{exer}\label{exer: re-compute}
Find the laplacian eigenvalues of the halved cube graph $\tfrac{1}{2}Q_n$, by using Theorem~\ref{thm: ab-spec}.
\end{exer}

\begin{exer}\label{exer: and5} Find the adjacency spectrum of the Andrasf\'ai graph $A_n$. Compare the diameter and the number of distinct eigenvalues.
\end{exer}

\begin{notes} For a Cayley graph coming from an abelian group, the eigenvalues are given by character sums. It turns out that the same holds when general groups are involved. This requires, however, a general theory of characters which is outside the scope of this text. See Babai (\emph{Spectra of Cayley graphs}, J. Combin. Theory Ser. B 1979) and also the discussion of Murty (\emph{Ramanujan graphs}, J. Ramanujan Math. Soc. 2003). Still, we will take a step in the non-abelian direction while still relying on character theory for abelian groups. In the next paragraph, we compute the eigenvalues of bi-Cayley graphs of abelian groups. As we know, these are certain Cayley graphs of generalized dihedral groups. 
\end{notes}


\bigskip
\subsection*{Bi-Cayley graphs of abelian groups}\label{sec: bi-cayley graphs}\
\medskip

The following is an adaptation of Theorem~\ref{thm: ab-spec} to a bipartite setting. 

\begin{thm}\label{thm: mock Cayley}
Consider the bi-Cayley graph of a finite abelian $G$ with respect to $S$. Then the adjacency eigenvalues are
\begin{align*}
\bigg\{\pm\Big|\sum_{s\in S}\: \chi(s)\Big|: \chi \textrm{ character of } G\bigg\}.
\end{align*} 
\end{thm}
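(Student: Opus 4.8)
The plan is to adapt the proof of Theorem~\ref{thm: ab-spec} to the bipartite situation, using the characters of $G$ as the building blocks of an eigenbasis. First I would write the adjacency matrix of the bi-Cayley graph in the block form
\[
A=\begin{pmatrix} 0 & B \\ B^{T} & 0\end{pmatrix},
\]
where rows and columns are indexed by $G_\bl$ and $G_\wh$, and $B$ is the $G\times G$ matrix with $B(g,h)=1$ exactly when $g^{-1}h\in S$. A function on the vertex set is a pair $(f_\bl,f_\wh)$ of functions on $G$, and the defining adjacency shows that $A$ sends $(f_\bl,f_\wh)$ to $(Bf_\wh,\,B^{T}f_\bl)$. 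Thus $A$ interchanges the black-supported and white-supported functions, and the whole problem reduces to understanding $B$ and $B^{T}$ on $\ell^2 G$.

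The key computation is that each character $\chi$ of $G$ is a simultaneous eigenfunction of $B$ and $B^{T}$. Exactly as in Theorem~\ref{thm: ab-spec}, $(B\chi)(g)=\sum_{s\in S}\chi(gs)=\lambda_\chi\,\chi(g)$ with $\lambda_\chi=\sum_{s\in S}\chi(s)$; and since $\chi(s^{-1})=\overline{\chi(s)}$, one finds $(B^{T}\chi)(g)=\sum_{s\in S}\chi(gs^{-1})=\overline{\lambda_\chi}\,\chi(g)$. Consequently the two-dimensional space $V_\chi$ spanned by $(\chi,0)$ and $(0,\chi)$ is $A$-invariant, and in this basis $A$ acts by the matrix
\[
\begin{pmatrix} 0 & \lambda_\chi \\ \overline{\lambda_\chi} & 0 \end{pmatrix},
\]
whose eigenvalues are $\pm\sqrt{\lambda_\chi\overline{\lambda_\chi}}=\pm|\lambda_\chi|$.

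To finish, I would invoke the fact (recalled before Theorem~\ref{thm: ab-spec}) that the characters form an orthogonal basis of $\ell^2 G$, so the subspaces $V_\chi$ are mutually orthogonal and their direct sum is all of $\ell^2(G_\bl\sqcup G_\wh)$, which has dimension $2|G|$. Hence $A$ is block-diagonalized into the $2\times 2$ blocks above, and its full spectrum is exactly $\big\{\pm\big|\sum_{s\in S}\chi(s)\big|:\chi\in G^{\wedge}\big\}$, as claimed. The one step needing care is the bookkeeping on the white copy: one must correctly track that right multiplication by $S$ on the black side becomes multiplication by $S^{-1}$ on the white side, producing the \emph{conjugate} eigenvalue $\overline{\lambda_\chi}$ rather than $\lambda_\chi$. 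This conjugation is precisely what makes the two off-diagonal entries complex conjugates, hence what forces the clean value $\pm|\lambda_\chi|$; the resulting symmetry of the spectrum about $0$ is then automatic and consistent with the bipartiteness of the graph.
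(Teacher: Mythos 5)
Your proof is correct and takes essentially the same route as the paper: the paper directly writes down the eigenfunctions $\theta=(\chi,\,c(\chi)\chi)$ with phase $c(\chi)=\overline{\alpha_\chi}/|\alpha_\chi|$ (where $\alpha_\chi=\sum_{s\in S}\chi(s)$ is your $\lambda_\chi$), which is exactly what one obtains by diagonalizing your $2\times 2$ block $\bigl(\begin{smallmatrix}0&\lambda_\chi\\ \overline{\lambda_\chi}&0\end{smallmatrix}\bigr)$ on the invariant subspace $V_\chi$. Your packaging is marginally cleaner in two respects: completeness of the spectrum follows automatically from the orthogonality of the $V_\chi$ and a dimension count, where the paper only asserts its $2|G|$ eigenfunctions are ``easily seen to be linearly independent,'' and the degenerate case $\lambda_\chi=0$ requires no special convention.
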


\begin{proof}
Let $\chi$ be a character of $G$, and put 
\begin{align*}
\alpha_\chi=\sum_{s\in S}\: \chi(s).
\end{align*} 
We look for an adjacency eigenfunction $\theta$ of the form $\theta=\chi$ on $G_\bl$, and $\theta=c(\chi)\:\chi$ on $G_\wh$ for a suitable non-zero scalar $c(\chi)$. A good choice is 
\begin{align*}
c(\chi)=\frac{\overline{\alpha_\chi}}{|\alpha_\chi|}
\end{align*}
with the convention that $c(\chi)=1$ when $\alpha_\chi=0$. Indeed, we have:
\begin{align*}
A\theta (g_\bl)&=\sum_{h_\wh\sim g_\bl} \theta(h_\wh)=c(\chi)\: \sum_{s\in S}\:\chi(gs)=c(\chi)\:\alpha_\chi\:\chi(g)=|\alpha_\chi|\:\theta (g_\bl)\\
A\theta (g_\wh)&=\sum_{h_\bl\sim g_\wh} \theta(h_\bl)=\sum_{s\in S}\:\chi(gs^{-1})=\overline{\alpha_\chi}\:\chi(g)=|\alpha_\chi|\:\theta (g_\wh)
\end{align*}
and so $\theta$ is an eigenfunction with eigenvalue $|\alpha_\chi|$. Switching the sign of $\theta$ on $G_\wh$ produces an eigenfunction $\theta'$ with eigenvalue $-|\alpha_\chi|$. The $2|G|$ eigenfunctions we have found, $\theta=\theta(\chi)$ and $\theta'=\theta'(\chi)$ for $\chi$ running over the characters of $G$, are easily seen to be linearly independent. So there are no further eigenvalues.
\end{proof}

For Cayley graphs of abelian groups, the characters provide a basis of eigenfunctions that does not depend on the choice of $S$. Here, characters provide a basis of eigenfunctions that does depend on $S$, but in a rather weak way: namely, the phase $c(\chi)$ depends on $S$. For later use, we remark that $c(\chi)=\pm 1$ whenever $\alpha_\chi$ is real.

\begin{ex}\label{ex: paley via cayley}
 Consider the bi-Paley graph $BP(q)$, for $q\equiv 3$ mod $4$. If we put again
\begin{align*}
\alpha_\psi=\sum_{s\in \F^{*2}} \psi(s)
\end{align*}
then the non-trivial eigenvalues of $BP(q)$ are $\pm |\alpha_\psi|$ as $\psi$ runs over the non-trivial additive characters of $\F$. Now $2\alpha_\psi +1=G(\psi,\sigma)$, and $G(\psi,\sigma)^2=\sigma(-1)\: q=-q$. Thus $\alpha_\psi=\tfrac{1}{2}(-1\pm \mathrm{i}\sqrt{q})$, and so $|\alpha_\psi|=\tfrac{1}{2}\sqrt{q+1}$. In conclusion, the adjacency eigenvalues of $BP(q)$ are $\pm \tfrac{1}{2}(q-1)$ and $\tfrac{1}{2}\sqrt{q+1}$.
\end{ex}

\begin{ex}\label{ex: compute inc} The incidence graph $I_n(q)$ may be viewed as the bi-Cayley graph of the multiplicative group $\K^*/\F^*$ with respect to the subset $\{[s]: \Tr(s)=0\}$. Here $\K$ is a degree $n$ extension of $\F$, and $\Tr:\K\to\F$ is the trace. The characters of $\K^*/\F^*$ correspond, through the quotient homomorphism $[\cdot]: \K^*\to \K^*/\F^*$, to the multiplicative characters of $\K$ that are trivial on $\F$. So the adjacency eigenvalues are $\pm |\alpha_{[\chi]}|$, where
\begin{align*}
\alpha_{[\chi]}=\sum_{\substack{[s]\in \K^*/\F^* \\ \Tr(s)=0}} [\chi]([s])= \frac{1}{q-1}\:\sum_{\substack{s\in \K^*\\ \Tr(s)=0}}\chi(s)=\frac{1}{q-1}\:E_0(\chi).
\end{align*}
Recall, $E_0(\chi)$ denotes the singular Eisenstein sum associated to $\chi$. The trivial character yields the simple eigenvalues $\pm |\alpha_{[\ct]}|=\pm (q^{n-1}-1)/(q-1)$. For a non-trivial $\chi$, we have $E_0(\chi)=-(q-1)\: E(\chi)$ by Lemma~\ref{lem: sing eins}, and so 
\begin{align*}
\alpha_{[\chi]}=-E(\chi).
\end{align*}
Applying Theorem~\ref{thm: abs eins}, we obtain the eigenvalues $\pm |\alpha_{[\chi]}|=\pm q^{n/2-1}$, each with multiplicity $|\K^*/\F^*|-1=(q^n-q)/(q-1)$.
\end{ex}

\begin{exer}\label{exer: SP1} Let $\F$ be a finite field with $q\geq 3$ elements. The \emph{sum-product graph} $SP(q)$ is the bipartite graph on two copies of $\F\times \F^*$, in which vertices $(a,x)_\bl$ and $(b,y)_\wh$ are connected when $a+b=xy$. Compute the adjacency eigenvalues of $SP(q)$.
\end{exer}

\begin{notes}
In its directed version, the sum-product graph was first considered by Friedman (\emph{Some graphs with small second eigenvalue}, Combinatorica 1995).
\end{notes}


\bigskip
\subsection*{Strongly regular graphs}\label{sec: Strongly regular graphs}\
\medskip

Recall that a strongly regular graph is a regular graph with the property that, for some non-negative integers $a$ and $c$, any two adjacent vertices have $a$ common neighbours, and any two distinct non-adjacent vertices have $c$ common neighbours.

A strongly regular graph has diameter $2$, so we expect at least three distinct eigenvalues. The following proposition explicitly computes the adjacency eigenvalues, and their multiplicities, for a strongly regular graph.

\begin{thm}\label{thm: strong spectra}
A strongly regular graph has three distinct adjacency eigenvalues. In terms of the parameters $(n,d,a,c)$, they are given as follows: the simple eigenvalue $\alpha_1=d$, and two more eigenvalues
 \begin{align*}
 \alpha_{2,3}=\frac{a-c\pm \sqrt{\mathrm{disc}}}{2}, \qquad \mathrm{disc}=(a-c)^2+4(d-c)>0
 \end{align*}
with multiplicities
\begin{align*}
m_{2,3}=\frac{1}{2}\Bigg(n-1\mp \frac{(n-1)(a-c)+2d}{\sqrt{\mathrm{disc}}}\Bigg).
\end{align*}
Conversely, a regular graph having three distinct adjacency eigenvalues is strongly regular.
\end{thm}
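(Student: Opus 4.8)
The plan is to convert the combinatorial defining property of a strongly regular graph into a single quadratic matrix identity, read the eigenvalues off from it, and then pin down the multiplicities by trace bookkeeping; the converse will run the same identity backwards starting from the minimal polynomial. For the forward direction, I would first record the basic identity obtained by counting walks of length two: the $(u,v)$-entry of $A^2$ is the number of common neighbours of $u$ and $v$. By regularity this is $d$ on the diagonal, and by strong regularity it is $a$ whenever $u\sim v$ and $c$ whenever $u\neq v$ are non-adjacent. Since $J-I-A$ is the indicator matrix of distinct non-adjacent pairs, this gives
$$A^2 = dI + aA + c(J-I-A),$$
that is, $A^2-(a-c)A-(d-c)I=cJ$. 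The constant function $\ct$ is an eigenfunction with eigenvalue $d$, simple by Corollary~\ref{cor: regtrivial}. On the orthogonal complement of $\ct$ we have $Jf=0$, so any eigenvalue $\alpha\neq d$ there satisfies $\alpha^2-(a-c)\alpha-(d-c)=0$; solving this quadratic produces $\alpha_{2,3}$ and the discriminant $(a-c)^2+4(d-c)$. To see $\mathrm{disc}>0$ I would note $c\geq 1$ and $c\leq d$ (a common neighbour of two non-adjacent vertices lies among the $d$ neighbours of either), while $a\leq d-1$ (an edge uses up one neighbour slot of each endpoint); hence $\mathrm{disc}=0$ would force $a=c=d$, contradicting $a\leq d-1$.

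Next, the multiplicities $m_2,m_3$ follow from two linear relations: $m_2+m_3=n-1$ (the dimension left after the simple eigenvalue $d$) and $d+m_2\alpha_2+m_3\alpha_3=\Tr A=0$. Using $\alpha_2-\alpha_3=\sqrt{\mathrm{disc}}$ together with $\alpha_2+\alpha_3=a-c$, I would solve this $2\times 2$ system to reach the stated closed form $m_{2,3}=\tfrac12\big(n-1\mp\tfrac{(n-1)(a-c)+2d}{\sqrt{\mathrm{disc}}}\big)$. This is routine algebra, so I would not belabour it.

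For the converse, suppose $X$ is $d$-regular with exactly three distinct adjacency eigenvalues. Being connected and regular, $d$ is one of them and is simple; call the other two $\theta$ and $\tau$. Since $A$ is symmetric, its minimal polynomial is a product of distinct linear factors $(x-d)(x-\theta)(x-\tau)$, so the matrix $M:=(A-\theta I)(A-\tau I)$ annihilates the $\theta$- and $\tau$-eigenspaces and acts as the nonzero scalar $(d-\theta)(d-\tau)$ on $\mathrm{span}(\ct)$. Thus $M$ is a scalar multiple of the orthogonal projection onto $\ct$, namely $M=\mu J$ with $\mu=(d-\theta)(d-\tau)/n$. Expanding $M=A^2-(\theta+\tau)A+\theta\tau I=\mu J$ and reading off entries then shows that the number of common neighbours of two vertices is the \emph{constant} $\theta+\tau+\mu$ for adjacent pairs and the \emph{constant} $\mu$ for distinct non-adjacent pairs (such pairs exist, since three distinct eigenvalues rule out $K_n$). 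These are counts, hence non-negative integers, so $X$ is strongly regular with $a=\theta+\tau+\mu$ and $c=\mu$.

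The genuinely load-bearing step is the converse identity $M=\mu J$: the whole argument hinges on recognizing that, for a symmetric matrix with three eigenvalues one of which is the simple Perron value with eigenvector $\ct$, the product of the two non-principal factors of the minimal polynomial collapses to a multiple of the all-ones matrix. The forward multiplicity computation, by contrast, is purely mechanical, and the only delicate point there is verifying the strict positivity of the discriminant.
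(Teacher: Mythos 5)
Your proof is correct and follows essentially the same route as the paper: the identity $A^2=(a-c)A+(d-c)I+cJ$ for the forward direction, the trace system $m_2+m_3=n-1$, $d+m_2\alpha_2+m_3\alpha_3=0$ for the multiplicities, and for the converse the recognition that $M=(A-\theta I)(A-\tau I)$ is a scalar multiple of $J$ (you get this via the spectral projection onto $\ct$, the paper via the observation that $(A-dI)M=0$ forces constant columns while regularity forces a constant diagonal---the same idea in two dialects). Your explicit verification that $\mathrm{disc}>0$, using $c\leq d$ and $a\leq d-1$, is a small but genuine improvement, since the paper asserts the positivity without proof.
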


\begin{proof}
Squaring the adjacency matrix $A$, we see that $A^2(u,v)$ equals $d$ if $u=v$, $a$ if $u\sim v$, and $c$ otherwise. Therefore $A^2=(a-c)A+(d-c)I+c J$, where $J$ is the $n\times n$ matrix all of whose entries are $1$. 

Now let $\alpha$ be a non-trivial eigenvalue, with eigenfunction $f$. Applying the above relation to $f$, and keeping in mind that $Jf=0$, we get $\alpha^2=(a-c)\alpha+(d-c)$. This quadratic equation yields the claimed formulas for $\alpha_2$ and $\alpha_3$.

The corresponding multiplicities $m_{2,3}$ can be determined by combining the dimensional relation $1+m_2+m_3=n$ with the tracial relation $d+m_2\alpha_2+m_3\alpha_3=0$. Solving this system, and using the formulas for $\alpha_2$ and $\alpha_3$, we are led to  the desired explicit formulas for $m_2$ and $m_3$. 

For the converse, let $\alpha_2$ and $\alpha_3$ denote the non-trivial eigenvalues, and consider $M:=A^2-(\alpha_2+\alpha_3)A+\alpha_2\alpha_3 I$. We know that $(A-d I)M=0$. This means that every column of $M$ is an eigenvector for $d$, hence $M$ is constant along columns. On the other hand, $M$ is constant along the diagonal, by regularity. Thus $M$ is a constant multiple of $J$, and strong regularity follows. Furthermore, we may read off the parameters: besides $d=\alpha_1$, we have $a-c=\alpha_2+\alpha_3$ and $d-c=-\alpha_2\alpha_3$. \end{proof}

\begin{ex} The Paley graph $P(q)$ has adjacency eigenvalues $\frac{1}{2}(\pm\sqrt{q}-1)$, each with multiplicity $\frac{1}{2}(q-1)$, as well as the simple eigenvalue $\frac{1}{2}(q-1)$. 
\end{ex}

\begin{ex} The twin graphs have adjacency eigenvalues $-2,2,6$, with multiplicities $\times 9$, $\times 6$, respectively $\times 1$. 
\end{ex}

\begin{ex}
The Petersen graph has adjacency eigenvalues $-2,1,3$, with multiplicities $\times 4$, $\times 5$, respectively $\times 1$. 
\end{ex}

Theorem~\ref{thm: strong spectra} implies that the adjacency spectrum of a strongly regular graph is completely determined by its parameters. In particular, strongly regular graphs with the same parameters are isospectral. In this way we get examples of isospectral but not isomorphic graphs, for instance the twin graphs, or the graphs provided by Corollary~\ref{cor: machine}.

A more surprising upshot of Theorem~\ref{thm: strong spectra} has to do with the multiplicities $m_{2,3}$ of the non-trivial eigenvalues. Namely, their integrality imposes strong restrictions on the parameters $(n,d,a,c)$. There are two compatible possibilities, named according to the nature of the eigenvalues.

\smallskip
\textbf{Quadratic type:} $(n-1)(a-c)+2d=0$. As $d<n-1$, we must have $c-a=1$ and $n-1=2d$. Using Proposition~\ref{prop: double counting}, we then deduce that $2c=d$. Thus $(n,d,a,c)=(4c+1, 2c, c-1, c)$. The non-trivial eigenvalues are $\alpha_{2,3}=(-1\pm \sqrt{n})/2$, so they are real quadratic integers, and they have equal multiplicities. The Paley graph $P(q)$ is of quadratic type. 
 
\smallskip
\textbf{Integral type:} the discriminant $\mathrm{disc}$ is a square, say $t^2$, and $t$ divides $(n-1)(a-c)+2d$. As $t$ and $a-c$ have the same parity, the non-trivial eigenvalues $\alpha_{2,3}=(a-c\pm t)/2$ are integral. They have different multiplicities, unless we are in the quadratic type as well. The Petersen graph, the twin graphs, and $K_n\times K_n$ are of integral type.

\bigskip
\subsection*{Two applications}\
\medskip

To illustrate just how useful the integrality of the multiplicities can be, let us consider two classical, but nevertheless striking, applications. Much of their appeal comes from the fact that we are establishing purely combinatorial results by exploiting spectral constraints! 
  
A graph of diameter $2$ has girth at most $5$. This bound is achieved by two familiar graphs, the cycle $C_5$ and the Petersen graph. Our first gem addresses the following simple question: are there any other regular graphs of diameter $2$ and girth $5$? 

\begin{thm}[Hoffman - Singleton]\label{thm: h-s}
If a regular graph has diameter $2$ and girth $5$, then the size and the degree are one of the following: $n=5$ and $d=2$; $n=10$ and $d=3$; $n=50$ and $d=7$; $n=3250$ and $d=57$.
\end{thm}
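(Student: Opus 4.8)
The plan is to show that such a graph is forced to be strongly regular, and then to extract arithmetic constraints from the integrality of the eigenvalue multiplicities supplied by Theorem~\ref{thm: strong spectra}.

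First I would pin down the parameters. Since the girth is $5$, there are no $3$-cycles, so adjacent vertices have no common neighbour, i.e. $a=0$; and there are no $4$-cycles, so any two distinct vertices share at most one common neighbour. The diameter $2$ hypothesis forces any two non-adjacent vertices to have at least one common neighbour, hence exactly one, i.e. $c=1$. Thus the graph is strongly regular with parameters $(n,d,0,1)$. Proposition~\ref{prop: double counting} then gives $d(d-1)=n-d-1$, that is $n=d^2+1$.

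Next I would feed $(a,c)=(0,1)$ into Theorem~\ref{thm: strong spectra}. The discriminant is $\mathrm{disc}=4d-3$, and the non-trivial eigenvalues are $\alpha_{2,3}=\tfrac{1}{2}(-1\pm\sqrt{4d-3})$. The integrality of the multiplicities yields the usual dichotomy. In the quadratic type one has $(n-1)(a-c)+2d=d(2-d)=0$, forcing $d=2$ and hence $n=5$ (the cycle $C_5$). The crux is the integral type, where $4d-3=s^2$ for some positive integer $s$, and $s$ divides $(n-1)(a-c)+2d=d(2-d)$. Writing $4d=s^2+3$ and clearing denominators, I would compute $16\,d(d-2)=(4d)^2-8(4d)=(s^2+3)^2-8(s^2+3)=s^4-2s^2-15$. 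Since $s^2=4d-3$ is odd, $s$ is odd and $\gcd(s,16)=1$, so $s\mid d(d-2)$ is equivalent to $s\mid 16\,d(d-2)$, and reducing modulo $s$ gives $s\mid 15$. Hence $s\in\{1,3,5,15\}$. Discarding $s=1$ (which forces $d=1$), the surviving values $s=3,5,15$ yield $d=\tfrac{1}{4}(s^2+3)=3,7,57$ and correspondingly $n=d^2+1=10,50,3250$.

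The main obstacle is this divisibility argument: everything else is a direct substitution into Theorem~\ref{thm: strong spectra}. The one genuinely clever step is recognizing that the multiplicity-integrality condition collapses, after removing the factor coprime to $s$, to the single congruence $s\mid 15$, which is what reduces an a priori infinite family of candidate parameters to exactly four. I should also note that this theorem does not assert the existence of such graphs; it only constrains the possible $(n,d)$. (The cases $d=2,3,7$ are realized by $C_5$, the Petersen graph, and the Hoffman--Singleton graph, while the realizability of $d=57$ remains famously open, but that is beyond the stated claim.)
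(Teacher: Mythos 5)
Your proposal is correct and follows essentially the same route as the paper: strong regularity with $(a,c)=(0,1)$, the relation $n=d^2+1$ from Proposition~\ref{prop: double counting}, and the quadratic/integral dichotomy from Theorem~\ref{thm: strong spectra} leading to $s\mid 15$. Your explicit computation $16\,d(d-2)=s^4-2s^2-15$ simply fills in the ``simple arithmetic manipulations'' that the paper leaves to the reader, and it does so correctly.
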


\begin{proof}
A regular graph with diameter $2$ and girth $5$ is strongly regular with parameters $a=0$ and $c=1$. Indeed, adjacent vertices have no common neighbours since there are no $3$-cycles. Distinct non-adjacent vertices must have common neighbours, by the diameter assumption. There is in fact just one common neighbour, since there are no $4$-cycles.

The only case of quadratic type is $(n,d)=(5,2)$. Turning to the integral type, we let $t^2=\mathrm{disc}=4d-3$, so $t$ divides $(n-1)-2d$. But $n=d^2+1$ by Proposition~\ref{prop: double counting}. Now $t$ divides $4d-3$ and $d^2-2d$, and simple arithmetic manipulations imply that $t$ divides $15$. If $t=1$ then $d=1$, which is impossible. The other possibilities, $t=3$, $t=5$, or $t=15$, lead to $(n,d)$ being one of $(10,3)$, $(50,7)$, $(3250,57)$. 
\end{proof}

In the previous theorem, the first two cases are uniquely realized by the two graphs that we already know, the $5$-cycle and the Petersen graph. It turns out that the third case, $n=50$ and $d=7$, is also realized by a unique graph--nowadays called the \emph{Hoffman - Singleton graph}. Amazingly, it is still not known whether there is a graph realizing the last case, $n=3250$ and $d=57$.

Our second gem is often stated as the \emph{Friendship theorem}: if, in a group of people, any two persons have exactly one common friend, then there is a person who is everybody's friend. A more precise graph-theoretic statement is the following.

\begin{thm}[Erd\H os - R\'enyi - S\'os]\label{thm: friend}
 A graph with the property that any two distinct vertices have exactly one common neighbour, is a windmill graph.
\end{thm}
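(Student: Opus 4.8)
The plan is to prove the Friendship Theorem by combining a purely combinatorial ``regularity'' step with the spectral machinery developed for strongly regular graphs. The statement to prove is that a graph $X$ in which any two distinct vertices have exactly one common neighbour must be a windmill graph (a collection of triangles sharing a common central vertex).

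\textbf{Step 1: Reduce to the regular case, or find a universal friend directly.} First I would argue that if $X$ is regular, then it is in fact strongly regular with $c=1$, and I can apply the spectral analysis. The friendship hypothesis gives $c=1$ for non-adjacent pairs. For adjacent pairs, I need to pin down $a$. The key combinatorial lemma, which I expect to be the main obstacle, is to show that \emph{either} there is a vertex adjacent to all others (yielding the windmill directly), \emph{or} the graph is regular. The classical route is: suppose no universal friend exists; then show any two non-adjacent vertices $u,v$ have $\deg(u)=\deg(v)$, and propagate this equality across the graph using the hypothesis that the ``friendship relation'' is unique. Concretely, if $u\not\sim v$, their unique common neighbour $w$ together with a careful matching of the other neighbours of $u$ and $v$ (paired via their unique common friends) forces $\deg(u)=\deg(v)$. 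One then shows the graph cannot contain two vertices of different degree unless it has a universal vertex, which gives regularity.

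\textbf{Step 2: Apply the strongly regular spectral constraint.} Assuming regularity of degree $d$, the graph is strongly regular with $a=1$ (two adjacent vertices lie on a unique triangle, forcing their one common neighbour) and $c=1$. By Theorem~\ref{thm: strong spectra}, the non-trivial eigenvalues are $\alpha_{2,3}=\frac{a-c\pm\sqrt{\mathrm{disc}}}{2}=\frac{\pm\sqrt{4d-3}}{2}$ since $a-c=0$ and $\mathrm{disc}=4(d-c)=4(d-1)+? $— here I would recompute: with $a=c=1$ we get $a-c=0$ and $\mathrm{disc}=(a-c)^2+4(d-c)=4(d-1)$, so $\alpha_{2,3}=\pm\sqrt{d-1}$. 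The multiplicity formula from Theorem~\ref{thm: strong spectra} gives $m_{2,3}=\frac{1}{2}\big(n-1\mp \frac{2d}{\sqrt{\mathrm{disc}}}\big)$, since $(n-1)(a-c)=0$. For these multiplicities to be integers, $\sqrt{d-1}$ must be rational, hence an integer; write $d-1=s^2$. Then integrality of $\frac{2d}{2s}=\frac{d}{s}=\frac{s^2+1}{s}$ forces $s\mid 1$, so $s=1$ and $d=2$.

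\textbf{Step 3: Conclude.} With $d=2$ the graph is a single cycle, and the friendship condition (every two vertices have exactly one common neighbour) fails for cycles of length $>3$ and holds only for the triangle $C_3$, which is the windmill on one triangle. Thus the regular case collapses to the trivial windmill, and the genuine windmill graphs arise precisely in the non-regular case from Step 1, where a universal friend exists and the remaining vertices pair off into triangles through it. I expect Step 1 (establishing regularity or extracting the universal vertex by elementary counting) to be the delicate part, while Step 2 is a clean application of the eigenvalue-multiplicity integrality already in hand; the arithmetic in Step 2 is the decisive blow that rules out all regular configurations except the degenerate triangle.
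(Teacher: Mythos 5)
Your proposal follows essentially the same route as the paper's proof: the regular case is settled by the strongly regular integrality constraint with $a=c=1$ (so $\mathrm{disc}=4(d-1)$, integrality of the multiplicities forces $\sqrt{d-1}$ to be an integer dividing $d$, hence $d=2$, and no cycle beyond the triangle survives the friendship condition), while the irregular case is the same combinatorial argument — non-adjacent vertices have equal degree via the matching of neighbours through unique common friends, then one extracts a universal vertex whose remaining neighbours pair off into triangles. The only loose end is cosmetic: since the paper's convention excludes complete graphs from being strongly regular, you should dispose of $K_n$ separately (the friendship hypothesis on an adjacent pair forces $n=3$) before invoking Theorem~\ref{thm: strong spectra}, exactly as the paper does at the start of its regular case.
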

\begin{figure}[!ht]\bigskip
\centering
\GraphInit[vstyle=Simple]
\tikzset{VertexStyle/.style = {
shape = circle,
inner sep = 0pt,
minimum size = .8ex,
draw}}
\begin{tikzpicture}
  \grStar[RA=1.4]{11}
  \Edges(a0, a1)
  \Edges(a2, a3)
  \Edges(a4, a5)
  \Edges(a6, a7)
  \Edges(a8, a9)
\end{tikzpicture}
\end{figure}

\medskip
\begin{proof}
Let us first consider the familiar context of regular graphs. If the given graph is complete, then it must be $K_3$, the one-bladed windmill graph. Otherwise, the graph is strongly regular with $a=c=1$, so it is of integral type. Let $t^2=\mathrm{disc}=4(d-1)$, where $t$ divides $2d$. Putting $t=2t'$, we have that $t'$ divides $d-1$ and $d$, hence $t'=1$ and so $d=2$. However, no cycle can have $a=c=1$.

In the irregular case, the argument is combinatorial. The first observation is that $\mathrm{deg}(u)=\mathrm{deg}(v)$ whenever $u$ and $v$ are non-adjacent vertices. Indeed, let $u_1,\dots, u_k$ be the neighbours of $u$, and let $v_i$ be the common neighbour of $v$ and $u_i$. If $v_i=v_j$ for $i\neq j$, then that vertex and $u$ have at least two common neighbours, namely $u_i$ and $u_j$, and this is a contradiction. Thus $v_1,\dots, v_k$ are distinct neighbours of $v$, hence $\mathrm{deg}(v)\geq \mathrm{deg}(u)$. Interchanging the roles of $u$ and $v$, we get $\mathrm{deg}(u)=\mathrm{deg}(v)$ as desired. 

Let $x$ and $y$ be adjacent vertices having different degrees. Then every other vertex is adjacent to $x$ or to $y$, possibly to both. We claim that, in fact, one of the following holds: every vertex different from $x$ is adjacent to $x$, or every vertex different from $y$ is adjacent to $y$. Indeed, if this is not the case, then there are two vertices $x'$ and $y'$ such that $x'$ is adjacent to $x$ but not to $y$, and $y'$ is adjacent to $y$ but not to $x$. Let $z$ be the common neighbour of $x'$ and $y'$. Now $z$ is adjacent to $x$ or to $y$, say $z\sim y$. But then $x$ and $z$ have two common neighbours, namely $x'$ and $y$. This is a contradiction, and our claim is proved.

So there is a vertex $w$ which is connected to every other vertex. For each $v$ adjacent to $w$, there is a unique $v'$ adjacent to $w$ and to $v$. This shows that the vertices different from $w$ are joined in disjoint pairs, so the graph is a windmill graph.
\end{proof}

\begin{exer}\label{exer: nobip} Show that the incidence graph $I_n(q)$ is not a bipartite double.
\end{exer}

\begin{notes}
Theorem~\ref{thm: h-s} is due to Hoffman and Singleton (\emph{On Moore graphs with diameters $2$ and $3$}, IBM J. Res. Develop. 1960). Theorem~\ref{thm: friend} is due to Erd\H os, R\'enyi, and S\'os (\emph{On a problem of graph theory}, Studia Sci. Math. Hungar. 1966). 
\end{notes}


\bigskip
\subsection*{Design graphs}\label{sec: designs}\
\medskip

Recall that a design graph is a regular bipartite graph with the property that two distinct vertices of the same colour have the same number of common neighbours. The relevant parameters are $m$, the half-size; $d$, the degree; $c$, the number of neighbours shared by any monochromatic pair of vertices.

\begin{thm}\label{thm: eigen-design}
A design graph has four distinct adjacency eigenvalues. In therms of the parameters $(m,d,c)$, they are given as follows: the simple eigenvalues $\pm d$, and $\pm \sqrt{d-c}$, each with multiplicity $m-1$.

Conversely, a regular bipartite graph having four distinct eigenvalues is a design graph. 
\end{thm}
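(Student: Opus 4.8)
The plan is to exploit the bipartite block structure of the adjacency matrix. Writing the vertices as black followed by white, we have
\[
A = \begin{pmatrix} 0 & B \\ B^{\mathsf T} & 0 \end{pmatrix},
\]
where $B$ is the $m\times m$ biadjacency matrix. Then $A^2$ is block-diagonal with diagonal blocks $BB^{\mathsf T}$ and $B^{\mathsf T}B$. The $(u,v)$ entry of $BB^{\mathsf T}$ counts the common neighbours of the black vertices $u$ and $v$: it equals $d$ on the diagonal and $c$ off the diagonal, precisely by the design property. Hence $BB^{\mathsf T}=(d-c)I+cJ$, and likewise $B^{\mathsf T}B=(d-c)I+cJ$, since the white vertices also enjoy the design property. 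First I would compute the spectrum of $(d-c)I+cJ$: it is $d-c$ with multiplicity $m-1$, together with $d-c+cm$, which equals $d^2$ after invoking the identity $c(m-1)=d(d-1)$. Thus $A^2$ has eigenvalue $d^2$ with multiplicity $2$ and $d-c$ with multiplicity $2(m-1)$.

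Next I would pass from $A^2$ back to $A$ using that the spectrum of a bipartite graph is symmetric about $0$. The eigenvalue $d^2$ of $A^2$ yields $\pm d$, each simple (these are the trivial eigenvalues of the connected $d$-regular bipartite graph), while $d-c$ yields $\pm\sqrt{d-c}$, each with multiplicity $m-1$ by symmetry. To see that these four values are genuinely distinct, I would note $d>c$: the relation $c(m-1)=d(d-1)$ forces $m=d$ (that is, $K_{m,m}$, excluded by convention) whenever $c=d$, so a design graph has $m>d$ and hence $c<d$; this gives $0<\sqrt{d-c}<d$. This settles the forward direction with the stated multiplicities.

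For the converse, let the regular bipartite graph have exactly four distinct eigenvalues. Since the set of distinct eigenvalues is symmetric about $0$, contains the simple trivial eigenvalues $\pm d$, and since a symmetric set containing $0$ has odd cardinality, the four distinct eigenvalues must be $\{\pm d,\pm\theta\}$ with $0<\theta<d$. Reading this through the singular values of $B$ (the eigenvalues of $A$ are $\pm$ the singular values of $B$), the matrix $BB^{\mathsf T}$ has eigenvalue $d^2$ with multiplicity $1$, with eigenvector the all-ones vector $\ct$ coming from $d$-regularity, and $\theta^2$ with multiplicity $m-1$. A symmetric matrix with exactly these two eigenvalues and with $\ct$ spanning the top eigenspace must equal $\theta^2 I+\tfrac{d^2-\theta^2}{m}J$. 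Its off-diagonal entries are all equal to $\tfrac{d^2-\theta^2}{m}$, so any two distinct black vertices share this common number of neighbours; the same argument applied to $B^{\mathsf T}B$ handles the white vertices. Hence the graph is a design graph with $c=(d^2-\theta^2)/m$.

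The main obstacle is the converse, and specifically the bookkeeping that pins down the multiplicity of $d^2$ in $BB^{\mathsf T}$ as exactly $1$: this is what lets me conclude that $\ct$ spans the entire top eigenspace and therefore that $BB^{\mathsf T}$ has the affine form $\theta^2 I+(\text{const})\,J$ with constant off-diagonal part. The cleanest route is the singular-value dictionary between $A$ and $B$, which simultaneously delivers the multiplicities in both $BB^{\mathsf T}$ and $B^{\mathsf T}B$ and rules out $0$ as a fourth eigenvalue.
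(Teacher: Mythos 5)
Your proof is correct. The forward direction coincides with the paper's: both write $A^2$ in block form, identify each diagonal block as $(d-c)I+cJ$ via the design property, and read off the spectrum using $c(m-1)=d(d-1)$; you are in fact slightly more careful than the paper, since you verify $c<d$ (via the exclusion of $K_{m,m}$) so that the four values $\pm d$, $\pm\sqrt{d-c}$ are genuinely distinct, a point the paper passes over. The converse is where you genuinely diverge. The paper reuses the Hoffman-polynomial trick from its strongly regular theorem: it forms $M=(A+dI)(A^2-\alpha^2 I)$, observes $(A-dI)M=0$, so every column of $M$ is a multiple of the constant eigenfunction $\ct$, and since the diagonal of $M$ is constant (by regularity and bipartiteness) this forces $M=kJ$; evaluating $M$ at a monochromatic pair $(u,v)$, where $A(u,v)=A^3(u,v)=0$, gives $k=dA^2(u,v)$, i.e.\ the design property, with no multiplicity computations at all. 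You instead descend to $BB^t$: simplicity of $\pm d$ together with the bipartite symmetry of the spectrum pins down the eigenvalues of $BB^t$ as $d^2$ (simple, with eigenvector $\ct$) and $\theta^2$ (multiplicity $m-1$), whence the spectral decomposition yields $BB^t=\theta^2 I+\tfrac{d^2-\theta^2}{m}J$ and constant off-diagonal entries. Your route costs the multiplicity bookkeeping but buys the explicit value $c=(d^2-\theta^2)/m$, and it supplies a justification the paper silently assumes: that the four distinct eigenvalues of a connected regular bipartite graph must have the form $\{\pm d,\pm\theta\}$ with $\theta\neq 0$ --- your parity observation, that a finite set symmetric about $0$ has even cardinality exactly when it omits $0$, is precisely what legitimizes the paper's opening line ``let $\pm d,\pm\alpha$ be the distinct eigenvalues.'' Both converses use connectivity in the same essential way, through the simplicity of $d$ and its constant eigenfunction.
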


\begin{proof} As for any bipartite graph, the adjacency matrix of a design graph satisfies
\begin{align*}
A^2=
\begin{pmatrix}
C_\bl & 0\\
0 & C_\wh 
\end{pmatrix}
\end{align*}
where $C_\bl$ and $C_\wh$ are square matrices indexed by the black, respectively by the white vertices. These two matrices have a simple combinatorial description: $C_\bl$ and $C_\wh$ record, diagonally, the degrees in each colour, and off-diagonally the number of common neighbours for distinct vertices of the same colour.

The design property means that
\begin{align*}
C_\bl=C_\wh=(d-c)\:I_m+c\:J_m.
\end{align*}
So the eigenvalues of both $C_\bl$ and $C_\wh$ are $(d-c)+cm=d^2$, with multiplicity $1$, and $d-c$, with multiplicity $m-1$. The claim follows. 

For the converse, let $\pm d, \pm \alpha$ be the distinct eigenvalues. As in the proof of Theorem~\ref{thm: strong spectra}, consider the matrix
\begin{align*}
M:=(A+dI)(A^2-\alpha^2I)=A^3+dA^2-\alpha^2A-d\alpha^2I.
\end{align*} 
Then $(A-dI)M=0$ and $M$ is constant along the diagonal, so $M$ is constant multiple of $J$, say $M=kJ$. Now if $u$ and $v$ are two distinct vertices of the same colour, then $k=M(u,v)=dA^2(u,v)$. This is precisely the design property. \end{proof}

\begin{ex} The incidence graph $I_n(q)$ is a design graph, with parameters 
\begin{align*}
(m,d,c)=\Big(\frac{q^n-1}{q-1},\frac{q^{n-1}-1}{q-1},\frac{q^{n-2}-1}{q-1}\Big).
\end{align*} 
So the adjacency eigenvalues are $\pm (q^{n-1}-1)/(q-1)$ and $\pm q^{n/2-1}$.
\end{ex}

\begin{ex} The bi-Paley graph $BP(q)$, where $q\equiv 3$ mod $4$, is a design graph with parameters
\begin{align*}
(m,d,c)=\Big(q, \tfrac{1}{2}(q-1), \tfrac{1}{4}(q-3)\Big).
\end{align*} 
So the adjacency eigenvalues are $\pm \frac{1}{2}(q-1)$ and $\pm \frac{1}{2}\sqrt{q+1}$.
\end{ex}

Design graphs with the same parameters are isospectral. For example: if $q=2^n-1$ is a Mersenne prime, where $n>3$, then the bi-Paley graph $BP(q)$ and the incidence graph $I_n(2)$ are isospectral but not isomorphic.

\bigskip
\subsection*{Partial design graphs}\label{sec: pardesigns}\
\medskip

Recall that partial design graph is a regular bipartite graph with the property that there are only two possible values for the number of neighbours shared by two vertices of the same colour. 

To a partial design graph with parameters $(m,d,c_1,c_2)$ we associate a derived graph, the \emph{$c_1$-graph}. This is the (possibly disconnected) graph on the $m$ black vertices of the partial design graph, in which edges connect pairs of vertices that have exactly $c_1$ common neighbours.

\begin{thm}\label{thm: pdg}
Given a partial design graph with parameters $(m,d,c_1,c_2)$, the following hold.
\begin{itemize}
\item[(i)] The $c_1$-graph is regular of degree 
\begin{align*}
d'=\frac{d(d-1)-(m-1)c_2}{c_1-c_2}.
\end{align*}
\item[(ii)] Let $\alpha_1'=d', \alpha_2', \ldots, \alpha_m'$ be the adjacency eigenvalues of the $c_1$-graph. Then the adjacency eigenvalues of the partial design graph are as follows: 
\begin{align*}
\pm d,\;\pm\sqrt{(d-c_2)+(c_1-c_2)\alpha_k'} \qquad (k=2,\dots,m)
\end{align*}\end{itemize}
\end{thm}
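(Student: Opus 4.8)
The plan is to mimic the proof of Theorem~\ref{thm: eigen-design}, replacing the matrix $J$ there by the adjacency matrix of the $c_1$-graph. Write the adjacency matrix of the partial design graph in bipartite block form
\begin{align*}
A=\begin{pmatrix} 0 & B\\ B^T & 0\end{pmatrix},
\end{align*}
where $B$ is the $m\times m$ biadjacency matrix recording black--white adjacencies. Then $A^2$ is block-diagonal with blocks $BB^T$ and $B^TB$, and $C_\bl:=BB^T$ carries a combinatorial meaning: its diagonal records the degree $d$ of each black vertex, while its $(u,v)$-entry for $u\neq v$ is the number of common neighbours of the black vertices $u$ and $v$, hence $c_1$ or $c_2$. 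Letting $N$ denote the adjacency matrix of the $c_1$-graph on the black vertices and $I,J$ the $m\times m$ identity and all-ones matrices, I can therefore record
\begin{align*}
BB^T=(d-c_2)\:I+(c_1-c_2)\:N+c_2\:J,
\end{align*}
since the entries equal to $c_1$ are exactly the off-diagonal positions where $N=1$.

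For part (i), I will compute the row sums of $BB^T$ in two ways. On the one hand, the sum of the entries in the row indexed by $u$ equals $\sum_w B(u,w)\sum_v B(v,w)=d\cdot d=d^2$, using that every white vertex has $d$ black neighbours. On the other hand, if $u$ has $d'_u$ neighbours in the $c_1$-graph, the same row sum equals $d+c_1 d'_u+c_2(m-1-d'_u)$. Equating and solving for $d'_u$ yields $d'_u=\bigl(d(d-1)-(m-1)c_2\bigr)/(c_1-c_2)$, independent of $u$, so the $c_1$-graph is regular of degree $d'$ as claimed. The same argument applied to $B^TB$ shows the white $c_1$-graph is $d'$-regular too; since $BB^T$ and $B^TB$ are square and isospectral, it is immaterial which colour class is used.

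For part (ii), I diagonalise $N$. As $N$ is real symmetric and $d'$-regular, it admits an orthogonal eigenbasis whose first vector is $\ct$ with $N\ct=d'\ct$, and whose remaining vectors $f_2,\dots,f_m$, with eigenvalues $\alpha_2',\dots,\alpha_m'$, are orthogonal to $\ct$, so that $Jf_k=0$. Feeding these into the displayed decomposition gives $BB^T\ct=\bigl((d-c_2)+(c_1-c_2)d'+c_2m\bigr)\ct=d^2\ct$, where the arithmetic collapses to $d^2$ precisely because of the degree formula from part (i), while $BB^Tf_k=\bigl((d-c_2)+(c_1-c_2)\alpha_k'\bigr)f_k$ for $k\geq 2$. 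Hence the eigenvalues of $BB^T$ are $d^2$ together with $(d-c_2)+(c_1-c_2)\alpha_k'$ for $k=2,\dots,m$. Finally, since the graph is bipartite its adjacency spectrum is symmetric about $0$, and the singular values of $B$ (the square roots of the eigenvalues of $BB^T$) furnish the eigenvalues of $A$ as $\pm$ pairs, yielding exactly $\pm d$ and $\pm\sqrt{(d-c_2)+(c_1-c_2)\alpha_k'}$, a total of $2m$ eigenvalues matching $|V|=2m$. The one point requiring care is the bookkeeping that singles out $\ct$, rather than the $f_k$, as the unique eigenvector feeling the $c_2J$ term, which is exactly what separates the trivial eigenvalues $\pm d$ from the rest; the radicands are automatically non-negative, being eigenvalues of the positive semidefinite matrix $BB^T$. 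The converse direction, should one wish to include it, runs as in Theorem~\ref{thm: eigen-design}, with the matrix polynomial in $A$ now producing a multiple of $N$ plus a multiple of $J$ rather than a multiple of $J$ alone.
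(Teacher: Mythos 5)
Your proof is correct and follows essentially the same route as the paper: the identical decomposition $BB^T=(d-c_2)\,I+(c_1-c_2)\,N+c_2\,J$ and the identical eigenbasis computation for part (ii), with $\ct$ picking up the $J$ term and the $f_k$ killing it. The only cosmetic differences are that your row-sum double count in part (i) is the entrywise form of the paper's observation that $C_\bl\,\ct=d^2\,\ct$ (obtained there by restricting $A^2\ct=d^2\ct$), and that where the paper proves a lemma that $MN$ and $NM$ are isospectral, you invoke the equivalent standard fact that the eigenvalues of a bipartite adjacency matrix are the $\pm$ singular values of the biadjacency block.
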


The proof uses the following:

\begin{lem} Let $M$ and $N$ be square matrices of the same size. Then $MN$ and $NM$ have the same eigenvalues.
\end{lem}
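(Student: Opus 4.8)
The plan is to prove the sharper statement that $MN$ and $NM$ have the same \emph{characteristic polynomial}; since the eigenvalues are precisely its roots, counted with multiplicity, this immediately gives the lemma. Working at the level of characteristic polynomials has the advantage of automatically tracking the multiplicity of every eigenvalue, including the eigenvalue $0$, which is the only place where $MN$ and $NM$ can genuinely differ in naive treatments.

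The easy case is when one factor, say $M$, is invertible. Then $NM = M^{-1}(MN)M$, so $MN$ and $NM$ are conjugate and share the same characteristic polynomial. The real content is to cover the singular case uniformly, and for that my preferred route is a single block-matrix similarity that makes no invertibility assumption. Writing $n$ for the common size, I would consider the two $2n\times 2n$ matrices
\begin{align*}
P=\begin{pmatrix} MN & 0\\ N & 0\end{pmatrix}, \qquad Q=\begin{pmatrix} 0 & 0\\ N & NM\end{pmatrix},
\end{align*}
and conjugate $P$ by the manifestly invertible block matrix $S=\begin{pmatrix} I & M\\ 0 & I\end{pmatrix}$, whose inverse is $\begin{pmatrix} I & -M\\ 0 & I\end{pmatrix}$. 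A direct block multiplication gives $S^{-1}PS=Q$, so $P$ and $Q$ are similar and hence have equal characteristic polynomials.

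To finish, I would exploit that $P$ and $Q$ are each block triangular, so their characteristic polynomials factor through the diagonal blocks:
\begin{align*}
\det(\lambda I_{2n}-P)=\lambda^n\det(\lambda I_n-MN), \qquad \det(\lambda I_{2n}-Q)=\lambda^n\det(\lambda I_n-NM).
\end{align*}
Equating the two expressions (they are equal by similarity) and cancelling the common factor $\lambda^n$ yields $\det(\lambda I-MN)=\det(\lambda I-NM)$, as desired. I do not expect any serious obstacle here: the one point demanding genuine care is precisely the one this argument handles for free, namely that the \emph{full} multiset of eigenvalues — zero eigenvalues and all multiplicities included — agrees, and not merely the nonzero spectrum. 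The block computation $S^{-1}PS=Q$ is the only calculation, and it is routine; an alternative density argument (invertible matrices are dense and the characteristic polynomial is continuous in the entries) would also work over $\R$ or $\C$, but the block-similarity proof is cleaner and is valid over an arbitrary field.
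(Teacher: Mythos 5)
Your proof is correct: the conjugation checks out (indeed $PS=\begin{pmatrix} MN & MNM\\ N & NM\end{pmatrix}$, and left multiplication by $S^{-1}$ clears the top row, giving $Q$), the block-triangular determinant factorizations are right, and cancelling $\lambda^n$ is legitimate in the polynomial ring. But your route is genuinely different from the paper's. The paper also works with $2n\times 2n$ block matrices, yet instead of a similarity it factors the single matrix $\bigl(\begin{smallmatrix} I & M\\ N & I\end{smallmatrix}\bigr)$ as a product of block-triangular matrices in two ways,
\begin{align*}
\begin{pmatrix} I & 0\\ N & I \end{pmatrix}\begin{pmatrix} I & M\\ 0 & I-NM \end{pmatrix}=\begin{pmatrix} I & M\\ N & I \end{pmatrix}=\begin{pmatrix} I-MN & M\\ 0 & I \end{pmatrix}\begin{pmatrix} I & 0\\ N & I \end{pmatrix},
\end{align*}
reads off $\det(I-NM)=\det(I-MN)$, and then obtains $\det(zI-MN)=\det(zI-NM)$ for all $z$ by rescaling $M\mapsto z^{-1}M$ when $z\neq 0$ and checking $z=0$ directly via $\det(-MN)=(-1)^n\det M\det N=\det(-NM)$. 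So both arguments secure the full characteristic polynomial -- multiplicities and zero eigenvalues included; the concern you flag about naive treatments losing the eigenvalue $0$ is exactly what the paper's $z=0$ case handles. What your version buys: a single identity of polynomials with no case split and no division by $z$, valid verbatim over an arbitrary field (indeed over any commutative ring). What the paper's buys: a marginally shorter computation, with the two factorizations being the familiar Schur-complement decompositions. Either argument fully suffices for the application in Theorem 9.9, where the multiplicities of the eigenvalues of $BB^t$ and $B^tB$ are what matter.
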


\begin{proof}
We claim that $\det (z I -MN)=\det (z I -NM)$ for each $z\in \C$. This is clear when $z=0$. As for non-zero $z$, it suffices to treat the case $z=1$. In the identity
\begin{align*}
\begin{pmatrix}
I & 0\\
N & I 
\end{pmatrix}
\begin{pmatrix}
I & M\\
0 & I-NM 
\end{pmatrix}=
\begin{pmatrix}
I & M\\
N & I 
\end{pmatrix}=
\begin{pmatrix}
I-MN & M\\
0 & I 
\end{pmatrix}
\begin{pmatrix}
I & 0\\
N & I 
\end{pmatrix}
\end{align*}
the determinant of the left-most product is $\det(I-NM)$, whereas for the right-most product it is $\det(I-MN)$.
\end{proof}

\begin{proof}[Proof of Theorem~\ref{thm: pdg}]
Let 
\begin{align*}
A=\begin{pmatrix}
0 & B\\
B^t & 0 
\end{pmatrix}
\end{align*}
be the adjacency matrix of the partial design graph, where $B$ is an  $m\times m$ matrix. As in the case of design graphs, we consider the square of $A$:
\begin{align*}
A^2=
\begin{pmatrix}
C_\bl & 0\\
0 & C_\wh 
\end{pmatrix}
\end{align*}
where $C_\bl=BB^t$ and $C_\wh=B^tB$. The two matrices $C_\bl$ and $C_\wh$ have the same eigenvalues, thanks to the previous lemma. Hence the eigenvalues of $A$ are $\pm \sqrt{\mu}$ for $\mu$ running over the eigenvalues of $C_\bl$. Each of $\pm \sqrt{\mu}$ has a multiplicity equal to that of $\mu$ when $\mu\neq 0$. If $\mu=0$, then $0$ is an eigenvalue of $A$ with twice the multiplicity of $\mu=0$.

The characteristic property for a partial design means that the $m\times m$ matrix $C_\bl$ can be expressed as follows. Its $(u_\bl,v_\bl)$ entry is $d$ if $u_\bl=v_\bl$; $c_1$ if $u_\bl$ and $v_\bl$ are adjacent in the $c_1$-graph; $c_2$ if $u_\bl$ and $v_\bl$ are distinct and not adjacent in the $c_1$-graph. In short,
\begin{align*}
C_\bl=(d-c_2)\: I+c_2\:J+(c_1-c_2)\: A'
\end{align*}
where $A'$ is the adjacency matrix of the $c_1$-graph.

(i) Thanks to regularity, $A^2$ has $\ct$ as an eigenvector, with eigenvalue $d^2$. It follows, by restricting, that the same holds for $C_\bl$. Applying the above expression for $C_\bl$ to $\ct$, we obtain
\begin{align*}
A'\: \ct=\frac{d(d-1)-(m-1)c_2}{c_1-c_2}\: \ct
\end{align*}
so the $c_1$-graph is regular, of degree $d'$ equal to the coefficient on the right-hand side.

(ii) Let $f_1'=\ct, f_2',\dots,f_m'$ be an orthogonal basis of eigenvectors corresponding to the eigenvalues $\alpha_1'=d', \alpha_2', \ldots, \alpha_m'$ of $A'$. Then $C_\bl f_1'=d^2 f_1'$, and
\begin{align*}
C_\bl f_k'=\big((d-c_2)+(c_1-c_2)\alpha_k'\big) f_k'\qquad k=2,\dots,m
\end{align*} 
since $Jf_k'=0$. Thus, the eigenvalues of $C_\bl$ are $d^2$, and $(d-c_2)+(c_1-c_2)\alpha_k'$ for $k=2,\dots,m$. We deduce that the eigenvalues of $A$ are as claimed.
\end{proof}

\begin{ex} Consider the Tutte - Coxeter graph (Example~\ref{ex: T-C}). It is a partial design graph, with parameters $(m,d,c_1,c_2)=(15, 3, 0, 1)$. The $0$-graph is a regular graph of degree $8$ on $15$ vertices, which can be described as follows: the vertices are the edges of $K_6$, and two edges are adjacent when they share a common endpoint. This is, in fact, a strongly regular graph with parameters $a=c=4$. We first find that the $0$-graph has adjacency eigenvalues $8$; $2$ with multiplicity $\times 5$; $-2$ with multiplicity $\times 9$. Then, we deduce the adjacency eigenvalues of the Tutte - Coxeter graph: $\pm 3$; $\pm 2$, each with multiplicity $\times 9$; $0$ with multiplicity $\times 10$.
\end{ex}

\begin{ex} Let $\F$ be a finite field with $q\geq 3$ elements. The sum-product graph $SP(q)$ is the bipartite graph on two copies of $\F\times \F^*$, in which $(a,x)_\bl$ and $(b,y)_\wh$ are connected whenever $a+b=xy$. Then $SP(q)$ is a partial design graph with parameters $(m,d,c_1,c_2)=(q(q-1), q-1, 0,1)$. Thanks to the automorphism that switches the colour of each vertex, it suffices to check the partial design property on black vertices. Let $(a,x)_\bl$ and $(a',x')_\bl$ be distinct vertices. A vertex $(b,y)_\wh$ is adjacent to both if and only if $b=xy-a=x'y-a'$. Thus $b$ is determined by $y$, and $a-a'=(x-x')y$. There is no solution $y$ when $a=a'$ or $x=x'$, otherwise there is a unique solution $y$. The $0$-graph has vertex set $\F\times \F^*$, and edges connect $(a,x)$ and $(a',x')$ whenever $a=a'$ or $x=x'$. This is the product $K_q\times K_{q-1}$. Its adjacency eigenvalues are $2q-3$; $q-2$ with multiplicity $\times(q-2)$; $q-3$ with multiplicity $\times(q-1)$; $-2$ with multiplicity $\times(q-1)(q-2)$. Hence $SP(q)$ has adjacency eigenvalues $\pm (q-1)$; $\pm \sqrt{q}$ with multiplicity $\times(q-1)(q-2)$; $\pm 1$ with multiplicity $\times(q-1)$; $0$ with multiplicity $\times 2(q-2)$.
\end{ex}

\begin{ex} Consider now the following extended relative of $SP(q)$. The \emph{ full sum-product graph} $FSP(q)$ is the bipartite graph on two copies of $\F\times \F$, in which $(a,x)_\bl$ and $(b,y)_\wh$ are connected whenever $a+b=xy$. This is a partial design graph with parameters $(m,d,c_1,c_2)=(q^2, q, 0,1)$. As before, let $(a,x)_\bl$ and $(a',x')_\bl$ be distinct vertices in $FSP(q)$. A common neighbour $(b,y)_\wh$ is determined by $b=xy-a=x'y-a'$. Now $a-a'=(x-x')y$ has no solution when $x=x'$, and one solution otherwise. The $0$-graph has vertex set $\F\times \F$, and edges connect $(a,x)$ and $(a',x')$ whenever $x=x'$. This is a disconnected union of $q$ copies of $K_q$, and its adjacency eigenvalues are $q-1$ with multiplicity $\times q$, and $-1$ with multiplicity $\times q(q-1)$. We find that $FSP(q)$ has adjacency eigenvalues $\pm q$; $\pm \sqrt{q}$ with multiplicity $\times q(q-1)$; $0$ with multiplicity $\times 2(q-1)$.
\end{ex}

\begin{exer}\label{exer: halved design} 
Compute the laplacian eigenvalues of the halved cube graph $\tfrac{1}{2}Q_n$ by using Theorem~\ref{thm: pdg}.
\end{exer}

\begin{exer}\label{exer: higherincgraph} 
Let $V$ be a linear space of dimension $n\geq 5$ over a field with $q$ elements. Compute the adjacency eigenvalues of the following incidence graph: the vertices are the $2$-spaces and the $(n-2)$-spaces, joined according to inclusion.
\end{exer}

\bigskip
\section{Largest eigenvalues}\label{sec: extremal}

\bigskip
\subsection*{Extremal eigenvalues of symmetric matrices}\
\medskip

Both the adjacency matrix and the laplacian matrix of a graph are real symmetric matrices. Let us discuss a simple principle concerning the extremal eigenvalues of real symmetric matrices.

Let $M$ be a real symmetric $n\times n$ matrix, with eigenvalues
\begin{align*}
\mu_{\min}=\mu_1\leq \ldots\leq \mu_n=\mu_{\max}.
\end{align*} 
The following result characterizes the extremal eigenvalues as extremal values of the \emph{Rayleigh ratio}
\begin{align*}
R(f):=\frac{\la Mf, f\ra }{\la f,f\ra},
\end{align*}
where $0\neq f\in \C^n$. Note that the Rayleigh ratio is real-valued.

\begin{thm}\label{thm: unconstrained} We have
\begin{align*}
\mu_{\min}=\min_{f\neq 0}\: R(f), \qquad \mu_{\max}=\max_{f\neq 0}\: R(f)
\end{align*}
and the extremal values are attained precisely on the corresponding eigenvectors.
\end{thm}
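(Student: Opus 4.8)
The plan is to diagonalize $M$ using the Spectral Theorem (stated earlier) and then recognize the Rayleigh ratio as a weighted average of the eigenvalues. Since $M$ is real and symmetric, the Spectral Theorem provides an orthonormal basis $v_1,\dots,v_n$ of eigenvectors with $Mv_k=\mu_k v_k$. Everything will follow by expanding an arbitrary vector in this basis.

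First I would write an arbitrary nonzero $f$ as $f=\sum_k c_k v_k$. Using orthonormality together with $Mv_k=\mu_k v_k$, I compute $\la Mf,f\ra=\sum_k \mu_k|c_k|^2$ and $\la f,f\ra=\sum_k |c_k|^2$, so that
\[
R(f)=\frac{\sum_k \mu_k |c_k|^2}{\sum_k |c_k|^2}.
\]
This exhibits $R(f)$ as a convex combination of the eigenvalues, with non-negative weights $|c_k|^2/\sum_j|c_j|^2$ that sum to $1$.

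The two bounds are then immediate: bounding each $\mu_k$ above by $\mu_{\max}=\mu_n$ gives $R(f)\le \mu_n$, and below by $\mu_{\min}=\mu_1$ gives $R(f)\ge \mu_1$. Evaluating at $f=v_n$ yields $R(v_n)=\mu_n$ and at $f=v_1$ yields $R(v_1)=\mu_1$, so both bounds are attained; this proves the two extremal identities.

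Finally, for the equality statement I would examine when $R(f)=\mu_n$. This amounts to $\sum_k (\mu_n-\mu_k)|c_k|^2=0$, a sum of non-negative terms, which forces $c_k=0$ whenever $\mu_k<\mu_n$; that is, $f$ lies in the eigenspace $\ker(M-\mu_n I)$, and conversely every such $f$ achieves the maximum. The argument for $\mu_{\min}$ is symmetric. The only point worth flagging is the phrase \emph{the corresponding eigenvectors}: when $\mu_{\max}$ (or $\mu_{\min}$) is a repeated eigenvalue, the maximizers (minimizers) fill out the whole eigenspace rather than a single line, and this is exactly what the computation delivers. I do not expect a genuine obstacle here; the only care needed is handling repeated eigenvalues correctly in the equality case.
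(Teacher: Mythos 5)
Your proof is correct and follows essentially the same route as the paper's: expand $f$ in an orthonormal eigenbasis, express $R(f)$ as a convex combination of the eigenvalues, and analyze the equality case by noting that the coefficients must vanish off the extremal eigenspace. Your remark about repeated eigenvalues filling out the whole eigenspace is a fair reading of \emph{the corresponding eigenvectors} and matches the paper's conclusion exactly.
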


\begin{proof}
Let $f_1, f_2, \dots, f_n$ be an orthonormal basis of eigenvectors. Decomposing $f\neq 0$ as $f=\sum c_i f_i$, we have $Mf=\sum \mu_i c_i f_i$, so
\begin{align*}
R(f)=\frac{\la Mf, f\ra }{\la f,f\ra}=\frac{\sum \mu_i |c_i|^2}{\sum |c_i|^2}.
\end{align*}
Hence $\mu_{\min}\leq R(f)\leq\mu_{\max}$. If $f$ is an eigenvector for $\mu_{\min}$ then $R(f)=\mu_{\min}$, and similarly for $\mu_{\max}$. Conversely, if $R(f)$ equals, say, $\mu_{\min}$, then $c_i=0$ whenever $\mu_i\neq \mu_{\min}$. Thus $f$ is in the $\mu_{\min}$-eigenspace. 
\end{proof}

As a first illustration of this spectral principle, let us prove the following fact concerning the largest laplacian eigenvalue. We know that $\lambda_{\max}\leq 2d$, but when is the upper bound achieved?

\begin{prop}
We have $\lambda_{\max}\leq 2d$, with equality if and only if the graph is regular and bipartite.
\end{prop}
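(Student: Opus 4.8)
The plan is to combine the variational description of $\lambda_{\max}$ from Theorem~\ref{thm: unconstrained} with the edge-differential formula \eqref{eq: laplacian}. Writing $f$ for a test function, these give
\[
\lambda_{\max}=\max_{f\neq 0}\frac{\sum_{\{u,v\}\in E}|f(u)-f(v)|^2}{\sum_{v\in V}|f(v)|^2}.
\]
For the inequality I would bound each edge term using $|a-b|^2\leq 2|a|^2+2|b|^2$ and then regroup the sum over edges as a sum over vertices, obtaining $\sum_{\{u,v\}\in E}|f(u)-f(v)|^2\leq 2\sum_v\deg(v)|f(v)|^2\leq 2d\sum_v|f(v)|^2$. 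This reproves $\lambda_{\max}\leq 2d$ (already known from Theorem~\ref{thm: basic location}) but, more importantly, it isolates exactly two inequalities whose simultaneous saturation will characterize equality.

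Next I would analyze the equality case. Let $f$ be an eigenfunction for $\lambda_{\max}=2d$, so both inequalities above are tight. Saturating $|a-b|^2\leq 2|a|^2+2|b|^2$ forces $a=-b$, since $2|a|^2+2|b|^2-|a-b|^2=|a+b|^2$; hence $f(u)=-f(v)$ along every edge. Saturating the second inequality forces $f(v)=0$ at every vertex of degree strictly less than $d$. The crucial deduction is that, along each edge, $|f(u)|=|f(v)|$, so by connectivity $|f|$ is constant, and since $f\not\equiv 0$ this constant is nonzero; thus $f$ vanishes nowhere. The second equality condition then forces every vertex to have degree $d$, i.e.\ the graph is regular.

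For bipartiteness I would track signs along closed paths: the relation $f(v_{i+1})=-f(v_i)$ applied to $v_0\sim v_1\sim\cdots\sim v_k=v_0$ yields $f(v_0)=(-1)^kf(v_0)$, and since $f(v_0)\neq 0$ the length $k$ must be even, so every cycle has even length and the graph is bipartite by the odd-cycle characterization. For the converse—that a regular bipartite graph attains the bound—I would simply test with the function equal to $+1$ on $V_\bl$ and $-1$ on $V_\wh$: each edge contributes $|1-(-1)|^2=4$, so the Rayleigh ratio equals $4|E|/n=2d$ using $2|E|=dn$, and the already-established upper bound upgrades this to equality. The only delicate point is the equality analysis: one must correctly identify both saturation conditions and then feed connectivity into the first to force constancy of $|f|$, which is precisely what simultaneously unlocks regularity and bipartiteness.
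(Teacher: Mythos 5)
Your proof is correct and follows essentially the same route as the paper's: the same Rayleigh-ratio bound via $|f(u)-f(v)|^2\leq 2|f(u)|^2+2|f(v)|^2$, and the same saturation analysis (edge defect $|f(u)+f(v)|^2=0$, plus vanishing at vertices of degree below $d$) combined with connectivity to force $f$ nowhere zero, hence regularity. The only cosmetic differences are that you obtain bipartiteness through the odd-cycle characterization, where the paper reads the bipartition directly off the sign classes of the $\pm c$-valued eigenfunction, and that you verify the converse with an explicit alternating test function, where the paper merely asserts that $2d$ is an eigenvalue.
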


\begin{proof}
The Rayleigh ratio for the laplacian is bounded above by $2d$. Indeed, for every $f\in \ell^2V$ we have
\begin{align*}
\la L f,f\ra &=\frac{1}{2}\sum_{u\sim v} \big|f(u)-f(v)\big|^2\leq \sum_{u\sim v} \big(|f(u)|^2+|f(v)|^2\big)=2\sum_v\mathrm{deg}(v)\: |f(v)|^2\\
&\leq 2d\: \sum_{v}|f(v)|^2=2d\: \la f,f\ra.
\end{align*}
We see, once again, that $\lambda_{\max}\leq 2d$. But now we can analyze the case of equality. If $\lambda_{\max}= 2d$ then $\la L f,f\ra = 2d\: \la f,f\ra$ for some $f\neq 0$. This  means that, in the above estimate, both inequalities are in fact equalities. The first is an equality when $f(u)+f(v)=0$ for every pair of adjacent vertices $u$ and $v$. By connectivity, there is a constant $c\neq 0$ such that $f$ is $\pm c$-valued. The choice of sign corresponds to a bipartite structure on the graph. As $f$ never vanishes, the second inequality is an equality when every vertex has degree $d$. In conclusion, the graph is regular and bipartite. Conversely, if the graph is regular and bipartite then $2d$ is an eigenvalue. 
\end{proof}

\bigskip
\subsection*{Largest adjacency eigenvalue}\
\medskip

\begin{thm}\label{thm: simple-positive}
The adjacency eigenvalue $\alpha_{\max}$ is simple, and it has a positive eigenfunction.
\end{thm}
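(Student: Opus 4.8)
The statement is the Perron--Frobenius phenomenon for the (non-negative, irreducible) adjacency matrix of a connected graph, and the plan is to extract it from the Rayleigh-ratio principle of Theorem~\ref{thm: unconstrained} together with connectivity. The three things to establish are: a maximizer can be chosen non-negative, any non-negative eigenfunction for $\alpha_{\max}$ is in fact strictly positive, and the eigenspace is one-dimensional.

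First I would locate the extremal eigenvector among non-negative functions. By Theorem~\ref{thm: unconstrained}, $\alpha_{\max}=\max_{f\neq 0} R(f)$ for the adjacency operator, with the maximum attained precisely on $\alpha_{\max}$-eigenfunctions. For any $f\in\ell^2V$ write $|f|$ for the function $v\mapsto |f(v)|$; then $\la |f|,|f|\ra=\la f,f\ra$, while formula~\eqref{eq: adjacency} and the triangle inequality give
\[
\la Af,f\ra=\sum_{u\edge v} f(u)\,\overline{f(v)}\leq \sum_{u\edge v}|f(u)|\,|f(v)|=\la A|f|,|f|\ra .
\]
Hence $R(f)\leq R(|f|)$. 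So if $f$ is a maximizer of $R$, then $|f|$ is a non-negative maximizer, and by Theorem~\ref{thm: unconstrained} it is an eigenfunction: $A|f|=\alpha_{\max}|f|$. This produces a non-negative eigenfunction, which I rename $f\geq 0$.

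Next I would upgrade non-negativity to strict positivity using connectivity. Suppose a non-negative eigenfunction $f\geq 0$ vanishes at some vertex $v$. Evaluating the eigenrelation there gives $0=\alpha_{\max}f(v)=\sum_{u\edge v} f(u)$; since every summand is non-negative, $f$ vanishes at all neighbours of $v$. As the graph is connected, iterating forces $f\equiv 0$. Thus a non-negative eigenfunction for $\alpha_{\max}$ is either identically zero or strictly positive, so the eigenfunction built above satisfies $f>0$; this is the asserted positive eigenfunction. Simplicity then follows from the same dichotomy. Let $g$ be any real eigenfunction for $\alpha_{\max}$ (the eigenspace has a real basis since $A$ is real symmetric). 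Because $f>0$ and $V$ is finite, $c:=\max_{v} g(v)/f(v)$ is attained, say at $v_0$. Then $cf-g$ is an $\alpha_{\max}$-eigenfunction, it is non-negative by the choice of $c$, and it vanishes at $v_0$; by the dichotomy it is identically zero, so $g=cf$. Hence every $\alpha_{\max}$-eigenfunction is a scalar multiple of $f$, and $\alpha_{\max}$ is simple.

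I expect the only delicate point to be the first step: one must pass from a maximizer $f$ to the eigenfunction $|f|$, and this is legitimate exactly because Theorem~\ref{thm: unconstrained} guarantees that the maximum value is attained \emph{only} on eigenvectors, so the non-negative maximizer $|f|$ is automatically an eigenfunction rather than merely an approximate one. The positivity propagation and the $c=\max g/f$ comparison trick are then routine, each relying only on connectedness and the finiteness of $V$.
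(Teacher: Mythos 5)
Your proof is correct. The first half---passing from a Rayleigh maximizer $f$ to the non-negative maximizer $|f|$ via the triangle inequality, invoking the equality case of Theorem~\ref{thm: unconstrained} to conclude that $|f|$ is an eigenfunction, then propagating strict positivity by connectivity---is essentially the paper's argument (the paper phrases it as $\la A|f|,|f|\ra\geq \big|\la Af,f\ra\big|=\alpha_{\max}\la |f|,|f|\ra$, using $\alpha_{\max}\geq 0$ from $\sum\alpha_k=0$, a sign issue your Rayleigh-ratio formulation quietly sidesteps). Where you genuinely diverge is simplicity. The paper analyzes the equality case of the triangle inequality: it shows the products $f(u)\overline{f(v)}$ across edges share a common phase $\omega$, proves $\omega=1$, concludes that $f/|f|$ is constant so every $\alpha_{\max}$-eigenfunction is a scalar multiple of a positive function, and finishes by noting that two orthogonal functions cannot both be positive. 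You instead use the classical Perron comparison trick: for a real eigenfunction $g$, the function $cf-g$ with $c=\max_v g(v)/f(v)$ is a non-negative eigenfunction vanishing at the maximizing vertex, hence identically zero by your dichotomy, so $g=cf$; together with your (correct) remark that the eigenspace of a real symmetric matrix has a real basis, this closes the argument. Your route is arguably cleaner, since it avoids the complex phase analysis entirely and simply reuses the positivity dichotomy you already established. What the paper's longer route buys is the phase machinery itself: the same equality-case analysis is re-run, with $\omega=\pm 1$, in the paper's first proof of the corollary that $\alpha_{\min}\geq -\alpha_{\max}$ with equality if and only if the graph is bipartite (there $\omega=-1$ produces the bipartition), so the extra work is an investment cashed in later, while your argument proves exactly the theorem at hand.
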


This theorem is the single most important fact about $\alpha_{\max}$. Let us explain it, prove it, and then use it to derive a number of consequences.

A function on a graph is \emph{positive} if all its values are positive. On the laplacian side, we know that $\lambda_{\min}=0$ is a simple eigenvalue, and it has a positive eigenfunction--namely, the constant function $\mathbb{1}$. Theorem~\ref{thm: simple-positive} should be seen as an adjacency dual of this laplacian fact.

\begin{proof}
Let $f$ be an eigenfunction of $\alpha_{\max}$. We claim that $|f|$ is a positive eigenfunction of $\alpha_{\max}$. We have
\begin{align*}
\la A|f|,|f|\ra&=\sum_{u\sim v} |f|(u)\: |f|(v)\geq \Big|\sum_{u\sim v} f(u)\: \bar{f}(v)\Big|=\big|\la Af,f\ra\big|
\end{align*}
and  \begin{align*}
\big|\la Af,f\ra\big|=\big|\alpha_{\max} \la f,f\ra\big|=\alpha_{\max}\: \la f,f\ra=\alpha_{\max}\:\la |f|,|f|\ra.
\end{align*} 
Here we are using the fact that $\alpha_{\max}\geq 0$; this follows from $\sum \alpha_k=0$.

Therefore $\la A|f|,|f|\ra=\alpha_{\max}\:\la |f|,|f|\ra$, so $|f|$ is indeed an eigenfunction of $\alpha_{\max}$. To show that $|f|$ is positive, consider the eigenrelation
\begin{align*}
\sum_{u: u\sim v} |f|(u)=\alpha_{\max}\:|f|(v).
\end{align*}
If $|f|$ vanishes at a vertex, then $|f|$ vanishes at each one of its neighbours. Thus, vanishing is contagious, but then connectivity would it pandemic: that is, $|f|\equiv 0$. This is a contradiction.

In addition, the inequality $\sum_{u\sim v} |f|(u)\: |f|(v)\geq \big|\sum_{u\sim v} f(u)\: \bar{f}(v)\big|$ we used along the way must be an equality. Now, if $\{z_j\}$ are complex numbers satisfying $\big|\sum z_j\big|=\sum |z_j|$, then they have the same phase: there is $\omega\in \C$ with $|\omega|=1$ such that $z_j=\omega|z_j|$ for all $j$. In our case, $f(u)\: \bar{f}(v)=\omega\: |f(u)|\: |f(v)|$ or
\begin{align}
\frac{f(u)}{|f(u)|}=\omega\: \frac{f(v)}{|f(v)|}\label{eq: quasi-prop}
\end{align}
 whenever $u$ and $v$ are adjacent. As $f$ and $|f|$ are eigenfunctions of $\alpha_{\max}$, we have
 \begin{align*}
\alpha_{\max}\:f(v)=\sum_{u: u\sim v} f(u)=\omega\: \frac{f(v)}{|f(v)|} \sum_{u: u\sim v} |f(u)|=\omega\: \frac{f(v)}{|f(v)|}\: \alpha_{\max}\:|f(v)|=\omega\:\alpha_{\max}\:f(v).
\end{align*}
for an arbitrarily fixed vertex $v$. Thus $\omega=1$, and \eqref{eq: quasi-prop} says that $f/|f|$ is constant across edges. Thanks to connectivity, it follows that $f/|f|$ is constant. In other words, $f$ is a scalar multiple of a positive function. 

We may now conclude that the eigenspace of $\alpha_{\max}$ is one-dimensional. Otherwise, we could pick two orthogonal eigenfunctions $f$ and $g$. Up to rescaling, we could also assume $f$ and $g$ positive. But then $0=\la f,g\ra=\sum f(v)\: g(v)$ provides a contradiction, as every term on the right-hand side is positive. 
\end{proof}
 
One upshot of this theorem is that it provides a tool for recognizing the largest adjacency eigenvalue. This is an intrinsic certificate, in the sense that we do not need to know the other eigenvalues!

 \begin{cor}\label{cor: recog}
 The only adjacency eigenvalue admitting a positive eigenfunction is $\alpha_{\max}$.
 \end{cor}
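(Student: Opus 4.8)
The plan is to pair a hypothetical positive eigenfunction against the positive eigenfunction of $\alpha_{\max}$ supplied by Theorem~\ref{thm: simple-positive}, and to exploit the self-adjointness of $A$ together with the strict positivity of the resulting inner product. The whole argument is a one-line computation once set up correctly, so there is no serious obstacle; the only thing to be careful about is keeping track of conjugates in the Hermitian inner product on $\ell^2 V$.

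Concretely, suppose $\alpha$ is an adjacency eigenvalue admitting a positive eigenfunction $g$, so $Ag=\alpha g$ with $g(v)>0$ for all $v$. By Theorem~\ref{thm: simple-positive}, $\alpha_{\max}$ also has a positive eigenfunction $f$, with $Af=\alpha_{\max} f$ and $f(v)>0$ for all $v$. Recall that the eigenvalues $\alpha,\alpha_{\max}$ are real, and that $A$ is a real symmetric matrix, hence self-adjoint for the inner product $\la\phi,\psi\ra=\sum_v \phi(v)\overline{\psi(v)}$. The next step is to compute $\la Af,g\ra$ in two ways:
\begin{align*}
\alpha_{\max}\:\la f,g\ra=\la Af,g\ra=\la f,Ag\ra=\alpha\:\la f,g\ra.
\end{align*}
Here the outer equalities use the eigenrelations (together with $\alpha\in\R$, so that $\overline{\alpha}=\alpha$), and the middle equality is self-adjointness of $A$.

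The final step is to observe that $\la f,g\ra=\sum_v f(v)\:g(v)>0$, since every summand is a product of two positive numbers. We may therefore cancel the factor $\la f,g\ra$ from both ends of the displayed chain, concluding that $\alpha=\alpha_{\max}$. This shows that no eigenvalue other than $\alpha_{\max}$ can possess a positive eigenfunction, which is exactly the assertion of the corollary. (As a sanity check on the logic: the existence half of the statement—that $\alpha_{\max}$ does admit a positive eigenfunction—is precisely the content already established in Theorem~\ref{thm: simple-positive}, so the corollary genuinely upgrades that existence to a characterization.)
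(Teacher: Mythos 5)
Your proof is correct and is essentially the paper's argument: the paper invokes the fact that eigenfunctions of distinct eigenvalues are orthogonal and notes that two positive functions cannot be orthogonal, while you simply inline the standard self-adjointness computation that proves that orthogonality, arriving at $\alpha_{\max}\,\la f,g\ra=\alpha\,\la f,g\ra$ with $\la f,g\ra>0$. Same pairing against the positive eigenfunction from Theorem~\ref{thm: simple-positive}, same punchline.
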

 
 \begin{proof}
 Distinct eigenvalues have orthogonal eigenfunctions. Two orthogonal functions cannot be both positive.
 \end{proof}

The next corollary concerns the smallest adjacency eigenvalue. We already know that $0\geq \alpha_{\min}\geq -d$, but now we can say a bit more.

\begin{cor}
We have $\alpha_{\min}\geq -\alpha_{\max}$, with equality if and only if the graph is bipartite. \end{cor}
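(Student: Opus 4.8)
The plan is to imitate the absolute-value argument from the proof of Theorem~\ref{thm: simple-positive}, but now applied to an eigenfunction of $\alpha_{\min}$, and then to extract bipartiteness from the equality case. The two tools I would lean on are the Rayleigh characterization (Theorem~\ref{thm: unconstrained}) and the fact that $\alpha_{\max}$ has a strictly positive eigenfunction (Theorem~\ref{thm: simple-positive}).

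First I would establish the inequality. Let $g$ be an eigenfunction for $\alpha_{\min}$. By formula~\eqref{eq: adjacency} the adjacency quadratic form is real, and the elementary bound $\mathrm{Re}(z)\geq -|z|$ gives
\[
\la Ag,g\ra=\sum_{\{u,v\}\in E} 2\,\mathrm{Re}\big(g(u)\,\overline{g(v)}\big)\;\geq\; -\sum_{\{u,v\}\in E} 2\,|g(u)|\,|g(v)|=-\la A|g|,|g|\ra.
\]
Since $\la g,g\ra=\la |g|,|g|\ra$ and $R(|g|)\leq \alpha_{\max}$ by Theorem~\ref{thm: unconstrained}, dividing through yields $\alpha_{\min}=R(g)\geq -R(|g|)\geq -\alpha_{\max}$, which is the desired inequality.

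Next I would analyze equality. Assume $\alpha_{\min}=-\alpha_{\max}$; then both inequalities above are equalities. From $R(|g|)=\alpha_{\max}$ and Theorem~\ref{thm: unconstrained}, $|g|$ is an eigenfunction of $\alpha_{\max}$, hence strictly positive by Theorem~\ref{thm: simple-positive}; in particular $g$ vanishes nowhere. Equality in the pointwise bound forces $\mathrm{Re}\big(g(u)\overline{g(v)}\big)=-|g(u)||g(v)|$ on every edge, i.e. $g(u)\overline{g(v)}$ is a negative real, so the phase $g(v)/|g(v)|$ reverses across each edge. Fixing a base vertex and using connectivity, this phase takes only two antipodal values; sorting the vertices by phase produces a partition $V_\bl\sqcup V_\wh$ with no monochromatic edge, so the graph is bipartite. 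For the converse I would avoid repeating the phase argument: given a bipartition $V_\bl\sqcup V_\wh$ and the positive $\alpha_{\max}$-eigenfunction $h$, set $g=h$ on $V_\bl$ and $g=-h$ on $V_\wh$; a direct check gives $Ag=-\alpha_{\max}\,g$, so $-\alpha_{\max}$ is an eigenvalue and $\alpha_{\min}\leq-\alpha_{\max}$, which combined with the inequality gives equality. (Alternatively one could just cite the theorem that a graph is bipartite iff its adjacency spectrum is symmetric about $0$.)

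The main obstacle is the forward equality direction, and specifically making the phase-reversal step rigorous over $\C$: one must argue that a nowhere-zero function whose phase flips along every edge of a \emph{connected} graph can take only two opposite phase values, so that its level sets genuinely bipartition the vertex set rather than yielding something weaker. Everything else reduces to the Rayleigh principle together with the triangle inequality, and is routine.
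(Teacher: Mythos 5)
Your proof is correct and follows essentially the same route as the paper's first proof: compare $g$ with $|g|$ via the triangle inequality to obtain $\alpha_{\min}\geq-\alpha_{\max}$, then in the equality case promote $|g|$ to a positive eigenfunction of $\alpha_{\max}$ and read a bipartition off the phase of $g$. Your per-edge equality analysis (forcing $g(u)\,\overline{g(v)}$ to be a negative real on every edge) is if anything a slight streamlining of the paper's version, which first extracts a single global phase $\omega$ and must separately rule out $\omega=1$; your explicit sign-flipped eigenfunction for the converse is also fine, where the paper instead relies on its earlier theorem that bipartite graphs have symmetric adjacency spectrum.
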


Note that the implication '$\alpha_{\min}= -\alpha_{\max}$ implies bipartite' improves the implication `symmetric adjacency spectrum implies bipartite' we already had.

\begin{proof}[First proof]
We adapt the proof of Theorem~\ref{thm: simple-positive}. Let $f$ be an eigenfunction of $\alpha_{\min}$, and consider $|f|$. Then:
\begin{align*}
\la A|f|,|f|\ra&=\sum_{u\sim v} |f|(u)\: |f|(v) \geq \Big|\sum_{u\sim v} f(u)\: \bar{f}(v)\Big|=\big|\la Af,f\ra\big|=-\alpha_{\min}\la |f|,|f|\ra
\end{align*}
Therefore $\alpha_{\max}\geq -\alpha_{\min}$. Now let us analyze the case of equality. Firstly, $|f|$ has to be an eigenfunction of $\alpha_{\max}$, in particular $f$ never vanishes. Secondly, we have \eqref{eq: quasi-prop} once again. Adjacency being a symmetric relation, we must have $\omega=\pm 1$. If $\omega =1$, then $f$ is a scalar multiple of $|f|$. Hence $f$ is an eigenfunction of $\alpha_{\max}$, a contradiction. Thus $\omega=-1$. This means that $f/|f|$ switches sign across each edge, and we get a bipartition of our graph.
\end{proof}

\begin{proof}[Second proof]
Let $r\geq 1$ be odd. Then 
\begin{align*}
0\leq \mathrm{Tr}\: A^r=\sum \alpha_k^r\leq \alpha_{\min}^r+(n-1)\alpha_{\max}^r,
\end{align*}
so 
\begin{align*}
(n-1)\:\alpha_{\max}^r\geq (-\alpha_{\min})^r.
\end{align*} 
Taking the $r$-th root, and letting $r\to \infty$, we obtain $\alpha_{\max}\geq -\alpha_{\min}$. Assume now that equality holds, $\alpha_{\max}=-\alpha_{\min}$. If the graph is non-bipartite, then the largest adjacency eigenvalue of its bipartite double is not simple--a contradiction. \end{proof}

\begin{notes}  
Part of what we said about the extremal adjacency eigenvalues falls under the scope of Perron - Frobenius theory. The aspect that is relevant for our discussion concerns square matrices $M$ that are non-negative, in the sense that $M(i,j)\geq 0$ for each $i,j$, and irreducible, meaning that for each $i,j$ there is some $k\geq 0$ such that $M^k(i,j)>0$. A symmetric non-negative $n\times n$ matrix $M$ has an associated underlying graph: the vertex set is $\{1,\dots,n\}$, and $i\neq j$ are connected if $M(i,j)> 0$. Loops can be added as well, to account for positive diagonal entries. Irreducibility of $M$ then corresponds to connectivity of the underlying graph. Conversely, consider a connected graph, possibly with loops, in which edges have positive weights. Then the accordingly-weighted adjacency matrix is a symmetric, non-negative, irreducible matrix. Adjacency results established above can be extended as follows. Let $M$ be a symmetric, non-negative, irreducible $n\times n$ matrix. Then (i) the largest eigenvalue $\mu_{\max}$ is simple, and it has a positive eigenvector, and (ii) the smallest eigenvalue satisfies $\mu_{\min}\geq -\mu_{\max}$.
\end{notes}

\bigskip
\subsection*{The average degree}\
\medskip

The average vertex degree of a graph is denoted $d_{\mathrm{ave}}$, and it is given, as the name suggests, by
\begin{align*}
d_{\mathrm{ave}}=\frac{1}{n}\sum_v \mathrm{deg}(v).
\end{align*}
Using this notion, we can sharpen the already known fact that $\alpha_{\max}\leq d$.

\begin{thm}
We have $d_\mathrm{ave}\leq \alpha_{\max}\leq d$, with equality on either side if and only if the graph is regular.
\end{thm}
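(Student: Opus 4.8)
The plan is to use the Rayleigh ratio characterization of $\alpha_{\max}$ from Theorem~\ref{thm: unconstrained}, testing against two well-chosen functions. For the lower bound $d_{\mathrm{ave}}\leq \alpha_{\max}$, I would plug the constant function $\ct$ into the Rayleigh ratio. Using formula \eqref{eq: adjacency}, we have $\la A\ct,\ct\ra=\sum_{u\sim v}1=2|E|$, while $\la \ct,\ct\ra=n$, so
\begin{align*}
\alpha_{\max}=\max_{f\neq 0} \frac{\la Af,f\ra}{\la f,f\ra}\geq \frac{\la A\ct,\ct\ra}{\la \ct,\ct\ra}=\frac{2|E|}{n}=d_{\mathrm{ave}}.
\end{align*}
For the upper bound $\alpha_{\max}\leq d$, this is already known from Theorem~\ref{thm: basic location}, so the only new work on that side is the equality analysis.

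Next I would handle the equality cases. The easy direction is that a regular graph has $d_{\mathrm{ave}}=d$, and by Corollary~\ref{cor: regtrivial} the degree $d$ is a (simple) adjacency eigenvalue with eigenfunction $\ct$; since $\ct$ is positive, Corollary~\ref{cor: recog} forces this eigenvalue to be $\alpha_{\max}$. Hence $d_{\mathrm{ave}}=d=\alpha_{\max}$ and both inequalities are equalities. The substantive direction is the converse: assuming equality on either side, deduce regularity.

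For the case $d_{\mathrm{ave}}=\alpha_{\max}$: equality in the Rayleigh characterization means that the maximizing $\ct$ is actually an eigenfunction of $\alpha_{\max}$ (this is the ``attained precisely on eigenvectors'' clause of Theorem~\ref{thm: unconstrained}). But $A\ct(v)=\mathrm{deg}(v)$, so $\ct$ being an eigenfunction forces $\mathrm{deg}(v)$ to be constant, i.e. the graph is regular. For the case $\alpha_{\max}=d$: let $f$ be the positive eigenfunction of $\alpha_{\max}=d$ guaranteed by Theorem~\ref{thm: simple-positive}, and evaluate the eigenrelation at a vertex $v$ where $f$ attains its maximum. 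Then
\begin{align*}
d\: f(v)=\sum_{u:\:u\sim v} f(u)\leq \mathrm{deg}(v)\: f(v)\leq d\: f(v),
\end{align*}
forcing equality throughout; in particular $f(u)=f(v)$ for every neighbour $u$ of $v$ and $\mathrm{deg}(v)=d$. Since $f$ is constant on the neighbours of its max and those neighbours are again maxima, connectivity propagates this to show $f$ is constant and every vertex has degree $d$, giving regularity.

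I expect the main obstacle to be the equality analysis under $\alpha_{\max}=d$, where one must argue that $f$ is globally constant rather than merely locally maximal at one vertex. The ``contagion along edges'' argument used in the proof of Theorem~\ref{thm: simple-positive} is the right template: the set of maximizing vertices is nonempty, closed under taking neighbours, hence all of $V$ by connectivity, which simultaneously yields constancy of $f$ and $\mathrm{deg}\equiv d$. Everything else is a direct substitution into the Rayleigh ratio plus appeals to the already-established Theorems~\ref{thm: unconstrained}, \ref{thm: simple-positive} and Corollaries~\ref{cor: regtrivial}, \ref{cor: recog}.
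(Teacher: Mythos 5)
Your proof is correct, and its skeleton matches the paper's: the lower bound is obtained exactly as in the paper, by testing the Rayleigh ratio of Theorem~\ref{thm: unconstrained} on $\ct$, with equality forcing $\ct$ to be an eigenfunction and hence $\deg$ to be constant. Where you diverge is the equality analysis for $\alpha_{\max}=d$. The paper takes the positive eigenfunction $f$ of Theorem~\ref{thm: simple-positive} and sums the eigenrelation over all vertices, giving $\alpha_{\max}\sum_v f(v)=\sum_v \deg(v)\,f(v)\leq d\sum_v f(v)$; since $f$ is positive at \emph{every} vertex, equality holds if and only if $\deg(v)=d$ for all $v$ --- a one-line global argument that needs no connectivity and reproves $\alpha_{\max}\leq d$ in passing, so no appeal to Theorem~\ref{thm: basic location} is required. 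You instead run a maximum-principle argument: evaluate the eigenrelation at a vertex where $f$ is maximal, deduce $\deg(v)=d$ and that the neighbours are again maxima, then propagate by connectivity. Both are valid; the summation trick buys brevity, while your contagion argument uses positivity only at the maximizing vertex and is the same mechanism as the ``vanishing is contagious'' step in the proof of Theorem~\ref{thm: simple-positive}. Your forward direction via Corollaries~\ref{cor: regtrivial} and~\ref{cor: recog} is also fine, though it falls out of either equality argument directly.
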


The above double bound for $\alpha_{\max}$ suggests that it can be interpreted as a spectral notion of degree. 

\begin{proof}
Let $f$ be a positive eigenfunction of $\alpha_{\max}$. Then $\alpha_{\max}\:f(v)=\sum_{u: u\sim v} f(u)$ for each vertex $v$ and, adding all these relations, we get
\begin{align*}
\alpha_{\max}\:\sum f(v)=\sum \deg(v)\: f(v)\leq d\:\sum f(v).
\end{align*}
Hence $\alpha_{\max}\leq d$, with equality if and only if $\deg(v)=d$ for each $v$, i.e., the graph is regular. 

For the constant function $\mathbb{1}$, the Rayleigh ratio is
\begin{align*}
\frac{\la A\mathbb{1},\mathbb{1}\ra}{\la \mathbb{1},\mathbb{1}\ra}=\frac{\sum \mathrm{deg}(v)}{n}=d_{\mathrm{ave}}
\end{align*}
so $d_{\mathrm{ave}}\leq \alpha_{\max}$. If equality holds then $\ct$ is an eigenfunction for $\alpha_{\max}$, and so the graph must be regular.
\end{proof}

The previous theorem provides a useful tool for hearing regularity. The spectral criterion for regularity is that the largest adjacency eigenvalue $\alpha_{\max}$ should equal the average degree. We note that the average degree is determined by the spectrum as
\begin{align*}
d_{\mathrm{ave}}=\frac{1}{n}\sum\alpha_k^2.
\end{align*}
So, given the adjacency spectrum of a graph, it can checked whether the graph is regular by verifying the relation $\sum\alpha_k^2=n\alpha_{\max}$.

Sometimes the list of \emph{distinct} adjacency eigenvalues suffices for hearing regularity. The following result illustrates this idea. Let us recall that an extremal design graph of degree $d$ has adjacency eigenvalues $\pm d$ and $\pm \sqrt{d-1}$.

\begin{thm}
Assume that the distinct adjacency eigenvalues of a graph are $\pm d$ and $\pm \sqrt{d-1}$, where $d\geq 3$. Then the graph is an extremal design graph of degree $d$.
\end{thm}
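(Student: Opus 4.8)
The plan is to show, in order, that the graph is bipartite, then that it is regular of degree $d$, and finally to invoke the converse direction of Theorem~\ref{thm: eigen-design} to identify it as an extremal design graph. Since $d\geq 3$ one has $d>\sqrt{d-1}$, so among the four prescribed values the largest is $\alpha_{\max}=d$ and the smallest is $\alpha_{\min}=-d$.

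Bipartiteness is immediate: because $\alpha_{\min}=-d=-\alpha_{\max}$, the corollary asserting $\alpha_{\min}\geq-\alpha_{\max}$ with equality precisely for bipartite graphs applies. Regularity, which I expect to be the crux, I would derive from the positive Perron eigenfunction together with the integrality of the degrees. By Theorem~\ref{thm: simple-positive} there is a strictly positive $f$ with $Af=df$, hence $A^2 f=d^2 f$. As $A$ has exactly the four eigenvalues $\pm d,\pm\sqrt{d-1}$, its square $A^2$ has exactly the two eigenvalues $d^2$ and $d-1$, so $A^2=d^2\Pi+(d-1)(I-\Pi)$ with $\Pi$ the orthogonal projection onto the $d^2$-eigenspace. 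Since $A^2(v,v)=\deg(v)$, comparing diagonal entries gives $\deg(v)=(d-1)+(d^2-d+1)\,\Pi(v,v)$ at every vertex $v$.

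The decisive observation is that $\Pi(v,v)=\|\Pi e_v\|^2>0$ for all $v$: the eigenfunction $f$ lies in the range of $\Pi$ and never vanishes, so $e_v$ is not orthogonal to that range and $\Pi e_v\neq 0$. Consequently $\deg(v)-(d-1)=(d^2-d+1)\,\Pi(v,v)$ is a strictly positive integer, which forces $\deg(v)\geq d$ at every vertex and hence $d_{\mathrm{ave}}\geq d$. Combined with the average-degree bound $d_{\mathrm{ave}}\leq\alpha_{\max}=d$, this yields $d_{\mathrm{ave}}=d$, so all degrees equal $d$ and the graph is $d$-regular. Being a regular bipartite graph with four distinct eigenvalues, it is then a design graph by the converse half of Theorem~\ref{thm: eigen-design}; matching its eigenvalues $\pm\sqrt{d-c}$ against the prescribed $\pm\sqrt{d-1}$ gives $c=1$, identifying it as an extremal design graph of degree $d$. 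The hardest part is the regularity step, and what makes it go through is pinning the degrees between the pointwise bound $\deg(v)\geq d$ coming from integrality and the averaged bound $d_{\mathrm{ave}}\leq d$.
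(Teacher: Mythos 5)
Your proof is essentially sound, and on the crux---regularity---it takes a genuinely different route from the paper's. The paper computes $\Tr(A^4)$ in two ways: spectrally, from the identity $(A^2-d^2I)(A^2-(d-1)I)=0$, and combinatorially, by counting closed $4$-walks, discarding the $4$-cycle count and applying Cauchy--Schwarz to $\sum_u \deg(u)^2$; this yields $(d_{\mathrm{ave}}-d)(d^2-d-2d_{\mathrm{ave}})\geq 0$, which together with $d_{\mathrm{ave}}\leq \alpha_{\max}=d$ and $d\geq 3$ forces $d_{\mathrm{ave}}=d$. You instead localize: reading off the diagonal of the spectral decomposition $A^2=d^2\Pi+(d-1)(I-\Pi)$ and using the positivity of the Perron eigenfunction (Theorem~\ref{thm: simple-positive}) to get $\Pi(v,v)=\|\Pi e_v\|^2>0$, you obtain the \emph{pointwise} inequality $\deg(v)>d-1$ at every vertex, which is strictly more information than the averaged inequality the paper extracts, and then you sandwich against $d_{\mathrm{ave}}\leq\alpha_{\max}=d$. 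Your route avoids the walk-counting and Cauchy--Schwarz entirely, and the endgame (the converse half of Theorem~\ref{thm: eigen-design}, matching $\sqrt{d-c}=\sqrt{d-1}$ to get $c=1$) coincides with the paper's.

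There is, however, one step you assert without justification: you call $\deg(v)-(d-1)=(d^2-d+1)\Pi(v,v)$ a ``strictly positive integer,'' which presupposes $d\in\Z$. The hypothesis only says the eigenvalues are $\pm d$ and $\pm\sqrt{d-1}$ with $d\geq 3$ real, and the paper's proof never uses integrality of $d$; note that your deduction genuinely needs it, since if $d$ could be, say, $7/2$, then $\deg(v)>d-1=5/2$ would only give $\deg(v)\geq 3<d$ and the sandwich with $d_{\mathrm{ave}}\leq d$ would fail. Fortunately integrality is automatic and can be supplied with the very object you introduced: $A^2$ is an integer matrix, diagonalizable with exactly the two distinct eigenvalues $d^2$ and $d-1$, so its minimal polynomial $(x-d^2)\big(x-(d-1)\big)$ has rational coefficients, giving $s:=d^2+d-1\in\Q$ and $p:=d^3-d^2\in\Q$. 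Since $p+2s+2=d^3+d^2+2d=d\,(d^2+d+2)=d\,(s+3)$ and $s+3>0$, we get $d=(p+2s+2)/(s+3)\in\Q$; as $d$ is an eigenvalue of the integer matrix $A$ it is an algebraic integer, hence $d\in\Z$. With this one observation inserted before your integrality step, your proof is complete.
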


\begin{proof} 
The extremal adjacency eigenvalues have opposite signs, so the graph is bipartite. We aim to show that the graph is also regular. For then we will be able to conclude by applying Theorem~\ref{thm: eigen-design}. We show regularity by arguing that $d$, which denotes the largest adjacency eigenvalue, equals the average degree $d_{\mathrm{ave}}$.

Since the eigenvalues of $A$ are $\pm d$, $\pm \sqrt{d-1}$, we have $(A^2-d^2 I)(A^2-(d-1)I)=0$ and so $A^4=(d^2+d-1)A^2-d^2(d-1)I$. Taking traces, we get
\begin{align*}
\Tr (A^4)=(d^2+d-1) n d_{\mathrm{ave}}-d^2(d-1)n.
\end{align*}
On the other, we may understand $\Tr (A^4)$ in a combinatorial way. If $u$ is a vertex, then 
\begin{align*}
A^4(u,u)=c_4(u)+\deg(u)^2+\sum_{v:\: v\sim u} \big(\deg(v)-1\big)
\end{align*}
where the first term counts the $4$-cycles starting and ending at $u$, the second term counts the closed paths of the form $u\sim v\sim u\sim v'\sim u$, and the third counts the closed paths of the form $u\sim v\sim w(\neq u)\sim v\sim u$. Therefore:
\begin{align*}
\Tr (A^4)&=c_4+\sum_u \deg(u)^2 +\sum_u \sum_{v:\: v\sim u} \deg(v)-\sum_u \deg(u)\\
&=c_4+2\sum_u \deg(u)^2-\sum_u \deg(u)
\end{align*} 
where $c_4$ denotes the number of based $4$-cycles in the graph. Now $\sum \deg(u)=nd_{\mathrm{ave}}$, and we can safely bound 
\begin{align*}
\sum \deg(u)^2\geq \frac{1}{n}\Big(\sum \deg(u)\Big)^2=nd_{\mathrm{ave}}^2
\end{align*} 
as we expect the graph to be regular. Furthermore, we disregard the $c_4$ term since the girth of our graph should be $6$. In summary, we have the following combinatorial bound:
\begin{align*}
\Tr(A^4)\geq 2nd_{\mathrm{ave}}^2-nd_{\mathrm{ave}}
\end{align*} 
Combining this bound with the previous computation of $\Tr (A^4)$, we deduce
\begin{align*}
(d^2+d-1) d_{\mathrm{ave}}-d^2(d-1)\geq 2d_{\mathrm{ave}}^2-d_{\mathrm{ave}}
\end{align*}
that is
\begin{align*}
(d_{\mathrm{ave}}-d)(d^2-d-2d_{\mathrm{ave}})\geq 0.
\end{align*} 
Here $d\geq d_{\mathrm{ave}}$ and $d^2-d\geq 2d\geq 2d_{\mathrm{ave}}$, as $d\geq 3$. So it must be that $d=d_{\mathrm{ave}}$. 
\end{proof}

\begin{exer}\label{exer: petersen SD} Show that a graph whose adjacency eigenvalues are $-2,1,3$ must be the Petersen graph. True or false: a graph whose laplacian eigenvalues are $0,2,5$ must be the Petersen graph.
\end{exer}

\bigskip
\subsection*{A spectral Tur\'an theorem}\label{sec: turan}\
\medskip
 
The following theorem is a true classic. A whole subfield, that of extremal graph theory, originates from this highly influential result.

\begin{thm}[Tur\'an]\label{thm: turan}
If a graph does not contain the complete graph $K_{k+1}$ as a subgraph, then the number of edges can be bounded as follows:
\begin{align*}
|E|\leq \frac{1}{2}\Big(1-\frac{1}{k}\Big)\:n^2
\end{align*} 
\end{thm}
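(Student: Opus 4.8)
The plan is to combine two spectral facts: the inequality $d_{\mathrm{ave}}\le \alpha_{\max}$ proved above, and an upper bound on $\alpha_{\max}$ that encodes the absence of $K_{k+1}$. First I would reduce the edge count to the largest adjacency eigenvalue. Since $2|E|=\sum_v \mathrm{deg}(v)=n\,d_{\mathrm{ave}}$, the bound $d_{\mathrm{ave}}\le \alpha_{\max}$ gives
\begin{align*}
|E|\le \frac{n}{2}\,\alpha_{\max}.
\end{align*}
Thus the whole theorem follows once we establish the spectral estimate $\alpha_{\max}\le \big(1-\tfrac{1}{k}\big)\,n$ for every graph containing no $K_{k+1}$.

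To prove this estimate, I would use the positive eigenfunction $f$ of $\alpha_{\max}$ supplied by Theorem~\ref{thm: simple-positive}, normalized so that $\sum_v f(v)=1$; thus $f$ is a probability vector. On the one hand, since $Af=\alpha_{\max}f$ and $\sum_v f(v)^2\ge \big(\sum_v f(v)\big)^2/n=1/n$ by Cauchy--Schwarz,
\begin{align*}
\la Af,f\ra=\alpha_{\max}\sum_v f(v)^2\ge \frac{\alpha_{\max}}{n}.
\end{align*}
On the other hand, I claim that $\la Af,f\ra\le 1-\tfrac{1}{k}$. Granting the claim, the two displays combine to give $\alpha_{\max}/n\le 1-1/k$, which is exactly the desired estimate.

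The claim is the crux, and it is a statement about the quadratic form $Q(x)=\la Ax,x\ra=2\sum_{\{u,v\}\in E}x_ux_v$ maximized over the simplex $\{x\ge 0:\sum_v x_v=1\}$: I would show this maximum is at most $1-1/k$. The maximum is attained by compactness; among all maximizers pick one, $x^*$, whose support is smallest. If the support contained two non-adjacent vertices $u$ and $v$, then $Q$ restricted to the segment that transfers weight between $x_u$ and $x_v$ (keeping their sum fixed) is affine in the transfer parameter, because no term $x_ux_v$ occurs in $Q$ and the neighbour-sums $\sum_{a\sim u}x_a$, $\sum_{b\sim v}x_b$ are unaffected. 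Hence we could push the weight entirely onto one of the two vertices without decreasing $Q$, producing a maximizer of strictly smaller support---a contradiction. Therefore the support of $x^*$ is a clique, of some size $s\le k$ since there is no $K_{k+1}$. On such a clique $Q(x^*)=\big(\sum x_i\big)^2-\sum x_i^2=1-\sum x_i^2\le 1-1/s\le 1-1/k$, proving the claim.

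I expect the main obstacle to be this last paragraph: the passage from an arbitrary maximizer to one supported on a clique (the smallest-support weight-shifting argument), together with the observation that $Q$ is affine along a weight transfer between non-adjacent vertices. Everything else---the reduction through $d_{\mathrm{ave}}\le\alpha_{\max}$, the Cauchy--Schwarz lower bound, and the final arithmetic $1-1/s\le 1-1/k$---is routine.
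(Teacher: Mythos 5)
Your proof is correct, and it reaches Tur\'an's bound by a slightly longer route than the paper does. The paper's own proof is a one-liner given the Motzkin--Straus lemma: plug in the \emph{constant} function $f\equiv 1/n$, which gives $2|E|/n^2=\sum_{u\sim v}f(u)f(v)\leq 1-1/\omega\leq 1-1/k$ directly, with no spectral input at all. You instead plug in the Perron eigenfunction of $\alpha_{\max}$, which is precisely the paper's proof of the \emph{stronger} statement, Wilf's Theorem~\ref{thm: specturan} ($\alpha_{\max}\leq(1-1/k)n$), and then descend to Tur\'an via $2|E|/n=d_{\mathrm{ave}}\leq\alpha_{\max}$ --- a deduction the paper itself points out when it calls Wilf's theorem a spectral enhancement of Tur\'an's. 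So your argument proves more but costs more: it needs Theorem~\ref{thm: simple-positive} (existence of a positive eigenfunction), which the direct constant-function route avoids entirely. Your proof of the Motzkin--Straus claim is also sound and is a mild variant of the paper's: the paper runs a finite ``flow'' that repeatedly shifts the full weight of one of two non-adjacent support vertices toward the one with the larger value of $Af$, terminating because the support strictly shrinks each time; you instead invoke compactness to pick a maximizer of minimal support and use the observation that $Q$ is affine along the transfer segment (valid, since $u\not\sim v$ means no $x_ux_v$ term appears and neither neighbour-sum involves the transferred coordinates) to contradict minimality. The two arguments are interchangeable; yours trades the explicit termination argument for a compactness selection. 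The final clique computation $1-\sum x_i^2\leq 1-1/s\leq 1-1/k$ matches the paper's Cauchy--Schwarz step exactly.
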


Recall that $\alpha_{\max}\geq d_{\mathrm{ave}}=2|E|/n$. So the next theorem can be viewed as a spectral enhancement of Tur\'an's theorem. 

\begin{thm}[Wilf]\label{thm: specturan}
If a graph does not contain $K_{k+1}$ as a subgraph, then its largest adjacency eigenvalue satisfies the following upper bound:
\begin{align*}
\alpha_{\max}\leq \Big(1-\frac{1}{k}\Big)\:n
\end{align*} 
\end{thm}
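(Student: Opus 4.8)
The plan is to prove Wilf's theorem by induction on $n$, using the spectral principle (Theorem~\ref{thm: unconstrained}) together with a clever vertex-deletion argument. The key idea is that if the graph $X$ contains no $K_{k+1}$, then I can find a vertex of small degree to delete, and the inductive hypothesis applied to the smaller graph will control $\alpha_{\max}$.

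First I would set up the induction. The base case is essentially the graph on few vertices, or a $K_k$-free graph, where the bound is direct. For the inductive step, let $X$ be a $K_{k+1}$-free graph on $n$ vertices, and let $f$ be a positive eigenfunction for $\alpha_{\max}$, whose existence is guaranteed by Theorem~\ref{thm: simple-positive}. The strategy is to locate a vertex $v$ where $f$ attains its minimum value among all vertices, and to delete it. Deleting $v$ produces a graph $X'$ on $n-1$ vertices which is still $K_{k+1}$-free, so by induction $\alpha_{\max}(X')\leq (1-1/k)(n-1)$.

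The heart of the argument is then to relate $\alpha_{\max}(X)$ to $\alpha_{\max}(X')$ via the eigenfunction. Writing the eigenrelation $\alpha_{\max}\: f(v)=\sum_{u:\: u\sim v} f(u)$ at the chosen vertex $v$, and using that $f(v)$ is minimal, I would bound the contribution of $v$'s neighbours. More precisely, I expect the approach to use the Rayleigh ratio: the restriction of $f$ to $X'$ serves as a test function, and by Theorem~\ref{thm: unconstrained} we have $\alpha_{\max}(X')\geq \la A'g,g\ra/\la g,g\ra$ where $g=f|_{X'}$. The difference $\alpha_{\max}(X)\la f,f\ra - \alpha_{\max}(X')\la g,g\ra$ should then be expressible in terms of the edges incident to $v$, and the key combinatorial input is that $\deg(v)\leq (1-1/k)n$ — which must follow because a vertex of minimal $f$-value cannot have too high a degree in a $K_{k+1}$-free graph.

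The main obstacle, I anticipate, will be making the deletion argument tight enough: simply deleting an arbitrary vertex loses too much. The real subtlety is showing that deleting the \emph{right} vertex — one where the eigenfunction is smallest — allows the incident-edge terms to be absorbed into the bound. I expect the cleanest route may instead avoid induction entirely and argue directly: since $X$ is $K_{k+1}$-free, Tur\'an's theorem (Theorem~\ref{thm: turan}) already bounds $|E|$, but this only gives $\alpha_{\max}\geq 2|E|/n$ in the wrong direction. So the genuinely new content is an \emph{upper} bound on $\alpha_{\max}$, and the most likely successful plan is the minimal-eigenfunction-vertex deletion combined with the inductive hypothesis, carefully tracking how $f(v)$ being minimal forces $\sum_{u\sim v} f(u)\leq \deg(v)\cdot(\text{average-ish value})$ and hence constrains $\alpha_{\max}$ by $(1-1/k)n$. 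Verifying the precise inequality chain, and confirming that equality analysis matches the Tur\'an graph, will be where the careful bookkeeping lives.
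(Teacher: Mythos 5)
Your plan has a genuine gap, and it sits exactly where you suspected it would. Track the deletion step quantitatively: normalize $\la f,f\ra=1$, let $v$ minimize $f$, set $t=f(v)^2$, and let $g=f|_{X'}$. Using the eigenrelation at $v$, $\la A'g,g\ra=\la Af,f\ra-2f(v)\sum_{u\sim v}f(u)=\alpha_{\max}(1-2t)$, while $\la g,g\ra=1-t$. So the Rayleigh principle gives $\alpha_{\max}\leq \alpha_{\max}(X')\,\frac{1-t}{1-2t}$, and the inductive hypothesis then yields $\alpha_{\max}\leq \big(1-\tfrac{1}{k}\big)(n-1)\frac{1-t}{1-2t}$. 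For this to be at most $\big(1-\tfrac{1}{k}\big)n$ you need $t\leq \frac{1}{n+1}$, but minimality of $f(v)$ only guarantees $t\leq \frac{1}{n}$. In the worst case $t=\frac{1}{n}$ (any regular graph, where $f$ is constant) you get $\alpha_{\max}\leq \big(1-\tfrac{1}{k}\big)\frac{(n-1)^2}{n-2}>\big(1-\tfrac{1}{k}\big)n$, so the induction fails to close precisely on regular and near-regular graphs --- a large natural class, including the Tur\'an graphs themselves when $k$ divides $n$. Your proposed repair, the claim that the minimal-$f$ vertex has $\deg(v)\leq\big(1-\tfrac{1}{k}\big)n$, is both unproven and aimed at the wrong quantity: what is true (by intersecting neighbourhoods, since $|N(v_1)\cap\dots\cap N(v_j)|\geq n-j(n-\delta)$ forces a $K_{k+1}$ when $\delta>(1-\tfrac{1}{k})n$) is that a \emph{minimum-degree} vertex satisfies this bound, but the loss in the chain above is controlled by the eigenvector weight $t$, not by $\deg(v)$, so even granting the degree claim the inequality does not improve. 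Note also that at the minimal-$f$ vertex the eigenrelation gives $\deg(v)\leq\alpha_{\max}$, which runs in the unhelpful direction.

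The deeper issue is that your inductive step uses $K_{k+1}$-freeness only to pass it to $X'$, so the clique hypothesis never genuinely interacts with the spectrum. The paper's proof (which is Wilf's) is direct, not inductive: it applies the Motzkin--Straus inequality $\sum_{u\sim v}f(u)f(v)\leq\big(1-\tfrac{1}{\omega}\big)\big(\sum_v f(v)\big)^2$ to the positive Perron eigenfunction $f$, and then a single Cauchy--Schwarz step $\big(\sum_v f(v)\big)^2\leq n\la f,f\ra$ converts $\la Af,f\ra=\alpha_{\max}\la f,f\ra$ into the claimed bound. All of the combinatorial content --- where $\omega\leq k$ enters --- is concentrated in Motzkin--Straus, whose weight-shifting proof is the real engine; your proposal has no substitute for it. If you want to salvage an inductive scheme, you would need to handle the near-flat-eigenvector regime ($\frac{1}{n+1}<t\leq\frac{1}{n}$) by a separate direct argument (for exactly regular graphs, $\alpha_{\max}=d=\delta\leq\big(1-\tfrac{1}{k}\big)n$ works), but the intermediate cases still require a new idea, and at that point the Motzkin--Straus route is both shorter and cleaner.
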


The key ingredient in the proof is the following elegant fact, interesting in its own right. For convenience, we introduce a new graph invariant: the \emph{clique number} $\omega$ the size of the largest complete subgraph. This is an anti-independence number of sorts.

\begin{lem}[Motzkin - Straus]
Let $f$ be a function on a graph, having non-negative values and satisfying $\sum_v f(v)=1$. Then
\begin{align*}
\sum_{u\sim v} f(u)f(v)\leq 1-\frac{1}{\omega}.
\end{align*}
\end{lem}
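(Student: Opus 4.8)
The plan is to prove the Motzkin--Straus lemma by a local averaging/redistribution argument. First I would fix a function $f$ achieving the maximum of the quadratic form $Q(f)=\sum_{u\sim v} f(u)f(v)$ over the compact simplex $\{f\geq 0, \sum_v f(v)=1\}$; such a maximizer exists by continuity. The key observation is that the \emph{support} of a maximizer can be taken to be a clique. Indeed, suppose two vertices $u$ and $v$ in the support are \emph{not} adjacent. Consider shifting weight along the segment $f_t$ that adds $t$ to $f(u)$ and subtracts $t$ from $f(v)$ (keeping the sum equal to $1$). Since $u\not\sim v$, the function $t\mapsto Q(f_t)$ is \emph{affine} in $t$: the coefficient of $u$ and the coefficient of $v$ in $Q$ are $\sum_{w\sim u} f(w)$ and $\sum_{w\sim v} f(w)$ respectively, and there is no $f(u)f(v)$ cross term because $u$ and $v$ are non-adjacent. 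An affine function on an interval attains its maximum at an endpoint, so we may push all the weight off one of the two vertices without decreasing $Q$, thereby shrinking the support.

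Iterating this procedure, I would conclude that $Q$ is maximized by some $f^*$ supported on a set of mutually adjacent vertices, i.e.\ on a clique $C$ with $|C|\leq \omega$. On a clique, $Q(f^*)=\tfrac{1}{2}\big((\sum_{v\in C} f^*(v))^2-\sum_{v\in C} f^*(v)^2\big)=\tfrac{1}{2}\big(1-\sum_{v\in C} f^*(v)^2\big)$, using that every pair in $C$ is an edge. By Cauchy--Schwarz (or convexity), $\sum_{v\in C} f^*(v)^2\geq 1/|C|\geq 1/\omega$, with the minimum attained at the uniform distribution on $C$. Therefore
\begin{align*}
\max_f Q(f)=Q(f^*)\leq \frac{1}{2}\Big(1-\frac{1}{\omega}\Big).
\end{align*}
Wait---I should double-check the constant, since the lemma as stated has bound $1-\tfrac{1}{\omega}$ rather than $\tfrac12(1-\tfrac1\omega)$. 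The discrepancy is exactly the factor of $2$ coming from whether $\sum_{u\sim v}$ runs over ordered pairs or over edges. Given the paper's conventions in \eqref{eq: adjacency}, the sum $\sum_{u\sim v}$ is over \emph{ordered} adjacent pairs (each edge counted twice), so $Q(f)=2\sum_{\{u,v\}\in E} f(u)f(v)$ and the clique computation gives $Q(f^*)=1-\sum_{v\in C}f^*(v)^2\leq 1-1/\omega$, matching the stated bound. I would make this ordered-pair reading explicit at the outset.

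The main obstacle I anticipate is justifying the support-reduction step cleanly: one must verify that the affine-in-$t$ argument genuinely allows removing a vertex without leaving the simplex or decreasing $Q$, and handle the boundary bookkeeping (the segment stays in the simplex only while both coordinates remain non-negative, which is exactly why we stop at an endpoint). A secondary point is that after repeatedly zeroing out vertices one should confirm the support becomes a clique in finitely many steps---this follows because each step strictly decreases the support size, which is a non-negative integer. Once the maximizer is reduced to a clique, the remaining inequality is the elementary estimate above, so the real content is entirely in the redistribution lemma.
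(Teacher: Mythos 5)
Your proof is correct and follows essentially the same route as the paper's: the weight-transfer step across a non-adjacent pair (your affine-in-$t$ endpoint argument is exactly the paper's rule of shifting all of $f(y)$ to $x$ when $Af(x)\geq Af(y)$, since $u\not\sim v$ kills the quadratic term), followed by the same Cauchy--Schwarz computation once the support is a clique. You also correctly resolved the ordered-pair convention for $\sum_{u\sim v}$, which is where the constant $1-\tfrac{1}{\omega}$ rather than $\tfrac12\bigl(1-\tfrac{1}{\omega}\bigr)$ comes from.
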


\begin{proof}
Consider the support graph of $f$, that is to say, the subgraph induced by those vertices where $f$ is non-zero. If the support graph is complete, then it has at most $\omega$ vertices. In this case, we have
\begin{align*}
\la Af,f\ra=\sum_{u\neq v} f(u)f(v)=\Big(\sum_v f(v)\Big)^2-\Big(\sum_v f(v)^2\Big)=1-\sum_v f(v)^2
\end{align*}
and, by Cauchy - Schwarz,
\begin{align*}
\sum_v f(v)^2\geq \frac{1}{\omega}\Big(\sum_v f(v)\Big)^2= \frac{1}{\omega}.
\end{align*}
This establishes the desired bound in the complete case.

Assume now that the support graph of $f$ is not complete. The plan is to `flow' $f$ towards the complete case. Let $x$ and $y$ be distinct, non-adjacent vertices, and modify the weight distribution by transferring the weight of $y$ to $x$. Namely, define $f'$ as follows: $f'(y)=0$, $f'(x)=f(x)+f(y)$, and $f'(z)=f(z)$ for all other vertices. Thus $f'$ also has non-negative entries, and satisfies $\sum_v f'(v)=1$, but it has smaller support. We wish to compare $\la Af',f'\ra$ to $\la Af,f\ra$. To that end, it is convenient to write $f'=f+\e$ where $\e$ is the perturbation given by $\e(x)=f(y)$, $\e(y)=-f(y)$, and $\e=0$ elsewhere. Now
\begin{align*}
\la Af',f'\ra=\la Af,f\ra+\la Af,\e\ra+\la A\e,f\ra+\la A\e,\e\ra=\la Af,f\ra+2\:\la Af,\e\ra
\end{align*}
so:
\begin{align*}
\tfrac{1}{2}\big(\la Af',f'\ra-\la Af,f\ra\big)&=\la Af,\e\ra=\e(x)\: Af(x)+\e(y)\: Af(y)\\
&=f(y)\:\big(Af(x)-Af(y)\big)
\end{align*}
This gives us the rule for deciding which one of the two non-adjacent vertices is the weight receiver: we transfer the weight of $y$ to $x$ when $Af(x)\geq Af(y)$. Under this rule, $\la Af',f'\ra\geq \la Af,f\ra$. This means that, as long as the support graph is non-complete, we may shrink the support without decreasing the quantity we are interesting in. After performing this procedure a finite number of times, we reach a support graph which is complete.\end{proof}

Tur\'an's theorem follows from the lemma, by taking $f$ to be constant (that is, $f(v)=1/n$ for each vertex $v$). The idea of its spectral strengthening is to use a better choice for $f$.

\begin{proof}[Proof of Theorem~\ref{thm: specturan}] Let $f$ be a positive eigenfunction of $\alpha_{\max}$. We have $\omega\leq k$, by hypothesis, so the previous lemma yields
\begin{align*}
\la Af,f \ra=\sum_{u\sim v} f(u)f(v)\leq \Big(1-\frac{1}{k}\Big) \Big(\sum_v f(v)\Big)^2.
\end{align*}
Now $\la Af,f\ra=\alpha_{\max}\:\la f,f\ra$, and 
\begin{align*}
\big(\sum_v f(v)\big)^2\leq n\sum_v f(v)^2=n\:\la f,f\ra.
\end{align*}
The desired bound follows.
\end{proof}

\begin{notes} Tur\'an's theorem (\emph{On an extremal problem in graph theory}, Mat. Fiz. Lapok 1941, in Hungarian; \emph{On the theory of graphs}, Colloquium Math. 1954 in English) is actually a bit sharper, and it says the following: among all graphs with $n$ vertices that do not contain $K_{k+1}$, there is an explicit `Tur\'an graph' $\mathrm{Tu}(n,k)$ having the largest number of edges. The graph $\mathrm{Tu}(n,k)$ is the complete multipartite graph obtained by partitioning $n$ nodes into $k$ subsets as evenly as possible. A concrete formula for the number of edges of $\mathrm{Tu}(n,k)$ can be easily written down, but it is somewhat cumbersome. The weaker bound stated above, $\tfrac{1}{2}(1-1/k)\:n^2$, is convenient and essentially sharp, for it is attained when $k$ divides $n$. The case $k=2$ of Tur\'an's theorem is an early result of Mantel (1906), and it appeared as Exercise~\ref{exer: mantel} in this text. 

Theorem~\ref{thm: specturan} is due to Wilf (\emph{Spectral bounds for the clique and independence numbers of graphs}, J. Combin. Theory Ser. B 1986). The key lemma is a result of Motzkin - Straus (\emph{Maxima for graphs and a new proof of a theorem of Tur\'an}, Canad. J. Math. 1965).
\end{notes}

 \bigskip
\subsection*{Largest laplacian eigenvalue of bipartite graphs}\
\medskip

Consider a bipartite graph. Then its adjacency spectrum is symmetric with respect to $0$, so $\alpha_{\min}$ is a simple eigenvalue. Furthermore, $\alpha_{\min}$ has an eigenfunction that takes positive values on one side of the bipartition, and negative values on the other side of the bipartition. Such a function is said to be \emph{alternating}. These facts have a laplacian dual. 

\begin{thm}
The laplacian eigenvalue $\lambda_{\max}$ is simple, and it has an alternating eigenfunction.
\end{thm}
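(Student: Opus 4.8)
The plan is to reduce the statement to the Perron--Frobenius principle recorded in the preceding notes, by conjugating the laplacian with the alternating sign pattern coming from the bipartition. Write $V=V_\bl\cup V_\wh$ for the bipartition, and let $\e\colon V\to\{\pm 1\}$ be the function with $\e\equiv 1$ on $V_\bl$ and $\e\equiv -1$ on $V_\wh$. Let $E=\mathrm{diag}(\e)$ be the corresponding diagonal operator on $\ell^2V$; it is orthogonal and satisfies $E^2=I$. Observe that an alternating function is precisely one of the form $Eg$ with $g$ positive, so the target is to produce such an eigenfunction.

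First I would record the key conjugation identity. Since $A+L=D$ with $D=\mathrm{diag}(\mathrm{deg})$, and since every edge of a bipartite graph joins $V_\bl$ to $V_\wh$, we have $\e(u)\e(v)=-1$ whenever $u\sim v$, so $(EAE)(u,v)=\e(u)\,A(u,v)\,\e(v)=-A(u,v)$; that is, $EAE=-A$. As $E$ fixes the diagonal matrix $D$, it follows that
\begin{align*}
ELE=E(D-A)E=D+A=:Q.
\end{align*}
Thus $L$ and the signless laplacian $Q$ are conjugate via the orthogonal involution $E$, hence share their eigenvalues with the same multiplicities, and $f\mapsto Ef$ carries eigenfunctions of $Q$ to eigenfunctions of $L$. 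In particular $\lambda_{\max}(L)=\lambda_{\max}(Q)$.

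Next I would apply Perron--Frobenius to $Q$. The matrix $Q=D+A$ is real symmetric and non-negative, its diagonal entries being the (positive) degrees and its off-diagonal pattern being that of $A$. Its underlying graph is therefore $X$ itself, which is connected, so $Q$ is irreducible. By the Perron--Frobenius statement quoted in the notes, the largest eigenvalue of $Q$ is simple and admits a positive eigenfunction $g>0$.

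Finally I would transport this back through $E$. Simplicity is preserved under conjugation, so $\lambda_{\max}(L)=\lambda_{\max}(Q)$ is a simple eigenvalue of $L$, with eigenfunction $Eg$; since $g$ is positive and $E$ flips sign exactly on $V_\wh$, the function $Eg$ is positive on $V_\bl$ and negative on $V_\wh$, i.e.\ alternating. This yields both assertions. The hard part is essentially bookkeeping: establishing the conjugation identity $ELE=D+A$ and verifying that $Q$ is irreducible. Once those are in place, the conclusion is an immediate consequence of the bipartite sign symmetry together with the Perron--Frobenius principle already at our disposal.
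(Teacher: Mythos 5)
Your proof is correct, and it takes a genuinely different route from the paper's. The paper argues directly, re-running the argument of Theorem~\ref{thm: simple-positive} on the laplacian side: given an eigenfunction $g$ of $\lambda_{\max}$, it shows that the sign-flipped function ($|g|$ on $V_\bl$, $-|g|$ on $V_\wh$) is again an eigenfunction, via the Rayleigh ratio and the edgewise inequality $|g(u)|+|g(v)|\geq |g(u)-g(v)|$, then extracts non-vanishing from the eigenrelation and connectivity, analyzes the equality case to see that $g$ is a multiple of an alternating function, and finally gets simplicity from the observation that two orthogonal functions cannot both be alternating. Your conjugation $ELE=D+A=:Q$ repackages all of this at one stroke: it exhibits the laplacian of a bipartite graph as orthogonally equivalent to the signless laplacian, a symmetric, non-negative, irreducible matrix, so the statement becomes the literal image under $E$ of the Perron--Frobenius statement, with $E$ carrying the positive eigenvector to an alternating eigenfunction and preserving simplicity. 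This is shorter, and it makes the ``silent duality'' between the adjacency and laplacian pictures, which the paper only gestures at, completely explicit; for regular bipartite graphs it also recovers $\lambda_{\max}=d+\alpha_{\max}=2d$ instantly. The one point to flag is that the Perron--Frobenius extension you invoke is only \emph{stated} in the paper's notes, without proof; to be self-contained you should observe that the proof of Theorem~\ref{thm: simple-positive} carries over to $Q$ essentially verbatim --- the non-negative diagonal entries only help, since the terms $\deg(v)\,|f(v)|^2$ already have constant phase in the equality analysis, and $\mu_{\max}(Q)\geq 0$ follows from $\Tr Q>0$. Modulo that bookkeeping, your argument is complete.
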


\begin{proof}
We run a similar argument to the one of Theorem~\ref{thm: simple-positive}. Let $g$ be an eigenfunction for $\lambda_{\max}$. Define $g'$ by $g'=|g|$ on $V_\bl$, respectively $g'=-|g|$ on $V_\wh$. We claim that $g'$ is an alternating eigenfunction of $\lambda_{\max}$. For each edge $\{u,v\}$, we have $|g'(u)-g'(v)|=|g(u)|+|g(v)| \geq |g(u)-g(v)|$. Consequently,
\begin{align*}
\la Lg',g'\ra=\sum_{\{u, v\}\in E} \big|g'(u)-g'(v)\big|^2\geq \sum_{\{u, v\}\in E} \big|g(u)-g(v)\big|^2=\la Lg,g\ra.
\end{align*}
Also,
\begin{align*}
\la Lg,g\ra=\lambda_{\max}\: \la g,g\ra=\lambda_{\max}\:\la g',g'\ra.
\end{align*}
since $|g|=|g'|$. Hence $\la Lg',g'\ra=\lambda_{\max}\: \la g',g'\ra$, and it follows that $g'$ is an eigenfunction for $\lambda_{\max}$. Next, we use the eigenrelation for $g'$ to argue that $g'$ never vanishes. For each vertex $v$ we have
\begin{align*}
\sum_{u: u\sim v} g'(u)=(\deg(v)-\lambda_{\max})\:g'(v).
\end{align*}
In addition, $g'(u)$ does not change sign as $u$ runs over the neighbours of $v$. So, if $g'(v)=0$ then $g'(u)=0$ for each neighbour $u$ of $v$, and then connectivity leads us to the contradiction $g\equiv 0$. 

Moreover, we must have $|g(u)|+|g(v)| = |g(u)-g(v)|$ for every edge $\{u,v\}$. This means that $g(u)$ and $g(v)$ are collinear with, and on opposite sides of, $0$. In other words,
\begin{align*}
\frac{g(u)}{|g(u)|}+\frac{g(v)}{|g(v)|}=0
\end{align*}
for every edge $\{u,v\}$. It follows that, for some non-zero scalar $c$, $g/|g|=c$ on $V_\bl$ and $g/|g|=-c$ on $V_\wh$. Thus $g$ is a scalar multiple of $g'$, an alternating function.

We deduce that the eigenspace of $\lambda_{\max}$ is one-dimensional. The reason is that two orthogonal functions, say $f$ and $g$, cannot be both alternating: in $\la f,g\ra=\sum f(v_\bl)\: g(v_\bl)+\sum f(v_\wh)\: g(v_\wh)$, all terms have one and the same sign.
\end{proof}

The observation that two orthogonal functions cannot be both alternating also provides a way of recognizing $\lambda_{\max}$ without knowing the full laplacian spectrum. The intrinsic certificate is provided, again, by the eigenfunction.

\begin{cor}
The only laplacian eigenvalue with an alternating eigenfunction is $\lambda_{\max}$. 
\end{cor}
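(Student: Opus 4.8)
The plan is to argue exactly as in the proof of Corollary~\ref{cor: recog}, which is the adjacency counterpart of this statement. The preceding theorem already establishes that $\lambda_{\max}$ admits an alternating eigenfunction, so the only thing left is to show that no \emph{other} laplacian eigenvalue can have one. I would set this up as a proof by contradiction: suppose some eigenvalue $\lambda \neq \lambda_{\max}$ has an alternating eigenfunction $f$, and let $g$ be the alternating eigenfunction of $\lambda_{\max}$ produced by the theorem.

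First I would invoke the Spectral Theorem: since $L$ is real symmetric, eigenfunctions belonging to distinct eigenvalues are orthogonal, so $\la f,g\ra = 0$. The entire content of the argument is then the observation — already flagged in the remark following the theorem's proof — that two alternating functions cannot be orthogonal. Writing
\begin{align*}
\la f,g\ra = \sum_{v\in V_\bl} f(v)\:g(v) + \sum_{v\in V_\wh} f(v)\:g(v),
\end{align*}
I note that on $V_\bl$ both $f$ and $g$ are nonzero of fixed sign, and likewise on $V_\wh$, so every product $f(v)g(v)$ carries the same sign: all positive if $f$ and $g$ share the same orientation, all negative if their orientations are opposite. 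In either case $\la f,g\ra \neq 0$, contradicting orthogonality. Hence $\lambda = \lambda_{\max}$.

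There is no real obstacle here, since the existence half is handed to us by the theorem and the sign observation is elementary; the only point worth stating carefully is that an alternating function is genuinely nonzero on each side of the bipartition (which the theorem guarantees, as its alternating eigenfunction never vanishes), so that the terms $f(v)g(v)$ are all strictly of one sign and the sum cannot cancel.

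\begin{proof}
The previous theorem shows that $\lambda_{\max}$ has an alternating eigenfunction $g$. Suppose some eigenvalue $\lambda\neq \lambda_{\max}$ also had an alternating eigenfunction $f$. Since $L$ is symmetric, eigenfunctions for distinct eigenvalues are orthogonal, so $\la f,g\ra=0$. But in
\begin{align*}
\la f,g\ra = \sum_{v\in V_\bl} f(v)\:g(v) + \sum_{v\in V_\wh} f(v)\:g(v)
\end{align*}
each term has one and the same sign, as $f$ and $g$ are each of constant sign on $V_\bl$ and on $V_\wh$. Hence $\la f,g\ra\neq 0$, a contradiction. Therefore $\lambda=\lambda_{\max}$.
\end{proof}
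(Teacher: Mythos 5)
Your proof is correct and is precisely the argument the paper intends: the corollary follows from the observation, stated right after the theorem's proof, that two orthogonal functions cannot both be alternating, combined with orthogonality of eigenfunctions for distinct eigenvalues. Your careful note that an alternating function is nonvanishing on each side of the bipartition, so the products $f(v)\,g(v)$ are strictly of one sign, is exactly the point that makes the sum nonzero.
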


\bigskip
\subsection*{Subgraphs}\
\medskip

How do eigenvalues change upon passing to subgraphs? The next result addresses the behaviour of the largest eigenvalues. We will return to this question later on, with results pertaining to the entire spectrum.

\begin{thm}\label{thm: redeem}
Let $X'$ be a proper subgraph of a graph $X$. Then the largest eigenvalues satisfy $\alpha_{\max}'< \alpha_{\max}$, and $\lambda_{\max}'\leq \lambda_{\max}$.
\end{thm}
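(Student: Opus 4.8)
The plan is to derive both inequalities from the variational characterization of the largest eigenvalue (Theorem~\ref{thm: unconstrained}) by feeding it a test function on $X$ built from an extremal function on $X'$. Write $V'=V(X')$ and $E'=E(X')$, so $V'\subseteq V$ and $E'\subseteq E$ with at least one containment strict, and for a function $h$ on $V'$ let $\hat h$ be its extension to $V$ by zero. The only inputs needed to compare Rayleigh ratios are the formulas \eqref{eq: adjacency} and \eqref{eq: laplacian}, read both for $X$ and for $X'$, since each quadratic form is a sum over the \emph{edges} of the respective graph.

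The laplacian bound is the softer one, and I would dispatch it first. Let $g$ be an eigenfunction of $\lambda_{\max}'$ on $X'$. Since $\hat g$ vanishes off $V'$, the denominators match: $\langle \hat g,\hat g\rangle=\langle g,g\rangle$. For the numerator, \eqref{eq: laplacian} gives $\langle L\hat g,\hat g\rangle=\sum_{\{u,v\}\in E}|\hat g(u)-\hat g(v)|^2$, and splitting $E$ into $E'$ and $E\setminus E'$ shows this equals $\langle L'g,g\rangle$ plus a nonnegative remainder. Hence the Rayleigh ratio of $\hat g$ on $X$ is at least $\lambda_{\max}'$, and Theorem~\ref{thm: unconstrained} yields $\lambda_{\max}\geq\lambda_{\max}'$. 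No strictness is claimed, and none holds in general, so this is the whole story.

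For the adjacency bound I would argue by contradiction, which is where the Perron property of Theorem~\ref{thm: simple-positive} enters. Pick a nonnegative, nonzero eigenfunction $f'$ for $\alpha_{\max}'$ on $X'$; it exists by applying Theorem~\ref{thm: simple-positive} to a connected component of $X'$ realizing $\alpha_{\max}'$ and extending by zero across the remaining components. As before $\langle \hat f',\hat f'\rangle=\langle f',f'\rangle$, while \eqref{eq: adjacency} together with $f'\geq 0$ gives $\langle A\hat f',\hat f'\rangle\geq\langle A'f',f'\rangle=\alpha_{\max}'\langle f',f'\rangle$; so the Rayleigh ratio of $\hat f'$ is at least $\alpha_{\max}'$, giving $\alpha_{\max}\geq\alpha_{\max}'$ in any case. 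Now suppose $\alpha_{\max}=\alpha_{\max}'$. Then $\hat f'$ attains the maximal Rayleigh ratio on $X$, so by Theorem~\ref{thm: unconstrained} it is an $\alpha_{\max}$-eigenfunction, and by Theorem~\ref{thm: simple-positive} it is a scalar multiple of a positive function, hence nowhere zero. A nonnegative function that never vanishes is strictly positive, which forces $V'=V$ and shows $f'$ is positive on all of $V'$. Finally, comparing the two eigenrelations at each vertex $v$, namely $\alpha_{\max}f'(v)=\sum_{u\sim_X v}f'(u)$ and $\alpha_{\max}f'(v)=\sum_{u\sim_{X'}v}f'(u)$, and using $f'>0$, forces the $X$-neighbours and the $X'$-neighbours of every $v$ to coincide, so $E'=E$. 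Thus $X'=X$, contradicting properness.

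The main obstacle lives entirely in the adjacency case: upgrading the easy weak inequality $\alpha_{\max}\geq\alpha_{\max}'$ to a strict one. The underlying mechanism is that the zero-extension of $f'$ cannot be the Perron eigenfunction of $X$ unless $X'$ already carries all of $X$'s vertices \emph{and} all of its edges. Organizing this cleanly---instead of splitting into the separate cases ``a vertex is missing'' and ``only an edge is missing,'' and worrying separately about $X'$ being disconnected---is the one delicate point; the unifying device above is to route everything through Theorem~\ref{thm: simple-positive} and then read off both $V'=V$ and $E'=E$ from strict positivity and the two eigenrelations.
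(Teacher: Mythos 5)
Your proposal is correct and follows essentially the same route as the paper: extend an extremal eigenfunction by zero, compare Rayleigh ratios via \eqref{eq: adjacency} and \eqref{eq: laplacian}, and in the adjacency equality case invoke Theorem~\ref{thm: simple-positive} to conclude the extension is the (positive) Perron eigenfunction, forcing $V'=V$ and $E'=E$. Your two minor refinements---choosing a nonnegative eigenfunction supported on a maximizing component to cover disconnected $X'$, and deducing $E'=E$ by comparing the two eigenrelations pointwise rather than from equality in the edge-sum inequality---are harmless variants of the paper's argument, and the first is in fact slightly more careful than the paper, which simply takes a positive eigenfunction on $X'$.
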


\begin{proof}
Let $f'$ be a positive eigenfunction for $\alpha'_{\max}$ having positive entries. Extend it to a function $f$ on $X$ by setting $f\equiv 0$ off the vertices of $X'$. Then $\la f',f'\ra=\la f,f\ra$ and
\begin{align*}
\la A'f',f'\ra=2\sum_{\{u,v\}\in E'} f'(u) f'(v)\leq 2\sum_{\{u,v\}\in E} f'(u) f'(v)=\la Af,f\ra.
\end{align*}
Hence
\begin{align*}
\alpha_{\max}'=\frac{\la A'f',f'\ra}{\la f',f'\ra}\leq \frac{\la Af,f\ra}{\la f,f\ra}\leq \alpha_{\max}.
\end{align*}
If $\alpha_{\max}'= \alpha_{\max}$ then equalities hold throughout the above estimate. On the one hand, $f$ is an eigenfunction for $\alpha_{\max}$. In particular, $f$ never vanishes so $V'=V$. On the other hand, it follows that $E'=E$. Therefore $X'$ is $X$ itself, contradicting the properness assumption. 

The argument for the largest laplacian eigenvalue is similar, though easier. Let $g'$ be an eigenfunction for $\lambda'_{\max}$, and extend it to a function $g$ on $X$ by setting $g\equiv 0$ off the vertices of $X'$. Then $\la g',g'\ra=\la g,g\ra$ and
\begin{align*}
\la L'g',g'\ra=\sum_{\{u,v\}\in E'} \big|g'(u)- g'(v)\big|^2\leq \sum_{\{u,v\}\in E} \big|g(u)- g(v)\big|^2=\la Lg,g\ra.
\end{align*}
Taking Rayleigh ratios, it follows that $\lambda_{\max}'\leq \lambda_{\max}$.
\end{proof}

\begin{exer} \label{exer: another lambda_n} (i) Show that $\lambda_{\max}\geq d+1$ and $\alpha_{\max}\geq \sqrt{d}$. Describe, in each case, the graphs for which equality holds. (ii) Which graphs satisfy $\lambda_{\max}\leq 4$?
\end{exer}

The following result, a rather striking classification, combines the adjacency part of Theorem~\ref{thm: redeem} with Corollary~\ref{cor: recog}.

\begin{thm}[Smith]\label{thm: ADE} 
The graphs satisfying $\alpha_{\max}<2$ are the A-D-E graphs of Figure~\ref{fig: ADE}. The graphs satisfying $\alpha_{\max}=2$ are the extended A-D-E graphs of Figure~\ref{fig: extADE}.
\end{thm}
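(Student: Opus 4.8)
The plan is to use subgraph monotonicity (Theorem~\ref{thm: redeem}) as the main engine, together with the eigenvalue-recognition principle (Corollary~\ref{cor: recog}), and to reduce the converse direction to a finite combinatorial case analysis. I would structure the argument around a single observation: the extended A-D-E graphs are exactly the \emph{minimal} connected graphs with $\alpha_{\max}=2$, so that by Theorem~\ref{thm: redeem} each of them becomes a forbidden proper subgraph for any graph with $\alpha_{\max}\le 2$.

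\emph{The examples.} First I would show that every extended A-D-E graph has $\alpha_{\max}=2$. For each such graph I exhibit an explicit \emph{positive} function $\delta$ satisfying $A\delta=2\delta$; by Corollary~\ref{cor: recog} a positive eigenfunction can only belong to $\alpha_{\max}$, so this forces $\alpha_{\max}=2$. For the cycle $\tilde A_n=C_{n+1}$ the constant function $\ct$ works, the graph being $2$-regular. For $\tilde D_n$ one takes the labelling assigning $1$ to the four fork-leaves and $2$ to every vertex of the central path, and for $\tilde E_6,\tilde E_7,\tilde E_8$ the standard integer ``marks'' (for instance centre $3$, arm-midpoints $2$, tips $1$ on $\tilde E_6=T_{2,2,2}$); in each case $A\delta=2\delta$ is a vertex-by-vertex check. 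Since every finite A-D-E graph embeds as a proper subgraph of an extended one ($A_n\subset C_{n+1}$, $D_n\subset\tilde D_n$, and each $E_k\subset\tilde E_k$ by extending one arm), Theorem~\ref{thm: redeem} immediately yields $\alpha_{\max}<2$ for all finite A-D-E graphs.

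\emph{The converse reduction.} Now let $X$ be any connected graph with $\alpha_{\max}(X)\le 2$. The crucial consequence of Theorem~\ref{thm: redeem} is that $X$ cannot contain, as a \emph{proper} subgraph, any graph whose largest eigenvalue equals $2$; in particular it cannot properly contain an extended A-D-E graph. If $X$ contains a cycle, that cycle is some $C_m=\tilde A_{m-1}$ with $\alpha_{\max}=2$, so it cannot be proper and $X=C_m$. Otherwise $X$ is a tree. If some vertex has degree $\ge 4$, then $X$ contains the star $K_{1,4}=\tilde D_4$, again non-properly, so $X=\tilde D_4$. Otherwise $X$ is a tree of maximum degree $\le 3$; and if it has two or more vertices of degree $3$, then truncating, at two of them, the arms not lying on the path between them down to single leaves exhibits some $\tilde D_m$ as a subgraph, forcing $X=\tilde D_m$.

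\emph{The spider case and the main obstacle.} The only surviving shapes are trees of maximum degree $\le 3$ with at most one branch vertex: a path, which is $A_n$ with $\alpha_{\max}<2$, or a three-armed spider $T_{p,q,r}$ with arm-lengths $p\ge q\ge r\ge 1$. Here I split on arm lengths, using that $T_{2,2,2}=\tilde E_6$, $T_{3,3,1}=\tilde E_7$, $T_{5,2,1}=\tilde E_8$ all have $\alpha_{\max}=2$: if $r\ge 2$ then $T_{2,2,2}$ is a subgraph and $X=\tilde E_6$; if $r=1$ and $q\ge 3$ then $T_{3,3,1}$ is a subgraph and $X=\tilde E_7$; if $r=1$, $q=2$, $p\ge 5$ then $T_{5,2,1}$ is a subgraph and $X=\tilde E_8$. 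The remaining spiders are precisely $T_{p,1,1}=D_{p+3}$ and $T_{2,2,1}=E_6$, $T_{3,2,1}=E_7$, $T_{4,2,1}=E_8$, which are the finite $D$- and $E$-graphs, each with $\alpha_{\max}<2$ by the first paragraph. This exhausts all cases. I expect the genuine difficulty to lie not in any single estimate but in making this case analysis airtight—verifying that the arm-length trichotomy and the passage ``two branch vertices $\Rightarrow$ concrete $\tilde D_m$ subgraph'' leave no gaps—and in recording the marks so that the positivity needed for Corollary~\ref{cor: recog} is transparent.
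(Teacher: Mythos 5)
Your proposal is correct and follows essentially the same route as the paper: exhibit positive eigenfunctions (``marks'') on the extended A-D-E graphs so that Corollary~\ref{cor: recog} forces $\alpha_{\max}=2$, then use the strict monotonicity of Theorem~\ref{thm: redeem} to forbid extended A-D-E subgraphs and run the identical case analysis (no cycles, degree at most $3$, one trivalent vertex, arm-length trichotomy via $\tilde E_6$, $\tilde E_7$, $\tilde E_8$), with the A-D-E graphs getting $\alpha_{\max}<2$ as proper subgraphs of their extended versions. If anything, your ``minimality'' framing makes explicit the converse for $\alpha_{\max}=2$ (a graph attaining $2$ must \emph{equal} an extended A-D-E graph, since proper containment would violate strictness), a point the paper's proof leaves implicit.
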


\begin{figure}[!ht]
\centering
\GraphInit[vstyle=Classic]
\tikzset{VertexStyle/.style = {
shape = circle,
inner sep = 0pt,
minimum size = .8ex,
draw}}
\vspace{.6cm}
\flushleft
\qquad\qquad\qquad\qquad $A_n$\qquad\begin{tikzpicture}[scale=.8]
\SetVertexNoLabel
\Vertex{A}\WE(A){G}
\EA(A){B} \EA(B){C} \EA(C){D} \EA(D){E} 
\Edges(G, A,B, C, D)
\Edge[label=$...$](D)(E)
\end{tikzpicture}

\vspace{.5cm}
\qquad\qquad\qquad\qquad $D_n$\qquad\begin{tikzpicture}[scale=.8]
\SetVertexNoLabel
\Vertex{A}\NO(A){G}\WE(A){I}
\EA(A){B} \EA(B){C} \EA(C){D} \EA(D){E} 
\Edges(G, A,B, C, D)
\Edges(I,A)
\Edge[label=$...$](D)(E)
\end{tikzpicture}

\vspace{.5cm}
\qquad\qquad\qquad\qquad $E_6$\qquad\begin{tikzpicture}[scale=.8]
\SetVertexNoLabel
\Vertex{A}\WE(A){G}\NO(A){I}\WE(G){H}
\EA(A){B} \EA(B){C}
\Edges(H, G, A,B, C)
\Edges(I,A)
\end{tikzpicture}

\vspace{.5cm}
\qquad\qquad\qquad\qquad $E_7$\qquad\begin{tikzpicture}[scale=.8]
\SetVertexNoLabel
\Vertex{A}\WE(A){G}\NO(A){I}\WE(G){H}
\EA(A){B} \EA(B){C} \EA(C){D} 
\Edges(H, G, A,B, C, D)
\Edges(I,A)
\end{tikzpicture}

\vspace{.5cm}
\qquad\qquad\qquad\qquad $E_8$\qquad\begin{tikzpicture}[scale=.8]
\SetVertexNoLabel
\Vertex{A}\WE(A){G}\NO(A){I}\WE(G){H}
\EA(A){B} \EA(B){C} \EA(C){D} \EA(D){E}
\Edges(H, G, A,B, C, D, E)
\Edges(I,A)
\end{tikzpicture}

\bigskip
\caption{The A-D-E graphs.}\label{fig: ADE}
\end{figure}
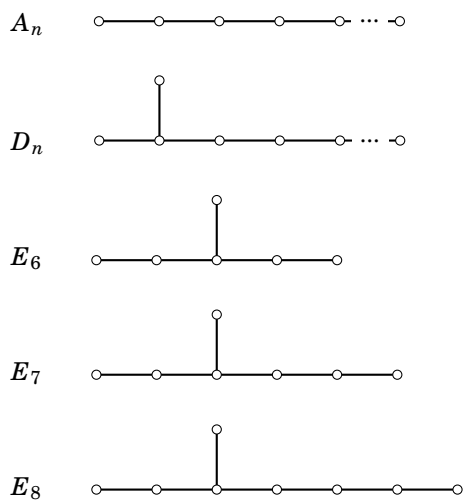

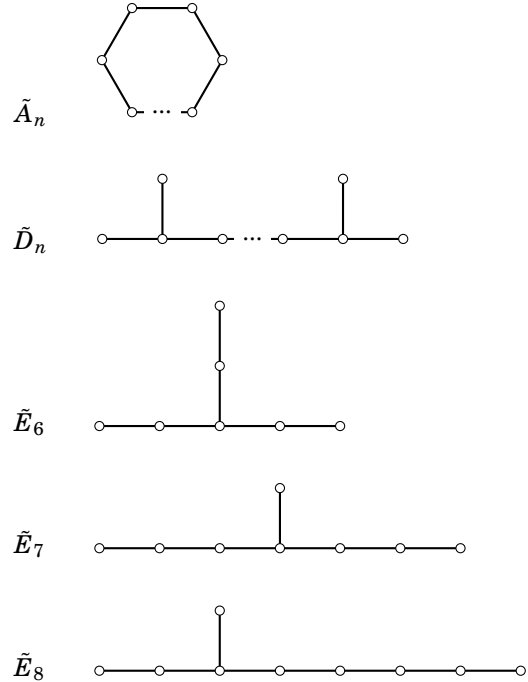
\begin{figure}[!ht]
\centering
\GraphInit[vstyle=Classic]
\tikzset{VertexStyle/.style = {
shape = circle,
inner sep = 0pt,
minimum size = .8ex,
draw}}
\vspace{.9cm}
\flushleft
\qquad\qquad\qquad\qquad $\tilde{A}_n$\qquad\begin{tikzpicture}[scale=.8]
\SetVertexNoLabel
\Vertices{circle}{A,B,C,D,E,F}
\Edges(F, A,B,C,D,E)
\Edge[label=$...$](E)(F)
\end{tikzpicture}

\vspace{.6cm}
\qquad\qquad\qquad\qquad $\tilde{D}_n$\qquad\begin{tikzpicture}[scale=.8]
\SetVertexNoLabel
\Vertex{A}\NO(A){G}\WE(A){I}
\EA(A){B} \EA(B){D}\EA(D){E} \EA(E){M} \NO(E){N}
\Edges(G, A, B)
\Edges(I,A) \Edges(E,M) \Edges(E,N)\Edges(E,D)
\Edge[label=$...$](B)(D)
\end{tikzpicture}

\vspace{.6cm}
\qquad\qquad\qquad\qquad $\tilde{E}_6$\qquad\begin{tikzpicture}[scale=.8]
\SetVertexNoLabel
\Vertex{A}\WE(A){G}\NO(A){I}\NO(I){J}\WE(G){H}
\EA(A){B} \EA(B){C}
\Edges(H, G, A,B, C)
\Edges(J,I,A)
\end{tikzpicture}

\vspace{.6cm}
\qquad\qquad\qquad\qquad $\tilde{E}_7$\qquad\begin{tikzpicture}[scale=.8]
\SetVertexNoLabel
\Vertex{A}\WE(A){$^G$}\NO(A){I}\WE(G){H}\WE(H){J}
\EA(A){B} \EA(B){C} \EA(C){D}
\Edges(J, H, G, A,B, C, D)
\Edges(I,A)
\end{tikzpicture}

\vspace{.6cm}
\qquad\qquad\qquad\qquad $\tilde{E}_8$\qquad\begin{tikzpicture}[scale=.8]
\SetVertexNoLabel
\Vertex{A}\WE(A){G}\NO(A){I}\WE(G){H}
\EA(A){B} \EA(B){C} \EA(C){D} \EA(D){E}\EA(E){J}
\Edges(H, G, A,B, C, D, E, J)
\Edges(I,A)
\end{tikzpicture}

\bigskip
\caption{The extended A-D-E graphs.}\label{fig: extADE}
\end{figure}

\begin{proof}
Firstly, the extended A-D-E graphs have $\alpha_{\max}=2$. A fun exercise, left to the reader, shows that each graph admits $2$ as an adjacency eigenvalue, and the corresponding eigenfunction has positive (and integral) entries. It follows that $2$ is, indeed, the largest adjacency eigenvalue.

Now consider a graph with $\alpha_{\max}<2$. Then none of the extended A-D-E graphs can appear as a subgraph. Our graph contains no cycle $\tilde{A}_n$, so it is a tree. A degenerate instance of $\tilde{D}_n$ is a star with $4$ pendant vertices; using it, we deduce that the maximal vertex degree in our graph is at most $3$. Another use of $\tilde{D}_n$ tells us that at most one vertex has degree $3$. We leave aside the case of a path, $A_n$ in our list, and we assume that there is precisely one trivalent vertex. To figure out the length of the three pending paths, we use the graphs $\tilde{E}_6$, $\tilde{E}_7$, and $\tilde{E}_8$. The first tells us that at least one of the paths has length $1$. Assume that there is precisely one such path, otherwise we are in the $D_n$ case. Now $\tilde{E}_7$ determines that at least one of the remaining two paths has length $2$. The third path can have length $2$, $3$, or $4$, but no larger, because of $\tilde{E}_8$. We have thus obtained the remaining graphs, $E_6$, $E_7$, $E_8$. To verify that the A-D-E graphs do satisfy $\alpha_{\max}<2$, we note that they are proper subgraphs of the extended A-D-E graphs in the way suggested by the notation. \end{proof}

\begin{notes}
Theorem~\ref{thm: ADE} is due to John H. Smith (\emph{Some properties of the spectrum of a graph}, in `Combinatorial Structures and their Applications', Gordon and Breach 1970). The A-D-E graphs and their extended versions are also known as the simple Coxeter - Dynkin diagrams and they occur, quite amazingly, in several classification results for seemingly unrelated objects. A few words about the notations in Figures~\ref{fig: ADE} and ~\ref{fig: extADE}. The number of nodes of a graph is given by the index in Figure~\ref{fig: ADE}, and by the index plus one in Figure~\ref{fig: extADE}. The notation for the A-type, standard in this context, is slightly at odds with some of our notations: $\tilde{A}_n$ is the cycle $C_{n+1}$, while $A_n$ is not an Andr\'asfai graph but a path graph, and should be denoted $P_n$.
\end{notes}

\bigskip
\subsection*{Largest eigenvalues of trees}\label{sec: trees}\
\medskip

Let us start with a fairly general fact. Consider a rooted tree, that is, a tree with a distinguished vertex. A function on the vertices of the tree is \emph{radial} if it is constant on spheres around the root vertex. The rooted tree is \emph{radially regular} if the degree function is radial. Two important examples are the trees $T_{d,R}$ and $\tilde{T}_{d,R}$. Recall that the root vertex has degree $d-1$ in $T_{d,R}$, and degree $d$ in $\tilde{T}_{d,R}$; the pendant vertices lie on a sphere of radius $R$ about the root; the remaining intermediate vertices all have degree $d$.

\begin{figure}[!ht]\bigskip
\centering
\GraphInit[vstyle=Classic]
\tikzset{VertexStyle/.style = {
shape = circle,
inner sep = 0pt,
minimum size = .8ex,
draw}}
\begin{minipage}[b]{0.48\linewidth}
\centering
\begin{tikzpicture}[scale=.7]
\SetVertexNoLabel
\Vertex[x=1.5,y=1]{A}
\Vertex[x=2,y=1]{B}
\Vertex[x=2.5,y=1]{C}
\Vertex[x=3,y=1]{D}
\Vertex[x=7,y=1]{a}
\Vertex[x=7.5,y=1]{b}
\Vertex[x=8,y=1]{c}
\Vertex[x=8.5,y=1]{d}
\Vertex[x=2,y=1.5]{E}
\Vertex[x=3,y=1.5]{F}
\Vertex[x=3,y=2.5]{G}
\Vertex[x=7,y=1.5]{e}
\Vertex[x=8,y=1.5]{f}
\Vertex[x=7,y=2.5]{g}
\Vertex[x=5,y=4.5]{R}
\Edges(R, G, E, A)
\Edges(G, F, D)
\Edges(F, C)
\Edges(E, B)
\Edges(R, g, e, a)
\Edges(g, f, d)
\Edges(f, c)
\Edges(e, b)
\end{tikzpicture}
\end{minipage}
\begin{minipage}[b]{0.48\linewidth}
\centering
\begin{tikzpicture}[scale=.7]
\SetVertexNoLabel
\Vertex[x=1.5,y=1]{A}
\Vertex[x=2,y=1]{B}
\Vertex[x=2.5,y=1]{C}
\Vertex[x=3,y=1]{D}
\Vertex[x=7,y=1]{a}
\Vertex[x=7.5,y=1]{b}
\Vertex[x=8,y=1]{c}
\Vertex[x=8.5,y=1]{d}
\Vertex[x=2,y=1.5]{E}
\Vertex[x=3,y=1.5]{F}
\Vertex[x=3,y=2.5]{G}
\Vertex[x=7,y=1.5]{e}
\Vertex[x=8,y=1.5]{f}
\Vertex[x=7,y=2.5]{g}
\Vertex[x=5,y=4.5]{R}
\Vertex[x=5, y=2.5]{u}
\Vertex[x=4.5, y=1.5]{v}
\Vertex[x=5.5, y=1.5]{w}
\Vertex[x=4.5, y=1]{x}
\Vertex[x=4, y=1]{y}
\Vertex[x=5.5, y=1]{z}
\Vertex[x=6, y=1]{t}
\Edges(R, G, E, A)
\Edges(G, F, D)
\Edges(F, C)
\Edges(E, B)
\Edges(R, g, e, a)
\Edges(g, f, d)
\Edges(f, c)
\Edges(e, b)
\Edges(R, u)
\Edges(v, u)
\Edges(w, u)
\Edges(v, x)
\Edges(w, z)
\Edges(v, y)
\Edges(w, t)
\end{tikzpicture}
\end{minipage}
\caption{$T_{3,3}$ and $\tilde{T}_{3,3}$, once again.}
\end{figure}
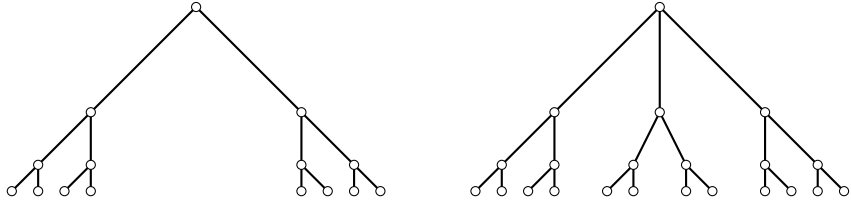

\begin{lem}\label{lem: rad reg}
For a radially regular tree, the eigenfunctions corresponding to the largest eigenvalues are radial.
\end{lem}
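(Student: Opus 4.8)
The plan is to exploit the symmetry of a radially regular tree together with the simplicity of the extremal eigenvalues established earlier. First I would observe that radial regularity is equivalent to \emph{spherical homogeneity}: since the degree of a non-root vertex equals one plus its number of children, while the degree of the root equals its number of children, the hypothesis that $\deg$ is constant on each sphere $S_k=\{v:\mathrm{dist}(r,v)=k\}$ forces the number of children to be constant on each $S_k$ as well. Consequently, any two vertices at the same distance from the root $r$ carry isomorphic descendant subtrees, with matching branching at every further level.

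Next I would show that the group $\mathrm{Aut}(T,r)$ of root-fixing automorphisms acts transitively on each sphere $S_k$, which is the technical heart of the argument. I would prove this by induction on $k$. For $k=0$ it is trivial. Given $u,w\in S_k$ with parents $u',w'\in S_{k-1}$, the inductive hypothesis supplies $\tau\in\mathrm{Aut}(T,r)$ with $\tau(u')=w'$; then $\tau(u)$ and $w$ are both children of $w'$, and since their descendant subtrees are isomorphic by spherical homogeneity, there is an automorphism $\rho$ swapping these two subtrees while fixing everything else. Hence $\rho\tau(u)=w$, completing the induction. The main obstacle is setting up these subtree-swapping automorphisms cleanly from spherical homogeneity and verifying they extend to genuine root-fixing automorphisms of the whole tree.

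With the symmetry in hand, the spectral conclusion is quick. Any $\sigma\in\mathrm{Aut}(T,r)$ induces a permutation operator on $\ell^2V$ that commutes with both $A$ and $L$, so $f\circ\sigma$ is an eigenfunction with the same eigenvalue whenever $f$ is. In the adjacency case, Theorem~\ref{thm: simple-positive} gives that $\alpha_{\max}$ is simple with a positive eigenfunction $f$, so $f\circ\sigma=c_\sigma f$ for some positive scalar $c_\sigma$; since $\sigma$ has finite order, the relation $f\circ\sigma^m=c_\sigma^m f$ with $\sigma^m=\mathrm{id}$ forces $c_\sigma=1$. Thus $f$ is $\mathrm{Aut}(T,r)$-invariant, and transitivity on spheres makes $f$ constant on each $S_k$, i.e.\ radial. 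The laplacian case is identical: $\lambda_{\max}$ is simple with an \emph{alternating} eigenfunction, and root-fixing automorphisms preserve the distance-parity bipartition, hence carry alternating functions to alternating functions with the same sign convention; the same finite-order argument yields invariance and therefore radiality.
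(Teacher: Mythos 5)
Your proof is correct, but it takes a genuinely different route from the paper's. The paper never mentions automorphisms: it introduces the spherical averaging operator $f\mapsto f^\#$, where $f^\#$ takes on $S_r$ the value $|S_r|^{-1}\sum_{v\in S_r}f(v)$, and verifies by a short direct computation (using radial regularity) that averaging commutes with $A$ and with $L$; hence $f^\#$ is again an eigenfunction for the same eigenvalue unless it vanishes, the sign structure of the extremal eigenfunctions --- positive for $\alpha_{\max}$, alternating for $\lambda_{\max}$ --- rules out vanishing, and simplicity then forces $f$ to be a scalar multiple of the radial function $f^\#$. You instead establish transitivity of the root-fixing automorphism group on each sphere and deduce invariance of the simple extremal eigenfunctions. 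Your combinatorial step (the subtree-swapping automorphisms extracted from spherical homogeneity) is heavier than the paper's two-line commutation computation, but it buys a cleaner conceptual statement, and in fact it subsumes the paper's mechanism: once $\mathrm{Aut}(T,r)$ is transitive on spheres, averaging over spheres is exactly averaging over the group action, so commutation of $f\mapsto f^\#$ with $A$ and $L$ falls out of equivariance. Two points worth making explicit in a full write-up: first, the reason degree-constancy on spheres upgrades to level-by-level isomorphism of descendant subtrees is that in a tree every non-root vertex has exactly one neighbour strictly closer to the root, so every vertex of $S_k$ has exactly $d_k-1$ children; second, your finite-order argument for $c_\sigma=1$ can be shortcut --- the permutation operator is unitary, so $|c_\sigma|=1$, while $c_\sigma>0$ because $f\circ\sigma$ and $f$ have the same sign at every vertex (by positivity in the adjacency case; in the laplacian case because root-fixing automorphisms preserve each side of the distance-parity bipartition and the alternating eigenfunction never vanishes). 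Both proofs lean on the same two external inputs, simplicity of $\alpha_{\max}$ with a positive eigenfunction and simplicity of $\lambda_{\max}$ with an alternating eigenfunction, so the approaches are interchangeable here; yours generalizes more readily to statements about invariance under any group of symmetries, while the paper's averaging argument is shorter and avoids constructing any automorphisms at all.
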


\begin{proof}
For a function $f$ defined on the vertices of the tree, we let $f^\#$ denote the radial function obtained by averaging $f$ on spheres about the root vertex. More precisely, 
we define $f^\#$ on $S_r$ by
\begin{align*}
f_r^\#\equiv \frac{1}{|S_r|} \sum_{v\in S_r} f(v)
\end{align*}
where $S_r$, for $r=0,1,\dots,R$, denotes the sphere of radius $r$ with respect to the root. We claim that the adjacency and the laplacian operators commute with averaging:
\begin{align*}
(Af)^\#=A(f^\#), \qquad (Lf)^\#=L(f^\#)
\end{align*}
The claim implies that averaging turns eigenfunctions into radial eigenfunctions, except when averaging yields the zero function. This degeneration cannot happen for the largest eigenvalues, since $\alpha_{\max}$ admits a positive eigenfunction, whereas $\lambda_{\max}$ admits an alternating eigenfunction - say, positive on even-radius spheres and negative on odd-radius spheres.  As the eigenfunctions of $\alpha_{\max}$ and $\lambda_{\max}$ are unique up to scaling, we conclude that they are radial.

Now let us prove the claim. It suffices to check that $A(f^\#)=(Af)^\#$ on each sphere. As the degree function is constant on spheres, the same holds for the laplacian. Note that $A(f^\#)$ is constant on a sphere of radius $r$, namely 
\begin{align*}
A(f^\#)\equiv f_{r-1}^\#+\frac{|S_{r+1}|}{|S_r|}\:f_{r+1}^\#\qquad \textrm{ on } S_r.
\end{align*}
But the right-hand quantity is precisely $(Af)_r^\#$, thanks to the following computation:
\begin{align*}
|S_r| \:(Af)_r^\# &=\sum_{v\in S_r} (Af)(v)= \frac{|S_{r}|}{|S_{r-1}|}\sum_{u\in S_{r-1}} f(u)+ \sum_{w\in S_{r+1}} f(w)\\
&= |S_{r}|\: f_{r-1}^\#+|S_{r+1}|\: f_{r+1}^\#
\end{align*}
A natural convention accommodates the extreme cases, $r=0$ (the root) and $r=R$ (the pendant vertices). The reader who feels that they should be handled separately is invited to do so. \end{proof}

We now focus on the tree $T_{d,R}$, and we determine the eigenvalues that admit a radial eigenfunction.

\begin{lem}\label{lem: rad eigen}
The adjacency eigenvalues of $T_{d,R}$ that admit a radial eigenfunction are
\begin{align*}
\Big\{2\sqrt{d-1}\:\cos \frac{\pi k}{R+2}: k=1,\dots,R+1 \Big\}.
\end{align*}
The laplacian eigenvalues of $T_{d,R}$ that admit a radial eigenfunction are
\begin{align*}
\Big\{0,\: d-2\sqrt{d-1}\:\cos \frac{\pi k}{R+1}: k=1,\dots,R \Big\}.
\end{align*}
\end{lem}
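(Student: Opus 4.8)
The plan is to exploit the fact, established in Lemma~\ref{lem: rad reg}, that for the radially regular tree $T_{d,R}$ the extremal eigenfunctions are radial. Since we seek all eigenvalues admitting a radial eigenfunction, I would reduce the eigenvalue problem for the adjacency (respectively laplacian) operator to a one-dimensional recurrence on the sphere-indices $r=0,1,\dots,R$. The key geometric input is the branching structure of $T_{d,R}$: the root (radius $0$) has $d-1$ children; each intermediate vertex on a sphere $S_r$ with $1\le r<R$ has one parent and $d-1$ children, so $|S_{r+1}|/|S_r|=d-1$ for $0\le r<R$; and the pendant vertices on $S_R$ have no children. For a radial function $f$ with value $f_r$ on $S_r$, the adjacency eigenrelation $Af=\alpha f$ becomes, as computed in the proof of Lemma~\ref{lem: rad reg}, the system
\begin{align*}
(d-1)f_1&=\alpha f_0,\\
f_{r-1}+(d-1)f_{r+1}&=\alpha f_r\qquad(1\le r\le R-1),\\
f_{R-1}&=\alpha f_R.
\end{align*}

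The second step is to symmetrize this non-self-adjoint-looking recurrence by the substitution $f_r=(d-1)^{-r/2}\,g_r$, which turns the middle equations into the symmetric three-term recurrence $g_{r-1}+g_{r+1}=\tfrac{\alpha}{\sqrt{d-1}}\,g_r$, i.e.\ a Chebyshev-type recurrence with parameter $2\cos\theta=\alpha/\sqrt{d-1}$. The boundary conditions at $r=0$ and $r=R$ then become the constraints that determine the admissible $\theta$. Writing $g_r=\sin((r+1)\theta)$ handles one boundary automatically; imposing the other forces a quantization condition of the form $\sin((R+2)\theta)=0$, giving $\theta=\pi k/(R+2)$. Discarding the degenerate values ($k=0$ and $k=R+2$, which yield the zero function) leaves $k=1,\dots,R+1$, and hence $\alpha=2\sqrt{d-1}\,\cos\frac{\pi k}{R+2}$, exactly the claimed adjacency eigenvalues.

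For the laplacian, since $T_{d,R}$ is \emph{not} regular the two operators genuinely differ, so I would redo the boundary bookkeeping with the laplacian eigenrelation $Lf=\lambda f$. The interior equations read $(d-\lambda)f_r=f_{r-1}+(d-1)f_{r+1}$, and after the same $(d-1)^{-r/2}$ substitution I would obtain the recurrence $g_{r-1}+g_{r+1}=\frac{d-\lambda}{\sqrt{d-1}}\,g_r$, so $d-\lambda=2\sqrt{d-1}\cos\theta$. The crucial difference is at the boundary vertices: the root has degree $d-1$ and each pendant vertex has degree $1$, not $d$, so the diagonal entries of $L$ there are smaller, altering the two end-equations. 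The cleanest route is to check that $\lambda=0$ (with the constant radial eigenfunction $\ct$, which is an eigenfunction by Theorem~\ref{thm: plus lap}) is always a radial laplacian eigenvalue, and that the remaining boundary conditions produce the quantization $\sin((R+1)\theta)=0$, giving $\theta=\pi k/(R+1)$ for $k=1,\dots,R$ and hence $\lambda=d-2\sqrt{d-1}\cos\frac{\pi k}{R+1}$.

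The main obstacle I anticipate is handling the two boundary equations correctly and honestly: the naive interior recurrence fails precisely at $r=0$ and $r=R$, and the degree of the root ($d-1$, not $d$) and of the pendants ($1$, not $d$) must be tracked carefully so that the endpoint conditions collapse to a clean vanishing-sine condition. It is easy here to produce the \emph{wrong} denominator ($R+1$ versus $R+2$) by mis-assigning a boundary reflection, so I would verify the two cases against a small instance (say $R=1$ or $R=2$) to pin down the correct phase shift before committing. Establishing completeness -- that these are \emph{all} radial eigenvalues and that there are no spurious coincidences or missing multiplicities -- follows from the observation that the radial functions form an $(R+1)$-dimensional space, and the quantization yields exactly $R+1$ distinct values in each case (counting $\lambda=0$ on the laplacian side), so the list is automatically complete.
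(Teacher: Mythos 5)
Your proposal is correct and takes essentially the same route as the paper's proof: the paper likewise reduces to the three-term recurrence in the sphere index, absorbs the root and pendant equations into fictitious boundary values ($f_{-1}=0$, $f_{R+1}=0$ for the adjacency; $g_{-1}=g_0$, $g_R=g_{R+1}$ for the laplacian), and quantizes via the characteristic roots of $(d-1)x^2-\alpha x+1=0$, whose parametrization $x_{1,2}=\zeta^{\pm 1}/\sqrt{d-1}$ with $\zeta^{2(R+2)}=1$ (resp. $\xi^{2(R+1)}=1$, after splitting off $\lambda=0$) is precisely your condition $\sin((R+2)\theta)=0$ (resp. $\sin((R+1)\theta)=0$). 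Your substitution $f_r=(d-1)^{-r/2}g_r$ with the sine ansatz is merely a symmetrized rewriting of the paper's computation, and your boundary bookkeeping and quantization conditions are exactly right.
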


\begin{proof}
Let $\alpha$ be an adjacency eigenvalue admitting a radial eigenfunction $f$. If $f_r$ denotes the value on the sphere of radius $r$, then:
\begin{align*}
\alpha f_0&=(d-1) f_1\\
\alpha f_r&=f_{r-1}+(d-1)f_{r+1}\qquad r=1,\dots,R-1\\
\alpha f_R&=f_{R-1}
\end{align*}

The solution to this recurrence is a linear combination $f_r=ax_1^{r+1}+bx_2^{r+1}$, where $x_1$ and $x_2$ are the roots of the equation $(d-1)x^2-\alpha x+1=0$, subject to the initial and terminal conditions $f_{-1}=0$, $f_{R+1}=0$. Thus $a+b=0$ and $x_1^{R+2}=x_2^{R+2}$.  Since $x_1x_2=1/(d-1)$, we find that $x_1=\zeta/\sqrt{d-1}$ and $x_2=\bar{\zeta}/\sqrt{d-1}$, where $\zeta$ satisfies $\zeta^{2(R+2)}=1$. Then 
\begin{align*}
\alpha=(d-1)(x_1+x_2)=2\sqrt{d-1}\:\mathrm{Re}(\zeta).
\end{align*}

Up to multiplication by a scalar, $f$ is given by 
\begin{align*}
f_r=(d-1)^{\:-r/2}\:\mathrm{Im}(\zeta^{r+1}), \qquad r=0,\dots,R.
\end{align*} 
For $f$ to be non-zero, we have to have $\zeta\neq \pm 1$. Conversely, if $\zeta\neq \pm 1$ then $f$ is non-zero since $f_0\neq0$. We conclude that the adjacency eigenvalues admitting radial eigenfunctions are
\begin{align*}
\big\{2\sqrt{d-1}\:\mathrm{Re}(\zeta): \zeta^{2(R+2)}=1,\: \zeta\neq \pm 1 \big\}.
\end{align*}

Now let $\lambda$ be a laplacian eigenvalue admitting a radial eigenfunction $g$. If $g_r$ denotes, again, the value on the sphere of radius $r$, then:
\begin{align*}
(d-1-\lambda) g_0&=(d-1) g_1\\
(d-\lambda) g_r&=g_{r-1}+(d-1)g_{r+1}\qquad r=1,\dots,R-1\\
(1-\lambda) g_R&=g_{R-1}
\end{align*}

The solution is given by $g_r=ay_1^{r+1}+by_2^{r+1}$, where $y_1$ and $y_2$ are the roots of the equation $(d-1)y^2-(d-\lambda) y+1=0$, subject to the initial and terminal conditions $g_{-1}=g_0$, $g_R=g_{R+1}$. These conditions say that $a(y_1-1)=-b(y_2-1)$ and $a(y_1-1)y_1^{R+1}=-b(y_2-1)y_2^{R+1}$. If $y_1=1$ or $y_2=1$, then the quadratic relation they satisfy yields $\lambda=0$, and $g$ is readily seen to be constant.  If $y_1,y_2\neq 1$ then $y_1^{R+1}=y_2^{R+1}$. Since $y_1y_2=1/(d-1)$, we get $y_1=\xi/\sqrt{d-1}$ and $y_2=\bar{\xi}/\sqrt{d-1}$ where $\xi^{2(R+1)}=1$. The relation $d-\lambda=(d-1)(y_1+y_2)$ yields
\begin{align*}
\lambda=d-2\sqrt{d-1}\:\mathrm{Re}(\xi).
\end{align*}

Up to re-scaling $g$, we have $g_r=(y_2-1)y_1^{r+1}-(y_1-1)y_2^{r+1}=y_1y_2(y_1^{r}-y_2^{r})-(y_1^{r+1}-y_2^{r+1})$. Using again the relation $y_1y_2=1/(d-1)$, and re-scaling $g$ once again, we reach the formula 
\begin{align*}
g_r=(d-1)^{\:-r/2}\:\big(\mathrm{Im}(\xi^{r})-\sqrt{d-1}\:\mathrm{Im}(\xi^{r+1})\big), \qquad r=0,\dots,R.
\end{align*} 
We note that $g$ is non-zero if and only if $\xi\neq \pm 1$. Thus, the laplacian eigenvalues admitting radial eigenfunctions are
\begin{align*}
\big\{0,\: d-2\sqrt{d-1}\:\mathrm{Re}(\xi): \xi^{2(R+1)}=1, \: \xi\neq \pm 1\big\}.
\end{align*}
\end{proof}

The eigenvalues of $T_{d,R}$ that we have found are all simple, and there are certainly other eigenvalues. These are hiding in subtrees. Let us remark, however, that the tree $T_{2,R}$ is the path graph on $R+1$ vertices. In this case eigenfunctions are automatically radial, so we actually have all the eigenvalues. The upshot is that the adjacency, respectively the laplacian eigenvalues of $P_n$, the path graph on $n$ vertices, are
\begin{align*}
\adspec(P_n)&=\Big\{2\:\cos \frac{\pi k}{n+1}: k=1,\dots,n \Big\},\\
\lapspec(P_n)&=\Big\{2-2\:\cos \frac{\pi k}{n}: k=0,\dots,n-1 \Big\}.
\end{align*}

For our purposes, the main consequence of Lemma~\ref{lem: rad eigen} is that it spells out the largest eigenvalues of $T_{d,R}$. Indeed, thanks to Lemma~\ref{lem: rad reg}, we simply have to pick out the largest eigenvalue from each listing.

\begin{thm}\label{ex: fission tree}
The largest eigenvalues of $T_{d,R}$ are as follows:
\begin{align*}
\alpha_{\max}=2\sqrt{d-1}\: \cos \frac{\pi}{R+2},\qquad \lambda_{\max}&=d+2\sqrt{d-1}\: \cos \frac{\pi}{R+1}
\end{align*}
\end{thm}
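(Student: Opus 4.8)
The plan is to combine the two lemmas just established in the obvious way: Lemma~\ref{lem: rad reg} guarantees that the eigenfunctions attached to $\alpha_{\max}$ and $\lambda_{\max}$ are radial, and Lemma~\ref{lem: rad eigen} lists \emph{all} eigenvalues of $T_{d,R}$ that admit a radial eigenfunction. So the largest adjacency (respectively laplacian) eigenvalue must be the largest entry of the corresponding list. The whole proof is therefore a matter of selecting the maximum from each finite set.

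For the adjacency side, Lemma~\ref{lem: rad eigen} gives the radial eigenvalues $2\sqrt{d-1}\cos\tfrac{\pi k}{R+2}$ for $k=1,\dots,R+1$. Since $\cos$ is decreasing on $[0,\pi]$ and the arguments $\pi k/(R+2)$ increase with $k$, the largest value occurs at $k=1$, yielding $2\sqrt{d-1}\cos\tfrac{\pi}{R+2}$. For the laplacian side, the radial eigenvalues are $0$ together with $d-2\sqrt{d-1}\cos\tfrac{\pi k}{R+1}$ for $k=1,\dots,R$. Here the expression is \emph{increasing} in $k$ because of the minus sign, so the largest value is attained at $k=R$, giving $d-2\sqrt{d-1}\cos\tfrac{\pi R}{R+1}$. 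The final step is the trigonometric simplification $\cos\tfrac{\pi R}{R+1}=\cos\bigl(\pi-\tfrac{\pi}{R+1}\bigr)=-\cos\tfrac{\pi}{R+1}$, which turns this into $d+2\sqrt{d-1}\cos\tfrac{\pi}{R+1}$, exactly as claimed.

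I would write this as follows. By Lemma~\ref{lem: rad reg}, the eigenfunctions of $\alpha_{\max}$ and of $\lambda_{\max}$ on the radially regular tree $T_{d,R}$ are radial; hence both extremal eigenvalues appear in the lists of Lemma~\ref{lem: rad eigen}. On the adjacency side, the map $k\mapsto 2\sqrt{d-1}\cos\tfrac{\pi k}{R+2}$ is strictly decreasing for $k\in\{1,\dots,R+1\}$, so
\begin{align*}
\alpha_{\max}=2\sqrt{d-1}\:\cos\frac{\pi}{R+2}.
\end{align*}
On the laplacian side, among $0$ and the values $d-2\sqrt{d-1}\cos\tfrac{\pi k}{R+1}$ for $k\in\{1,\dots,R\}$, the largest is attained at $k=R$, since increasing $k$ increases the expression. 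Using $\cos\tfrac{\pi R}{R+1}=-\cos\tfrac{\pi}{R+1}$, we obtain
\begin{align*}
\lambda_{\max}=d-2\sqrt{d-1}\:\cos\frac{\pi R}{R+1}=d+2\sqrt{d-1}\:\cos\frac{\pi}{R+1}.
\end{align*}

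There is essentially no serious obstacle here, since the two lemmas do all the heavy lifting. The only points requiring a moment's care are, first, confirming that the $0$ eigenvalue on the laplacian list never competes for the maximum (it is clearly smaller than the $k=R$ term whenever $d>2\sqrt{d-1}\cos\tfrac{\pi}{R+1}$, which holds because $d\ge 2\sqrt{d-1}$ with the cosine factor strictly less than $1$), and second, getting the direction of monotonicity right in each case — the sign flip between the adjacency and laplacian formulas is exactly what makes the adjacency maximum occur at $k=1$ but the laplacian maximum at $k=R$. I would make sure to flag that asymmetry explicitly so the reader is not surprised by the reflection identity for the cosine.
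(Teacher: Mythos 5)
Your proposal is correct and matches the paper's own argument, which likewise combines Lemma~\ref{lem: rad reg} with Lemma~\ref{lem: rad eigen} and simply picks the largest entry from each listing (the paper leaves the monotonicity and the identity $\cos\tfrac{\pi R}{R+1}=-\cos\tfrac{\pi}{R+1}$ implicit, which you spell out). Your extra check that $0$ cannot compete for the laplacian maximum, via $d\geq 2\sqrt{d-1}$, is a sound and welcome detail.
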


Using this theorem, we can bound the largest eigenvalues of a tree in terms of the maximal degree and the diameter.

\begin{cor}\label{cor: ext-trees}
The largest eigenvalues of a tree satisfy 
\begin{align*}
\alpha_{\max}\leq 2\sqrt{d-1}\: \cos \frac{\pi}{\delta+2},\qquad \lambda_{\max}\leq d+2\sqrt{d-1}\: \cos \frac{\pi}{\delta+1}.
\end{align*}
In particular, $\alpha_{\max}< 2\sqrt{d-1}$ and $\lambda_{\max}< d+2\sqrt{d-1}$.
\end{cor}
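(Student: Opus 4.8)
The plan is to realize an arbitrary tree $X$ as a subgraph of one of the model trees $T_{d,R}$ and then to combine the subgraph monotonicity of Theorem~\ref{thm: redeem} with the exact eigenvalue formulas of Theorem~\ref{ex: fission tree}. Writing $d$ and $\delta$ for the maximal degree and the diameter of $X$, I would show that $X$ embeds as a subgraph of $T_{d,\delta}$. Since $T_{d,\delta}$ has $\alpha_{\max}=2\sqrt{d-1}\,\cos\frac{\pi}{\delta+2}$ and $\lambda_{\max}=d+2\sqrt{d-1}\,\cos\frac{\pi}{\delta+1}$, the monotonicity relations $\alpha_{\max}(X)\le\alpha_{\max}(T_{d,\delta})$ and $\lambda_{\max}(X)\le\lambda_{\max}(T_{d,\delta})$ then yield the two displayed inequalities at once.

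For the embedding, the key observation is that every finite tree has a leaf, and I would root $X$ at such a leaf $v_0$. As a rooted tree, $v_0$ then carries a single child, while every other vertex, having at most $d-1$ children besides its unique parent, carries at most $d-1$ children. Since the root of $T_{d,\delta}$ has $d-1$ children and every internal vertex of $T_{d,\delta}$ likewise has $d-1$ children, one can map $X$ into $T_{d,\delta}$ sphere by sphere, sending each vertex to a distinct descendant slot. The one point to verify is that the depth of $X$ rooted at $v_0$, namely the eccentricity of $v_0$, does not exceed $\delta$; this holds because any eccentricity is bounded by the diameter. The result is $X$ displayed as a subtree, hence a subgraph, of $T_{d,\delta}$, which is exactly the hypothesis needed to invoke Theorem~\ref{thm: redeem}.

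The \emph{in particular} assertions then follow immediately: the arguments $\frac{\pi}{\delta+2}$ and $\frac{\pi}{\delta+1}$ are strictly positive, so $\cos\frac{\pi}{\delta+2}<1$ and $\cos\frac{\pi}{\delta+1}<1$, giving the strict bounds $\alpha_{\max}<2\sqrt{d-1}$ and $\lambda_{\max}<d+2\sqrt{d-1}$.

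The main obstacle is the bookkeeping in the embedding: one must confirm that rooting at a leaf keeps every branching bounded by $d-1$, so that $X$ genuinely fits inside the $(d-1)$-ary model tree $T_{d,\delta}$ rather than overflowing it. This is precisely why $T_{d,R}$, with root degree $d-1$, is the correct model here (rooting at a leaf of degree one respects the constraint $1\le d-1$), whereas the root-degree-$d$ tree $\tilde{T}_{d,R}$ would be the natural choice only if one wished to root at a high-degree center; that alternative would sharpen the bound in terms of the radius but is not available to us, since Theorem~\ref{ex: fission tree} supplies the eigenvalues of $T_{d,R}$ and not of $\tilde{T}_{d,R}$.
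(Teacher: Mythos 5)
Your proof is correct and follows essentially the same route as the paper: embed the tree as a subtree of $T_{d,\delta}$ by rooting at a leaf, then combine the subgraph monotonicity of Theorem~\ref{thm: redeem} with the exact eigenvalues from Theorem~\ref{ex: fission tree}. The only (harmless) difference is that the paper hangs the tree from an endpoint of a diametrical path, whereas you root at an arbitrary leaf and invoke eccentricity $\leq$ diameter, which is equivalent since an endpoint of a longest path in a tree is itself a leaf.
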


\begin{proof}
Let $X$ be a tree of maximal degree $d$ and diameter $\delta$. Then $X$ can be viewed as a subtree of $T_{d, \delta}$: pick a path of length $\delta$ in $X$, and then hang $X$ by one of the endpoints. Therefore $\alpha_{\max}(X)\leq \alpha_{\max}(T_{d, \delta})$ and $\lambda_{\max}(X)\leq \lambda_{\max}(T_{d, \delta})$. The desired bounds follow from Theorem~\ref{ex: fission tree}.
\end{proof}

For the tree $\tilde{T}_{d,R}$, we could carry out a spectral analysis as for $T_{d,R}$, but the computations are more cumbersome. We will content ourselves with estimates on the largest eigenvalues, obtained by exploiting the relationship with $T_{d,R}$. Namely, $T_{d,R}$ is a subtree of $\tilde{T}_{d,R}$, and $\tilde{T}_{d,R}$ is a subtree of $T_{d,2R}$. From Theorem~\ref{ex: fission tree} we also deduce the following.

\begin{cor}\label{cor: tilde T}
The largest eigenvalues of $\tilde{T}_{d,R}$ satisfy the following bounds:
\begin{align*}
2\sqrt{d-1}\: \cos \frac{\pi}{R+2}\leq &\:\alpha_{\max}\leq 2\sqrt{d-1}\: \cos \frac{\pi}{2R+2}\\
d+2\sqrt{d-1}\: \cos \frac{\pi}{R+1}\leq &\:\lambda_{\max}\leq d+2\sqrt{d-1}\: \cos \frac{\pi}{2R+1}
\end{align*}
\end{cor}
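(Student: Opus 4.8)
The plan is to derive all four inequalities from just two inputs that are already available: the subgraph monotonicity of the largest eigenvalues (Theorem~\ref{thm: redeem}) and the exact formulas for the largest eigenvalues of the fully regular tree $T_{d,R}$ (Theorem~\ref{ex: fission tree}). The only structural work is to pin down the chain of tree embeddings
\begin{align*}
T_{d,R}\subseteq \tilde{T}_{d,R}\subseteq T_{d,2R},
\end{align*}
both of which are proper containments of one tree as an induced subtree of another; once these are established, the corollary is immediate.

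For the lower bounds I would use the first containment. The tree $\tilde{T}_{d,R}$ is obtained from $T_{d,R}$ by attaching one extra branch at the root, upgrading its degree from $d-1$ to $d$; equivalently, deleting one of the $d$ root-branches of $\tilde{T}_{d,R}$ leaves exactly $T_{d,R}$. Thus $T_{d,R}$ is a proper subtree of $\tilde{T}_{d,R}$, and Theorem~\ref{thm: redeem} gives $\alpha_{\max}(T_{d,R})<\alpha_{\max}(\tilde{T}_{d,R})$ and $\lambda_{\max}(T_{d,R})\leq\lambda_{\max}(\tilde{T}_{d,R})$. Plugging in the values $\alpha_{\max}(T_{d,R})=2\sqrt{d-1}\cos\frac{\pi}{R+2}$ and $\lambda_{\max}(T_{d,R})=d+2\sqrt{d-1}\cos\frac{\pi}{R+1}$ from Theorem~\ref{ex: fission tree} yields the two lower bounds (in fact with a strict inequality on the adjacency side).

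For the upper bounds I would use the second containment. Here I would pick a vertex $v$ at distance $R$ from the root of $T_{d,2R}$ and consider the ball $B_R(v)$. Since $T_{d,2R}$ is a tree this ball is an induced subtree; the vertex $v$ is intermediate, hence has full degree $d$, every interior vertex of the ball again has degree $d$, and all leaves of $B_R(v)$ sit at distance exactly $R$ from $v$ — going downward one reaches the pendants of $T_{d,2R}$ at radius $2R$, while going upward one meets the original root, which is cut off as a leaf of the ball. This reproduces $\tilde{T}_{d,R}$ as a proper induced subtree of $T_{d,2R}$, so Theorem~\ref{thm: redeem} gives $\alpha_{\max}(\tilde{T}_{d,R})\leq\alpha_{\max}(T_{d,2R})=2\sqrt{d-1}\cos\frac{\pi}{2R+2}$ and $\lambda_{\max}(\tilde{T}_{d,R})\leq\lambda_{\max}(T_{d,2R})=d+2\sqrt{d-1}\cos\frac{\pi}{2R+1}$, again via Theorem~\ref{ex: fission tree} applied with radius $2R$.

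The only real obstacle is the bookkeeping in the two embeddings, in particular checking that the ball $B_R(v)$ inside $T_{d,2R}$ is genuinely isomorphic to $\tilde{T}_{d,R}$ as a rooted radially regular tree (correct root degree, correct interior degrees, and all pendants at radius exactly $R$). Everything else is a direct substitution, and no new estimate or computation beyond Theorems~\ref{thm: redeem} and~\ref{ex: fission tree} is needed.
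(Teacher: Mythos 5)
Your proposal is correct and is exactly the paper's argument: the text preceding the corollary derives it from the containments $T_{d,R}\subseteq \tilde{T}_{d,R}\subseteq T_{d,2R}$ together with the monotonicity of Theorem~\ref{thm: redeem} and the exact values from Theorem~\ref{ex: fission tree}. Your only addition is to make explicit the second embedding via the ball $B_R(v)$ around a depth-$R$ vertex of $T_{d,2R}$, a verification the paper leaves implicit, and your check of it (root degree $d$, interior degrees $d$, all pendants at radius exactly $R$) is accurate.
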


The function $x\mapsto \cos (\pi/x)$, for $x\geq 2$, is non-negative and increasing to $1$ as $x\to \infty$. Elementary calculus shows that $1-\cos (\pi/x)$ lies between two numerical multiples of $1/x^{2}$. In asymptotic notation, $1-\cos (\pi/x)=\Theta(x^{-2})$. So both $\tilde{T}_{d,R}$ and $T_{d,R}$ satisfy
\begin{align*}
\alpha_{\max}&= 2\sqrt{d-1} - 2\sqrt{d-1}\:\Theta(R^{-2}),\\ \lambda_{\max}&= d+2\sqrt{d-1}- 2\sqrt{d-1}\: \Theta(R^{-2}).
\end{align*}

\bigskip
\section{More eigenvalues}
In this section, we take a global view on graph spectra. We consider all eigenvalues, not just the extremal ones.

\bigskip
\subsection*{Eigenvalues of symmetric matrices: Courant - Fischer}\label{sec: symmetric}\
\medskip

We start with the main tool for understanding eigenvalues of real symmetric matrices: the Courant - Fischer minimax formulas. The power and the versatility of these formulas will become apparent in subsequent sections.

Let $M$ be a real symmetric $n\times n$ matrix, with eigenvalues
\begin{align*}
\mu_{\min}=\mu_1\leq \ldots\leq \mu_n=\mu_{\max}.
\end{align*} 
We have seen in Theorem~\ref{thm: unconstrained} that the extremal eigenvalues can be described in terms of the Rayleigh ratio \begin{align*}
R(f)=\frac{\la Mf, f\ra}{\la f,f\ra}
\end{align*} as follows:
\begin{align*}
\mu_{\min}=\min_{f\neq 0}\: R(f), \qquad \mu_{\max}=\max_{f\neq 0}\: R(f) 
\end{align*}

Broadly speaking, the minimax formulas describe each eigenvalue $\mu_k$ via constrained optimizations of the Rayleigh ratio.

\begin{thm}[Courant - Fischer]
The following hold:
\begin{align}
\mu_k=\min_{\dim V=k}\;\max_{f\in V}\: R(f)=\max_{\dim W=n-k+1}\;\min_{f\in W}\: R(f) \label{eq: CF global}
\end{align}
Furthermore, let $f_1, f_2, \dots, f_n$ be an orthonormal basis of eigenvectors, let $V_k$ denote the linear span of $f_1,\dots,f_k$, and let $W_{n-k+1}$ denote the linear span of $f_k,\dots,f_n$. Then:
\begin{align}
\mu_k=\max_{f\in V_k}\: R(f)=\min_{f\in W_{n-k+1}}\: R(f) \label{eq: CF local}
\end{align}
\end{thm}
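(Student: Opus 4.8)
The plan is to prove the \emph{local} formulas \eqref{eq: CF local} first, and then derive the \emph{global} minimax formulas \eqref{eq: CF global} from them, since the global formulas amount to saying that $V_k$ and $W_{n-k+1}$ are optimal among all subspaces of the respective dimensions. Throughout I would work with the orthonormal eigenbasis $f_1,\dots,f_n$, writing an arbitrary $f=\sum c_i f_i$, so that
\begin{align*}
R(f)=\frac{\sum_i \mu_i |c_i|^2}{\sum_i |c_i|^2},
\end{align*}
a weighted average of the eigenvalues, exactly as in the proof of Theorem~\ref{thm: unconstrained}. This is the single computation that drives everything.

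First I would establish \eqref{eq: CF local}. If $f\in V_k=\mathrm{span}(f_1,\dots,f_k)$, then $c_i=0$ for $i>k$, so $R(f)$ is an average of $\mu_1,\dots,\mu_k$, whence $R(f)\leq \mu_k$; taking $f=f_k$ gives equality, so $\max_{f\in V_k} R(f)=\mu_k$. Symmetrically, if $f\in W_{n-k+1}=\mathrm{span}(f_k,\dots,f_n)$, then $c_i=0$ for $i<k$, so $R(f)$ is an average of $\mu_k,\dots,\mu_n$, giving $R(f)\geq \mu_k$, with equality again at $f=f_k$; hence $\min_{f\in W_{n-k+1}} R(f)=\mu_k$. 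This step is routine once the averaging formula is in hand.

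Next I would prove the global formulas \eqref{eq: CF global}. For the first, the local result already supplies $\mu_k=\max_{f\in V_k} R(f)$ with $\dim V_k=k$, so $\min_{\dim V=k}\max_{f\in V} R(f)\leq \mu_k$. For the reverse inequality I must show that \emph{every} $k$-dimensional subspace $V$ contains a nonzero $f$ with $R(f)\geq \mu_k$. The standard dimension-counting argument does this: since $\dim V=k$ and $\dim W_{n-k+1}=n-k+1$, we have $\dim V+\dim W_{n-k+1}=n+1>n$, so $V\cap W_{n-k+1}\neq \{0\}$. Picking a nonzero $f$ in this intersection gives $R(f)\geq \mu_k$ by the local bound on $W_{n-k+1}$, hence $\max_{f\in V} R(f)\geq \mu_k$ for every such $V$, which yields $\min_{\dim V=k}\max_{f\in V} R(f)\geq \mu_k$. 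The second global formula is entirely symmetric, intersecting an arbitrary $(n-k+1)$-dimensional subspace $W$ with $V_k$ (dimensions $n-k+1$ and $k$ again sum to $n+1$).

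The main obstacle, and the only genuinely non-trivial point, is precisely the dimension-intersection step: the inequalities coming directly from $V_k$ and $W_{n-k+1}$ only establish that the min-max and max-min are \emph{at most}, respectively \emph{at least}, $\mu_k$, and one needs the transversality argument $\dim V+\dim W_{n-k+1}>n$ to force a nonzero common vector and thereby pin down the optimum over \emph{all} subspaces. I would take care to state that the optima are attained (so the $\min$ and $\max$ are justified, not merely $\inf$ and $\sup$), which follows because the Rayleigh ratio is continuous and invariant under scaling, hence attains its extrema on the compact unit sphere of each finite-dimensional subspace.
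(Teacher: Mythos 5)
Your proposal is correct and follows essentially the same route as the paper's proof: establish the local formulas \eqref{eq: CF local} via the weighted-average form of the Rayleigh ratio in the eigenbasis, then derive the global formulas \eqref{eq: CF global} by the dimension-counting observation that a $k$-dimensional $V$ and the $(n-k+1)$-dimensional $W_{n-k+1}$ must share a nonzero vector. Your added remark that the extrema are genuinely attained (by compactness of the unit sphere) is a small refinement the paper leaves implicit, but it does not change the argument.
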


\begin{proof}
We start by establishing the `local' formulas \eqref{eq: CF local}. If $f\in V_k$ then $f=\sum_{i\leq k} c_i f_i$, so
\begin{align*}
R(f)=\frac{\la Mf, f\ra }{\la f,f\ra}=\frac{\sum_{i\leq k} \mu_i |c_i|^2}{\sum_{i\leq k} |c_i|^2}\leq \mu_k.
\end{align*}
If $f\in W_{n-k+1}$ then $f=\sum_{i\geq k} c_i f_i$, so
\begin{align*}
R(f)=\frac{\la Mf, f\ra }{\la f,f\ra}=\frac{\sum_{i\geq k} \mu_i |c_i|^2}{\sum_{i\geq k} |c_i|^2}\geq \mu_k.
\end{align*}
We conclude by noting that for the eigenvector $f_k$, which is both in $V_k$ and in $W_{n-k+1}$, we have $R(f_k)=\mu_k$.

Next, we establish the `global' formulas \eqref{eq: CF global}. We clearly have
\begin{align*}
\mu_k=\max_{f\in V_k}\: R(f)\geq \min_{\dim V=k}\;\max_{f\in V}\: R(f).
\end{align*}
For the reverse inequality, we need to show that $\mu_k\leq \max_{f\in V}\: R(f)$ for each subspace $V$ of dimension $k$. We use the following observation: if two subspaces $V, W\subseteq \C^n$ satisfy $\dim V+\dim W>n$, then $V$ and $W$ share a non-zero vector. Hence a $k$-dimensional space $V$ and the $(n-k+1)$-dimensional space $W_{n-k+1}$ share a non-zero vector $f_0$. We then have
\begin{align*}
\mu_k=\min_{f\in W_{n-k+1}}\: R(f)\leq R(f_0)\leq \max_{f\in V}\: R(f)
\end{align*}
as desired. The second minimax formula is established by a similar argument. \end{proof}

The min-max formulas for the intermediate eigenvalues are rather unwieldy, even for adjacency and laplacian matrices. One refreshing and useful exception is a fairly transparent formula for $\lambda_2$, the smallest non-trivial laplacian eigenvalue. The constant function $\mathbb{1}$ is an eigenfunction for the simple eigenvalue $\lambda_1=0$, and the space $W_{n-1}$ from the Courant - Fischer theorem is then the orthogonal complement of the subspace spanned by $\mathbb{1}$. We get:
\begin{align}
\lambda_2= \min_{0\neq f\perp \mathbb{1}} \frac{\la L f, f\ra }{\la f, f\ra }=\min\Bigg\{\frac{\sum_{\{u,v\}\in E} \big|f(u)-f(v)\big|^2}{\sum_{u\in V} |f(u)|^2}: \sum_{u\in V} f(u)=0, \; f\neq 0\Bigg\}\label{eq: lambda2}
\end{align}

\begin{exer}\label{exer: second laplacian} Show that a graph which is not complete has $\lambda_2\leq d$ and $\alpha_2\geq 0$.
\end{exer}

\bigskip
\subsection*{A bound for the laplacian eigenvalues}\
\medskip

The following result is a bound on the growth of laplacian eigenvalues, formulated in terms of the maximal degree and the diameter. Two ingredients in the proof are the minimax formulas, and our discussion around the largest eigenvalues of trees. 

\begin{thm}\label{thm: laplacian growth}
Let $1\leq k \leq  \delta/2$. Then:
\begin{align*}
\lambda_{k+1}\leq d-2\sqrt{d-1}\: \cos\frac{2\pi k}{\delta}
\end{align*}
\end{thm}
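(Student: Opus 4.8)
The plan is to bound $\lambda_{k+1}$ from above through the Courant--Fischer minimax formula \eqref{eq: CF global}, namely $\lambda_{k+1}=\min_{\dim V=k+1}\max_{0\neq f\in V}\langle Lf,f\rangle/\langle f,f\rangle$, by exhibiting one convenient $(k+1)$-dimensional test space $V$. I would build $V$ out of $k+1$ ``bumps'' with pairwise far-apart supports, each modelled on a radial eigenfunction of a small tree, so that the Rayleigh ratio on all of $V$ is controlled by the worst individual bump.

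Concretely, fix a geodesic $v_0\edge v_1\edge\dots\edge v_\delta$ realizing the diameter, set $\tilde R=\lfloor \delta/2k\rfloor-1$, and place centres $u_i=v_{a_i}$ ($i=0,\dots,k$) equally spaced along the geodesic, at mutual distance at least $2\tilde R+2$. The hypothesis $k\leq \delta/2$ is exactly what guarantees $\tilde R\geq 0$ and that $k+1$ such centres fit inside a length-$\delta$ geodesic with their radius-$\tilde R$ balls pairwise separated (graph distance between supports $\geq 2$). Let $f_i$ be a test function supported on the ball $B_{\tilde R}(u_i)$. Because the supports are separated there are no edges between them, so $\langle f_i,f_j\rangle=0$ and $\langle Lf_i,f_j\rangle=0$ for $i\neq j$; hence for $f=\sum_i c_if_i$ both quadratic forms are block-diagonal and $\langle Lf,f\rangle/\langle f,f\rangle\leq \max_i \langle Lf_i,f_i\rangle/\langle f_i,f_i\rangle$. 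The $f_i$ are linearly independent, so $V=\mathrm{span}\{f_i\}$ has dimension $k+1$, and the minimax formula gives $\lambda_{k+1}\leq \max_i R(f_i)$.

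Everything then reduces to a single ``bump estimate'': on any ball $B_{\tilde R}(u)$ there is a function $f$ with $R(f)\leq d-2\sqrt{d-1}\cos\frac{\pi}{\tilde R+2}$, and since $\tilde R+2\geq \delta/2k$ this is $\leq d-2\sqrt{d-1}\cos\frac{2\pi k}{\delta}$, the desired value. To prove the bump estimate I would take $f$ radial about $u$, $f(x)=g(\mathrm{dist}(u,x))$, extended by $0$, with $g(r)=(d-1)^{-r/2}\sin\frac{(r+1)\pi}{\tilde R+2}$ --- the radial profile modelled on Lemma~\ref{lem: rad eigen} for the tree $T_{d,\tilde R}$. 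By \eqref{eq: laplacian} only edges between consecutive spheres contribute, so writing $|S_r|$ for sphere sizes and $e_r$ for the number of edges from $S_r$ to $S_{r+1}$ one gets $\langle Lf,f\rangle=\sum_r e_r\,(g(r{+}1)-g(r))^2$ and $\langle f,f\rangle=\sum_r |S_r|\,g(r)^2$. The two structural inequalities I would use are $|S_{r+1}|\leq e_r\leq (d-1)|S_r|$ (each sphere vertex has a neighbour one step closer to $u$, hence at most $d-1$ one step farther), which both bound the numerator and show that $b_r:=|S_r|(d-1)^{-r}$ is non-increasing. Substituting $g(r)=(d-1)^{-r/2}u_r$ with $u_r=\sin\frac{(r+1)\pi}{\tilde R+2}$ turns the target inequality $\langle Lf,f\rangle\leq \beta\langle f,f\rangle$, with $\beta=d-2\sqrt{d-1}\cos\frac{\pi}{\tilde R+2}$, into $\sum_r b_r\big(u_{r+1}-\sqrt{d-1}\,u_r\big)^2\leq \beta\sum_r b_r u_r^2$, which I would attack by summation by parts against the monotone weights $b_r$, the relevant partial sums telescoping (via $u_{r+1}-\cos\tfrac{\pi}{\tilde R+2}u_r=\cos\frac{(r+1)\pi}{\tilde R+2}\sin\frac{\pi}{\tilde R+2}$) to closed trigonometric expressions.

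This last step is where the real work --- and the main obstacle --- lies. The decaying profile $g$ is tuned to tree-like (maximal) sphere growth $|S_r|\sim(d-1)^r$; when the ball grows more slowly (at the extreme, a path, where $|S_r|\equiv 1$) the termwise quantities $\beta u_r^2-(u_{r+1}-\sqrt{d-1}u_r)^2$ need not be individually non-negative, so the summation-by-parts has to genuinely exploit the monotonicity of $b_r$ together with the lower bound $b_r\geq (d-1)^{-r}$, rather than any pointwise inequality. I expect to handle this by checking positivity of the telescoped partial sums in the fast-growth regime and, in the slow-growth regime, replacing $g$ by its undecayed analogue (for which the ratio is only smaller); the boundary term at the root, where the centre may have degree $d$ rather than $d-1$, and the parity of $\delta$ are minor corrections absorbed by the slack $\tilde R+2\geq \delta/2k$. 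The case $d=2$, where the graph is a path or a cycle, is checked directly from the explicit spectra.
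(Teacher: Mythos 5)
Your global architecture is exactly that of the paper: choose $R=\lfloor \delta/2k\rfloor-1$, place $k+1$ centres along a diameter-realizing geodesic so the radius-$R$ balls are completely separated, note that both quadratic forms are block-diagonal on the span of the bumps, and feed the resulting $(k+1)$-dimensional space into Courant--Fischer. Those steps of your proposal are correct and need no changes. The entire content of the theorem therefore sits in your ``bump estimate'', and there your argument has a genuine gap. You transplant the closed-form radial profile of $T_{d,\tilde R}$ and try to prove $\sum_r b_r\bigl(u_{r+1}-\sqrt{d-1}\,u_r\bigr)^2\leq \beta\sum_r b_r u_r^2$ by Abel summation against the non-increasing weights $b_r=|S_r|(d-1)^{-r}$. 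But Abel summation requires one-sided signs for the partial sums, and these fail: with $\theta=\pi/(\tilde R+2)$ the termwise quantity is $t_r=\sin\theta\,\bigl[\sin((2r+3)\theta)-\sqrt{d-1}\,\sin((2r+2)\theta)\bigr]$, whose root-side partial sums telescope to
\begin{align*}
T_m=\sin((m+1)\theta)\,\Bigl[\sin((m+3)\theta)-\sqrt{d-1}\,\sin((m+2)\theta)\Bigr],
\end{align*}
and for $d=3$ and $\theta$ small one has $T_0=\sin\theta\,[\sin 3\theta-\sqrt{2}\sin 2\theta]>0$ (since $\sin 3\theta/\sin 2\theta\to 3/2>\sqrt{2}$), while the tail sums $T_{\tilde R}-T_{m-1}$ likewise fail to have a fixed sign. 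So monotonicity of $b_r$ alone cannot close the argument; the extra input $b_r\geq (d-1)^{-r}$ that you invoke would have to enter in some essential, unspecified way, and your fallback of splitting into ``fast-growth'' and ``slow-growth'' regimes is not a dichotomy --- a ball can grow irregularly, and it is precisely in mixed regimes that such weight-dependent inequalities break. As it stands, Step 1 of your proof is a plausible research plan, not a proof.

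The paper avoids this entire difficulty with a pointwise, rather than variational, bump estimate, and you may want to absorb the trick: it transplants onto $B_R(v)$ the positive radial eigenfunction $f$ of $\tilde T_{d,R}$ (root of degree $d$, which also disposes of your worry about the centre's degree), observes from the eigenrelation and $\alpha_T<d$ that $f$ is \emph{decreasing} in the radius, and then proves the supersolution bound $Lg(u)\leq (d-\alpha_T)\,g(u)$ at every vertex $u$ of the ball using only two facts: each $u\in S_r$ has at least one neighbour in $S_{r-1}$, and every other neighbour of $u$ carries a value $\geq f_{r+1}$ by monotonicity. Pairing against the non-negative $g$ immediately gives $\la Lg,g\ra\leq (d-\alpha_T)\la g,g\ra$, with no bookkeeping of sphere sizes $|S_r|$ or edge counts $e_r$ whatsoever, and the eigenvalue bound $\alpha_T\geq 2\sqrt{d-1}\cos\frac{\pi}{R+2}$ comes for free from subgraph monotonicity (Corollary~\ref{cor: tilde T}, since $T_{d,R}\subseteq \tilde T_{d,R}$). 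In other words: where you try to force the tree profile to win a global quadratic-form inequality on an arbitrary ball, the paper uses the monotonicity of the tree eigenfunction to make it a pointwise supersolution on an arbitrary ball --- that single observation is the missing idea, and with it your Steps 2 and 3 complete the proof exactly as you wrote them.
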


\begin{proof} We break down the proof into three steps. In the first one, we construct a sample function which is supported on a ball, and whose laplacian Rayleigh ratio is controlled in terms of the radius. In the second step, we argue that there are $k+1$ balls which are well-separated. In the last step, we consider global functions on the graph defined by taking scaled sample functions on each of the $k+1$ balls.

\smallskip
\emph{Step 1.} Let $B_R$ be a ball of radius $R\geq 0$ around a vertex $v$. We show that there is a non-zero function $g$ supported on $B_R$ such that
\begin{align*}
\frac{\la Lg,g\ra}{\la g,g \ra}\leq d-2\sqrt{d-1}\: \cos\frac{\pi}{R+2}.\tag{$*$}
\end{align*}

If $R=0$, take $g$ to be $1$ at $v$ and $0$ elsewhere. Then the left-hand side of $(*)$ is $\mathrm{deg}(v)$ while the right-hand side is $d$.

Assume $R\geq 1$. Consider the tree $\tilde{T}_{d,R}$, and let $\alpha_T$ denote its largest adjacency eigenvalue. Let $f$ be a positive eigenfunction on $\tilde{T}_{d,R}$ for $\alpha_T$. Since $f$ is radial, we may view it as a function of the radius $r\mapsto f_r$, for $r=0,\dots,R$. Spelling out the eigenrelation for $f$, we get 
\begin{align*}
\alpha_T  f_0=d f_1, \qquad \alpha_T f_r=f_{r-1}+(d-1)\:f_{r+1}\quad \textrm{ for } r=1,\dots,R
\end{align*} 
with the convention that $f_{R+1}=0$. These relations, and the fact that $\alpha_T<d$, imply that $f$ is decreasing. Indeed, the first relation yields $f_0>f_1$, while the second relation yields that $f_r>f_{r+1}$ provided $f_{r-1}>f_r$.

Next, we define a function $g$ by transplanting $f$ onto the ball $B_R$. Namely, foliate $B_R$ into spheres $S_r$ for $r=0,\dots,R$, and set $g\equiv f_r$ on $S_r$. Set also $g\equiv 0$ outside $B_R$. We claim that the pointwise bound
\begin{align*}
Lg\leq (d-\alpha_T)g
\end{align*}
holds on $B_R$. Indeed, let $u\in S_r$. We argue the generic case $r>0$, leaving the case $r=0$ (that is, $u=v$) to the reader. As $u$ has at least one neighbour on $S_{r-1}$, and $f$ is decreasing in $r$, we have:
\begin{align*}
Ag(u)&\geq f_{r-1}+(\mathrm{deg}(u)-1)f_{r+1}=\alpha_T f_r-(d-\mathrm{deg}(u))f_{r+1}\\
&\geq \alpha_Tf_r-(d-\mathrm{deg}(u))f_{r}=(\alpha_T-d+\mathrm{deg}(u))g(u)
\end{align*}
Hence
\begin{align*}
Lg(u)= \mathrm{deg}(u) g(u)-Ag(u)\leq  (d-\alpha_T) g(u)
\end{align*}
as claimed. As $g$ is positive on $B_R$ and vanishes off $B_R$, the pointwise bound yields $\la Lg,g\ra\leq (d-\alpha_T) \la g,g\ra$. Now $(*)$ follows, by using the lower bound on $\alpha_T$ from Corollary~\ref{cor: tilde T}.

\smallskip
\emph{Step 2.} Let $R= \big\lfloor \frac{1}{2k}\delta\big\rfloor-1$, and note that $R\geq 0$. We show that there are $k+1$ balls of radius $R$ in $X$, $B_R(v_i)$ for $i=1,\dots,k+1$, which are completely separated: they are disjoint, and there are no edges between them.

Indeed, let $v_1$ and $v_{k+1}$ be vertices with $\mathrm{dist}(v_1,v_{k+1})=\delta$, and pick a path of length $\delta$ between them. As $2(R+1)\leq \frac{1}{k}\delta$, we may successively pick vertices $v_2,\dots,v_{k}$ on the path such that the distance between two consecutive vertices is at least $2(R+1)$. In fact, the distance between any two distinct vertices is at least $2(R+1)$. It follows that any two balls $B_R(v_i)$ and $B_R(v_j)$, where $i\neq j$, are completely separated: if $p_i\in B_R(v_i)$ and $p_j\in B_R(v_j)$ then $2(R+1)\leq \mathrm{dist}(v_i,v_j)\leq 2R+\mathrm{dist}(p_i,p_j)$ so $\mathrm{dist}(p_i,p_j)\geq 2$.

\smallskip
\emph{Step 3.} For each ball $B_R(v_i)$, let $g_i$ be a function supported on $B_R(v_i)$ as in Step $1$. Then $g_1,\dots,g_{k+1}$ are mutually orthogonal since the balls $B_R(v_1), \dots, B_R(v_{k+1})$ are completely separated. Furthermore, $Lg_i$ is supported on $B_{R+1}(v_i)$, hence orthogonal to $g_j$ for each $j\neq i$. For a non-zero linear combination $\phi=\sum c_i g_i$ we have
\begin{align*}
\la L\phi,\phi\ra=\sum |c_i|^2 \la Lg_i,g_i\ra, \qquad \la \phi,\phi\ra=\sum |c_i|^2 \la g_i,g_i\ra
\end{align*}
so
\begin{align*}
\frac{\la L\phi,\phi\ra}{\la \phi,\phi \ra}\leq d-2\sqrt{d-1}\: \cos\frac{\pi}{R+2}\leq d-2\sqrt{d-1}\: \cos\frac{2\pi k}{\delta}
\end{align*}
as $R+2= \big\lfloor \frac{1}{2k}\delta\big\rfloor+1>\frac{1}{2k}\delta$. On the other hand, by the minimax formulas we have that
\begin{align*}
\lambda_{k+1}\leq \max_{0\neq \phi\in V} \frac{\la L\phi,\phi\ra}{\la \phi,\phi \ra}
\end{align*}
for the $(k+1)$-dimensional space $V$ spanned by $g_1,\dots,g_{k+1}$. We conclude the proof by combining the last two inequalites. 
\end{proof}

For regular graphs, we immediately deduce a quantitative bound on the adjacency eigenvalues:

\begin{cor}\label{cor: abbs}
Consider a $d$-regular graph, and $1\leq k \leq  \delta/2$. Then:
\begin{align*}
\alpha_{k+1}\geq 2\sqrt{d-1}\: \cos\frac{2\pi k}{\delta}
\end{align*}
\end{cor}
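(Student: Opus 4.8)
The plan is to derive this corollary directly from Theorem~\ref{thm: laplacian growth} using the rigid relationship between adjacency and laplacian eigenvalues for regular graphs. The key structural fact, established in the section on first properties, is that for a $d$-regular graph we have $A+L=dI$, so the eigenvalues pair up as $\alpha_k+\lambda_k=d$, with the \emph{crucial} caveat that the orderings run in opposite directions. Recall our convention: the laplacian eigenvalues are indexed in increasing order $\lambda_1\leq\lambda_2\leq\dots\leq\lambda_n$, while the adjacency eigenvalues are indexed in decreasing order $\alpha_1\geq\alpha_2\geq\dots\geq\alpha_n$. The matching of eigenspaces therefore gives the relation $\alpha_k=d-\lambda_k$ for each index $k$ (the $k$-th largest adjacency eigenvalue corresponds to the $k$-th smallest laplacian eigenvalue, since a larger $\alpha$ means a smaller $\lambda$).

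First I would record this index correspondence carefully, since getting the bookkeeping right is the entire content of the argument. With $\alpha_{k+1}=d-\lambda_{k+1}$ in hand, I would simply invoke Theorem~\ref{thm: laplacian growth}, which states that for $1\leq k\leq\delta/2$ one has
\begin{align*}
\lambda_{k+1}\leq d-2\sqrt{d-1}\:\cos\frac{2\pi k}{\delta}.
\end{align*}
Substituting and reversing the inequality gives
\begin{align*}
\alpha_{k+1}=d-\lambda_{k+1}\geq d-\Big(d-2\sqrt{d-1}\:\cos\frac{2\pi k}{\delta}\Big)=2\sqrt{d-1}\:\cos\frac{2\pi k}{\delta},
\end{align*}
which is exactly the claimed bound. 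The range of validity $1\leq k\leq\delta/2$ is inherited unchanged from the theorem.

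There is essentially no obstacle here beyond verifying the direction of the index pairing, and that is the one step I would not skip. The substance of the result lives entirely in Theorem~\ref{thm: laplacian growth}; the corollary is a translation into adjacency language, permitted precisely because regularity forces $L=dI-A$ and hence makes the two spectra affinely equivalent (with a sign flip that reverses the ordering). I would present this as a short two-line deduction, perhaps with a sentence reminding the reader that for regular graphs the adjacency and laplacian eigenvalues carry the same spectral information via $\alpha_k=d-\lambda_k$, and that the opposite indexing conventions are exactly what make the ``$\leq$'' on the laplacian side become a ``$\geq$'' on the adjacency side.
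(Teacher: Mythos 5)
Your deduction is correct and matches the paper exactly: the paper derives this corollary immediately from Theorem~\ref{thm: laplacian growth} via the regular-graph relation $\alpha_k+\lambda_k=d$ (stated earlier with precisely the opposite-ordering convention you invoke). Your careful note on the index pairing is the only nontrivial bookkeeping involved, and you handled it correctly.
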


This has, in turn, an interesting qualitative consequence. The diameter of a $d$-regular graph tends to infinity as the size grows to infinity, and then the above lower bound tends to $2\sqrt{d-1}$. We obtain:

\begin{cor} Fix a threshold $t<2\sqrt{d-1}$. Then $\alpha_2> t$ for all but finitely many $d$-regular graphs. More generally, for each $k\geq 2$ we have that $\alpha_k> t$ for all but finitely many $d$-regular graphs.
\end{cor}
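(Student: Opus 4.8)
The plan is to derive this corollary directly from the quantitative lower bound in Corollary~\ref{cor: abbs}, using the fact that within the class of $d$-regular graphs, only finitely many can have small diameter. First I would recall the setup: for a $d$-regular graph with diameter $\delta$ and any $k$ with $1\leq k\leq \delta/2$, we have the bound
\begin{align*}
\alpha_{k+1}\geq 2\sqrt{d-1}\:\cos\frac{2\pi k}{\delta}.
\end{align*}
The right-hand side, as a function of $\delta$, increases to $2\sqrt{d-1}$ as $\delta\to\infty$, since $\cos(2\pi k/\delta)\to\cos 0=1$. So the strategy is to show that for a fixed $k$, the diameter $\delta$ must be large on all but finitely many $d$-regular graphs, and then the bound forces $\alpha_{k+1}$ (equivalently $\alpha_k$ after reindexing) above the threshold $t$.

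The key auxiliary fact I would invoke is Theorem~\ref{thm: logarithmic}: for a $d$-regular graph with $d\geq 3$, the diameter satisfies $\delta>\frac{\log n}{\log(d-1)}-2$. This shows $\delta\to\infty$ as $n\to\infty$ among $d$-regular graphs; equivalently, only finitely many $d$-regular graphs have diameter below any fixed bound, since there are only finitely many graphs of bounded size. (For the degenerate cases $d=1,2$ the statement is either vacuous or easily checked separately, so I would state the corollary for the interesting regime $d\geq 3$.) The argument then runs as follows. Fix $k\geq 2$ and a threshold $t<2\sqrt{d-1}$. Since $\cos(2\pi(k-1)/\delta)\to 1$ as $\delta\to\infty$, there is a diameter bound $\delta_0$ such that whenever $\delta\geq \delta_0$ we have $2\sqrt{d-1}\cos\frac{2\pi(k-1)}{\delta}>t$ and moreover $\delta_0\geq 2(k-1)$, so that the hypothesis $1\leq k-1\leq \delta/2$ of Corollary~\ref{cor: abbs} holds. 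Applying that corollary with $k-1$ in place of $k$ then gives $\alpha_k\geq 2\sqrt{d-1}\cos\frac{2\pi(k-1)}{\delta}>t$.

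It remains to handle the finitely many graphs with $\delta<\delta_0$: by Theorem~\ref{thm: logarithmic}, such graphs satisfy $n<(d-1)^{\delta_0+2}$, so there are only finitely many of them. Hence $\alpha_k>t$ for all but finitely many $d$-regular graphs, which is exactly the claim; the case $k=2$ is the special instance stated first. The main obstacle, such as it is, lies in the bookkeeping of indices: Corollary~\ref{cor: abbs} is phrased in terms of $\alpha_{k+1}$ and the constraint $k\leq\delta/2$, so I must be careful to match the index $k$ in the present corollary against the index in Corollary~\ref{cor: abbs} and to ensure the constraint is met by choosing $\delta_0$ large enough. There are no analytic difficulties—the convergence $\cos(2\pi k/\delta)\to 1$ is elementary—so the proof is essentially a clean combination of the diameter lower bound (largeness of $\delta$ forced by largeness of $n$) with the eigenvalue lower bound, together with the observation that bounded diameter permits only finitely many $d$-regular graphs.
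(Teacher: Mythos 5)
Your proposal is correct and is exactly the paper's argument: the paper derives the corollary in one sentence from Corollary~\ref{cor: abbs} together with the observation that the diameter of a $d$-regular graph tends to infinity with its size (which is Theorem~\ref{thm: logarithmic}), so that $2\sqrt{d-1}\cos\frac{2\pi(k-1)}{\delta}$ eventually exceeds $t$. You have merely made explicit the index shift $k\mapsto k-1$, the constraint $\delta\geq 2(k-1)$, and the finiteness of $d$-regular graphs of bounded diameter, all of which are handled correctly.
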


\begin{notes}
Naturally, it all happened backwards. First came the last corollary. The part concerning $\alpha_2$ is stated in a paper of Alon (\emph{Eigenvalues and expanders}, Combinatorica 1986) as a result due Alon and Boppana. 
Lubotzky, Phillips, and Sarnak (\emph{Ramanujan graphs}, Combinatorica 1988) proved a weak version of the Alon - Boppana result. The first published proof, and a quantitative one, of the Alon - Boppana result was subsequently given by Alon's alter ego Nilli (\emph{On the second eigenvalue of a graph}, Discrete Math. 1991). Meanwhile, Burger established the general version in unpublished work (\emph{Cheng's inequality for graphs}, preprint 1987). It has been pointed out by Friedman that the Alon - Boppana phenomenon - namely, escape of eigenvalues towards $2\sqrt{d-1}$ and beyond - is implicit in work of McKay (\emph{The expected eigenvalue distribution of a large regular graph}, Linear Algebra Appl. 1981). 

Corollary~\ref{cor: abbs} came after. The earliest reference is Friedman (\emph{Some geometric aspects of graphs and their eigenfunctions}, Duke Math. J. 1993), followed by Quenell (\emph{Eigenvalue comparisons in graph theory}, Pacific J. Math. 1996) with an independent account. See also Nilli (\emph{Tight estimates for eigenvalues of regular graphs}, Electron. J. Combin. 2004) for a short argument. With some hindsight, the corollary can be already glimpsed in a paper by Brooks (\emph{The spectral geometry of $k$-regular graphs}, J. Anal. Math. 1991)

Theorem~\ref{thm: laplacian growth} is essentially due to Urakawa (\emph{Eigenvalue comparison theorems of the discrete Laplacians for a graph}, Geom. Dedicata 1999). Our approach is somewhat different, and somewhat simpler. Theorem~\ref{thm: laplacian growth} is the graph-theoretic analogue of a bound due to Cheng (\emph{Eigenvalue comparison theorems and its geometric applications}, Math. Z. 1975) for the laplacian eigenvalues on a compact Riemannian manifold. 
\end{notes}

\bigskip
\subsection*{Eigenvalues of symmetric matrices: Cauchy and Weyl} \label{sec: variational}\
\medskip

We now turn to the behaviour of eigenvalues with respect to several operations on symmetric matrices. We discuss two general principles, Cauchy's interlacing theorem and Weyl's inequality. The first one addresses the operation of deleting a row and the corresponding column. The second one concerns addition. Both results are applications of the Courant - Fischer minimax formulas.

\begin{thm}[Cauchy]
Let $M'$ be the symmetric $(n-1)\times (n-1)$ matrix obtained by deleting the first row and the first column of $M$. Then the eigenvalues of $M'$ interlace the eigenvalues of $M$:
\begin{align*}
\mu_1\leq \mu_1'\leq \mu_2\leq \ldots\leq\mu_{n-1}\leq\mu_{n-1}'\leq \mu_n
\end{align*}
\end{thm}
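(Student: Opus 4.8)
The plan is to deduce Cauchy interlacing directly from the Courant--Fischer minimax formulas \eqref{eq: CF global}, which are the natural engine for this kind of comparison. The key structural fact is that $M'$ acts as a symmetric operator on the $(n-1)$-dimensional subspace $U\subseteq \C^n$ consisting of vectors whose first coordinate vanishes; more precisely, if $P:\C^n\to U$ is the orthogonal projection forgetting the first coordinate, then for every $f\in U$ one has $\la M'f,f\ra=\la Mf,f\ra$, since deleting the first row and column exactly removes the terms involving the first coordinate. So the Rayleigh ratio of $M'$ on $U$ coincides with the Rayleigh ratio of $M$ restricted to $U$. This is the one computation to verify, and it is routine.

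First I would establish the lower interlacing inequalities $\mu_k\leq \mu_k'$. Applying the global minimax formula to $M'$ on the space $U$, we have $\mu_k'=\min_{\dim V=k,\,V\subseteq U}\max_{f\in V}R_{M'}(f)$, where the $V$ range over $k$-dimensional subspaces of $U$. Since such $V$ are in particular $k$-dimensional subspaces of $\C^n$, and since $R_{M'}(f)=R_M(f)$ for $f\in U$, each inner maximum equals $\max_{f\in V}R_M(f)$, which is bounded below by $\min_{\dim V=k}\max_{f\in V}R_M(f)=\mu_k$ (the minimum over the larger collection of all $k$-dimensional subspaces of $\C^n$ can only be smaller). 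Taking the minimum over $V\subseteq U$ preserves the bound, giving $\mu_k\leq \mu_k'$.

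Next I would establish the upper inequalities $\mu_k'\leq \mu_{k+1}$, using the dual max--min formula. Here $\mu_k'=\max_{\dim W=n-k,\,W\subseteq U}\min_{f\in W}R_{M'}(f)$, where $W$ ranges over $(n-k)$-dimensional subspaces of $U$. Again $R_{M'}=R_M$ on such $W$, and each $W$ is an $(n-k)$-dimensional subspace of $\C^n$, hence a candidate in the max--min formula $\mu_{k+1}=\max_{\dim W=n-k}\min_{f\in W}R_M(f)$. Since the collection $\{W\subseteq U\}$ is smaller than the collection of all $(n-k)$-dimensional subspaces of $\C^n$, the maximum over the former is at most the maximum over the latter, yielding $\mu_k'\leq \mu_{k+1}$. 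Combining the two families of inequalities gives the full interlacing chain.

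The only real subtlety---the step I expect to need the most care---is keeping straight which collection of subspaces shrinks and in which direction the resulting inequality goes, since the lower bounds come from a $\min\max$ (restricting the subspaces \emph{raises} the value) and the upper bounds from a $\max\min$ (restricting the subspaces \emph{lowers} the value). Once the identity $R_{M'}(f)=R_M(f)$ on $U$ is in hand, everything else is a matter of matching dimensions ($k$-dimensional subspaces for the $\min\max$, $(n-k)$-dimensional for the $\max\min$) and invoking monotonicity of $\min$ and $\max$ under passing to a subfamily. No computation beyond the Rayleigh-ratio identity is required.
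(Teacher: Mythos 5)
Your proposal is correct and follows essentially the same route as the paper: both deduce interlacing from the Courant--Fischer formulas by identifying the Rayleigh ratio of $M'$ with that of $M$ restricted to the hyperplane of vectors with vanishing first coordinate (the paper phrases this via the embedding $\phi:\C^{n-1}\into\C^n$, you via the subspace $U=\phi(\C^{n-1})$), then invoking monotonicity of the $\min\max$ and $\max\min$ under shrinking the family of admissible subspaces. Your dimension bookkeeping ($k$-dimensional subspaces for the lower bounds, $(n-k)$-dimensional for the upper bounds, matching $\mu_{k+1}=\max_{\dim W=n-k}\min_{f\in W}R_M(f)$) is exactly right.
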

\begin{proof}
Let $\phi$ denote the linear embedding $\C^{n-1}\into\C^n$ given by adding $0$ as first coordinate. Then the Rayleigh ratios of $M'$ and $M$ are related by the formula $R'(f')=R(\phi(f'))$.

We have to show that $\mu_k\leq \mu'_k\leq \mu_{k+1}$ for $k=1,\dots,n-1$. Now
\begin{align*}
\mu'_k&=\min_{\dim V'=k}\;\max_{f'\in V'} \: R'(f')\\
&=\min_{\dim V'=k}\;\max_{f\in \phi(V')}\: R(f)\geq \min_{\dim V=k}\;\max_{f\in V}\: R(f)=\mu_k
\end{align*}
and
\begin{align*}
\mu'_k&=\max_{\dim W'=n-k}\;\min_{f'\in W'}\: R'(f')\\
&=\max_{\dim W'=n-k}\;\min_{f\in \phi(W')}\: R(f)\leq \max_{\dim W=n-k}\;\min_{f\in W}\: R(f)=\mu_{k+1}
\end{align*}
as desired.
\end{proof}

\begin{thm}[Weyl]
Let $M$ and $N$ be symmetric $n\times n$ matrices. Then
\begin{align*}
\mu_{k+\ell-1}(M+N)\geq \mu_k(M)+\mu_\ell(N)
\end{align*}
as long as the indices satisfy $1\leq k,\ell, k+\ell-1\leq n$.
\end{thm}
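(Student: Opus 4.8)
The plan is to prove Weyl's inequality using the Courant--Fischer minimax formulas, exactly in the spirit of the Cauchy interlacing proof just given. The key idea is to exploit the ``local'' minimax description of eigenvalues via spans of eigenvectors, and to intersect carefully chosen subspaces so that both $M$ and $N$ can be controlled simultaneously.

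First I would set up notation. Let $f_1,\dots,f_n$ be an orthonormal eigenbasis for $M$, with $M f_i = \mu_i(M)\, f_i$, and let $g_1,\dots,g_n$ be an orthonormal eigenbasis for $N$, with $N g_j = \mu_j(N)\, g_j$. Following the Courant--Fischer setup, let $V_k = \mathrm{span}\{f_1,\dots,f_k\}$ be the space on which the Rayleigh ratio $R_M$ is maximized at $\mu_k(M)$, and let $W_\ell = \mathrm{span}\{g_\ell,\dots,g_n\}$ be the space on which $R_N$ is minimized at $\mu_\ell(N)$. Then $\dim V_k = k$ and $\dim W_\ell = n-\ell+1$. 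Since $\dim V_k + \dim W_\ell = k + (n-\ell+1) = n + (k-\ell+1)$, and by hypothesis $k+\ell-1 \le n$ gives $k-\ell+1 \le n - 2\ell + 2$, I instead want a dimension count that forces a common vector with a third space; the cleaner route is to intersect $V_k$, $W_\ell$, and the minimizing space for $M+N$.

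The main step is the following. For the eigenvalue $\mu_{k+\ell-1}(M+N)$, the minimax formula gives
\begin{align*}
\mu_{k+\ell-1}(M+N)=\max_{\dim U = n-(k+\ell-1)+1}\;\min_{0\neq f\in U}\; R_{M+N}(f),
\end{align*}
where $R_{M+N}(f) = \la (M+N)f,f\ra/\la f,f\ra = R_M(f) + R_N(f)$. I would apply this with the specific choice $U = V_k \cap W_\ell$, provided this intersection has dimension at least $n-k-\ell+2$. A dimension count gives $\dim(V_k\cap W_\ell)\ge \dim V_k + \dim W_\ell - n = k + (n-\ell+1) - n = k-\ell+1$; this is not quite enough, so the correct pairing is to use $V_k = \mathrm{span}\{f_1,\dots,f_k\}$ for the $\min$-over-$W$ formula of $M$ and $W_\ell = \mathrm{span}\{g_\ell,\dots,g_n\}$ for $N$, and then for any nonzero $h$ in the intersection one has $R_M(h)\ge \mu_k(M)$ (since $h\in \mathrm{span}\{f_k,\dots,f_n\}$ after the correct reindexing) and $R_N(h)\ge \mu_\ell(N)$. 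Thus $R_{M+N}(h)\ge \mu_k(M)+\mu_\ell(N)$, and choosing $U$ to be this intersection, which has dimension at least $n-(k+\ell-1)+1$, the outer maximum in the minimax formula for $M+N$ yields $\mu_{k+\ell-1}(M+N)\ge \mu_k(M)+\mu_\ell(N)$.

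The hard part will be getting the index bookkeeping exactly right: one must pair the ``$\min$ over $W_{n-k+1}$'' description of $\mu_k(M)$ (using the span of the \emph{top} eigenvectors $f_k,\dots,f_n$) with the analogous description of $\mu_\ell(N)$, and then verify that the intersection of these two large subspaces has dimension at least $n-(k+\ell-1)+1$, so that it is an admissible competitor in the \emph{outer maximum} of the minimax formula for $\mu_{k+\ell-1}(M+N)$. The dimension inequality $\dim(W^M_{n-k+1}\cap W^N_{n-\ell+1})\ge (n-k+1)+(n-\ell+1)-n = n-k-\ell+2 = n-(k+\ell-1)+1$ is precisely what makes the argument close, and the constraint $1\le k,\ell,k+\ell-1\le n$ guarantees all the indices and dimensions are legitimate.
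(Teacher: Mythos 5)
Your final argument is correct and is essentially the paper's proof: both hinge on intersecting the space $W_{n-k+1}$ spanned by the top eigenvectors of $M$ with the space $W_{n-\ell+1}$ for $N$, and on the additivity $R_{M+N}(f)=R_M(f)+R_N(f)$ of the Rayleigh ratio. The only cosmetic difference is that the paper intersects these two spaces with a third, $V_{k+\ell-1}$ for $M+N$ (the dimensions sum to $2n+1>2n$, forcing a common non-zero test vector), whereas you feed the two-space intersection---after trimming it to dimension exactly $n-(k+\ell-1)+1$, a point worth stating explicitly---into the outer maximum of the Courant--Fischer formula for $\mu_{k+\ell-1}(M+N)$; the two bookkeeping schemes are interchangeable.
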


\begin{proof}
If three subspaces $U, V, W\subseteq \C^n$ satisfy $\dim U+\dim V+\dim W>2n$, then $U$, $V$ and $W$ share a non-zero vector. The three subspaces we will use are: the subspace $W_{n-k+1}$ with respect to $M$, the subspace $W_{n-\ell+1}$ with respect to $N$, and the subspace $V_{k+\ell-1}$ with respect to $M+N$. For a common non-zero vector $f_0$, we have
\begin{align*}
\mu_{k+\ell-1}(M+N)\geq R_{M+N}(f_0)=R_M(f_0)+R_N(f_0)\geq\mu_k(M)+\mu_\ell(N)
\end{align*}
as claimed.
\end{proof}

Weyl's inequality provides lower and upper bounds for the eigenvalues of $M+N$, thought of as a perturbation of $M$ by $N$. Let us single out the simplest bounds of this kind.

\begin{cor}
Let $M$ and $N$ be symmetric $n\times n$ matrices. Then:
\begin{align}
\mu_k(M)+\mu_{\min}(N)\leq &\:\mu_k(M+N)\leq \mu_k(M)+\mu_{\max}(N)\label{eq: weyl1}
\end{align}
\end{cor}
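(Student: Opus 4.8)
The plan is to derive the corollary as the two simplest special cases of Weyl's inequality, which has just been established. The entire statement \eqref{eq: weyl1} is a direct substitution of well-chosen index pairs into the master inequality $\mu_{k+\ell-1}(M+N)\geq \mu_k(M)+\mu_\ell(N)$.

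First I would prove the lower bound $\mu_k(M+N)\leq \mu_k(M)+\mu_{\max}(N)$ is not quite what Weyl gives directly; rather, the cleanest route is to get the \emph{left} inequality first by setting $\ell=1$ in Weyl's theorem. With $\ell=1$ we have $\mu_\ell(N)=\mu_1(N)=\mu_{\min}(N)$ and $k+\ell-1=k$, so Weyl reads $\mu_k(M+N)\geq \mu_k(M)+\mu_{\min}(N)$, which is exactly the left half of \eqref{eq: weyl1}.

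Next I would obtain the right inequality by applying the left inequality to the perturbation in the opposite direction. Write $M=(M+N)+(-N)$. The left bound, applied with $M+N$ in the role of the base matrix and $-N$ in the role of the perturbation, gives $\mu_k(M)\geq \mu_k(M+N)+\mu_{\min}(-N)$. Since $\mu_{\min}(-N)=-\mu_{\max}(N)$ (the eigenvalues of $-N$ are the negatives of those of $N$, in reverse order), this rearranges to $\mu_k(M+N)\leq \mu_k(M)+\mu_{\max}(N)$, which is the right half. Combining the two halves completes the proof.

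There is no real obstacle here; the only point requiring a moment's care is the symmetry step $\mu_{\min}(-N)=-\mu_{\max}(N)$, and the observation that the left inequality is itself a symmetric statement that may be reapplied with roles swapped. Both are immediate, so this is essentially a bookkeeping corollary rather than a genuinely new argument.

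\begin{proof}
Both inequalities are special cases of Weyl's inequality. For the left inequality, take $\ell=1$ in Weyl's theorem. Then $\mu_\ell(N)=\mu_1(N)=\mu_{\min}(N)$ and $k+\ell-1=k$, so we obtain
\begin{align*}
\mu_k(M+N)\geq \mu_k(M)+\mu_{\min}(N).
\end{align*}
For the right inequality, apply the inequality just proved with $M+N$ in place of $M$ and $-N$ in place of $N$, using $M=(M+N)+(-N)$:
\begin{align*}
\mu_k(M)\geq \mu_k(M+N)+\mu_{\min}(-N).
\end{align*}
Since the eigenvalues of $-N$ are the negatives of those of $N$ listed in reverse order, we have $\mu_{\min}(-N)=-\mu_{\max}(N)$. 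Substituting and rearranging yields
\begin{align*}
\mu_k(M+N)\leq \mu_k(M)+\mu_{\max}(N),
\end{align*}
as claimed.
\end{proof}
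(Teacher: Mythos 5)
Your proof is correct and follows exactly the paper's own argument: the left inequality is the case $\ell=1$ of Weyl's inequality, and the right inequality follows by applying it to $M+N$ and $-N$ together with the identity $\mu_{\min}(-N)=-\mu_{\max}(N)$. Nothing to add.
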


\begin{proof} The lower bound $\mu_k(M)+\mu_{\min}(N)\leq \mu_k(M+N)$ is the case $\ell=1$ of Weyl's inequality. Replacing $M$ by $M+N$, and $N$ by $-N$, we get $\mu_k(M+N)+\mu_{\min}(-N)\leq \mu_k(M)$. As $\mu_{\min}(-N)=-\mu_{\max}(N)$, this is precisely the upper bound $\mu_k(M+N)\leq \mu_k(M)+\mu_{\max}(N)$.
\end{proof}

Here is a simple application to graph eigenvalues. Recall that, for a $d$-regular graph, the adjacency and the laplacian eigenvalues are related by $\alpha_k+\lambda_k=d$. In general, $\alpha_k+\lambda_k$ is confined to the interval determined by the minimal and the maximal vertex degrees.

\begin{prop}\label{prop: niki} 
We have $d_{\min}\leq \alpha_k+\lambda_k\leq d$.
\end{prop}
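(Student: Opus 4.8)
The plan is to read the bound off from the matrix identity $A+L=\mathrm{diag}(\mathrm{deg})$ using Weyl's inequality. Write $D=\mathrm{diag}(\mathrm{deg})$, so that $D=A+L$ exhibits $D$ as a sum of two real symmetric matrices. The key point about $D$ is that it is diagonal: its eigenvalues are exactly the vertex degrees, and hence $\mu_{\min}(D)=d_{\min}$ and $\mu_{\max}(D)=d$. Everything else is a matter of lining up the eigenvalue orderings. Since the adjacency eigenvalues are indexed in decreasing order while Weyl's theorem is stated in the increasing convention $\mu_1\leq\ldots\leq\mu_n$, I would record the dictionary $\alpha_k=\mu_{n-k+1}(A)$, whereas the laplacian eigenvalues already agree, $\lambda_k=\mu_k(L)$. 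Therefore $\alpha_k+\lambda_k=\mu_{n-k+1}(A)+\mu_k(L)$, and the two indices $n-k+1$ and $k$ sum to $n+1$ — precisely the configuration in which Weyl's inequality degenerates to an extreme eigenvalue of the sum.

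For the upper bound I would apply the stated inequality $\mu_{i+j-1}(M+N)\geq\mu_i(M)+\mu_j(N)$ with $M=A$, $N=L$, $i=n-k+1$, and $j=k$. Then $i+j-1=n$, so the admissibility conditions $1\leq i,j,\,i+j-1\leq n$ hold, and the inequality reads $d=\mu_n(D)\geq\mu_{n-k+1}(A)+\mu_k(L)=\alpha_k+\lambda_k$, which is the right-hand bound.

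For the lower bound I would invoke the dual form of Weyl's inequality, namely $\mu_{p+q-n}(M+N)\leq\mu_p(M)+\mu_q(N)$ valid whenever $p+q\geq n+1$. This is obtained from the stated inequality exactly as in the proof of the Weyl corollary: replace $M,N$ by $-M,-N$ and use $\mu_k(-X)=-\mu_{n-k+1}(X)$. Taking $p=n-k+1$ and $q=k$ gives $p+q-n=1$, so $d_{\min}=\mu_1(D)\leq\mu_{n-k+1}(A)+\mu_k(L)=\alpha_k+\lambda_k$, completing the two-sided estimate.

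The only genuine obstacle is the index bookkeeping: one must check that the chosen indices lie in the admissible ranges for Weyl's inequality and its dual, and that the negation trick correctly flips the inequality while sending $\mu_p$ to $\mu_{n-p+1}$. Once the orderings are aligned, both inequalities fall out of the single boundary case $i+j-1=n$ (respectively $p+q-n=1$) of Weyl's theorem, with no computation beyond substituting indices.
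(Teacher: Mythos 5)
Your proof is correct, and it rests on exactly the same two ingredients as the paper's: the identity $A+L=\mathrm{diag}(\mathrm{deg})$ and Weyl's inequality. The paper packages the argument more economically by applying the corollary \eqref{eq: weyl1} to $M=-A$ and $N=\mathrm{diag}(\mathrm{deg})$, so that $M+N=L$; since the opposite ordering conventions give $\mu_k(-A)=-\alpha_k$ at once, both bounds drop out in a single line, with no need for your boundary-index cases $i+j-1=n$ and $p+q-n=1$ or the separate derivation of the dual form of Weyl's inequality.
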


\begin{proof}
We apply the inequality \eqref{eq: weyl1} to $M=-A$, and $N=\mathrm{diag}(\mathrm{deg})$, the diagonal matrix recording the degrees. Thus $M+N=L$. The relevant eigenvalues are: $\mu_k(M)=-\alpha_k$, $\mu_k(M+N)=\lambda_k$, and $\mu_{\min}(N)=d_{\min}$, $\mu_{\max}(N)=d$. Thus $-\alpha_k+d_{\min}\leq \lambda_k\leq -\alpha_k+d$, as desired.
\end{proof}

\begin{exer}\label{exer: aronszajn} Let $M$ be a symmetric matrix of size $n$, partitioned as
\begin{align*}
M=\begin{pmatrix}
M' & N\\
N^t & M''
\end{pmatrix}
\end{align*} 
where $M'$ and $M''$ are square, and hence symmetric, matrices of size $n'$ respectively $n''$. Show that $\mu_1+\mu_{k+\ell}\leq \mu'_k+\mu''_\ell$ for $1\leq k\leq n'$, $1\leq \ell\leq n''$.
\end{exer}

\begin{notes} Proposition~\ref{prop: niki} is an observation of Nikiforov (\emph{Eigenvalues and extremal degrees of graphs}, Linear Algebra Appl. 2006). Exercise~\ref{exer: aronszajn} is a finite-dimensional instance of a result due to Aronszajn (\emph{Rayleigh-Ritz and A. Weinstein methods for approximation of eigenvalues I. Operators in a Hilbert space}, Proc. Nat. Acad. Sci. U. S. A. 1948). 
\end{notes}

\bigskip
\subsection*{Subgraphs} \label{sec: subgraphs}\
\medskip

How do the eigenvalues of a graph change upon passing to a subgraph? For the largest adjacency or laplacian eigenvalues, we have addressed this question in Theorem~\ref{thm: redeem}. We can now extend our scope to the entire spectrum, thanks to the Cauchy and Weyl inequalities. In this section, we dispense with the usual connectivity assumption. We maintain, however, the non-degeneracy assumption that $n\geq 3$.

Starting with an ambient graph, one gets a subgraph by two operations: removing vertices, and removing edges. We will take a close look at each one, proving in particular the following:

\begin{thm}\label{thm: easy subgraph}
If $X'$ is a spanning subgraph of $X$, then the laplacian eigenvalues satisfy $\lambda_k'\leq \lambda_k$. If $X'$ is an induced subgraph of $X$, then the adjacency eigenvalues satisfy $\alpha_k'\leq \alpha_k$.
\end{thm}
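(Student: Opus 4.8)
The plan is to prove each of the two assertions separately, reducing both to the Cauchy interlacing theorem, which is the natural tool since passing to a subgraph of either kind amounts to deleting rows and columns from the relevant matrix.

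First I would treat the induced-subgraph statement, as it is the cleaner of the two. If $X'$ is an induced subgraph of $X$ on a subset $V'\subseteq V$, then the adjacency matrix $A'$ of $X'$ is exactly the principal submatrix of $A$ obtained by keeping the rows and columns indexed by $V'$ and deleting the rest. (This is precisely where the \emph{induced} hypothesis is used: for an induced subgraph every edge of $X$ joining two vertices of $V'$ is retained, so the surviving entries of $A$ are unchanged.) Deleting $n-n'$ rows and the corresponding columns one at a time, the Cauchy interlacing theorem gives at each stage that the eigenvalues of the smaller matrix interlace those of the larger. Iterating, I would conclude that $\alpha_k'\leq\alpha_{k+(n-n')}\leq\alpha_{\max}$, and in particular $\alpha_k'\leq\alpha_k$ for the index ranges where $\alpha_k$ is defined. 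The key point is simply that one application of interlacing yields $\mu_k'\leq\mu_{k+1}$, so repeated deletion only pushes each eigenvalue upward in $X$; the inequality $\alpha_k'\leq\alpha_k$ follows directly.

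For the spanning-subgraph statement the matrix to watch is the laplacian. If $X'$ is a spanning subgraph of $X$ then $V'=V$, so both laplacians are $n\times n$, and the natural relation is $L=L'+L''$, where $L''$ is the laplacian of the graph $X''$ consisting of the deleted edges (on the same vertex set). Here I would invoke formula \eqref{eq: laplacian}, which shows that $\la L''f,f\ra=\sum_{\{u,v\}\in E''}|f(u)-f(v)|^2\geq 0$ for every $f$, so $L''$ is positive semidefinite and hence $\mu_{\min}(L'')\geq 0$. Weyl's inequality in the form \eqref{eq: weyl1}, applied with $M=L'$ and $N=L''$, then gives $\lambda_k=\mu_k(L'+L'')\geq\mu_k(L')+\mu_{\min}(L'')\geq\lambda_k'$, which is exactly the desired bound $\lambda_k'\leq\lambda_k$.

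The main obstacle, such as it is, is bookkeeping rather than conceptual depth: I must be careful about indexing conventions, since the laplacian eigenvalues are ordered increasingly ($\lambda_1\leq\cdots\leq\lambda_n$) while the adjacency eigenvalues are ordered decreasingly ($\alpha_1\geq\cdots\geq\alpha_n$), so the direction of interlacing has to be matched to the ordering in each case to land the correct inequality. I would also want to note explicitly why the induced hypothesis is needed for the adjacency claim (so that $A'$ is genuinely a principal submatrix of $A$) and why the spanning hypothesis is needed for the laplacian claim (so that $L-L'$ is an honest laplacian and therefore positive semidefinite); mixing the two hypotheses up is the only real trap. Both arguments are short once the correct decomposition is in hand.
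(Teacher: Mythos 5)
Your proposal is correct and follows essentially the paper's route: the paper obtains the theorem by iterating its single-vertex removal bound \eqref{eq: ver-ad} (Cauchy interlacing applied to the principal submatrix $A'$) and its single-edge removal bound \eqref{eq: edge-lap1} (Weyl's inequality \eqref{eq: weyl1}), and your one-shot decomposition $L=L'+L''$ with $L''$ positive semidefinite by \eqref{eq: laplacian} is just the aggregated form of the same Weyl argument. One bookkeeping slip to fix: in the paper's decreasing convention for adjacency eigenvalues the iterated interlacing reads $\alpha_{k+(n-n')}\leq \alpha_k'\leq \alpha_k$, so your intermediate chain $\alpha_k'\leq\alpha_{k+(n-n')}\leq\alpha_{\max}$ is written backwards, though the conclusion $\alpha_k'\leq\alpha_k$, which comes from repeatedly applying the upper interlacing bound at fixed $k$, is the right one.
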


A spanning subgraph is a subgraph having all the vertices of the ambient graph. An induced subgraph is a subgraph having all the edges inherited from the ambient graph. The above theorem conforms with the philosophy that laplacian eigenvalues sense the edges, while adjacency eigenvalues sense the vertices.

The next two theorems give bounds for both the adjacency and the laplacian eigenvalues, when either a vertex or an edge is removed from the ambient graph. 

\begin{thm}
Let $X'$ be a graph obtained by removing a vertex from the graph $X$. Then its adjacency and its laplacian eigenvalues can be bounded as follows: 
\begin{align}
\alpha_{k+1}\leq &\:  \alpha_k'\leq \alpha_k\label{eq: ver-ad}
\end{align}
respectively
\begin{align}
\lambda_k-1\leq \lambda_k'\leq \lambda_{k+1}\label{eq: ver-lap}
\end{align}
\end{thm}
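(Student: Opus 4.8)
The plan is to realize the removal of a vertex as a principal submatrix operation, so that both Cauchy interlacing and Weyl's inequality apply directly. Let $A$ and $L$ denote the adjacency and laplacian matrices of $X$, indexed so that the removed vertex, call it $w$, corresponds to the first row and column. Removing $w$ from $X$ deletes that row and column from $A$, yielding exactly the adjacency matrix $A'$ of $X'$, so the adjacency claim \eqref{eq: ver-ad} is literally Cauchy's interlacing theorem applied to $M=A$: we get $\alpha_k=\mu_{n-k+1}(A)$ and $\alpha_k'=\mu_{(n-1)-k+1}(A')$ after converting from the increasing to the decreasing convention, and the interlacing inequalities $\mu_k\le\mu_k'\le\mu_{k+1}$ translate into $\alpha_{k+1}\le\alpha_k'\le\alpha_k$. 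The only care needed is the index bookkeeping between our decreasing adjacency ordering $\alpha_1\ge\cdots\ge\alpha_n$ and the increasing ordering $\mu_1\le\cdots\le\mu_n$ used in the Cauchy statement.

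The laplacian bound \eqref{eq: ver-lap} is the genuinely interesting part, because deleting $w$ from $L$ does \emph{not} give $L'$: the diagonal degree entries of the surviving neighbours of $w$ each drop by $1$. So I would proceed in two stages. First, let $L^\circ$ denote the principal submatrix of $L$ obtained by deleting the row and column of $w$. Cauchy interlacing applied to $M=L$ gives $\lambda_k\le\lambda_k^\circ\le\lambda_{k+1}$ in the increasing laplacian convention. Second, I relate $L^\circ$ to the actual laplacian $L'$ of $X'$: we have $L^\circ = L' + D$, where $D$ is the diagonal matrix that equals $1$ at each neighbour of $w$ and $0$ elsewhere. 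Indeed $L'$ records the correct (reduced) degrees of $X'$, whereas $L^\circ$ still carries the old degrees, and the old degree exceeds the new degree by exactly $1$ at each former neighbour of $w$. Since $D$ is a diagonal $0$--$1$ matrix, $\mu_{\min}(D)=0$ and $\mu_{\max}(D)\le 1$.

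Now I apply Weyl's inequality, in the form \eqref{eq: weyl1}, to the decomposition $L^\circ = L' + D$. This gives $\mu_k(L')+\mu_{\min}(D)\le \mu_k(L^\circ)\le \mu_k(L')+\mu_{\max}(D)$, that is $\lambda_k'\le \lambda_k^\circ\le \lambda_k'+1$, hence $\lambda_k^\circ-1\le\lambda_k'\le\lambda_k^\circ$. Chaining this with the Cauchy bounds $\lambda_k\le\lambda_k^\circ\le\lambda_{k+1}$ yields, on the one hand, $\lambda_k'\le\lambda_k^\circ\le\lambda_{k+1}$, and on the other hand, $\lambda_k'\ge\lambda_k^\circ-1\ge\lambda_k-1$. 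Together these are exactly the claimed inequalities $\lambda_k-1\le\lambda_k'\le\lambda_{k+1}$.

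The step I expect to be the main obstacle is the clean identification $L^\circ = L' + D$ together with verifying $0\le D\le I$; this is the one place where a genuine structural observation about the graph enters, as opposed to pure matrix generalities. Everything else is a matter of correctly matching indices across the two eigenvalue orderings and invoking the already-established Cauchy and Weyl theorems. I would write out the neighbour-by-neighbour comparison of degrees carefully, since that is where an off-by-one error would most easily creep in, and I would explicitly note that $D$ has entries in $\{0,1\}$ so that its extreme eigenvalues are $0$ and at most $1$, which is precisely what makes the perturbation bound tight enough to produce the constant $1$ in the statement.
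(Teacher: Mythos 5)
Your proof is correct and is essentially the paper's own argument: the paper writes $L'=M+N$, where $M$ is the principal submatrix of $L$ and $N$ is the diagonal matrix with $-1$ at each neighbour of the removed vertex, which is exactly your decomposition $L^\circ = L'+D$ rearranged, followed by the same combination of Weyl's inequality \eqref{eq: weyl1} and Cauchy interlacing; the adjacency part is likewise identical. One cosmetic caveat: when the removed vertex is adjacent to all others, $D=I$ and $\mu_{\min}(D)=1$ rather than $0$ (the paper notes the mirror-image fact about $N$ in passing), but since the argument only needs $0\le\mu_{\min}(D)$ and $\mu_{\max}(D)\le 1$, your conclusion is unaffected.
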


\begin{proof}
Label the vertices by $1,2,\dots, n$ so that the removed vertex is $1$. The adjacency matrix $A'$ is obtained by deleting the first row and column of the adjacency matrix $A$. Cauchy's interlacing theorem is directly applicable, yielding \eqref{eq: ver-ad}.

The laplacian matrix $L'$ is given by $L'=M+N$, where $M$ denotes the matrix obtained by deleting the first row and column of the laplacian matrix $L$, and $N$ is the following matrix:
\begin{align*}
\begin{pmatrix}
-I_{\mathrm{deg}(1)} &  \\
   &  0_{n-1-\mathrm{deg}(1)}  
\end{pmatrix}
\end{align*}
Here $\mathrm{deg}(1)$ denotes, as usual, the degree of the vertex $1$, and we furthermore assume that the neighbours of $1$ come right after ($2$, $3$ and so on). The smallest eigenvalue of $N$ is $-1$. The largest is $0$, except when $\mathrm{deg}(1)=n-1$ in which case it is $-1$. By \eqref{eq: weyl1}, we have $\mu_k(M)-1\leq \lambda'_k\leq \mu_k(M)$ for $k=1,\dots,n-1$. On the other hand, Cauchy's interlacing theorem says that $\lambda_k\leq \mu_k(M)\leq \lambda_{k+1}$ for $k=1,\dots,n-1$. The two-sided bounds \eqref{eq: ver-lap} follow. 
\end{proof}

\begin{thm}
Let $X'$ be a graph obtained by removing an edge from the graph $X$. Then its adjacency and its laplacian eigenvalues can be bounded as follows: 
\begin{align}
\alpha_k-1\leq &\:  \alpha_k'\leq \alpha_k+1\label{eq: edge-ad1}
\end{align}
respectively
\begin{align}
\lambda_k-2\leq &\:  \lambda_k'\leq \lambda_k\label{eq: edge-lap1}
\end{align}
\end{thm}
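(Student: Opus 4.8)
The plan is to treat removal of an edge as a rank-one (for the adjacency matrix) or rank-one-like (for the laplacian) perturbation, and then invoke Weyl's inequality exactly as in the vertex-removal proof. Let $e=\{u,v\}$ be the removed edge. First I would compare $A$ and $A'$: since $A'$ differs from $A$ only in the two symmetric entries $A(u,v)=A(v,u)=1$ which become $0$, we have $A=A'+N$ where $N$ is the symmetric matrix with $N(u,v)=N(v,u)=1$ and all other entries zero. This $N$ is a single off-diagonal pair, so its eigenvalues are $+1$, $-1$, and $0$ with multiplicity $n-2$; in particular $\mu_{\min}(N)=-1$ and $\mu_{\max}(N)=1$.

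For the adjacency bounds \eqref{eq: edge-ad1}, I would apply the corollary to Weyl's inequality \eqref{eq: weyl1} with $M=A'$ and this $N$, giving
\begin{align*}
\alpha_k'+\mu_{\min}(N)\leq \alpha_k(A'+N)=\alpha_k\leq \alpha_k'+\mu_{\max}(N),
\end{align*}
that is $\alpha_k'-1\leq \alpha_k\leq \alpha_k'+1$, which rearranges to the claimed $\alpha_k-1\leq \alpha_k'\leq \alpha_k+1$. (One must be a little careful that the paper indexes adjacency eigenvalues in decreasing order, whereas Weyl's theorem as stated uses increasing order; I would simply note that Weyl's inequality and its corollary are symmetric under reversing the ordering, so the two-sided bound \eqref{eq: weyl1} holds verbatim for $\alpha_k$.)

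For the laplacian bounds \eqref{eq: edge-lap1}, the perturbation is cleaner. Removing the edge $\{u,v\}$ lowers $\deg(u)$ and $\deg(v)$ each by one and changes the two off-diagonal entries from $-1$ to $0$. Thus $L=L'+P$, where $P$ is the laplacian of the single-edge graph on $\{u,v\}$: it has $P(u,u)=P(v,v)=1$, $P(u,v)=P(v,u)=-1$, and zeros elsewhere. This $P$ is positive semidefinite with eigenvalues $2$, $0$, and $0$ with multiplicity $n-2$, so $\mu_{\min}(P)=0$ and $\mu_{\max}(P)=2$. Applying \eqref{eq: weyl1} with $M=L'$ and $N=P$ gives $\lambda_k'+0\leq \lambda_k\leq \lambda_k'+2$, which is exactly $\lambda_k-2\leq \lambda_k'\leq \lambda_k$.

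The main obstacle, such as it is, lies not in any deep estimate but in correctly identifying the perturbation matrices and their extreme eigenvalues, and in reconciling the two opposite eigenvalue-ordering conventions (decreasing for $\alpha$, increasing for $\lambda$) with the increasing convention used in the statement of Weyl's inequality. Once $N$ and $P$ are written down explicitly and their spectra $\{-1,0,\dots,0,1\}$ and $\{0,\dots,0,2\}$ are read off, both pairs of bounds drop out of the single corollary \eqref{eq: weyl1} with no computation. I would present the laplacian case first or second interchangeably, but I expect the cleanest exposition is to do adjacency via $\mu_{\min}(N)=-1,\mu_{\max}(N)=1$ and laplacian via $\mu_{\min}(P)=0,\mu_{\max}(P)=2$ side by side.
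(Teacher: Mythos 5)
Your proof is correct and matches the paper's argument: the paper likewise writes the edge removal as a perturbation by a matrix with spectrum $\{\pm 1, 0^{\, n-2}\}$ (adjacency) respectively $\{-2, 0^{\, n-1}\}$ (laplacian, the negative of your $P$) and applies the Weyl corollary \eqref{eq: weyl1}. The only cosmetic differences are the direction of the perturbation ($A=A'+N$ versus the paper's $A'=A+N$) and your handling of the ordering convention via symmetry, where the paper instead substitutes $n-k$ for $k$; both resolutions are valid.
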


\begin{proof} Label the vertices by $1,2,\dots, n$ so that the removed edge is the one between $1$ and $2$. The adjacency matrix of $X'$ and that of $X$ are related by $A'=A+N$, where $A$ is the adjacency matrix of $X$, and $N$ is the following matrix:
\begin{align*}
\begin{pmatrix}
0 & -1 &  \\
-1 & 0 &     \\
 &  &  0_{n-2}  
\end{pmatrix}
\end{align*}
This matrix has simple eigenvalues $\pm 1$, as well as $0$ with multiplicity $n-2$. The bound \eqref{eq: weyl1}, with $n-k$ in place of $k$, yields \eqref{eq: edge-ad1}.

The laplacian matrix of $X'$ and that of $X$ are related by $L'=L+N$, where $N$ is the following matrix:
\begin{align*}
\begin{pmatrix}
-1 & -1 &  \\
-1 & -1 &     \\
 &  &  0_{n-2}  
\end{pmatrix}
\end{align*}
The eigenvalues of this matrix are $-2$ with multiplicity $1$, and $0$ with multiplicity $n-1$.  Now \eqref{eq: weyl1} gives \eqref{eq: edge-lap1}.
\end{proof}

Revisiting the proof, we see that the adjacency part of the edge-removal theorem can be generalized.

\begin{thm}
Let $Y$ be a subgraph of $X$, and let $X'$ be the graph obtained from $X$ by removing the edges of $Y$. Then the adjacency eigenvalues of $X'$ satisfy the following bounds: 
\begin{align}
\alpha_k-\alpha_{\max}(Y)\leq &\:  \alpha_k'\leq \alpha_k-\alpha_{\min}(Y)\label{eq: edge-adH}
\end{align}
\end{thm}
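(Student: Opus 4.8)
The statement generalizes the adjacency part of the edge-removal theorem, replacing the single deleted edge with the edges of an arbitrary subgraph $Y$. The plan is to write the adjacency matrices of $X$ and $X'$ as a difference, identify the discrepancy as the (extended) adjacency matrix of $Y$, and then apply Weyl's inequality exactly as in the single-edge case.

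First I would set up notation carefully. Let $A$ be the adjacency matrix of $X$, indexed by the full vertex set $V$ of $X$, and let $A'$ be the adjacency matrix of $X'$, indexed by the same set $V$. Removing the edges of $Y$ from $X$ kills exactly those adjacencies recorded in $Y$, so if we let $A_Y$ denote the adjacency matrix of $Y$, \emph{padded with zero rows and columns} for all vertices of $X$ not in $Y$, then the relation
\begin{align*}
A'=A-A_Y
\end{align*}
holds entrywise. The one subtlety worth a sentence is that $Y$ is a subgraph of $X$, so every edge of $Y$ is genuinely an edge of $X$; hence subtracting $A_Y$ zeroes out precisely the entries that ought to disappear and disturbs nothing else.

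Next I would invoke Weyl's inequality in its two-sided corollary form \eqref{eq: weyl1}, applied to $M=A'$ and $N=A_Y$, so that $M+N=A$. The padding does not change the spectrum of $A_Y$ beyond appending zeros, and in particular $\mu_{\min}(A_Y)=\alpha_{\min}(Y)$ and $\mu_{\max}(A_Y)=\alpha_{\max}(Y)$ whenever $Y$ has at least one isolated-from-$Y$ vertex in $X$; if $Y$ spans $X$ these are literally the extremal eigenvalues of $Y$, and if not the extra zeros lie weakly between them, so the bounds $\alpha_{\min}(Y)\le 0\le\alpha_{\max}(Y)$ keep \eqref{eq: weyl1} valid with the stated endpoints. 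Reading \eqref{eq: weyl1} off with $N=A_Y$ gives
\begin{align*}
\alpha_k(A')+\alpha_{\min}(Y)\leq \alpha_k(A)\leq \alpha_k(A')+\alpha_{\max}(Y),
\end{align*}
and rearranging both inequalities yields exactly $\alpha_k-\alpha_{\max}(Y)\leq \alpha_k'\leq \alpha_k-\alpha_{\min}(Y)$, which is \eqref{eq: edge-adH}.

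The main obstacle, such as it is, lies not in any hard estimate but in the bookkeeping around the padding of $A_Y$: one must confirm that extending $Y$'s adjacency matrix by zeros inserts only the eigenvalue $0$ (with appropriate multiplicity) and therefore cannot shift $\mu_{\min}$ below $\alpha_{\min}(Y)$ nor $\mu_{\max}$ above $\alpha_{\max}(Y)$. Once that is settled, the proof is a direct specialization of Weyl's inequality, and indeed the single-edge bound \eqref{eq: edge-ad1} is recovered by taking $Y=K_2$, whose adjacency eigenvalues are $\pm 1$.
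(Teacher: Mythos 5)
Your proof is correct and follows essentially the same route the paper intends: the paper proves the single-edge case by writing $A'=A+N$ and applying Weyl's corollary \eqref{eq: weyl1}, and its generalization is exactly your decomposition $A'=A-A_Y$ with the same inequality. Your worry about the zero-padding of $A_Y$ resolves cleanly in all cases, since the adjacency spectrum of any graph has trace zero, forcing $\alpha_{\min}(Y)\leq 0\leq \alpha_{\max}(Y)$, so appending zero eigenvalues never moves the extremes.
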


\begin{ex} A graph is said to be \emph{hamiltonian} if it contains a spanning cycle, meaning a cycle that visits every vertex. Graphically, this means that the graph can be drawn as a cycle with additional chords. 

The Petersen graph is not hamiltonian. This can be argued combinatorially, but a spectral proof fits the discussion. Let us assume that the Petersen graph has a hamiltonian cycle. Such a cycle $C_{10}$ is obtained by removing $5$ edges from the Petersen graph, so we will rely on the edge-removal theorem. Specifically, we could use the left-hand bound in \eqref{eq: edge-lap1}, saying that $\lambda_k(C_{10})\leq \lambda_k(\mathrm{Pet})$ for all $k=1,\dots,10$. Even better, we can use \eqref{eq: edge-adH}. In this case, $Y$ consists of $5$ disjoint copies of $K_2$, so: 
\begin{align*}
\alpha_k(\mathrm{Pet})-1\leq  \alpha_k(C_{10})\leq \alpha_k(\mathrm{Pet})+1, \qquad k=1,\dots,10\tag{$\dagger$}
\end{align*} 
Note that the right-hand bound of ($\dagger$) is the laplacian bound mentioned before.

Listing the adjacency eigenvalues of $C_{10}$ and those of the Petersen graph, we find that ($\dagger$) fails for $k=6$ and $k=7$.

\begin{align*}
\begin{array}{rccccccccccc}
k \quad \quad &  10  &   9   & 8 &  7 & 6 & 5 &  4 & 3 &  2 &  1\\[.4em]\cline{2-11}\\
\alpha_k(C_{10})\quad \quad & -2 & -1.6..  & -1.6.. & -0.6.. & -0.6.. & 0.6.. & 0.6.. & 1.6.. & 1.6.. & 2\\[.4em]
\alpha_k(\mathrm{Pet})\quad  \quad & -2 & -2  & -2 & -2 & 1 & 1 & 1 & 1 & 1 & 3
\end{array}
\end{align*}
\end{ex}

\smallskip

\begin{exer}\label{exer: pendant} Show that a tree has
\begin{align*}
\lambda_2\leq 2-2\cos \frac{\pi}{\delta +1}.
\end{align*}
Deduce that $\lambda_2\leq 1$ with equality if and only if the tree is a star graph.
\end{exer}

\begin{notes}
A spectral proof of the fact that the Petersen graph is not hamiltonian was first given by Mohar (\emph{A domain monotonicity theorem for graphs and Hamiltonicity}, Discrete Appl. Math. 1992). A simpler argument, essentially the one explained above, was noticed by van den Heuvel (\emph{Hamilton cycles and eigenvalues of graphs}, Linear Algebra Appl. 1995).

The result of Exercise~\ref{exer: pendant} is due to Grone, Merris, and Sunder (\emph{The Laplacian spectrum of a graph}, SIAM J. Matrix Anal. Appl. 1990).
\end{notes}

\bigskip
\section{Spectral bounds}

The focus of this section is on bounds relating graph eigenvalues to graph invariants and other combinatorial properties.

\bigskip
\subsection*{Chromatic number and independence number}\label{sec: chromatic}\
\medskip

 We start with one of the earliest results in spectral graph theory. It gives a spectral improvement of the upper bound $\chi\leq 1+d$.
 
\begin{thm}[Wilf] \label{thm: wilf}We have $\chi\leq 1+\alpha_{\max}$.
\end{thm}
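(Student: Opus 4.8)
The plan is to mimic the inductive argument used for Proposition~\ref{prop: chromatic bound}, where we proved $\chi\leq d+1$ by deleting a vertex, but to run the induction on the spectral quantity $\alpha_{\max}$ instead of on $d$. The key enabling fact is the monotonicity of $\alpha_{\max}$ under passage to induced subgraphs, which is exactly the adjacency part of Theorem~\ref{thm: redeem} (or the more refined Theorem~\ref{thm: easy subgraph}): if $X'$ is a proper induced subgraph of $X$, then $\alpha'_{\max}\leq \alpha_{\max}$. This replaces the role that the trivial bound $d(X')\leq d(X)$ played in the degree argument.

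First I would set up an induction on the number of vertices $n$. The base case $n=1$ (or any small graph) is immediate since a single vertex needs one colour and $\alpha_{\max}\geq 0$. For the inductive step, I would use the fact that $\alpha_{\max}\geq d_{\mathrm{ave}}$, established earlier: since the average degree is at most the maximum, but more usefully, since $\alpha_{\max}\geq d_{\mathrm{ave}}=\tfrac{1}{n}\sum_v\deg(v)$, there must exist at least one vertex $v$ whose degree satisfies $\deg(v)\leq d_{\mathrm{ave}}\leq \alpha_{\max}$. This is the crucial selection step: we pick a vertex of below-average degree rather than an arbitrary one. Delete this vertex to form the induced subgraph $X'$. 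By Theorem~\ref{thm: redeem} we have $\alpha_{\max}(X')\leq \alpha_{\max}(X)$, so by the inductive hypothesis $X'$ can be coloured with $1+\alpha_{\max}(X')\leq 1+\alpha_{\max}(X)$ colours.

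Now I would complete the colouring by reinstating $v$. Since $v$ has $\deg(v)\leq \alpha_{\max}(X)$ neighbours, and these occupy at most $\lfloor\alpha_{\max}\rfloor\leq \alpha_{\max}$ of the available $1+\alpha_{\max}(X)$ colours, at least one colour remains free for $v$. This finishes the induction. One should be slightly careful that the bound is applied with the floor, i.e.\ the number of colours is an integer $\lceil 1+\alpha_{\max}\rceil$ or that we interpret $\chi\leq 1+\alpha_{\max}$ as $\chi\leq 1+\lfloor\alpha_{\max}\rfloor$ whenever needed; since $\deg(v)$ is an integer bounded by $\alpha_{\max}$, it is in fact bounded by $\lfloor\alpha_{\max}\rfloor$, leaving a colour available.

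The main obstacle, and the only genuinely spectral ingredient, is ensuring the existence of a low-degree vertex in a way that interacts correctly with the induced-subgraph monotonicity. The degree bound must come from a spectral quantity that does not increase under vertex deletion, and $d_{\mathrm{ave}}$ itself is \emph{not} monotone, which is why I route the argument through the inequality $\deg(v)\leq d_{\mathrm{ave}}\leq \alpha_{\max}$ applied within each graph individually rather than trying to track $d_{\mathrm{ave}}$ across the induction. The elegance is that $\alpha_{\max}$ simultaneously dominates a minimum-degree vertex (via the average) \emph{and} behaves monotonically under induced subgraphs, so the classical greedy-deletion argument goes through verbatim with $\alpha_{\max}$ in place of $d$.
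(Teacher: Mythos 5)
Your proposal is correct and is essentially the paper's own proof: the paper likewise inducts on the size, deletes a vertex $v$ of minimal degree, applies the monotonicity $\alpha_{\max}'\leq\alpha_{\max}$ from Theorem~\ref{thm: redeem}, and frees a colour for $v$ via the chain $\deg(v)\leq d_{\mathrm{ave}}\leq\alpha_{\max}$. Your extra remarks on the floor of $\alpha_{\max}$ and on why one must not track $d_{\mathrm{ave}}$ itself through the induction are sound refinements of the same argument, not a different route.
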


\begin{proof} 
We argue as in the proof of Theorem~\ref{prop: chromatic bound}, inducting on the size. Let $X'$ be the (possibly disconnected) graph obtained by removing a vertex $v$ of minimal degree. The induction hypothesis, and the inequality $\alpha_{\max}'\leq \alpha_{\max}$, imply that $X'$ can be coloured using at most $1+ \alpha_{\max}$ colours. Since $\deg(v)\leq d_\mathrm{ave}\leq \alpha_{\max}$, there is at least one more colour available to complete the colouring.
\end{proof}

One may wonder whether the even better bound $\chi\leq 1+d_\mathrm{ave}$ holds. The above proof does not work since $d_\mathrm{ave}$, unlike $d$ and $\alpha_{\max}$, may increase by passing to subgraphs. And a counterexample is not hard to find: the graph obtained from $K_n$ by attaching a pendant edge at each vertex has $\chi=n$ and $d_\mathrm{ave}=\frac{1}{2}(n+1)$.

The next result gives a lower bound for the chromatic number in terms of adjacency eigenvalues. Recall that non-trivial graphs have $\alpha_{\min}<0$.

\begin{thm}[Hoffman]\label{thm: hoffman2}
We have: 
\begin{align*}
\chi\geq 1+\frac{\alpha_{\max}}{-\alpha_{\min}}
\end{align*}
\end{thm}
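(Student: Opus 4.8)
The plan is to realize $\alpha_{\max}$ inside a small symmetric matrix whose trace is forced to vanish by the colouring, and then exploit that the eigenvalues of this small matrix sum to zero. Fix an optimal colouring, so the vertex set splits into $c=\chi$ nonempty independent classes $V_1,\dots,V_c$. Let $u$ be the positive eigenfunction of $\alpha_{\max}$ furnished by Theorem~\ref{thm: simple-positive}, and for each class let $y_i$ be the function equal to $u$ on $V_i$ and $0$ elsewhere. Since the $y_i$ have disjoint supports and $u>0$, they are $c$ nonzero, mutually orthogonal vectors; let $U$ be the $c$-dimensional space they span, and note that $u=y_1+\dots+y_c\in U$.

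Next I would compress $A$ to $U$. Writing $e_i=y_i/\|y_i\|$ for the resulting orthonormal basis, form the $c\times c$ symmetric matrix $C$ with entries $C_{ij}=\la Ae_j,e_i\ra$. A direct check shows that for $f\in U$ with coordinate vector $a$ one has $\la Af,f\ra=\la Ca,a\ra$ and $\la f,f\ra=\la a,a\ra$, so the Rayleigh ratio of $C$ on $\C^c$ coincides with the Rayleigh ratio of $A$ restricted to the subspace $U$. The diagonal entries vanish, $C_{ii}=\la Ay_i,y_i\ra/\|y_i\|^2=0$, precisely because each $V_i$ is independent; hence $\operatorname{Tr}C=0$ and the eigenvalues $\gamma_1\ge\dots\ge\gamma_c$ of $C$ sum to $0$.

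The crux is to pin down the extreme eigenvalues of $C$, and this is the step I expect to carry the weight. On the one hand, applying the Rayleigh characterization of Theorem~\ref{thm: unconstrained} to $A$ and then restricting the maximisation and minimisation to $U$ gives $\gamma_1\le\alpha_{\max}$ and $\gamma_c\ge\alpha_{\min}$. On the other hand, since $u\in U$ and $Au=\alpha_{\max}u$, the coordinate vector $a$ of $u$ satisfies $R_C(a)=R_A(u)=\alpha_{\max}$, which forces $\gamma_1\ge\alpha_{\max}$. Combining the two bounds yields $\gamma_1=\alpha_{\max}$. In words: compressing $A$ to the span of the colour-restricted Perron vectors keeps $\alpha_{\max}$ as the top eigenvalue while annihilating the diagonal, and this interplay between the surviving Perron value and the zero trace is the whole point.

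To finish, I would use $\sum_i\gamma_i=0$ together with $\gamma_i\ge\gamma_c\ge\alpha_{\min}$ for every $i$:
\[
\alpha_{\max}=\gamma_1=-\sum_{i=2}^{c}\gamma_i\le-(c-1)\gamma_c\le-(c-1)\,\alpha_{\min}.
\]
Since non-trivial graphs have $\alpha_{\min}<0$, dividing by $-\alpha_{\min}>0$ gives $c-1\ge\alpha_{\max}/(-\alpha_{\min})$, which is exactly the claimed inequality $\chi\ge 1+\alpha_{\max}/(-\alpha_{\min})$.
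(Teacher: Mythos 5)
Your proof is correct, but it is not the argument the paper gives. The paper follows Nikiforov: it perturbs the positive eigenfunction $f$ of $\alpha_{\max}$ by $f_k=f+af\cdot\ct_{S_k}$ on each colour class, sums the quadratic forms $\la Af_k,f_k\ra\geq \alpha_{\min}\la f_k,f_k\ra$, and makes the clever scalar choice $a=-\chi$ to extract $\alpha_{\max}\leq-(\chi-1)\alpha_{\min}$. You instead compress $A$ to the $\chi$-dimensional span of the colour-restrictions of the Perron vector: independence of each class kills the diagonal of the compression $C$, so $\operatorname{Tr}C=0$; membership of $u$ in the subspace forces $\gamma_1=\alpha_{\max}$ exactly (the restricted maximum of the Rayleigh ratio cannot exceed the global one, yet $u$ attains it); the restricted minimum gives $\gamma_c\geq\alpha_{\min}$; and the trace identity finishes. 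All steps check out, including the two points where care is needed: positivity of $u$ guarantees every $y_i\neq 0$ (so the compression really is $\chi\times\chi$), and the optimality of the colouring guarantees every class is nonempty. Your route is essentially Hoffman's original interlacing-style approach, which the paper relegates to Exercise~\ref{exer: hoffman} via Aronszajn's inequality (Exercise~\ref{exer: aronszajn}); your trace trick is a nice simplification that avoids Aronszajn entirely, using only Theorems~\ref{thm: unconstrained} and~\ref{thm: simple-positive}. Comparatively, Nikiforov's perturbation is more bare-hands (no quotient matrix, no basis bookkeeping), while your compression localizes the whole argument in a $\chi\times\chi$ zero-trace matrix whose eigenvalues sit in $[\alpha_{\min},\alpha_{\max}]$, which makes the mechanism transparent and naturally invites refinements (e.g., bounding $\alpha_{\max}$ by the negated sum of the $\chi-1$ smallest eigenvalues of $A$ via interlacing of compressions).
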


\begin{proof}
Let $f$ be a positive eigenfunction for $\alpha_{\max}$. Given an independent set of vertices, $S$, we consider the perturbation $f_S=f+af\cdot \ct_S$ where $\ct_S$ is the characteristic function of $S$, and $a\in \R$ will be chosen later. On the one hand, we have
\begin{align*}
\la f_S,f_S\ra&=\la f,f\ra+2a\: \la f,f\cdot \ct_S\ra+a^2\:\la f\cdot \ct_S,f\cdot \ct_S\ra\\
&=\la f,f\ra+(a^2+2a)\: \la f,f\cdot \ct_S\ra.
\end{align*}
On the other hand,
  \begin{align*}
\la Af_S,f_S\ra&=\la Af,f\ra+2a\: \la Af,f\cdot \ct_S\ra+a^2\:\la A(f\cdot \ct_S),f\cdot \ct_S\ra\\
&=\la Af,f\ra+2a\: \la Af,f\cdot \ct_S\ra
\end{align*}
by using the symmetry of $A$, and the independence of $S$.

Now consider a partition $V=S_1\cup\dots\cup S_\chi$ into $\chi$ independent subsets, and let $f_k$ be the corresponding modifications of $f$ on each $S_k$, as above. Adding up the above relations, we get:
\begin{align*}
\sum \la f_k,f_k\ra&= (\chi+a^2+2a)\:\la f,f\ra\\
 \sum \la Af_k,f_k\ra&= (\chi+2a)\:\la Af,f\ra=(\chi+2a)\:\alpha_{\max}\:\la f,f\ra
\end{align*}
As $\la Af_k,f_k\ra\geq \alpha_{\min}\: \la f_k,f_k\ra$ for each $k$, we are led to the following inequality:
\begin{align*}
(\chi+2a)\: \alpha_{\max}\geq (\chi+a^2+2a)\:\alpha_{\min}.
\end{align*}
Taking now $a=-\chi$, we obtain $\alpha_{\max}\leq -(\chi-1)\:\alpha_{\min}$. This is the desired inequality. 
\end{proof}

\begin{exer}\label{exer: hoffman} 
Give an alternate proof of the previous theorem by using the result of Exercise~\ref{exer: aronszajn}.
\end{exer}

We now turn to the independence number, where we get an upper bound in laplacian terms.

\begin{thm}[Hoffman]\label{thm: hoffman1}
We have: 
\begin{align*}
\iota\leq n\:\bigg(1-\frac{d_{\mathrm{min}}}{\lambda_{\max}}\bigg)
\end{align*}
\end{thm}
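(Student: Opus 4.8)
The plan is to use the laplacian Rayleigh quotient together with a well-chosen test function built from an independent set, exploiting the variational characterization of $\lambda_{\max}$. Let $S$ be an independent set of vertices with $|S|=\iota$, and write $s=|S|$. The natural test function is $f=\ct_S-\tfrac{s}{n}\ct$, that is, the characteristic function of $S$ shifted to have mean zero. The point of subtracting $\tfrac{s}{n}\ct$ is that $f\perp \ct$, so $f$ lies in the orthogonal complement of the trivial laplacian eigenspace; more importantly, for the $\lambda_{\max}$ bound we will simply use $\la Lf,f\ra\leq \lambda_{\max}\la f,f\ra$, which holds for any $f$.

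First I would compute the two inner products explicitly. For the denominator, since $\ct_S$ takes value $1$ on $S$ and $0$ elsewhere, a direct expansion gives $\la f,f\ra=s-\tfrac{s^2}{n}=s\bigl(1-\tfrac{s}{n}\bigr)$. For the numerator, I use formula~\eqref{eq: laplacian}, namely $\la Lf,f\ra=\sum_{\{u,v\}\in E}|f(u)-f(v)|^2$. Because the constant shift cancels in every difference $f(u)-f(v)=\ct_S(u)-\ct_S(v)$, the numerator equals $\sum_{\{u,v\}\in E}|\ct_S(u)-\ct_S(v)|^2$, which counts exactly the edges with precisely one endpoint in $S$, i.e.\ $|\bd S|$. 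Since $S$ is independent, every edge incident to a vertex of $S$ leaves $S$, so this count is $\sum_{v\in S}\deg(v)\geq d_{\mathrm{min}}\,s$.

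Combining these, the inequality $\la Lf,f\ra\leq \lambda_{\max}\la f,f\ra$ becomes
\begin{align*}
d_{\mathrm{min}}\,s\leq \lambda_{\max}\,s\Bigl(1-\frac{s}{n}\Bigr).
\end{align*}
Dividing by $s$ (which is positive) and rearranging yields $\tfrac{s}{n}\leq 1-\tfrac{d_{\mathrm{min}}}{\lambda_{\max}}$, that is $s=\iota\leq n\bigl(1-\tfrac{d_{\mathrm{min}}}{\lambda_{\max}}\bigr)$, as desired.

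I do not expect a genuine obstacle here; the argument is a short variational computation. The only points requiring mild care are the bookkeeping that $\la Lf,f\ra$ counts boundary edges rather than adjacencies (the distinction flagged after~\eqref{eq: laplacian}), and the use of independence to identify $|\bd S|$ with $\sum_{v\in S}\deg(v)$ and hence bound it below by $d_{\mathrm{min}}\,|S|$. One should also note that $f\neq 0$ provided $S$ is a proper nonempty subset, which holds since a non-trivial graph has $1\leq\iota\leq n-1$, so the Rayleigh quotient is legitimate.
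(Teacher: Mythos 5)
Your proof is correct and is essentially the paper's argument: your test function $f=\ct_S-\tfrac{s}{n}\ct$ is exactly $\tfrac{1}{n}$ times the paper's function (which equals $|S^c|$ on $S$ and $-|S|$ on $S^c$), and the Rayleigh ratio is scale-invariant, so the two computations coincide. The bookkeeping ($\la f,f\ra=s(1-s/n)$, $\la Lf,f\ra=|\bd S|\geq d_{\mathrm{min}}\,s$ by independence) matches the paper's identity \eqref{eq: useful} step for step.
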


\begin{proof}
Let $S$ be a proper subset of vertices, and consider the function $f$ defined by $f\equiv |S^c|$ on $S$, respectively $f\equiv -|S|$ on $S^c$. We compute
\begin{align*}
\la f, f\ra &=\sum_{u\in S} f(u)^2+\sum_{u\in S^c} f(u)^2 =|S|\cdot |S^c|^2+|S^c|\cdot |S|^2= n |S||S^c|\\
\la L f, f\ra& =\sum_{\{u,v\}\in E}|f(u)-f(v)|^2 =\sum_{\{u,v\}\in \bd S}|f(u)-f(v)|^2 = n^2 |\bd S|
\end{align*}
and so:
\begin{align}
\frac{\la L f, f\ra}{\la f, f\ra}=\frac{n|\bd S|}{|S||S^c|}\label{eq: useful}
\end{align}
Now, if $S$ is an independent subset, then all edges emanating from vertices of $S$ are in $\bd S$, so $|\bd S|\geq d_{\mathrm{min}}\:|S|$. The Rayleigh ratio on the left-hand side of \eqref{eq: useful} is at most $\lambda_{\max}$. Therefore 
\begin{align*}
\lambda_{\max}\geq \frac{n\:d_{\mathrm{min}}}{n-|S|}
\end{align*} 
that is, $|S|\leq  n\:\big(1-d_{\mathrm{min}}/\lambda_{\max}\big)$.
\end{proof}

In practice, the previous two theorems are usually applied to regular graphs. In this case, they can be stated as follows.  

\begin{cor}
For a regular graph, we have: 
\begin{align*}
\iota\leq \frac{n}{1-d/\alpha_{\min}}, \qquad \chi\geq 1-d/\alpha_{\min}
\end{align*}
\end{cor}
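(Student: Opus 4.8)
The plan is to derive the corollary as the regular-graph specialization of Theorem~\ref{thm: hoffman2} (Hoffman's lower bound for $\chi$) and Theorem~\ref{thm: hoffman1} (Hoffman's upper bound for $\iota$), using the elementary spectral dictionary for regular graphs. The key observation is that for a $d$-regular graph the adjacency and laplacian eigenvalues are linked by $\alpha_k+\lambda_k=d$, in reverse order, so $\lambda_{\max}=d-\alpha_{\min}$ and $d_{\min}=d$. Substituting these into the general theorems should collapse both bounds to the stated forms.

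First I would treat the chromatic bound. Theorem~\ref{thm: hoffman2} gives $\chi\geq 1+\alpha_{\max}/(-\alpha_{\min})$, and for a $d$-regular graph Corollary~\ref{cor: regtrivial} identifies $\alpha_{\max}=d$. Hence
\begin{align*}
\chi\geq 1+\frac{d}{-\alpha_{\min}}=1-\frac{d}{\alpha_{\min}},
\end{align*}
which is exactly the claimed inequality (note $\alpha_{\min}<0$, so $-d/\alpha_{\min}>0$).

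Next I would handle the independence bound. Theorem~\ref{thm: hoffman1} gives $\iota\leq n\big(1-d_{\min}/\lambda_{\max}\big)$. For a regular graph $d_{\min}=d$, and since $A+L=dI$ we have $\lambda_{\max}=d-\alpha_{\min}$. Therefore
\begin{align*}
\iota\leq n\left(1-\frac{d}{d-\alpha_{\min}}\right)=n\cdot\frac{-\alpha_{\min}}{d-\alpha_{\min}}=\frac{n}{1-d/\alpha_{\min}},
\end{align*}
where the last equality follows by dividing numerator and denominator by $-\alpha_{\min}$. This matches the stated form.

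I do not expect any genuine obstacle here: the entire content is a clean substitution once the regular-graph relations $\alpha_{\max}=d$, $d_{\min}=d$, and $\lambda_{\max}=d-\alpha_{\min}$ are invoked. The only point requiring mild care is the algebraic rearrangement of the independence bound into the quotient form $n/(1-d/\alpha_{\min})$ and confirming the sign conventions (that $\alpha_{\min}<0$ makes both right-hand sides positive and meaningful), but these are routine manipulations rather than a substantive difficulty.
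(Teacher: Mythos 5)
Your proof is correct and follows exactly the paper's route: the text after the corollary states that the independence bound is Theorem~\ref{thm: hoffman1} rewritten via $\alpha_{\min}+\lambda_{\max}=d$ (with $d_{\min}=d$), and the chromatic bound is Theorem~\ref{thm: hoffman2} with $\alpha_{\max}=d$, which is precisely your substitution argument. Your algebraic rearrangement and sign checks are all in order, so there is nothing to add.
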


For the independence number, we have rewritten the bound of Theorem~\ref{thm: hoffman1} by using the relation $\alpha_{\min}+\lambda_{\max}=d$. A practical motivation is that computations usually provide the adjacency eigenvalues. For the chromatic number, we have used $\alpha_{\max}=d$ in the bound provided by Theorem~\ref{thm: hoffman2}. Observe that, in the regular case, the chromatic bound can be derived from the independence bound by using $\chi\cdot \iota\geq n$.

In general, the independence bound of Theorem~\ref{thm: hoffman1} and the inequality $\chi\cdot \iota\geq n$ imply the following chromatic bound:
\begin{align*}
\chi\geq \frac{1}{1-d_{\mathrm{min}}/\lambda_{\max}}
\end{align*}
Note, however, that this is no better than the adjacency bound of Theorem~\ref{thm: hoffman2}. As $\alpha_{\max}\geq d_{\mathrm{ave}}\geq d_{\min}$, and $\alpha_{\min}+ \lambda_{\max}\geq d_{\min}$, we get
\begin{align*}
1+\frac{\alpha_{\max}}{-\alpha_{\min}}\geq 1+\frac{d_{\min}}{\lambda_{\max}- d_{\min}}=\frac{1}{1-d_{\mathrm{min}}/\lambda_{\max}}.
\end{align*}

\begin{ex} Consider the Paley graph $P(q)$, where $q\equiv 1$ mod $4$. Plugging in $n=q$, $d=\tfrac{1}{2}(q-1)$, and $\alpha_{\min}=-\tfrac{1}{2}(\sqrt{q}+1)$ in the previous corollary, we find that
\begin{align*}
\iota\leq \sqrt{q}\leq \chi.
\end{align*}
These are fairly good bounds. As we have seen in Theorem~\ref{prop: chromatic paley}, equalities hold when $q$ is a square.
\end{ex}

\begin{notes} Theorem~\ref{thm: wilf} is due to Wilf (\emph{The eigenvalues of a graph and its chromatic number}, J. London Math. Soc. 1967). Theorem~\ref{thm: hoffman2} is due to Hoffman (\emph{On eigenvalues and colorings of graphs}, in `Graph Theory and its Applications', Academic Press 1970) but the proof given above is based on an argument of Nikiforov (\emph{Chromatic number and spectral radius}, Linear Algebra Appl. 2007). Hoffman's original approach is that of Exercise~\ref{exer: hoffman}. Theorem~\ref{thm: hoffman1} is also due to Hoffman (unpublished).
\end{notes}

\bigskip
\subsection*{Isoperimetric constant}\label{sec: isoperimetric}\
\medskip
 
The smallest positive eigenvalue of the laplacian, $\lambda_2$, is an indicator of how well a graph is connected. This principle arises from, and is made quantitative by, relations between $\lambda_2$ and various graph-theoretic measures of connectivity. Here we pursue the relation with the isoperimetric constant $\beta$. As we will see, the spectral perspective on $\beta$ turns out to be particularly successful. It is, in fact, one of the most consistent and conceptual instances of understanding combinatorial aspects by spectral methods.

\begin{thm}[Alon - Milman]\label{thm: AM}
We have $\beta\geq \lambda_2/2$.
\end{thm}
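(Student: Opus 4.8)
The plan is to bound $\beta$ from below by exhibiting, for an arbitrary vertex subset $S$ with $|S|\leq n/2$, a test function whose laplacian Rayleigh ratio controls $|\bd S|/|S|$ via the variational formula \eqref{eq: lambda2} for $\lambda_2$. The natural candidate is the characteristic function $\ct_S$, but this is not orthogonal to $\ct$, so I would first correct it to $f=\ct_S-\tfrac{|S|}{n}\:\ct$, which satisfies $\sum_u f(u)=0$ and hence is an admissible test function.

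First I would compute the two ingredients of the Rayleigh ratio. For the numerator, note that $f(u)-f(v)=\ct_S(u)-\ct_S(v)$, which is nonzero (equal to $\pm 1$) exactly when the edge $\{u,v\}$ crosses between $S$ and $S^c$; thus $\la Lf,f\ra=\sum_{\{u,v\}\in E}|f(u)-f(v)|^2=|\bd S|$. For the denominator, a direct calculation gives
\begin{align*}
\la f,f\ra=\sum_{u\in S}\Big(1-\tfrac{|S|}{n}\Big)^2+\sum_{u\in S^c}\Big(\tfrac{|S|}{n}\Big)^2=|S|\:\frac{|S^c|^2}{n^2}+|S^c|\:\frac{|S|^2}{n^2}=\frac{|S|\:|S^c|}{n}.
\end{align*}
Plugging these into \eqref{eq: lambda2} yields $\lambda_2\leq \la Lf,f\ra/\la f,f\ra=n|\bd S|/(|S|\:|S^c|)$, which is precisely the relation \eqref{eq: useful} already derived in the proof of Theorem~\ref{thm: hoffman1}.

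The final step is to turn this into the isoperimetric bound. Since $|S|\leq n/2$, the complement satisfies $|S^c|=n-|S|\geq n/2$, so $n/|S^c|\leq 2$. Rearranging the inequality gives
\begin{align*}
\frac{|\bd S|}{|S|}\geq \lambda_2\:\frac{|S^c|}{n}\geq \frac{\lambda_2}{2}.
\end{align*}
Taking the minimum over all such $S$ gives $\beta\geq \lambda_2/2$, as desired. I do not anticipate a genuine obstacle here: the argument is a short and clean application of the $\lambda_2$ minimax formula, and the only point demanding a little care is ensuring the test function is orthogonal to $\ct$ and correctly accounting for the factor $|S^c|/n\geq 1/2$ coming from the constraint $|S|\leq n/2$. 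The reason this direction (the easy Cheeger inequality) is straightforward, while the reverse bound $\beta\leq\sqrt{2d\lambda_2}$ is the hard one, is that here a single well-chosen test function suffices, whereas the converse requires extracting a good sweep set from an arbitrary eigenfunction.
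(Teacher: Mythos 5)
Your proof is correct and takes essentially the same route as the paper: your test function $\ct_S-\tfrac{|S|}{n}\:\ct$ is just the paper's choice ($f\equiv|S^c|$ on $S$, $f\equiv-|S|$ on $S^c$) rescaled by $1/n$, so both arguments reduce to the identity \eqref{eq: useful} followed by the observation that $|S^c|\geq n/2$. Nothing further is needed.
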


\begin{proof}
Recall \eqref{eq: useful}:
\begin{align*}
\frac{\la L f, f\ra}{\la f, f\ra}=\frac{n|\bd S|}{|S||S^c|}
\end{align*}
where $S$ is a proper subset of vertices, and $f$ is the function given by $f\equiv |S^c|$ on $S$, respectively $f\equiv -|S|$ on $S^c$. As $f\perp \ct$, the left-hand side is at least $\lambda_2$. So, if $S$ is such that $|S|\leq n/2$, then
\begin{align*}
\lambda_2\leq \frac{n|\bd S|}{|S||S^c|} \leq 2 \frac{|\bd S|}{|S|} 
\end{align*}
implying that $\beta\geq \lambda_2/2$.
\end{proof}

There is an upper bound of similar flavour, but this is contingent on the existence of a certain kind of eigenfunction. 

\begin{thm}
If there is a laplacian eigenvalue $\lambda$ having a $\{\pm 1\}$-valued eigenfunction, then $\beta\leq \lambda/2$. 
\end{thm}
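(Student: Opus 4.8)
The plan is to feed the given $\{\pm 1\}$-valued eigenfunction directly into the Rayleigh quotient, in the same spirit as the proof of Theorem~\ref{thm: AM}, but exploiting the special shape of the eigenfunction to obtain the reverse estimate. Write $f$ for the eigenfunction and set $S=\{v: f(v)=1\}$, so that $f=\ct_S-\ct_{S^c}$. Because $|f(v)|=1$ at every vertex, $\la f,f\ra=n$. Using formula~\eqref{eq: laplacian}, an edge contributes $0$ to $\la Lf,f\ra$ when both endpoints lie on the same side of the partition $\{S,S^c\}$, and it contributes $|1-(-1)|^2=4$ when it crosses between the two sides; hence $\la Lf,f\ra=4|\bd S|$. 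The eigenrelation $Lf=\lambda f$ then forces $4|\bd S|=\lambda n$, that is, $|\bd S|=\lambda n/4$.

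Next I would pin down the sizes of $S$ and $S^c$. If $\lambda=0$, then $f$ is constant by Theorem~\ref{thm: plus lap}, so $S$ is empty or all of $V$ and there is no admissible set to consider; thus we may assume $\lambda>0$. Since $\ct$ is the eigenfunction of the simple eigenvalue $0$, and eigenfunctions of a symmetric operator attached to distinct eigenvalues are orthogonal, we get $f\perp\ct$, i.e. $\sum_v f(v)=|S|-|S^c|=0$. Therefore $|S|=|S^c|=n/2$; in particular $S$ is non-empty and contains exactly half the vertices, so it is an admissible set in the definition of $\beta$.

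Finally, substituting these values into the definition of the isoperimetric constant gives $\beta\le |\bd S|/|S|=(\lambda n/4)/(n/2)=\lambda/2$, as claimed. The one step requiring care — and really the crux of the argument — is the balance $|S|=|S^c|=n/2$: without it the computation only yields $\beta\le \lambda n/(4|S|)$, which improves on $\lambda/2$ exactly when $|S|\ge n/2$, the opposite of what the definition of $\beta$ permits. So the heart of the matter is the observation that a $\{\pm 1\}$-valued eigenfunction for a nonzero eigenvalue is automatically orthogonal to the constants, which forces the vertex partition to be perfectly balanced. Everything else is a direct evaluation of the two inner products.
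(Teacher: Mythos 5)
Your proof is correct and is essentially the paper's own argument: the same set $S=\{v: f(v)=1\}$, the same evaluations $\la Lf,f\ra=4|\bd S|$ and $\la f,f\ra=n=2|S|$, and the same appeal to orthogonality with $\ct$ to get the balanced partition $|S|=n/2$, after which the Rayleigh quotient gives $\lambda=2|\bd S|/|S|\geq 2\beta$. Your explicit handling of the $\lambda=0$ degeneracy is a small refinement the paper leaves implicit, but the route is identical.
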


\begin{proof}
Let $g$ be a $\{\pm 1\}$-valued eigenfunction for $\lambda$, and let $S=\{v: \: g(v)=1\}$. As $g$ is orthogonal to $\mathbb{1}$, we have $|S|=n/2$. Now $\la Lg,g\ra=4|\bd S|$ and $\la g,g \ra=n=2|S|$. Therefore
\begin{align*}
\lambda=\frac{\la Lg,g\ra}{\la g,g \ra}=2\: \frac{|\bd S|}{|S|}\geq 2\beta
\end{align*}
as claimed.
\end{proof}

Note that, if $\lambda$ is to have a $\{\pm 1\}$-valued eigenfunction, then $\lambda$ has to be an even integer. Indeed, in the eigenrelation $\lambda g(v)=\deg(v)g(v)-\sum_{u: u\sim v} g(u)$, the right-hand side is an even integer. Also, $n$ has to be even.

An immediate consequence of the previous two theorems is the following.

\begin{cor}
If $\lambda_2$ admits a $\{\pm 1\}$-valued eigenfunction, then $\beta=\lambda_2/2$.
\end{cor}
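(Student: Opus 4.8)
The plan is to read off this corollary directly from the two preceding theorems, since it is stated as an immediate consequence of them. The key observation is that the hypothesis of the corollary---that $\lambda_2$ admits a $\{\pm 1\}$-valued eigenfunction---simultaneously satisfies the hypotheses needed to apply both inequalities, with $\lambda = \lambda_2$.

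First I would invoke the Alon--Milman theorem (Theorem~\ref{thm: AM}), which holds unconditionally and gives $\beta \geq \lambda_2/2$. This is the lower bound, and it requires nothing beyond the fact that $\lambda_2$ is the smallest positive laplacian eigenvalue. Next I would apply the upper-bound theorem, taking $\lambda = \lambda_2$: by hypothesis $\lambda_2$ has a $\{\pm 1\}$-valued eigenfunction, so that theorem applies verbatim and yields $\beta \leq \lambda_2/2$. Combining the two inequalities forces $\beta = \lambda_2/2$.

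Since both halves are already proved, there is essentially no obstacle here; the only thing to verify is that the two theorems are genuinely compatible, i.e., that applying the upper-bound theorem with the specific eigenvalue $\lambda_2$ is legitimate. It is: the upper-bound theorem is stated for \emph{any} laplacian eigenvalue $\lambda$ possessing a $\{\pm 1\}$-valued eigenfunction, and $\lambda_2$ is such an eigenvalue by assumption. I would present this in one or two sentences.

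\begin{proof}
By Theorem~\ref{thm: AM}, we have $\beta \geq \lambda_2/2$ unconditionally. On the other hand, the hypothesis says that $\lambda_2$ admits a $\{\pm 1\}$-valued eigenfunction, so the previous theorem applies with $\lambda = \lambda_2$ and gives $\beta \leq \lambda_2/2$. Combining the two bounds yields $\beta = \lambda_2/2$.
\end{proof}
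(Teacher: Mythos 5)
Your proposal is correct and matches the paper exactly: the corollary is stated there as an immediate consequence of the two preceding theorems, with the lower bound $\beta\geq\lambda_2/2$ from Theorem~\ref{thm: AM} and the upper bound $\beta\leq\lambda_2/2$ from applying the preceding theorem to $\lambda=\lambda_2$. Nothing further is needed.
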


This criterion for computing the isoperimetric constant is applicable in many examples. Even for those where we already knew $\beta$, the spectral approach is more satisfactory than our previous combinatorial arguments. One context where we know not only the eigenvalues, but also corresponding eigenfunctions, is that of Cayley graphs and bi-Cayley graphs of abelian groups. Some of the examples that follow will exploit that perspective.

\begin{ex} The Petersen graph has $\lambda_2=2$. A corresponding $\{\pm 1\}$-valued eigenfunction is most easily seen on the standard drawing of the Petersen graph. Put $+1$ on the vertices of the outer pentagon, and $-1$ on the vertices of the inner pentagram. Thus, $\beta=1$.
\end{ex}

\begin{ex} The cube graph $Q_n$ has $\lambda_2=2$. Putting $+1$ on binary strings starting in $0$, respectively $-1$ on binary strings starting in $1$, defines a $\{\pm 1\}$-valued eigenfunction for $\lambda_2$. Hence $\beta=1$. 

In fact, for \emph{any} Cayley graph of $(\Z_2)^n$ we can find a $\{\pm 1\}$-valued eigenfunction for $\lambda_2$. The reason is that $\lambda_2$ admits some non-trivial character as an eigenfunction, and all non-trivial characters of $(\Z_2)^n$ are $\{\pm 1\}$-valued. Thus $\beta=\lambda_2/2$, an integer, for any Cayley graph of $(\Z_2)^n$. 
\end{ex}

\begin{exer}\label{exer: iso halved} (i) Show that the halved cube graph $\tfrac{1}{2}Q_n$ has isoperimetric constant $\beta=n-1$. (ii) For a fixed $n$, consider the decked cube graphs $DQ_n(a)$ for varying $a\in (\Z_2)^n$. Which choice of $a$ maximizes the isoperimetric constant of $DQ_n(a)$?
\end{exer}

\begin{ex} Consider now the twin graphs, $K_4\times K_4$ and the Shrikhande graph. Both have $\lambda_2=4$, and for both graphs we can see a corresponding $\{\pm 1\}$-valued eigenfunction. In $K_4\times K_4$, we alternatively put $+1$ and $-1$ as we circle around the vertex set. In the Shrikhande graph, we put $+1$ on the vertices of the outer octagon, and $-1$ on all the inner vertices.

\begin{figure}[!ht]
\bigskip
\GraphInit[vstyle=Simple]
\tikzset{VertexStyle/.style = {
shape = circle,
inner sep = 0pt,
minimum size = .8ex,
draw}}
\begin{minipage}[b]{0.48\linewidth}
\centering
\begin{tikzpicture}[scale=.6]
  \grEmptyCycle[prefix=a, RA=4pt, rotation=11.25]{16}
  \EdgeDoubleMod{a}{16}{0}{1}{a}{16}{4}{1}{16}
  \EdgeDoubleMod{a}{16}{0}{1}{a}{16}{8}{1}{8}
  \EdgeDoubleMod{a}{16}{0}{4}{a}{16}{1}{4}{4}
  \EdgeDoubleMod{a}{16}{0}{4}{a}{16}{2}{4}{4}
  \EdgeDoubleMod{a}{16}{0}{4}{a}{16}{3}{4}{4}
  \EdgeDoubleMod{a}{16}{1}{4}{a}{16}{2}{4}{4}
  \EdgeDoubleMod{a}{16}{1}{4}{a}{16}{3}{4}{4}
  \EdgeDoubleMod{a}{16}{2}{4}{a}{16}{3}{4}{4}
\end{tikzpicture}
\end{minipage}
\begin{minipage}[b]{0.48\linewidth}
\centering
\begin{tikzpicture}[scale=.6]
  \grEmptyCycle[prefix=a, RA=4pt, rotation=22.5]{8}
  \grEmptyCycle[prefix=b, RA=2pt, rotation=22.5]{8}
  \EdgeDoubleMod{a}{8}{0}{1}{a}{8}{1}{1}{8}
  \EdgeDoubleMod{a}{8}{0}{1}{a}{8}{2}{1}{8}
  \EdgeDoubleMod{a}{8}{0}{1}{b}{8}{1}{1}{8}
  \EdgeDoubleMod{a}{8}{1}{1}{b}{8}{0}{1}{8}
  \EdgeDoubleMod{b}{8}{0}{1}{b}{8}{2}{1}{8}
  \EdgeDoubleMod{b}{8}{0}{1}{b}{8}{3}{1}{8}
\end{tikzpicture}
\end{minipage}
\bigskip
\end{figure}

Our seemingly lucky choice of eigenfunctions prompts a legitimate concern: what if the Cheshire Cat takes away our nice picture of, say, the Shrikhande graph? We are then left with a Cayley graph description, inviting us to take an algebraic approach. And we find that, as in the previous example, more is true: \emph{any} Cayley graph of $(\Z_4)^n$ admits a $\{\pm 1\}$-valued eigenfunction for $\lambda_2$, and so $\beta=\lambda_2/2$. Indeed, $\lambda_2$ has a non-trivial character $\chi$ as an eigenfunction. Now $\chi$ takes values in $\{\pm 1, \pm i\}$. The real and imaginary parts of $\chi$, $\mathrm{Re}(\chi)$ and $\mathrm{Im}(\chi)$, are eigenfunctions for $\lambda_2$. So $\mathrm{Re}(\chi)+\mathrm{Im}(\chi)$, which is $\{\pm 1\}$-valued, is an eigenfunction as well. 
\end{ex}

\begin{ex} For the incidence graph $I_n(q)$, we know that 
\begin{align*}
\lambda_2=d-\alpha_2=\frac{q^{n-1}-1}{q-1}-q^{n/2-1}.
\end{align*} Our aim is to show that, for suitable $q$ and $n$, the generic lower bound $\beta\geq \lambda_2/2$ becomes an equality. 

Let us think of $I_n(q)$ as the bi-Cayley graph of $\K^*/\F^*$ with respect to $\{[s]: \Tr(s)=0\}$. Assuming that $q$ is odd, a $\{\pm 1\}$-valued multiplicative character of $\K$ - in fact, the only one - is the quadratic signature $\sigma$. If we also assume that $n$ is even, then $\sigma$ is trivial on $\F^*$ so it defines a $\{\pm 1\}$-valued character, still denoted $\sigma$, of $\K^*/\F^*$. We now refer to the arguments of Theorem~\ref{thm: mock Cayley} and Example~\ref{ex: compute inc}. As $\alpha_\sigma$ is real, in fact integral, we have that the adjacency eigenvalue $|\alpha_\sigma|=\pm \alpha_\sigma$ has an eigenfunction given by $\sigma$ on the black vertices and $\pm \sigma$ on the white vertices. In other words, the second largest eigenvalue $\alpha_2=q^{n/2}-1$ has a $\{\pm 1\}$-valued eigenfunction. We conclude that, for $q$ odd and $n$ even, the incidence graph $I_n(q)$ has $\beta=\lambda_2/2$. As a concrete example, $I_4(q)$ has $\beta=q^2+1$.
\end{ex}

The next result is an upper bound for $\beta$ in terms of $\lambda_2$ and the maximal degree. 

\begin{thm}[Dodziuk]\label{thm: Dodziuk-Mohar}
We have $\beta\leq \sqrt{2d\lambda_2}$.
\end{thm}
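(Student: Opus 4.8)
The plan is to prove Dodziuk's upper bound $\beta\leq\sqrt{2d\lambda_2}$ by extracting a good cut from the eigenfunction associated to $\lambda_2$. This is the standard ``Cheeger'' direction, complementary to the Alon--Milman bound of Theorem~\ref{thm: AM}; the key is to pass from an eigenfunction to a family of threshold sets and estimate their boundaries. First I would let $g$ be an eigenfunction for $\lambda_2$, so that $\langle Lg,g\rangle=\lambda_2\langle g,g\rangle$ and $g\perp\mathbb{1}$. Using the orthogonality to $\mathbb{1}$, I may shift and arrange the values of $g$ so that roughly half the vertices are non-negative; concretely, one replaces $g$ by $g-c$ for a suitable constant (a median-type value) and then restricts to the part where $g\geq 0$, keeping control of the Rayleigh ratio. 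The function $f$ obtained this way is supported on at most half the vertices and satisfies
\begin{align*}
\frac{\sum_{\{u,v\}\in E}|f(u)-f(v)|^2}{\sum_v f(v)^2}\leq \lambda_2.
\end{align*}

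Next I would run the threshold (co-area) argument. Order the vertices so that $f(v_1)\geq f(v_2)\geq\dots\geq 0$, and consider the nested sets $S_t=\{v: f(v)>t\}$ for $t\geq 0$; each such set contains at most half the vertices, so $|\partial S_t|\geq \beta\,|S_t|$ by the definition of $\beta$. The heart of the estimate is to bound the ``numerator'' $\sum_{u\sim v}|f(u)-f(v)|$ from below in terms of the boundary sizes. The crucial trick is to apply the Cauchy--Schwarz inequality to split the edge-differential sum:
\begin{align*}
\sum_{u\sim v}|f(u)^2-f(v)^2|=\sum_{u\sim v}|f(u)-f(v)|\,|f(u)+f(v)|\leq \Big(\sum_{u\sim v}|f(u)-f(v)|^2\Big)^{1/2}\Big(\sum_{u\sim v}|f(u)+f(v)|^2\Big)^{1/2}.
\end{align*}
The first factor is controlled by $\lambda_2$ via the Rayleigh bound, and the second factor is bounded using $|f(u)+f(v)|^2\leq 2(f(u)^2+f(v)^2)$ together with the degree bound, giving $\sum_{u\sim v}|f(u)+f(v)|^2\leq 2d\sum_v f(v)^2$. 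This is precisely where the factor $2d$ enters.

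I would then identify the left-hand side combinatorially. Writing $f^2$ as an integral of indicator functions, $f(u)^2-f(v)^2=\int_0^\infty(\mathbb{1}_{S_t}(u)-\mathbb{1}_{S_t}(v))\,d(t^2)$ for $f(u)>f(v)$, one gets
\begin{align*}
\sum_{u\sim v}|f(u)^2-f(v)^2|=\int_0^\infty |\partial S_t|\,d(t^2)\geq \beta\int_0^\infty |S_t|\,d(t^2)=\beta\sum_v f(v)^2,
\end{align*}
using $|\partial S_t|\geq \beta|S_t|$ and the layer-cake identity $\int_0^\infty|S_t|\,d(t^2)=\sum_v f(v)^2$. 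Combining the three displayed estimates yields $\beta\sum_v f(v)^2\leq \sqrt{\lambda_2}\cdot\sqrt{2d\sum_v f(v)^2}\cdot\sqrt{\sum_v f(v)^2}$, and after cancelling $\sum_v f(v)^2$ and squaring we obtain $\beta^2\leq 2d\lambda_2$, which is the claim. The main obstacle I anticipate is the first step: arranging that the truncated eigenfunction $f$ has support in at most half the vertices \emph{while} retaining the Rayleigh bound $\leq\lambda_2$. One must choose the shift carefully (using a median so that both $\{g>c\}$ and $\{g<c\}$ have at most $n/2$ vertices) and then, after restricting to one side, verify that the restricted Rayleigh ratio only improves—this last point needs the eigenfunction property, not merely the variational characterization, and is the delicate part of the argument.
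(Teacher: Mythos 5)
Your core estimates coincide with the paper's: the co-area identity applied to $f^2$, the Cauchy--Schwarz splitting of $\sum_{\{u,v\}\in E}\big|f(u)^2-f(v)^2\big|$, and the bound $\sum_{\{u,v\}\in E}|f(u)+f(v)|^2\leq 2d\sum_v f(v)^2$ reproduce exactly Steps 1 and 2 of the paper's proof (the paper phrases the co-area step as a discrete summation by parts over the level values of $f$ and then applies it to $f^2$; your integral formulation is the same computation). The genuine hole is the step you yourself flag as delicate: producing a non-negative $f$, supported on at most half the vertices, with $\langle Lf,f\rangle\leq\lambda_2\langle f,f\rangle$. You propose shifting $g$ by a median $c$ and truncating, asserting that the restricted Rayleigh ratio ``only improves'', but with a shift this is false in general. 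On the support of $f=(g-c)^+$, the pointwise computation gives, at a vertex with $g(v)>c$,
\begin{align*}
Lf(v)\leq L(g-c)(v)=Lg(v)=\lambda_2\, g(v)=\lambda_2 f(v)+\lambda_2 c,
\end{align*}
so $\langle Lf,f\rangle\leq\lambda_2\langle f,f\rangle+\lambda_2\, c\sum_v f(v)$, and the error term has the wrong sign precisely when $c>0$. To salvage your route you would have to choose the sign of $g$ so that some median is $\leq 0$; as written, the argument is incomplete at its one load-bearing joint.

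The paper's Step 3 shows the shift is unnecessary altogether. Since $g\perp\mathbb{1}$, there are vertices with $g>0$ and vertices with $g<0$; these sets are disjoint, so one of them has at most $n/2$ elements, and after possibly replacing $g$ by $-g$ you may assume it is the positive one. Taking $f=g^+$ (i.e., $c=0$), the displayed inequality has no error term: $Lf(v)\leq Lg(v)=\lambda_2 f(v)$ at positive vertices, and since $f$ is positive exactly on those vertices, $\langle Lf,f\rangle\leq\langle Lg,f\rangle=\lambda_2\langle g,f\rangle=\lambda_2\langle f,f\rangle$. You correctly sensed that the eigenfunction property, and not merely the variational characterization, is what makes the truncation work; the fix is to truncate at the zero level rather than at a median.
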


\begin{proof} We proceed in three steps. The first step is a functional inequality involving $\beta$. The second step turns the $\ell^1$ inequality of the first step into an $\ell^2$ inequality. In the third step, we apply the inequality established in the second step to the positive part of an eigenfunction for $\lambda_2$.

\smallskip
\emph{Step 1.} Let $f$ be a non-negative function on the graph, supported on no more than half the vertices. Then:
\begin{align*}
\beta\: \sum_{v} f(v)\leq \sum_{\{u,v\}\in E}|f(u)-f(v)|
\end{align*}
Indeed, let $0=f_0<f_1<\ldots<f_s$ be the values taken by $f$. We write
\begin{align*}
\sum_{\{u,v\}\in E}|f(u)-f(v)|=\sum_{k\geq 1} N_k f_k.
\end{align*}
The coefficients are given by
\begin{align*}
N_k=E\big(\{f=f_k\},\{f<f_k\}\big)-E\big(\{f=f_k\}, \{f>f_k\}\big)
\end{align*}
where, say, $E\big(\{f=f_k\},\{f>f_k\} \big)$ denotes the number of edges between the vertex subsets $\{v: f(v)=f_k\}$ and $\{v: f(v)>f_k\}$. We then have
\begin{align*}
N_k &=E\big(\{f\geq f_k\},\{f<f_k\}\big)-E\big(\{f\leq f_k\}, \{f>f_k\}\big)\\
&=|\bd \{f\geq f_k\}|-|\bd \{f\geq f_{k+1}\}|
\end{align*}
so
\begin{align*}
\sum_{k\geq 1} N_k f_k&=\sum_{k\geq 1}\big(|\bd \{f\geq f_k\}|-|\bd \{f\geq f_{k+1}\}|\big)\: f_k\\
& =\sum_{k\geq 1} |\bd \{f\geq f_k\}|\: (f_k-f_{k-1}).
\end{align*}
At this point, we can bring in $\beta$. For each $k\geq 1$, the set $\{f\geq f_k\}$ contains no more than half the vertices, so $|\bd \{f\geq f_k\}|\geq \beta\: |\{f\geq f_k\}|$. Therefore
\begin{align*}
\sum_{k\geq 1} N_k f_k&\geq \beta \sum_{k\geq 1} |\{f\geq f_k\}|\: (f_k-f_{k-1})\\
&=\beta \sum_{k\geq 1} \big(| \{f\geq f_k\}|-| \{f\geq f_{k+1}\}|\big)\: f_k\\
&= \beta \sum_{k\geq 1}  |\{f=f_k\}|\: f_k=\beta \sum f(v)
\end{align*}
as claimed.

\smallskip
\emph{Step 2.} Let $f$ be a real-valued function, supported on no more than half the vertices. Then:
\begin{align*}
\beta^2\: \la f,f\ra\leq 2d\: \la Lf,f\ra
\end{align*}
Indeed, the bound from Step 1, applied to $f^2$, says that
\begin{align*}
 \beta\: \sum_{v} f(v)^2\leq \sum_{\{u,v\}\in E} \big|f^2(u)-f^2(v)\big|.
\end{align*}
Now $|f^2(u)-f^2(v)|=|f(u)-f(v)|\cdot |f(u)+f(v)|$ and so, by Cauchy - Schwarz, we have:
\begin{align*}
\Big(\sum_{\{u,v\}\in E}\big|f^2(u)-f^2(v)\big|\Big)^2\leq \Big(\sum_{\{u,v\}\in E}\big|f(u)-f(v)\big|^2\Big)\: \Big(\sum_{\{u,v\}\in E} \big|f(u)+f(v)\big|^2\Big)
\end{align*}
The first factor on the right-hand side is $\la Lf,f\ra$. We bound the second factor as follows:
\begin{align*}
\sum_{\{u,v\}\in E} \big|f(u)+f(v)\big|^2\leq 2 \sum_{\{u,v\}\in E} \big(f(u)^2+f(v)^2\big)=2\:\sum_{v} \mathrm{deg}(v)\: f(v)^2\leq 2d\: \sum_{v} f(v)^2
\end{align*}
The inequalities combine to give $\beta^2\: \la f,f\ra^2\leq 2d\: \la Lf,f\ra \: \la f,f\ra$. The claim follows after cancelling $\la f,f\ra$.

\smallskip
\emph{Step 3.} Let $g$ be an eigenfunction for $\lambda_2$. By taking the real or the imaginary part, we may assume that $g$ is real-valued. Let us refer to a vertex $v$ as positive (or negative) if $g(v)>0$ (respectively $g(v)<0$). As $g$ is orthogonal to $\ct$, there are both positive and negative vertices. Up to replacing $g$ by $-g$, we may assume that no more than half the vertices are positive. Let $f$ be the positive part of $g$, that is the function defined by $f=g$ on positive vertices, and $f=0$ elsewhere. Then $f(v)\geq g(v)$ for every vertex $v$, and so $Af(v)\geq Ag(v)$ for every vertex $v$. At a \emph{positive} vertex $v$ we have 
\begin{align*}
Lf(v)=\mathrm{deg}(v)\:f(v)-Af(v)\leq \mathrm{deg}(v)\:g(v)-Ag(v)=Lg(v).
\end{align*} 
As $f$ is supported on the positive vertices, where it takes positive values by definition, we have
\begin{align*}
\la Lf,f\ra\leq \la Lg,f\ra=\lambda_2 \:\la g,f\ra=\lambda_2 \:\la f,f\ra.
\end{align*}
Combining this bound with the bound from Step 2, we conclude that $\beta^2\leq 2d\lambda_2$.
\end{proof}

The spectral lower bound for $\beta$ (Theorem~\ref{thm: AM}) had a very easy proof, and turned out to be very effective in practice. So it is quite ironic that the spectral upper bound of Theorem~\ref{thm: Dodziuk-Mohar}, which is significantly harder to prove, is rarely effective in practice. Tested on some families of graphs, it quickly appears to be a fairly bad bound. Both the Paley graphs $P(q)$ and the incidence graphs $I_3(q)$ have $\alpha_2\sim C\sqrt{d}$ as $q\to \infty$, for some numerical constant $C$. Thus $\lambda_2=d-\alpha_2\sim d$, and $\sqrt{2d\lambda_2}\sim \sqrt{2} d$ as $q\to \infty$. This is worse than the trivial upper bound $d$. On the cubes $Q_n$, the performance is also poor: the spectral upper bound is $\sqrt{2d\lambda_2}=2\sqrt{d}$, whereas $\beta=1$. Here, at least, we are beating the combinatorial upper bound of roughly $d/2$.   

On the other hand, the spectral upper bound is effective for the cycles $C_n$: here $\beta\sim 4/n$ and $\sqrt{2d\lambda_2}=4\sin(\pi/n)\sim 4\pi/n$. Unlike the previous families, the cycles have constant degree. 

Recall that an expander family is a $d$-regular family of graphs whose isoperimetric constant is uniformly bounded away from $0$. The conceptual importance of Theorem~\ref{thm: Dodziuk-Mohar} is that, in conjunction with Theorems~\ref{thm: AM}, it  implies the following: a $d$-regular family is an expander family if and only if it has a uniform spectral gap. In laplacian terms, this means that $\lambda_2$ is uniformly bounded away from $0$; in adjacency terms, $\alpha_2$ is uniformly bounded away from $d$. Note, however, that there is a quantitative difference between the combinatorial and the spectral approach to expanders.

\begin{notes}
Fiedler (\emph{Algebraic connectivity of graphs}, Czechoslovak Math. J. 1973) was the first to seize the importance of $\lambda_2$ in relation to graph connectivity. He called $\lambda_2$ the `algebraic connectivity' of a graph. Fiedler's starting point was the observation that, in the context of possibly disconnected graphs, the connectivity of a graph is equivalent to the condition that $\lambda_2>0$.

Theorems~\ref{thm: AM} and ~\ref{thm: Dodziuk-Mohar} are graph-theoretic analogues of results due to Cheeger (\emph{A lower bound for the smallest eigenvalue of the Laplacian}, in `Problems in analysis', Princeton Univ. Press 1970) and Buser (\emph{A note on the isoperimetric constant}, Ann. Sci. \'Ecole Norm. Sup. 1982), relating the isoperimetric constant of a compact Riemannian manifold $M$ to the first positive eigenvalue of the laplacian operator on $L^2(M)$.  Theorem~\ref{thm: AM} is implicitly due to Alon and Milman (\emph{$\lambda_1$, isoperimetric inequalities for graphs, and superconcentrators}, J. Combin. Theory Ser. B 1985). Theorem~\ref{thm: Dodziuk-Mohar}, the analogue of Cheeger's result, is due to Dodziuk (\emph{Difference equations, isoperimetric inequality and transience of certain random walks}, Trans. Amer. Math. Soc. 1984). See also Mohar (\emph{Isoperimetric numbers of graphs}, J. Combin. Theory Ser. B 1989) for an improvement.

Expander families do not come easily. The first explicit construction is due to Margulis (\emph{Explicit constructions of expanders}, Problems Inform. Transmission 1973) and it relies on a rather sophisticated phenomenon, namely Kazhdan's property (T). The spectral perspective on expanders originates here.

Another early explicit construction of an expander family is due to Buser (\emph{On the bipartition of graphs}, Discrete Appl. Math. 1984). Buser's construction also depends on a spectral perspective, but in a rather different way: as he puts it, his proof is ``\emph{very} unorthodox--from the point of view of graph theory--and is in the realm of the spectral geometry of the Laplace operator on Riemann surfaces''.

Remarkably, there are expander families consisting of $d$-regular graphs with the following property: 
\begin{quote}
$(*)$ all eigenvalues different from $\pm d$ lie in the interval $[-2\sqrt{d-1},2\sqrt{d-1}]$
\end{quote}
In light of the Alon - Boppana phenomenon, one cannot pack the eigenvalues of a $d$-regular family in a smaller interval. On the other hand, the existence of expander families satisfying $(*)$ shows that the Alon - Boppana phenomenon is sharp.

Expander families satisfying $(*)$ were first constructed by Lubotzky, Phillips, Sarnak (\emph{Ramanujan graphs}, Combinatorica 1988) and, independently, Margulis (\emph{Explicit group-theoretic constructions of combinatorial schemes and their applications in the construction of expanders and concentrators}, Problems Inform. Transmission 1988). The graphs in question are Cayley graphs of $\mathrm{PSL}_2(\Z_p)$ or $\mathrm{PGL}_2(\Z_p)$, of degree $\ell+1$. Here $p$ and $\ell$ are distinct primes congruent to $1$ mod $4$, and the choice of $\mathrm{PSL}_2$ or $\mathrm{PGL}_2$ depends on the sign of the Legendre symbol $(p/\ell)=(\ell/p)$. The proof of $(*)$ for these specific graphs rests on a deep number-theoretical fact, the solution of the so-called Ramanujan conjecture. For this reason, expander families satisfying $(*)$ have come to be known as Ramanujan expander families. Simpler and more flexible constructions of Ramanujan expander families are currently known. A very recent and most notable result of Marcus, Spielman, and Srivastava (\emph{Interlacing families I: Bipartite Ramanujan graphs of all degrees}, Ann. of Math. 2015) says that Ramanujan expander families can be constructed for all degrees greater than $2$.

\end{notes}


\bigskip
\subsection*{Edge-counting}\label{sec: edgecount}\
\medskip
 
Let $S$ and $T$ be two sets of vertices in a regular graph. Denote by $e(S,T)$ the number of edges connecting vertices in $S$ to vertices in $T$, with the convention that edges having both endpoints in $S\cap T$ are counted twice. We want to estimate the local quantity $e(S,T)$ in terms of the size of $S$, the size of $T$, and global information on the graph. It turns out that the spread of the non-trivial eigenvalues of $X$ can be used to control the deviation of $e(S,T)$ from its expected value, $(\textrm{edge density})\cdot|S||T|$.

We first do this in the bipartite case.

\begin{thm}\label{thm: bipartite mixing lemma}
Let $X$ be a bipartite, $d$-regular graph of half-size $m$, and let $S$ and $T$ be vertex subsets lying in different sides of the bipartition. Then:
\begin{align}\label{eq: alonchung}
\bigg|e(S,T)-\frac{d}{m}|S| |T|\bigg|\leq \frac{\alpha_2}{m} \sqrt{|S| |T|\big(m-|S|\big)\big( m-|T|\big)} 
\end{align}
In particular, we have
\begin{align}\label{eq: alonchung-k}
\big|e(S,T)-\kappa |S| |T|\big|\leq \alpha_2 \sqrt{|S| |T|}\qquad \textrm{ whenever }\quad\frac{d-\alpha_2}{m}\leq \kappa\leq\frac{d+\alpha_2}{m}.
\end{align}
\end{thm}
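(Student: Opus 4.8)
The plan is to run the spectral (``expander mixing'') argument, adapted to the bipartite setting through the singular value decomposition of the biadjacency matrix. Write the adjacency matrix in block form
\begin{align*}
A=\begin{pmatrix} 0 & B\\ B^t & 0\end{pmatrix},
\end{align*}
where $B$ is the $m\times m$ matrix recording edges from black to white vertices. Since $S$ and $T$ lie on opposite sides of the bipartition, the quantity of interest is simply $e(S,T)=\la B\ct_T,\ct_S\ra$, with $\ct_S,\ct_T$ the relevant characteristic functions. The spectral input I would invoke is that the nonzero adjacency eigenvalues of a bipartite graph are exactly $\pm$ the singular values of $B$: thus the top singular value is $d$, simple by $d$-regularity and connectivity, with left and right singular vector $\ct/\sqrt m$, while every remaining singular value is at most $\alpha_2$.

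First I would fix a singular value decomposition $B=\sum_i \sigma_i\,u_iv_i^t$ with orthonormal systems $\{u_i\}$, $\{v_i\}$, ordered so that $\sigma_1=d$, $u_1=v_1=\ct/\sqrt m$, and $\sigma_i\le\alpha_2$ for $i\ge 2$. Expanding yields
\begin{align*}
e(S,T)=\sum_i \sigma_i\,\la\ct_S,u_i\ra\,\la\ct_T,v_i\ra.
\end{align*}
The $i=1$ term is $d\cdot\tfrac{|S|}{\sqrt m}\cdot\tfrac{|T|}{\sqrt m}=\tfrac{d}{m}|S||T|$, exactly the expected value. For the tail I would use $\sigma_i\le\alpha_2$ uniformly and apply Cauchy--Schwarz, then evaluate the two resulting sums by Parseval: $\sum_{i\ge 2}\la\ct_S,u_i\ra^2=\la\ct_S,\ct_S\ra-\la\ct_S,u_1\ra^2=|S|-|S|^2/m=|S|(m-|S|)/m$, and likewise for $T$. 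Multiplying the two square roots produces precisely the bound \eqref{eq: alonchung}.

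To obtain \eqref{eq: alonchung-k}, I would split $e(S,T)-\kappa|S||T|=\big(e(S,T)-\tfrac{d}{m}|S||T|\big)+\big(\tfrac{d}{m}-\kappa\big)|S||T|$, bound the first bracket by \eqref{eq: alonchung} and use $|\tfrac{d}{m}-\kappa|\le\alpha_2/m$ from the hypothesis on $\kappa$. It then suffices to verify
\begin{align*}
\frac{\alpha_2}{m}\sqrt{|S||T|(m-|S|)(m-|T|)}+\frac{\alpha_2}{m}|S||T|\le \alpha_2\sqrt{|S||T|},
\end{align*}
which, after dividing by $\tfrac{\alpha_2}{m}\sqrt{|S||T|}$ (the cases $S=\void$ or $T=\void$ being trivial), reduces to $\sqrt{(m-|S|)(m-|T|)}+\sqrt{|S||T|}\le m$; this last inequality is Cauchy--Schwarz applied to the vectors $(\sqrt{|S|},\sqrt{m-|S|})$ and $(\sqrt{|T|},\sqrt{m-|T|})$.

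The computations themselves are harmless. The one step demanding care is the spectral input: identifying the non-extremal adjacency eigenvalues of the bipartite graph with the singular values of $B$, and hence justifying the uniform bound $\sigma_i\le\alpha_2$ for $i\ge 2$. This is exactly what makes $\alpha_2$, rather than $d$, the quantity governing the deviation of $e(S,T)$ from its expected value, and it is the hinge on which both inequalities turn.
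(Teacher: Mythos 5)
Your proof is correct, and it differs from the paper's in packaging rather than in substance. The paper expands $\ct_S$ and $\ct_T$ in an orthonormal eigenbasis of the full $2m\times 2m$ adjacency matrix $A$, isolates the two extremal eigenvalues $\pm d$ (whose eigenfunctions are the constant, respectively the alternating, function of modulus $1/\sqrt{2m}$), checks that these two terms together contribute exactly $\frac{d}{m}|S||T|$, and bounds the intermediate terms by Cauchy--Schwarz using $|\alpha_k|\leq \alpha_2$ for $2\leq k\leq n-1$, which rests on the symmetry of the bipartite spectrum. Your singular value decomposition of the biadjacency matrix $B$ compresses the two extremal terms into the single term $\sigma_1\la \ct_S,u_1\ra\la \ct_T,v_1\ra=\frac{d}{m}|S||T|$, and replaces the symmetric-spectrum bookkeeping by the identification of the spectrum of $A$ with $\{\pm\sigma_i\}$; that identification is justified exactly as you indicate (eigenvectors $\tfrac{1}{\sqrt 2}(u_i,\pm v_i)$, or via $A^2$ being block-diagonal with blocks $BB^t$ and $B^tB$, as in the paper's treatment of design graphs), and connectivity plus $d$-regularity give $\sigma_1=d$ simple with singular vectors $\ct/\sqrt m$, hence $\sigma_i\leq\alpha_2$ for $i\geq 2$. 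Two small points keep your argument airtight: Parseval as an equality wants the full square SVD, so that $\{u_i\}$ and $\{v_i\}$ are orthonormal bases of $\R^m$ (Bessel's inequality would already give the needed direction); and your closing inequality $\sqrt{|S||T|}+\sqrt{(m-|S|)(m-|T|)}\leq m$ is the same step the paper writes as $\sqrt{(m-|S|)(m-|T|)}\leq m-\sqrt{|S||T|}$. What the SVD route buys is a cleaner ledger --- one extremal term instead of two, and no need to track the eigenvalue $-\alpha_2$ --- at the cost of the extra lemma relating eigenvalues of $A$ to singular values of $B$; the paper's route stays entirely inside the adjacency eigenbasis already set up in the text.
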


The constant $\kappa$ in \eqref{eq: alonchung-k} should be thought of as a rounding constant. A generic choice is $\kappa=d/m$, but more convenient choices are usually available in applications.

\begin{proof}
Note first that, for the characteristic functions of $S$ and $T$, we have
\begin{align*}
\la A\: \ct_S, \ct_T\ra= \sum_{u\sim v} \ct_S(u)\:\ct_T(v)= e(S,T).
\end{align*}
Next, we seek a spectral understanding of the left-hand side. Let 
\begin{align*}
d=\alpha_1>\alpha_2\geq\ldots\geq \alpha_{n-1}=-\alpha_2>\alpha_n=-d
\end{align*} 
be the adjacency eigenvalues of $X$, where $n=2m$ denotes, as usual, the number of vertices. Let also $\{f_k\}$ be an orthonormal basis consisting of normalized adjacency eigenfunctions. Thus $f_1$ is constant equal to $1/\sqrt{n}$ while $f_n$ is $1/\sqrt{n}$ on, say, black vertices, and $-1/\sqrt{n}$ on white vertices. Expand $\ct_S$ and $\ct_T$ in the eigenbasis as $\ct_S=\sum s_k f_k$, respectively $\ct_T=\sum t_k f_k$. Then: 
\begin{align*}
\la A\:\ct_S, \ct_T\ra=\sum_{k=1}^n \alpha_k s_k \overline{t}_k
\end{align*}
We separate the terms corresponding to the extremal eigenvalues $\pm d$, and the terms corresponding to the intermediate eigenvalues. If, say, $S$ consists of black vertices and $T$ consists of white vertices, then the extremal coefficients are
\begin{align*}
s_1=\frac{1}{\sqrt{n}}|S|=s_n, \qquad  t_1=\frac{1}{\sqrt{n}}|T|=-t_n
\end{align*}
and so
\begin{align*}
\alpha_1 s_1 \overline{t}_1+\alpha_n s_n \overline{t}_n=\frac{d}{m}|S| |T|. 
\end{align*}
For the intermediate coefficients, we note that
\begin{align*}
\sum_{k= 2}^{n-1} |s_k|^2=\la\ct_S, \ct_S\ra-|s_1|^2-|s_n|^2=|S|-\frac{|S|^2}{m}=\frac{1}{m}|S|(m-|S|)
\end{align*}
and similarly for the coefficients of $T$. Therefore, by Cauchy - Schwarz we have that
\begin{align*}
\sum_{k=2}^{n-1} |s_k| |t_k|\leq \frac{1}{m} \sqrt{|S| |T|(m-|S|)(m- |T|)}.
\end{align*}
Now
\begin{align*}
\bigg|e(S,T)-\frac{d}{m}|S| |T|\bigg|=\bigg|\sum_{k\geq 2}^{n-1} \alpha_k s_k t_k\bigg|\leq \sum_{k\geq 2}^{n-1} |\alpha_k| |s_k| |t_k|\leq \alpha_2 \sum_{k\geq 2}^{n-1} |s_k| |t_k|
\end{align*}
and \eqref{eq: alonchung} follows. Finally, using $\sqrt{(m-|S|)(m-|T|)}\leq m-\sqrt{|S||T|}$ we get
\begin{align*}
 \frac{d+\alpha_2}{m}|S||T|-\alpha_2\sqrt{|S||T|}\leq e(S,T)\leq  \frac{d-\alpha_2}{m}|S||T|+\alpha_2\sqrt{|S||T|}
\end{align*}
which implies \eqref{eq: alonchung-k}.
\end{proof}

The non-bipartite case can be easily derived. 

\begin{cor}\label{cor: mixing lemma}
Let $X$ be a non-bipartite, $d$-regular graph of size $n$, and let $S$ and $T$ be vertex subsets. Then:
\begin{align*}
\bigg|e(S,T)-\frac{d}{n}|S| |T|\bigg|\leq \frac{\alpha}{n} \sqrt{|S| |T||S^c| |T^c|}, \qquad \alpha:=\max\{\alpha_2, -\alpha_n\}
\end{align*}
\end{cor}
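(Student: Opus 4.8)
The plan is to derive the non-bipartite mixing bound from the bipartite one (Theorem~\ref{thm: bipartite mixing lemma}) by passing to the bipartite double $X'$ of $X$. Since $X$ is non-bipartite and $d$-regular on $n$ vertices, its bipartite double $X'$ is a connected, bipartite, $d$-regular graph of half-size $m=n$. Crucially, by the Rule on bipartite doubles, the adjacency spectrum of $X'$ is the symmetrized spectrum of $X$: the eigenvalues of $X'$ are precisely $\pm\alpha$ as $\alpha$ runs over the eigenvalues of $X$. This means the largest non-trivial adjacency eigenvalue of $X'$ equals $\alpha = \max\{\alpha_2, -\alpha_n\}$, exactly the quantity appearing in the statement. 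So the role played by $\alpha_2$ in the bipartite theorem is played by $\alpha$ here.

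First I would set up the dictionary between vertex subsets of $X$ and subsets of $X'$. Recall that $X'$ has vertex set $V_\bl \cup V_\wh$, two disjoint copies of $V(X)$, with $u_\bl \sim v_\wh$ iff $u \sim v$ in $X$. Given subsets $S, T \subseteq V(X)$, I would form $S_\bl \subseteq V_\bl$ (the black copy of $S$) and $T_\wh \subseteq V_\wh$ (the white copy of $T$); these lie on opposite sides of the bipartition of $X'$. The key observation is that edges of $X'$ between $S_\bl$ and $T_\wh$ correspond exactly to adjacencies $u \sim v$ in $X$ with $u \in S$, $v \in T$, so $e_{X'}(S_\bl, T_\wh) = e_X(S,T)$, where the count of $e_X(S,T)$ on the right uses the same double-counting convention on $S\cap T$ as in the statement. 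Here I must check this convention carefully: an edge of $X$ with both endpoints in $S\cap T$ gives rise to two edges in $X'$ between $S_\bl$ and $T_\wh$ (namely $u_\bl \sim v_\wh$ and $v_\bl \sim u_\wh$), which matches the stated double-counting rule.

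Then I would simply apply Theorem~\ref{thm: bipartite mixing lemma} to $X'$ with half-size $m = n$, the two subsets $S_\bl$ and $T_\wh$ of sizes $|S|$ and $|T|$, and the second eigenvalue equal to $\alpha$. Inequality~\eqref{eq: alonchung} gives
\begin{align*}
\bigg|e_{X'}(S_\bl, T_\wh) - \frac{d}{n}|S||T|\bigg| \leq \frac{\alpha}{n}\sqrt{|S||T|(n-|S|)(n-|T|)}.
\end{align*}
Substituting $e_{X'}(S_\bl,T_\wh) = e_X(S,T)$ and noting $n - |S| = |S^c|$, $n - |T| = |T^c|$ (complements taken in $V(X)$), this is precisely the claimed bound.

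The main obstacle I anticipate is not any hard estimate — the inequality is inherited wholesale — but rather the bookkeeping around the counting convention and the identification of eigenvalues. I would want to verify that the convention ``edges with both endpoints in $S\cap T$ counted twice'' in the non-bipartite statement is exactly what the bipartite double produces, since an asymmetry between $S$ and $T$ (they need not be equal, and $S\cap T$ need not be empty) could otherwise introduce a discrepancy. A clean way to sidestep any doubt is to note that in both theorems the quantity $e(\cdot,\cdot)$ is expressed spectrally as $\la A\,\ct_S, \ct_T\ra$ (for $X$) versus $\la A'\,\ct_{S_\bl}, \ct_{T_\wh}\ra$ (for $X'$), and these two inner products are manifestly equal because $A'$ restricted to the black-to-white block is just $A$. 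This reduces the convention-checking to a one-line linear-algebra identity rather than a case analysis over shared vertices. The second point to confirm is that $X'$ is connected, which holds by the earlier Proposition since $X$ is non-bipartite; connectivity is what guarantees $\pm d$ are simple and that $\alpha$ is the genuine second-largest modulus among the remaining eigenvalues of $X'$.
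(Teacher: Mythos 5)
Your proposal is correct and takes essentially the same route as the paper: the paper's proof is precisely to pass to the bipartite double $X'$, note that $\alpha=\max\{\alpha_2,-\alpha_n\}$ is its second largest adjacency eigenvalue (since $\adspec(X')=\pm\adspec(X)$), and apply \eqref{eq: alonchung} to $S$ and $T$ viewed as subsets of different colours. Your additional verifications --- that $e_{X'}(S_\bl,T_\wh)=\la A\,\ct_S,\ct_T\ra=e_X(S,T)$ reproduces the double-counting convention on $S\cap T$, and that connectivity of $X'$ follows from non-bipartiteness of $X$ --- merely make explicit the bookkeeping the paper leaves implicit.
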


\begin{proof}
Let $X'$ be the bipartite double of $X$. Then $\alpha=\max\{\alpha_2, -\alpha_n\}$ is the second largest eigenvalue of $X'$. Viewing $S$ and $T$ as vertex subsets of $X'$ of different colours, we may apply \eqref{eq: alonchung} to get the claimed bound.
\end{proof}

Theorem~\ref{thm: bipartite mixing lemma} provides a versatile method with numerous combinatorial applications. 

\begin{ex}\label{app: sum-product} Let $\F$ be a finite field with $q$ elements, and let $A,B,C,D\subseteq \F$. The question we wish to address is that of estimating the number of solutions $(a,b,c,d)\in A\times B\times C\times D$ to the following two equations: $a+b=cd$, respectively $ab+cd=1$.

The hypersurface $\{x+y=zt\}\subseteq \F^4$ has $q^3=\frac{1}{q}|\F^4|$ points, while the hypersurface $\{xy+zt=1\}\subseteq \F^4$ has $(q^2-1)(q-1)\sim \frac{1}{q}|\F|^4$ points. So, for both equations we would expect about $\frac{1}{q}|W|$ solutions in the `window' $W:=A\times B \times C\times D\subseteq \F^4$. We will now show that the error in making this estimate is on the order of $\sqrt{|W|}$. More precisely, we show that the number of restricted solutions $N_W$ to either one of the two equations satisfies the following bound:
\begin{align*}
\Big|N_W-\frac{1}{q}|W| \Big|\leq \sqrt{q|W|}
\end{align*}

We work in the incidence graph $I_3(q)$, which has $m=q^2+q+1$, $d=q+1$, $\alpha_2=\sqrt{q}$ so we may use $\kappa=1/q$ in \eqref{eq: alonchung-k}. 

For the equation $a+b=cd$, let $S$ be the set of black vertices of the form $[(a,-1,c)]_\bl$ with $a\in A$ and $c\in C$, and let $T$ be the set of white vertices of the form $[(-1,b,d)]_\wh$ with $b\in B$ and $d\in D$. Thus $|S||T|=|A||C||B||D|=|W|$. Furthermore, there is an edge between $[(a,-1,c)]_\bl$ and $[(-1,b,d)]_\wh$ precisely when $a+b=cd$, so $e(S,T)$ counts the number of solutions in $W=A\times B \times C\times D$ to the equation $a+b=cd$. An application of \eqref{eq: alonchung-k} yields the desired estimate.

For the equation $ab+cd=1$, take $S$ to be the set of black vertices of the form $[(1,a,c)]_\bl$ with $a\in A$ and $c\in C$, and $T$ to be the set of white vertices of the form $[(-1,b,d)]_\wh$ with $b\in B$ and $d\in D$.

Instead of the incidence graph $I_3(q)$, we could have used just as well the sum-product $SP(q)$ to handle the equation $a+b=cd$. For the equation $ab+cd=1$, however, no perfectly tailored graph is apparent.
\end{ex}

\begin{ex} Let $\F$ be a field with $q$ elements, where $q$ is odd. Assume that $A,B\subseteq \F$ have the property that the sum-set $A+B=\{a+b: a\in A,b\in B\}$ consists of squares. Then $|A||B|\leq q$.

Consider the bi-Cayley graph of the additive group of $\F$ with respect to $\F^2$, the subset of squares. Recall, this is a bipartite graph whose vertex set consists of two copies of $\F$, in which an edge connects $x_\bl$ to $y_\wh$ whenever $x+y$ is a square. The given subsets define a copy of the complete bipartite graph $K_{|A|,|B|}$, by viewing $A$ and $B$ as vertex subsets of different colours. 

The next step is a rather general application of \eqref{eq: alonchung-k}: if a regular bipartite graph contains the complete bipartite graph $K_{s,t}$, then
\begin{align}\label{eq: bipartite bound}
\sqrt{st}\leq \frac{m}{1+(m-d)/\alpha_2}.
\end{align}
Our graph has $m=q$ and $d=\frac{1}{2}(q+1)$. We claim that $\alpha_2\leq \frac{1}{2}(\sqrt{q}+1)$. Using these values in ~\eqref{eq: bipartite bound}, we obtain $st\leq q$ after a short and pleasant computation.

The bi-Cayley graph we are considering is a close relative of the bi-Paley graph. In order to estimate the adjacency eigenvalues, we adapt the arguments of Example~\ref{ex: paley via cayley}. Let $\psi$ be a non-trivial additive character. Then
\begin{align*}
\alpha_\psi=\sum_{s\in \F^{2}} \psi(s)=1+\sum_{s\in \F^{*2}} \psi(s)=\frac{1}{2}\big(G(\psi,\sigma)+1\big).
\end{align*}
Hence
\begin{align*}
|\alpha_\psi|\leq \frac{1}{2}\big(|G(\psi,\sigma)|+1\big)= \frac{1}{2}(\sqrt{q}+1)
\end{align*}
as claimed.
\end{ex}

\begin{exer}\label{exer: lines meeting planes} Let $\F$ be a field with $q$ elements, and assume that a proportion $c\in (0,1)$ of lines, respectively planes in $\F^3$, is selected. Give a lower bound for $c$, in terms of $q$, guaranteeing that some selected line is contained in some selected plane. 
\end{exer}

The edge-counting estimates from \ref{thm: bipartite mixing lemma} can be easily extended, mutatis mutandis, to path-counting. For two vertex subsets $S$ and $T$ in a graph,  we denote by $p_\ell(S,T)$ the number of paths of length $\ell$ connecting vertices in $S$ to vertices in $T$.

\begin{thm}\label{thm: path-count}
Let $X$ be a bipartite $d$-regular graph of half-size $m$, and let $S$ and $T$ be vertex subsets. Assume that either $\ell$ is odd, and $S$ and $T$ have different colours, or $\ell$ is even, and $S$ and $T$ have the same colour. Then:
\begin{align*}
\bigg|p_\ell(S,T)-\frac{d^\ell}{m}|S| |T|\bigg|\leq \frac{\alpha_2^\ell}{m} \sqrt{|S| |T| \big(m-|S|\big) \big(m-|T|\big)}
\end{align*}
\end{thm}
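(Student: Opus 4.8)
The plan is to mimic exactly the proof of Theorem~\ref{thm: bipartite mixing lemma}, replacing the adjacency operator $A$ with its $\ell$-th power $A^\ell$, since the key identity is that $\la A^\ell\: \ct_S, \ct_T\ra$ counts paths of length $\ell$ from $S$ to $T$. Indeed, the $(u,v)$-entry of $A^\ell$ equals the number of paths of length $\ell$ joining $u$ and $v$, so $\la A^\ell\: \ct_S, \ct_T\ra = \sum_{u,v} A^\ell(u,v)\:\ct_S(u)\:\ct_T(v) = p_\ell(S,T)$. This is the only genuinely new ingredient; the rest is a transparent bookkeeping of how the eigenvalues transform.

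First I would fix an orthonormal eigenbasis $\{f_k\}$ with adjacency eigenvalues $d=\alpha_1>\alpha_2\geq\ldots\geq\alpha_{n-1}=-\alpha_2>\alpha_n=-d$, exactly as before, where $n=2m$. Since $A f_k=\alpha_k f_k$, we have $A^\ell f_k=\alpha_k^\ell f_k$, so expanding $\ct_S=\sum s_k f_k$ and $\ct_T=\sum t_k f_k$ gives
\begin{align*}
p_\ell(S,T)=\la A^\ell \ct_S,\ct_T\ra=\sum_{k=1}^n \alpha_k^\ell\: s_k\:\overline{t}_k.
\end{align*}
The extremal coefficients $s_1,s_n,t_1,t_n$ are computed precisely as in the theorem. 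The parity hypothesis on $\ell$ is exactly what makes the two extremal terms combine cleanly: writing $s_1=|S|/\sqrt n=\varepsilon_S\, s_n$ and $t_1=|T|/\sqrt n=\varepsilon_T\, t_n$ with signs $\varepsilon_S,\varepsilon_T\in\{\pm1\}$ recording the colours, the contribution of $\pm d$ is $d^\ell s_1 t_1 + (-d)^\ell s_n t_n = d^\ell s_1 t_1\big(1+(-1)^\ell \varepsilon_S\varepsilon_T\big)$. When $\ell$ is odd and the colours differ, or $\ell$ is even and the colours agree, the factor is $2$, yielding $\tfrac{d^\ell}{m}|S||T|$; this is the role of the stated parity assumption.

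For the intermediate terms I would reuse verbatim the two Parseval computations $\sum_{k=2}^{n-1}|s_k|^2=\tfrac1m|S|(m-|S|)$ and its analogue for $T$, then apply Cauchy--Schwarz to bound $\sum_{k=2}^{n-1}|s_k||t_k|$ by $\tfrac1m\sqrt{|S||T|(m-|S|)(m-|T|)}$. The only modification is the bound $|\alpha_k^\ell|\leq \alpha_2^\ell$ for $2\leq k\leq n-1$, which holds because every intermediate eigenvalue satisfies $|\alpha_k|\leq\alpha_2$, hence $|\alpha_k|^\ell\leq\alpha_2^\ell$. Combining,
\begin{align*}
\bigg|p_\ell(S,T)-\frac{d^\ell}{m}|S||T|\bigg| = \bigg|\sum_{k=2}^{n-1}\alpha_k^\ell s_k t_k\bigg|\leq \alpha_2^\ell\sum_{k=2}^{n-1}|s_k||t_k|\leq \frac{\alpha_2^\ell}{m}\sqrt{|S||T|(m-|S|)(m-|T|)},
\end{align*}
which is the claim. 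I do not anticipate a genuine obstacle here: the one point requiring care is the sign bookkeeping in the extremal terms, making sure the parity hypothesis is invoked correctly so that the $\pm d$ contributions reinforce rather than cancel. Everything else is an immediate transcription of the edge-counting proof with $A\rightsquigarrow A^\ell$ and $\alpha_2\rightsquigarrow\alpha_2^\ell$.
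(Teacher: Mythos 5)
Your proof is correct and is precisely the ``mutatis mutandis'' extension of the proof of Theorem~\ref{thm: bipartite mixing lemma} that the paper itself invokes: replace $A$ by $A^\ell$, note $\la A^\ell \ct_S,\ct_T\ra=p_\ell(S,T)$, and observe that the parity/colour hypothesis makes the two extremal terms add to $\tfrac{d^\ell}{m}|S||T|$ while the intermediate eigenvalues obey $|\alpha_k|^\ell\leq\alpha_2^\ell$. Your sign bookkeeping with $\varepsilon_S,\varepsilon_T$ is exactly the point where the hypothesis enters, and it is handled correctly.
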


\begin{cor}
Let $X$ be a non-bipartite $d$-regular graph of size $n$, and let $S$ and $T$ be vertex subsets. Then:
\begin{align*}
\bigg|p_\ell(S,T)-\frac{d^\ell}{n}|S| |T|\bigg|\leq \frac{\alpha^\ell}{n} \sqrt{|S| |T| |S^c| |T^c| }, \qquad \alpha:=\max\{\alpha_2, -\alpha_n\}
\end{align*}
\end{cor}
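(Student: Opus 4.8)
The plan is to deduce this from the bipartite path-counting theorem (Theorem~\ref{thm: path-count}) by passing to the bipartite double, exactly mirroring the way the edge-counting bound (Corollary~\ref{cor: mixing lemma}) was obtained from Theorem~\ref{thm: bipartite mixing lemma}; indeed, the present corollary is the $\ell$-step generalization of that $\ell=1$ statement.

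First I would form the bipartite double $X'$ of $X$. Since $X$ is non-bipartite, $X'$ is connected, and it is a $d$-regular bipartite graph of half-size $m=n$. Its adjacency matrix has the block form
$$A'=\begin{pmatrix} 0 & A\\ A & 0\end{pmatrix},$$
so $\adspec(X')=\pm\adspec(X)$; in particular the second-largest eigenvalue of $X'$ is exactly $\alpha=\max\{\alpha_2,-\alpha_n\}$, the quantity appearing in the bound.

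The crux is a walk-counting correspondence. Powering the block matrix gives $(A')^{\ell}=\mathrm{diag}(A^{\ell},A^{\ell})$ when $\ell$ is even, and the anti-diagonal matrix carrying $A^\ell$ in both off-diagonal blocks when $\ell$ is odd. Hence the number of length-$\ell$ walks in $X'$ from $u_\bl$ to $w_\wh$ (for odd $\ell$), respectively from $u_\bl$ to $w_\bl$ (for even $\ell$), equals $A^\ell(u,w)$, the number of length-$\ell$ walks from $u$ to $w$ in $X$. Therefore, placing $S$ on the black side and $T$ on the white side when $\ell$ is odd, and placing both $S$ and $T$ on the black side when $\ell$ is even, realizes the quantity $p_\ell(S,T)$ computed in $X$ as the corresponding count $p_\ell(S,T)$ computed in $X'$. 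Crucially, these are precisely the two colour-parity configurations admitted by the hypothesis of Theorem~\ref{thm: path-count}.

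It then remains to substitute into Theorem~\ref{thm: path-count}. With half-size $m=n$, second eigenvalue $\alpha$, and subset sizes $|S|,|T|$ satisfying $m-|S|=|S^c|$ and $m-|T|=|T^c|$, the bipartite bound reads
$$\bigg|p_\ell(S,T)-\frac{d^\ell}{n}|S||T|\bigg|\leq \frac{\alpha^\ell}{n}\sqrt{|S||T||S^c||T^c|},$$
which is exactly the claim. I do not anticipate a genuine obstacle: the whole content is the reduction, and the only points needing care are the verification of the walk-counting identity via the block powers of $A'$ and the matching of the colour-parity hypothesis to the parity of $\ell$.
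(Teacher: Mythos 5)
Your proposal is correct and is exactly the route the paper intends: the paper proves the edge-counting Corollary~\ref{cor: mixing lemma} by passing to the bipartite double, where $\alpha=\max\{\alpha_2,-\alpha_n\}$ becomes the second-largest eigenvalue, and leaves the path-counting version as ``mutatis mutandis.'' Your block-power computation of $(A')^\ell$ and the resulting parity-dependent colour placement of $S$ and $T$ supply precisely the bookkeeping that the phrase elides, and they match the parity hypothesis of Theorem~\ref{thm: path-count}, so the substitution $m=n$, $m-|S|=|S^c|$, $m-|T|=|T^c|$ finishes the argument.
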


\begin{exer}\label{exer: chung}
Prove the following diameter bounds. If $X$ is a $d$-regular graph of size $n$, then
\begin{align*}
\delta\leq \frac{\log (n-1)}{\log (d/\alpha)}+1, \qquad \alpha:=\max\{\alpha_2,-\alpha_n\}. 
\end{align*}
If $X$ is a bipartite $d$-regular graph of half-size $m$, then
\begin{align*}
\delta\leq \frac{\log (m-1)}{\log (d/\alpha_2)}+2. 
\end{align*}
\end{exer}

\begin{notes}
Theorem~\ref{thm: bipartite mixing lemma} and Corollary~\ref{cor: mixing lemma} are slight variations on a result due to Alon and Chung (\emph{Explicit construction of linear sized tolerant networks}, Discrete Math. 1988). A conceptual precursor is Thomason's notion of jumbled graph (\emph{Pseudorandom graphs}, in 'Random graphs '85', North-Holland 1987).

The estimates in Example~\ref{app: sum-product} are inspired by, and partially improve, estimates due to Gyarmati and S\'ark\"ozy (\emph{Equations in finite fields with restricted solution sets II (Algebraic equations)}, Acta Math. Hungar. 2008). 

Exercise~\ref{exer: chung} is a result due to Chung (\emph{Diameters and eigenvalues}, J. Amer. Math. Soc. 1989).
\end{notes}

\newpage
\section*{-- Further reading --}
\bigskip

\begin{itemize}[itemsep=12pt, leftmargin=0pt]

\item[] Noga Alon\\
\emph{Tools from higher algebra}, in `Handbook of combinatorics', 1749--1783, Elsevier 1995

\item[] L\'aszl\'o Babai, Peter Frankl\\
\emph{Linear algebra methods in combinatorics}, Preliminary version, University of Chicago 1992

\item[] Simeon Ball\\
\emph{Finite geometry and combinatorial applications}, London Mathematical Society Student Texts no. 82, Cambridge University Press 2015

\item[] B\'ela Bollob\'as\\
\emph{Modern graph theory}, Graduate Texts in Mathematics no. 184, Springer 1998

\item[] Andries E. Brouwer, Willem H. Haemers\\ 
\emph{Spectra of graphs}, Universitext, Springer 2012

\item[] Peter J. Cameron, Jack H. van Lint\\
\emph{Designs, graphs, codes and their links}, London Mathematical Society Student Texts no. 22, Cambridge University Press 1991

\item[] Fan Chung\\
\emph{Spectral graph theory}, CBMS Series in Mathematics no. 92, American Mathematical Society 1997

\item[] Giuliana Davidoff, Peter Sarnak, Alain Valette\\ 
\emph{Elementary number theory, group theory, and Ramanujan graphs}, London Mathematical Society Student Texts no. 55, Cambridge University Press 2003

\item[] Chris Godsil, Gordon Royle\\
\emph{Algebraic graph theory}, Graduate Texts in Mathematics no. 207, Springer 2001

\item[] Shlomo Hoory, Nathan Linial, Avi Wigderson\\
\emph{Expander graphs and their applications}, Bull. Amer. Math. Soc. 43 (2006), no. 4, 439--561

\item[] Ji\v{r}\'i Matou\v{s}ek\\ 
\emph{Thirty-three miniatures. Mathematical and algorithmic applications of linear algebra}, Student Mathematical Library no. 53, American Mathematical Society 2010

\item[] Daniel Spielman\\
\emph{Spectral graph theory}, in `Combinatorial scientific computing', 495--524, CRC Press 2012

\end{itemize}

\end{document}